\numberwithin{equation}{section}
\newcommand{\vr}{\varrho}
\newcommand{\vu}{\vc{u}}
\newcommand{\vc}[1]{{\bf #1}}
\newcommand{\tor}{\mathbb{T}^3}
\newcommand{\expe}[1]{ \mathbb{E} \left[ #1 \right] }
\newcommand{\Div}{\divergence}
\newcommand{\R}{\mathbb R}
\newcommand{\N}{\mathbb N}
\newcommand{\E}{\mathbb E}
\newcommand{\p}{\mathbb P}
\newcommand{\dd}{\mathrm{d}}
\newcommand{\dx}{\,\mathrm{d}x}
\newcommand{\dt}{\,\mathrm{d}t}
\newcommand{\dxt}{\,\mathrm{d}x\,\mathrm{d}t}
\newcommand{\ds}{\,\mathrm{d}\sigma}
\newcommand{\dxs}{\,\mathrm{d}x\,\mathrm{d}\sigma}
\newcommand{\dif}{\mathrm{d}}
\newcommand{\mf}{\mathscr{F}}
\newcommand{\mr}{\mathbb{R}}
\newcommand{\prst}{\mathbb{P}}
\newcommand{\stred}{\mathbb{E}}
\newcommand{\ind}{\mathbf{1}}
\newcommand{\mn}{\mathbb{N}}
\newcommand{\mt}{\mathbb{T}^3}
\DeclareMathOperator{\diver}{div}
\newcommand{\distr}{\overset{d}{\sim}}
\newcommand{\tec}{{\overset{\cdot}{}}}
\newcommand{\bu}{\mathbf u}
\newcommand{\bq}{\mathbf q}
\begin{document}


\title{Stochastic Navier-Stokes equations for compressible fluids}

\author{Dominic Breit}
\address[D. Breit]{LMU Munich, Mathematical Institute, Theresienstra\ss e 39, 80333 Munich, Germany\newline
Department of Mathematics, Heriot-Watt University, Riccarton Edinburgh EH14 4AS, UK}
\email{d.breit@hw.ac.uk}

\author{Martina Hofmanov\'a}
\address[M. Hofmanov\'a]{Max Planck Institute for Mathematics in the Sciences, Inselstra\ss e 22, 04103 Leipzig, Germany\newline Technical University Berlin, Institute of Mathematics, Stra\ss e des 17. Juni 136, 10623 Berlin, Germany}
\email{hofmanov@math.tu-berlin.de}

\begin{abstract}
We study the Navier-Stokes equations governing the motion of an isentropic compressible fluid in three dimensions driven by a multiplicative stochastic forcing. In particular, we consider a stochastic perturbation of the system as a function of momentum and density. We establish existence of a so-called finite energy weak martingale solution under the condition that the adiabatic constant satisfies $\gamma>3/2$. The proof is based on a four layer approximation scheme together with a refined stochastic compactness method and a careful identification of the limit procedure.
\end{abstract}

\subjclass[2010]{60H15, 35R60, 76N10,  35Q30}
\keywords{Compressible fluids, stochastic Navier-Stokes equations, weak solution, martingale solution}

\date{\today}

\maketitle


\section{Introduction}



We consider the Navier-Stokes system for isentropic compressible viscous fluids driven by a multiplicative stochastic forcing and prove existence of a solution that is weak in both PDE and probabilistic sense. To be more precise,  let $\mt=[0,1]^3$ denote the three-dimensional torus, let $T>0$ and set $Q=(0,T)\times\mt$. We study the following system which governs the time evolution of density $\varrho$ and velocity $\bu$ of a compressible viscous fluid:
\begin{subequations}\label{eq:}
 \begin{align}
  \dif \varrho+\diver(\varrho\bu)\dif t&=0,\label{eq1}\\
  \dif(\varrho\bu)+\big[\diver(\varrho\bu\otimes\bu)-\nu\Delta\bu-(\lambda+\nu)\nabla\diver\bfu+\nabla p(\varrho)\big]\dif t&=\varPhi(\varrho,\varrho\bu) \,\dif W.\label{eq2}
 \end{align}
\end{subequations}
These equations describe the balance of mass and momentum of the flow.
Here $p(\varrho)$ is the pressure which is supposed to follow the $\gamma$-law, i.e. $p(\varrho)=a\varrho^\gamma$ where $a>0$ and $a$ is the squared reciprocal of the Mach-number (ratio of flow velocity and speed of sound). For the adiabatic exponent $\gamma$ (also called isentropic expansion factor) we suppose $\gamma>\frac{3}{2}$. Finally, the viscosity coefficients $\nu,\,\lambda$ satisfy
$$\nu>0,\quad\lambda+\frac{2}{3}\nu\geq0.$$
The driving process $W$ is a cylindrical Wiener process defined on some probability space $(\Omega,\mf,\prst)$ and the coefficient $\varPhi$ is generally nonlinear and satisfies suitable growth conditions. The precise description of the problem setting will be given in the next section.

The literature devoted to deterministic case is very extensive (see for instance Feireisl \cite{fei3}, Feireisl, Novotn\'y and Petzeltov\'a \cite{feireisl1}, Lions \cite{Li1}, Novotn\'y and Stra\v{s}kraba \cite{novot} and the references therein). The existence of weak solutions in the non-stationary setting is well-known provided $\gamma>\frac{3}{2}$ (in three dimensions, in two dimensions $\gamma>1$ suffices instead). This might not be optimal but already covers important examples like mono-atomic gases where $\gamma=\frac{5}{3}$. In the stationary situation the results have been recently extended to $\gamma>1$, see \cite{FMS,PlVa}.\\
The theory for the stochastic counterpart still remains underdeveloped. The only available results (see Feireisl, Maslowski and Novotn\'y \cite{feireisl2} for $d=3$ and \cite{To} in the case $d=2$) concern the Navier-Stokes system for compressible barotropic fluids under a stochastic perturbation of the form $\varrho\,\dif W$. 
This particular case of a multiplicative noise permits reduction of the problem. After applying some transformation it can be solved pathwise and therefore existence of a weak solution was established using deterministic arguments. This method has the drawback that the constructed solutions do not necessarily satisfy an energy inequality and are not progressively measurable (hence the stochastic integral is not defined).
We are not aware of any results concerning the Navier-Stokes equations for compressible fluids driven by a general multiplicative noise. Nevertheless, study of such models is of essential interest as they were proposed as models for turbulence, see Mikulevicius and Rozovskii \cite{mikul}. In case of a more general noise, the simplification mentioned before is no longer possible and methods from infinite-dimensional stochastic analysis are required.

There is a bulk of literature available concerning stochastic versions of the incompressible Navier-Stokes equations. Let us mention the pioneering paper by Bensoussan and Temam \cite{BeTe} and for an overview of the known results, recent developments, as well as further references, we refer to \cite{debusergo}, \cite{Fl2} and \cite{Ma}. The literature concerning other fluid types is rather rare. Just very recently first results on stochastic models for Non-Newtonian fluids appeared (see \cite{Br}, \cite{TeYo} and \cite{Yo}). Incompressible non-homogenous fluids with stochastic forcing were studied in \cite{Fu} and more recently in \cite{Sa}; one-dimensional stochastic isentropic Euler equations in \cite{berthvovelle}.

We aim at a systematic study of compressible fluids under random perturbations.
Our main result is the existence of a weak martingale solution to \eqref{eq:} in the sense of Definition \ref{def:sol}, see Theorem \ref{thm:main}.
Our solution satisfies an energy inequality which shows the time evolution of the energy compared to the initial energy. The setting includes in particular the case of
$$\Phi(\varrho,\varrho\bfu)\,\dd W= \Phi_1(\varrho) \,\dd W^1+\Phi_2(\varrho\bfu)\,\dd W^2$$
with two independent cylindrical Wiener processes $W^1$ and $W^2$ and suitable growth assumptions on $\varPhi_1$ and $\varPhi_2$, which is the main example we have in mind. Here the first term describes some external force; the case $\Phi_1(\varrho)=\varrho$ studied in \cite{feireisl2} is included but we could also allow nonlinear dependence in $\varrho$ (the case $\Phi(\varrho,\varrho\bfu)\,\dd W= \varrho\,\dd W$ corresponds to the forcing $\varrho\,\bff$ from deterministic models). The second term is a friction term; the model case is $\Phi_2(\varrho\bfu)$ being proportional to the momentum $\varrho\bfu$ but the dependence can be nonlinear as well. The solution is understood weakly in space-time (in the sense of distributions) and also weakly in the probabilistic sense (the underlying probability space is part of the solution). Such a concept of solution is very common in the theory of stochastic partial differential equations (SPDEs), in particular in fluid dynamics when the corresponding uniqueness is often not known. We refer the reader to Subsection \ref{subsec:solution} for a detailed discussion of this issue. 

The proof of Theorem \ref{thm:main} relies on a four layer approximation scheme that is motivated by the technique developed by Feireisl, Novotn\'y and Petzeltov\'a \cite{feireisl1} in order to deal with the corresponding deterministic counterpart. In each step we are confronted with the limit procedure in several nonlinear terms and in the stochastic integral. There is one significant difference in comparison to the deterministic situation leading to the concept of martingale solution: In general it is not possible to get any compactness in $\omega$ as no topological structure on the sample space $\Omega$ is assumed. To overcome this difficulty, it is classical to rather concentrate on compactness of the set of laws of the approximations and apply the Skorokhod representation theorem. It yields existence of a new probability space with a sequence of random variables that have the same laws as the original ones and that in addition converges almost surely. However, a major drawback is that the Skorokhod representation Theorem is restricted to metric spaces. The structure of the compressible Navier-Stokes equations naturally leads to weakly converging sequences. On account of this we work with the Jakubowski-Skorokhod Theorem which is valid on a large class of topological spaces (including separable Banach spaces with weak topology). Further discussion of the key ideas of the proof is postponed to Subsection \ref{subsec:outline}.

The exposition is organized as follows. In Section \ref{sec:framework} we continue with the introductory part: we introduce the basic set-up, the concept of solution and state the main result, Theorem \ref{thm:main}. Once the notation is fixed we present also a short outline of the proof, Subsection \ref{subsec:outline}. The remainder of the paper is devoted to the detailed proof of Theorem \ref{thm:main} that proceeds in several steps.


\section{Mathematical framework and the main result}
\label{sec:framework}

To begin with, let us set up the precise conditions on the random perturbation of the system \eqref{eq:}. Let $(\Omega,\mf,(\mf_t)_{t\geq0},\prst)$ be a stochastic basis with a complete, right-continuous filtration. The process $W$ is a cylindrical Wiener process, that is, $W(t)=\sum_{k\geq1}\beta_k(t) e_k$ with $(\beta_k)_{k\geq1}$ being mutually independent real-valued standard Wiener processes relative to $(\mf_t)_{t\geq0}$. Here $(e_k)_{k\geq1}$ denotes a complete orthonormal system in a sepa\-rable Hilbert space $\mathfrak{U}$ (e.g. $\mathfrak{U}=L^2(\mt)$ would be a natural choice).
To give the precise definition of the diffusion coefficient $\varPhi$, consider $\rho\in L^\gamma(\mt)$, $\rho\geq0$, and $\bfv\in L^2(\mt)$ such that $\sqrt\rho\bfv\in L^2(\mt)$. We recall that we assume $\gamma>\frac{3}{2}$.\\ Denote $\bfq=\rho\bfv$ and let $\,\varPhi(\rho,\bq):\mathfrak{U}\rightarrow L^1(\mt)$ be defined as follows
$$\varPhi(\rho,\bq)e_k=g_k(\cdot,\rho(\cdot),\bq(\cdot)).$$
The coefficients $g_{k}:\mt\times\mr\times\mr^3\rightarrow\mr^3$ are $C^1$-functions that satisfy uniformly in $x\in\mt$
\begin{align}\label{growth1}
\sum_{k\geq 1}|g_{k}(x,\rho,\bfq)|^2&\leq C\big(\rho^2+|\rho|^{\gamma+1}+|\bfq|^2\big),\\
\sum_{k\geq 1}|\nabla_{\rho,\bfq} g_{k}(x,\rho,\bfq)|^2&\leq C\big(1+|\rho|^{\gamma-1}\big)\label{growth2}.
\end{align}
Remark that in this setting $L^1(\mt)$ is the natural space for values of the operator $\varPhi(\rho,\rho\bfv)$. Indeed, due to lack of a priori estimates for \eqref{eq:} it is not possible to consider $\varPhi(\rho,\rho\bfv) $ as a mapping with values in a space with higher integrability. This fact brings difficulties concerning the definition of the stochastic integral in \eqref{eq:}. In fact, the space $L^1(\mt)$ does neither belong to the class 2-smooth Banach spaces nor to the class of UMD Banach spaces where the theory of stochastic It\^o-integration is well-established (see e.g. \cite{b2}, \cite{veraar}, \cite{ondrejat3}). However, since we expect the momentum equation \eqref{eq2} to be satisfied only in the sense of distributions anyway, we make use of the embedding $L^1(\mt)\hookrightarrow W^{-b,2}(\mt)$ (which is true provided $b>\frac{3}{2}$). Hence we understand the stochastic integral as a process in the Hilbert space $W^{-b,2}(\mt)$. To be more precise, it is easy to check that under the above assumptions on $\rho$ and $\bfv$, the mapping $\varPhi(\rho,\rho\bfv)$ belongs to $L_2(\mathfrak{U};W^{-b,2}(\mt))$, the space of Hilbert-Schmidt operators from $\mathfrak{U}$ to $W^{-b,2}(\mt)$. Indeed, due to \eqref{growth1} there holds
\begin{align}
\big\|\varPhi(\rho,\rho\bfv)\big\|^2_{L_2(\mathfrak{U};W^{-b,2}_x)}&=\sum_{k\geq1}\|g_k(\rho,\rho\bfv)\|_{W^{-b,2}_x}^2\leq C\sum_{k\geq1}\|g_k(\rho,\rho\bfv)\|_{L^1_x}^2\nonumber\\
&\leq C(\rho)_{\mt}\int_{\mt}\bigg(\sum_{k\geq1}\rho^{-1}|g_{k}(x,\rho,\rho\bfv)|^2\bigg)\,\dif x\label{stochest}\\
&\leq C(\rho)_{\mt}\int_{\mt}\big(\rho+\rho^\gamma+\rho|\bfv|^2\big)\,\dif x<\infty,\nonumber
\end{align}
where $(\rho)_{\mt}$ denotes the mean value of $\rho$ over $\mt$.
Consequently, if\footnote{Here $\mathcal{P}$ denotes the predictable $\sigma$-algebra associated to $(\mf_t)$.}
\begin{align*}
\rho&\in L^\gamma(\Omega\times(0,T),\mathcal{P},\dif\prst\otimes\dif t;L^\gamma(\mt)),\\
\sqrt\rho\bfv&\in L^2(\Omega\times(0,T),\mathcal{P},\dif\prst\otimes\dif t;L^2(\mt)),
\end{align*}
and the mean value $(\rho(t))_{\mt}$ is essentially bounded
then the stochastic integral $\int_0^\tec\varPhi(\rho,\rho\bfv)\,\dif W$ is a well-defined $(\mf_t)$-martingale taking values in $W^{-b,2}(\mt)$. Note that the continuity equation \eqref{eq1} implies that the mean value $(\varrho(t))_{\mt}$ of the density $\varrho$ is constant in time (but in general depends on $\omega$).
Finally, we define the auxiliary space $\mathfrak{U}_0\supset\mathfrak{U}$ via
$$\mathfrak{U}_0=\bigg\{v=\sum_{k\geq1}\alpha_k e_k;\;\sum_{k\geq1}\frac{\alpha_k^2}{k^2}<\infty\bigg\},$$
endowed with the norm
$$\|v\|^2_{\mathfrak{U}_0}=\sum_{k\geq1}\frac{\alpha_k^2}{k^2},\qquad v=\sum_{k\geq1}\alpha_k e_k.$$
Note that the embedding $\mathfrak{U}\hookrightarrow\mathfrak{U}_0$ is Hilbert-Schmidt. Moreover, trajectories of $W$ are $\prst$-a.s. in $C([0,T];\mathfrak{U}_0)$ (see \cite{daprato}).

\subsection{The concept of solution and the main result}
\label{subsec:solution}

We aim at establishing existence of a solution to \eqref{eq:} that is weak in both probabilistic and PDEs sense. Let us devote this subsection to the introduction of these two notions. From the point of view of the theory of PDEs, we follow the approach of \cite{feireisl1} and consider the so-called finite energy weak solutions. In particular, the system \eqref{eq:} is satisfied in the sense of distributions, the corresponding energy inequality holds true and, moreover, the continuum equation \eqref{eq1} is satisfied in the renormalized sense.

From the probabilistic point of view, two concepts of solution are typically considered in the theory of stochastic evolution equations, namely, pathwise (or strong) solutions and martingale (or
weak) solutions. In the former notion the underlying probability space as well as the driving process is fixed in advance while
in the latter case these stochastic elements become part of the solution of the problem. Clearly, existence of a pathwise solution is stronger and implies existence of a martingale solution. In the present work we are only able to establish existence of a martingale solution to \eqref{eq:}. Due to classical Yamada-Watanabe-type argument (see e.g. \cite{krylov}, \cite{pr07}), existence of a pathwise solution would then follow if pathwise uniqueness held true. However, uniqueness for the Navier--Stokes equations for compressible fluids is an open problem even in the deterministic setting. In hand with this issue goes the way how the initial condition is posed: there is given a probability measure on $L^\gamma(\mt)\times L^\frac{2\gamma}{\gamma+1}(\mt)$, hereafter denoted by $\Lambda$. It fulfills some further assumptions specified in Theorem \ref{thm:main} and plays the role of an initial law for the system \eqref{eq:}. That is, we require that the law of $(\varrho(0),\varrho\bfu(0))$ coincides with $\Lambda$.

Let us summarize the above in the following definition.

\begin{definition}[Solution]\label{def:sol}
Let $\Lambda$ be a Borel probability measure on $L^\gamma(\mt)\times L^\frac{2\gamma}{\gamma+1}(\mt)$. Then
$$\big((\Omega,\mf,(\mf_t),\prst),\varrho,\bfu,W)$$
is called a finite energy weak martingale solution to \eqref{eq:} with the initial data $\Lambda$ provided
\begin{enumerate}
\item $(\Omega,\mf,(\mf_t),\prst)$ is a stochastic basis with a complete right-continuous filtration,
\item $W$ is an $(\mf_t)$-cylindrical Wiener process,
\item the density $\vr$ satisfies $\vr \geq 0$, $t \mapsto \left< \vr(t, \cdot), \psi \right> \in C[0,T]$ for any
$\psi \in C^\infty(\tor)$
$\mathbb{P}$-a.s., the function $t \mapsto \left< \vr(t, \cdot), \psi \right>$
is progressively measurable,
and
\[
\E\bigg[\sup_{t \in [0,T]} \| \vr(t,\cdot) \|^p_{L^\gamma(\tor)} \bigg] < \infty \ \mbox{for all}\ 1 \leq p < \infty;
\]
\item the velocity field $\vu$ is adapted, $\vu \in L^2(\Omega \times (0,T); W^{1,2}(\tor))$,
\[
\E\bigg[\bigg( \int_0^T \| \vu \|^2_{W^{1,2}(\tor)} \ \dt \bigg)^p \bigg] < \infty\ \mbox{for all}\ 1 \leq p < \infty;
\]
\item the momentum $\vr \vu$ satisfies $t \mapsto \left< \vr \vu, \bfphi \right> \in C[0,T]$ for any $\bfphi \in C^\infty(\tor)$
$\mathbb{P}$-a.s., the function $t \mapsto \left< \vr \vu, \phi \right>$ is progressively measurable,
\[
\expe{ \sup_{t \in [0,T]} \left\| \vr \vu \right\|^p_{L^{\frac{2 \gamma}{\gamma + 1}}} } < \infty\ \mbox{for all}\  1 \leq p < \infty;
\]
\item $\Lambda=\prst\circ \big(\varrho(0),\varrho\bfu(0) \big)^{-1}$.
\item $\varPhi(\varrho,\varrho\bfu)\in L^2(\Omega\times[0,T],\mathcal{P},\dif\prst\otimes\dif t;L_2(\mathfrak{U};W^{-l,2}(\mt)))$ for some $l>\frac{3}{2}$,

\item for all $\psi\in C^\infty(\mt)$ and
 $\bfvarphi\in C^\infty(\mt)$ and all $t\in[0,T]$ there holds $\prst$-a.s.
\begin{align*}
\big\langle\varrho(t),\psi\big\rangle&=\big\langle\varrho(0),\psi\big\rangle+\int_0^t\big\langle\varrho\bfu,\nabla\psi\big\rangle\,\dif s,\\
\big\langle\varrho\bfu(t),\bfvarphi\big\rangle&=\big\langle\varrho \bfu(0),\bfvarphi\big\rangle+\int_0^t\big\langle\varrho\bfu\otimes\bfu,\nabla\bfvarphi\big\rangle\,\dif s-\nu\int_0^t\big\langle\nabla\bfu,\nabla\bfvarphi\big\rangle\,\dif s\\
&\quad-(\lambda+\nu)\int_0^t\big\langle\diver\bfu,\diver\bfvarphi\big\rangle\,\dif s+a\int_0^t\big\langle\varrho^\gamma,\diver\bfvarphi\big\rangle\,\dif s\\
&\quad+\int_0^t\big\langle\varPhi(\varrho,\varrho\bfu)\,\dif W,\bfvarphi\big\rangle,
\end{align*}
\item for all $p\in[1,\infty)$ the following energy inequality holds true
\begin{equation}\label{energy}
\begin{split}
&\stred\bigg[\sup_{0\leq t\leq T}\int_ {\mt} \Big(\frac{1}{2} \varrho(t)\big| \bfu(t)\big|^2+\frac{a}{\gamma-1}\varrho^\gamma (t)\Big)\dx\bigg]^p\\
&\quad+\stred\bigg[\int_0^{T}\int_ {\mt}\Big(\nu |\nabla \bfu |^2+(\lambda+\nu)|\diver\bfu|^2\Big)\dx\,\dif s\bigg]^p\\
& \leq \,C(p)\,\stred\bigg[\int_{\mt} \Big(\frac{1}{2} \frac{ |\varrho \bfu(0)|^2 }{\varrho(0)} +\frac{a}{(\gamma-1)} \varrho(0)^\gamma\Big)\dif x+1\bigg]^p.
\end{split}
\end{equation}
\item Let $b\in C^1(\R)$ such that $b'(z)=0$ for all $z\geq M_b$. Then
for all $\psi\in C^\infty(\mt)$ and all $t\in[0,T]$ there holds $\prst$-a.s.
\begin{align*}
\big\langle b(\varrho(t)),\psi\big\rangle&=\big\langle b(\varrho(0)),\psi\big\rangle+\int_0^t\big\langle b(\varrho)\bfu,\nabla\psi\big\rangle\,\dif s-
\int_0^t\big\langle \big(b'(\varrho)\varrho-b(\varrho)\bfu)\big)\diver\bfu,\psi\big\rangle\,\dif s.
\end{align*}
\end{enumerate}
\end{definition}

\begin{remark}
In (j) above, the continuity equation is assumed to hold in the renormalized sense. This
concept was introduced in \cite{DL}. It is an essential tool
to pass to the limit in the nonlinear pressure and therefore common in compressible fluid mechanics.
\end{remark}
\begin{remark}
The condition (g) was included in order to point out that the stochastic integral in \eqref{eq:} is a well-defined stochastic process with values in $W^{-b,2}(\mt)$, in particular, the integrand is progressively measurable. Nevertheless, the conditions on $\varrho$ and $\bfu$ together with the energy inequality \eqref{energy} already imply that $\varPhi(\varrho,\varrho\bfu)$ takes values in $L_2(\mathfrak{U};W^{-b,2}(\mt))$.
\end{remark}

To conclude this subsection we state our main result.

\begin{theorem}\label{thm:main}
Let $\gamma>3/2.$ Assume that for the initial law $\Lambda$ there exists $M\in(0,\infty)$ such that
\begin{equation*}\label{initial1}
\Lambda\Big\{(\rho,\bfq)\in L^\gamma(\mt)\times L^\frac{2\gamma}{\gamma+1}(\mt);\, \rho\geq0,\;(\rho)_{\mt}\leq M,\;\bfq(x)=0\;\text{whenever}\;\rho(x)=0\Big\}=1,
\end{equation*}
and that for all $p\in[1,\infty)$ the following moment estimate holds true
\begin{equation}\label{initial}
\int_{L^\gamma_x\times L^\frac{2\gamma}{\gamma+1}_x}\bigg\|\frac{1}{2}\frac{|\bfq|^2}{\rho}+\frac{a}{\gamma-1}\rho^\gamma\bigg\|_{L^1_x}^p\,\dif\Lambda(\rho,\bfq)<\infty.
\end{equation}
Then there exists a finite energy weak martingale solution to \eqref{eq:} with the initial data $\Lambda$.
\end{theorem}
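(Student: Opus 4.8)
\emph{Overall strategy.} I would adapt the layered approximation scheme of Feireisl, Novotn\'y and Petzeltov\'a --- Galerkin projection, artificial viscosity in the continuity equation, and artificial pressure --- to the stochastic setting: the noise is inserted at the innermost level of the regularized problem, and the regularizations are then removed one after another, each removal being performed by the stochastic compactness method (uniform a priori bounds $\Rightarrow$ tightness of the laws of the approximations $\Rightarrow$ the Jakubowski--Skorokhod representation theorem to obtain almost surely convergent copies on a new stochastic basis $\Rightarrow$ identification of the limit system, the nonlinear terms via Vitali's theorem and the stochastic integral via a martingale characterization).

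\emph{Step 1: the approximate system and the energy estimate.} For parameters $N\in\N$ (Galerkin dimension), $\varepsilon>0$ (artificial viscosity) and $\delta>0$ (artificial pressure) I would consider the Galerkin momentum equation on the span of the first $N$ basis functions, coupled with the parabolic continuity equation $\dif\vr+\diver(\vr\vu)\dt=\varepsilon\Delta\vr\dt$ and with the pressure $a\vr^\gamma$ replaced by $a\vr^\gamma+\delta\vr^\beta$ for a large exponent $\beta$. For a frozen velocity the continuity equation has a unique, strictly positive, smooth solution $\vr=\vr[\vu]$ by maximal parabolic regularity; substituting it back reduces the momentum equation to a finite system of It\^o SDEs for the coefficients of $\vu_N$, which --- using $g_k\in C^1$ and \eqref{growth2} --- is solved up to a maximal time by a fixed point argument (after a stopping-time truncation) and then extended globally by means of the energy estimate. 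That estimate follows by applying It\^o's formula to $\int_{\mt}\big(\tfrac12\vr|\vu|^2+\tfrac{a}{\gamma-1}\vr^\gamma+\tfrac{\delta}{\beta-1}\vr^\beta\big)\dx$, bounding the It\^o correction by \eqref{growth1} and invoking the Burkholder--Davis--Gundy inequality together with Gronwall's lemma; it yields bounds of the form \eqref{energy} uniform in $N,\varepsilon,\delta$, together with the extra dissipation $\varepsilon\int_Q a\gamma\,\vr^{\gamma-2}|\nabla\vr|^2\dxt$ and the $\delta$-terms. Since the continuity equation conserves the mean $(\vr(t))_{\mt}$ in time, the stochastic integral is well defined with values in $W^{-b,2}(\mt)$ by the computation \eqref{stochest}.

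\emph{Step 2: the limit passages.} The passages $N\to\infty$, then $\varepsilon\to0$, then $\delta\to0$ are organized in the same way. From the uniform estimates one reads off tightness of the joint laws of $(\vr,\vu,\vr\vu,W)$ on a path space of the form $C_w([0,T];L^\gamma(\mt))\times\big(L^2(0,T;W^{1,2}(\mt)),\,w\big)\times C_w([0,T];W^{-b,2}(\mt))\times C([0,T];\mathfrak{U}_0)$, the time regularity of the momentum being obtained from the equation (the stochastic term controlled by a fractional-in-time Sobolev estimate). The Jakubowski--Skorokhod theorem then provides a new stochastic basis carrying copies with identical laws that converge almost surely; the limits of $\diver(\vr\vu\otimes\vu)$ and of $\varPhi(\vr,\vr\vu)$ are identified from this convergence and the moment bounds, and the stochastic integral is recovered by verifying that the candidate limit is a square-integrable martingale with quadratic variation $\int_0^t\sum_{k\geq1}\langle g_k(\vr,\vr\vu),\bfvarphi\rangle^2\ds$. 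The renormalized continuity equation of item (j) is obtained via the DiPerna--Lions commutator lemma, which applies at the $\varepsilon$- and $\delta$-levels because $\vr$ is then square integrable in space--time for $\beta$ large; at the final level it is recovered through the refined truncation argument of the next step.

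\emph{Step 3: strong convergence of the density --- the main obstacle.} The delicate point, present already in the deterministic theory and only aggravated by the noise, is the almost everywhere convergence of $\{\vr\}$ needed to pass to the limit in the nonlinear pressure $a\vr^\gamma$ (and $\delta\vr^\beta$). I would establish it through the effective viscous flux identity: testing the momentum equation with $\bfvarphi\approx\nabla\Delta^{-1}\!\big[\mathbf{1}_{\mt}\,T_k(\vr)\big]$ and comparing with the analogous computation carried out on the limit system, one is led to $\overline{p(\vr)\,T_k(\vr)}-\overline{p(\vr)}\;\overline{T_k(\vr)}=(\lambda+2\nu)\big(\overline{T_k(\vr)\,\diver\vu}-\overline{T_k(\vr)}\;\diver\vu\big)$, where bars denote weak limits; the crucial extra check here is that the stochastic integral contributes nothing to this limit, which follows from the martingale characterization above together with an It\^o-product computation. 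Combined with a uniform bound on the oscillation defect measure of $\{\vr\}$ and the propagation of renormalized solutions, this yields the almost everywhere convergence of the approximate densities in $\Omega\times Q$ by Feireisl's argument. The hypothesis $\gamma>3/2$ is used precisely here and in the a priori bounds: together with $\vu\in L^2(0,T;L^6(\mt))$, $\vr\in L^\infty(0,T;L^\gamma(\mt))$ and the higher integrability $\vr\in L^{\gamma+\theta}(Q)$ for some $\theta>0$ --- the latter obtained by testing the momentum equation with the Bogovskii operator applied to $\vr^\theta-(\vr^\theta)_{\mt}$, again via an It\^o-product computation --- it makes the convective term sufficiently integrable and renders the oscillation defect measure finite, which is exactly what the density-compactness argument requires. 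Assembling the three limit passages produces the finite energy weak martingale solution with initial law $\Lambda$, the moment bounds on the initial energy used throughout being supplied by condition \eqref{initial}.
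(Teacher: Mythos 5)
Your overall strategy coincides with the paper's: a four-layer approximation (stopping-time truncation inside the Galerkin step, then $N\to\infty$, $\varepsilon\to0$, $\delta\to0$), uniform energy bounds from It\^o's formula and Burkholder--Davis--Gundy, tightness on weak/weak-$\ast$ path spaces, the Jakubowski--Skorokhod representation theorem, an It\^o-based effective-viscous-flux identity, and Feireisl's oscillation-defect-measure argument for the strong convergence of the density, with the higher integrability $\varrho\in L^{\gamma+\theta}(Q)$ obtained by testing with $\nabla\Delta^{-1}(\varrho^\theta - (\varrho^\theta)_{\mt})$. All of this matches the paper closely, including the observation that the stochastic integral drops out of the flux identity because the test function $\Delta^{-1}\nabla T_k(\varrho)$ is an adapted process of bounded variation.

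The one genuine gap is in your identification of the stochastic integral. You propose to conclude by ``verifying that the candidate limit is a square-integrable martingale with quadratic variation $\int_0^t\sum_{k\geq1}\langle g_k(\vr,\vr\vu),\bfvarphi\rangle^2\,\mathrm{d}s$.'' By itself this does not pin the martingale down as $\int_0^t\langle\varPhi(\vr,\vr\vu)\,\mathrm{d}W,\bfvarphi\rangle$: one could have the correct quadratic variation while the martingale is driven by a different Wiener process, and to conclude one would normally invoke a martingale representation theorem, which the paper deliberately avoids. The argument closes only once you \emph{also} identify the cross variation $\langle\!\langle M,\tilde\beta_k\rangle\!\rangle = \int_0^t\langle g_k(\vr,\vr\vu),\bfvarphi\rangle\,\mathrm{d}s$ with each component $\tilde\beta_k$ of the limit Wiener process produced by the Skorokhod step; bilinearity of the cross variation then gives $\langle\!\langle M-\int_0^\cdot\langle\varPhi\,\mathrm{d}\tilde W,\bfvarphi\rangle\rangle\!\rangle = 0$ directly. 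This quadratic-plus-cross-variation argument, adapted from Brze\'zniak--Ondrej\'at, is precisely the elementary device the paper relies on (equations \eqref{exp11}--\eqref{exp31} and the discussion around \eqref{mart}); your sketch as written is missing that half of the identification.
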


\begin{remark}
Note that the condition \eqref{initial} is directly connected to the energy inequality \eqref{energy}. More precisely,
\begin{align*}
\int_{L^\gamma_x\times L^\frac{2\gamma}{\gamma+1}_x}\bigg\|\frac{1}{2}\frac{|\bfq|^2}{\rho}+\frac{a}{\gamma-1}\rho^\gamma\bigg\|_{L^1_x}^p\,\dif\Lambda(\rho,\bfq)=\stred \bigg[\int_{\mt}\frac{1}{2}\frac{|\varrho\bfu(0)|^2}{\varrho(0)}+\frac{a}{\gamma-1}\varrho(0)^\gamma\,\dif x\bigg]^p
\end{align*}
which is the quantity that appears on the right hand side of \eqref{energy} (cf. Proposition \ref{prop:apriori1}). It follows from our proof that $C$ does not depend on $a$, $\gamma$, $\lambda$ or $\nu$.
\end{remark}

\begin{remark}
In order to simplify the computations we only study the case of periodic boundary conditions (note that the density does not require any boundary assumptions in the weak formulation). However, with a bit of additional work our theory can also be applied to the case of no-slip boundary conditions. Furthermore, the reader might observe that the assumption upon the initial law $\Lambda$ that implies $(\varrho(0))_{\mt}\leq M$ a.s. can be weakened to
$$\stred\big|(\varrho(0))_{\mt}\big|^p<\infty\qquad \forall p\in[2,\infty).$$
Furthermore, the total mass remains constant in time, i.e.
$$(\varrho(t))_{\mt}=(\varrho(0))_{\mt}\qquad\forall t\in[0,T].$$
\end{remark}

\begin{remark}
In dimension two the result of Theorem \ref{thm:main} even holds under the weaker assumption $\gamma>1$.
\end{remark}

\subsection{Outline of the proof}
\label{subsec:outline}

Our proof relies on a four layer approximation scheme whose core follows the technique developed by Feireisl, Novotn\'y and Petzeltov\'a \cite{feireisl1} in order to deal with the corresponding deterministic counterpart. To be more precise, we regularize the continuum equation by a second order term and modify correspondingly the momentum equation so that the energy inequality is preserved. In addition, we consider an artificial pressure term that allows to weaken the hypothesis upon the adiabatic constant $\gamma$. Thus we are led to study the following approximate system
\begin{subequations}\label{eq:approx}
 \begin{align}
  \dif \varrho+\diver(\varrho\bu)\dif t&=\varepsilon\Delta\varrho\,\dif t,\label{eq:approx1}\\
  \dif(\varrho\bu)+\big[\diver(\varrho\bu\otimes\bu)-\nu\Delta\bu-(\lambda+\nu)\nabla\diver\bfu\nonumber\\
+a\nabla \varrho^\gamma+\delta\nabla\varrho^\beta+\varepsilon\nabla\bu\nabla\varrho\big]\dif t&=\varPhi(\varrho,\varrho\bu) \,\dif W,\label{eq:approx2}
 \end{align}
\end{subequations}
where $\beta>\max\{\frac{9}{2},\gamma\}$. The term $\varepsilon\nabla\bfu\nabla\varrho$ is added to the momentum equation to maintain the energy balance. In order to ensure its convergence to 0
in the vanishing viscosity limit the artificial pressure $\delta\varrho^\beta$ is needed (it implies higher integrability of $\varrho$). It yields an estimate for $\sqrt{\varepsilon}\nabla\varrho$ which is uniformly in $\varepsilon$ by \eqref{eq:approx1}.

The aim is to pass to the limit first in $\varepsilon\rightarrow0$ and subsequently in $\delta\rightarrow0$, however, in order to solve \eqref{eq:approx} for $\varepsilon>0$ and $\delta>0$ fixed we need two additional approximation layers. In particular, we employ a stopping time technique to establish the existence of a unique solution to a finite-dimensional approximation of \eqref{eq:approx}. We gain so-called Faedo-Galerkin approximation, on each random time interval $[0,\tau_R)$ where the stopping time $\tau_R$ is defined as
$$\tau_R=\inf\big\{t\in[0,T];\|\bfu\|_{L^2}\geq R\big\}\wedge\inf\bigg\{t\in[0,T];\bigg\|\int_0^t\varPhi^{N}\big(\varrho,\varrho\bfu\big)\,\dif W\bigg\|_{L^2}\geq R\bigg\}$$
(with the convention $\inf\emptyset=T$), where $\varPhi^N$ is a suitable finite-dimensional approximation of $\varPhi$. It is then showed that the blow up cannot occur in a finite time. So letting $R\rightarrow\infty$ gives a unique solution to the Faedo-Galerkin approximation on the whole time interval $[0,T]$. 
The passage to the limit as $N\rightarrow\infty$ yields existence of a solution to \eqref{eq:approx}.

Except for the first passage to the limit, i.e. as $R\rightarrow\infty$, we always employ the stochastic compactness method. Let us discuss briefly its main features. The compactness method is widely used for solving various PDEs: one approximates the model problem, finds suitable uniform estimates proving that the set of approximate solutions is relatively compact in some path space and this leads to convergence of a subsequence whose limit is shown to fulfill the target equation.
The situation is more involved in the stochastic setting due to presence of the additional variable $\omega$. Indeed, generally it is not possible to get any compactness in $\omega$ as no topological structure on $\Omega$ is assumed. To overcome this issue, one concentrates rather on compactness of the set of laws of the approximations and then the Skorokhod representation theorem comes into play. It gives existence of a new probability space with a sequence of random variables that have the same laws as the original ones (so they can be shown to satisfy the same approximate problems though with different Wiener processes) and that in addition converge almost surely.

Powerful as it sounds there is one drawback of the classical Skorokhod representation theorem (see e.g. \cite[Theorem 11.7.2]{dudley}): it is restricted to random variables taking values in separable metric spaces. Nevertheless, Jakubowski \cite{jakubow} gave a suitable generalization of this result that holds true in the class of so-called quasi-Polish spaces. That is, topological spaces that are not metrizable but retain several important properties of Polish spaces (see \cite[Section 3]{onsebr} for further discussion). Namely, separable Banach spaces equipped with weak topology or spaces of weakly continuous functions with values in a separable Banach space belong to this class which perfectly covers the needs of our paper.

Another important ingredient of the proof is then the identification of the limit procedure. To be more precise, the difficulties arise in the passage of the limit in the stochastic integral as one now deals with a sequence of stochastic integrals driven by a sequence of Wiener processes. One possibility is to pass to the limit directly and such technical convergence results appeared in several works (see \cite{bensoussan} or \cite{krylov}), a detailed proof can be found in \cite{debussche1}. Another way is to show that the limit process is a martingale, identify its quadratic variation and apply an integral representation theorem for martingales, if available (see \cite{daprato}). Our proof relies on neither of those and follows a rather new general and elementary method that was introduced in \cite{on1} and already generalized to different settings (see \cite{on2} for the application to quasi-Polish spaces). The keystone is to identify not only the quadratic variation of the corresponding martingale but also its cross variation with the limit Wiener process obtained through compactness. This permits to conclude directly without use of any further difficult results.

\section{The Faedo-Galerkin approximation}
\label{sec:galerkin}

In this section, we present the first part of our proof of Theorem \ref{thm:main}. In particular, we prove existence of a unique solution to a Faedo-Galerkin approximation of the following viscous problem \eqref{eq:approx}
where $\varepsilon>0,\,\delta>0$ and $\beta> \max\set{\frac{9}{2},\gamma}$. To be more precise, let us consider a suitable orthogonal system formed by a family of smooth functions $(\bfpsi_n)$. We choose
 $(\bfpsi_n)$ such that it is an orthonormal system with respect to the $L^2(\mathbb T^3)$ inner product which is orthogonal with respect to the the $W^{l,2}(\mathbb T^3)$ inner product  where $l>\frac{5}{2}$ is fixed. Now, let us define the finite dimensional spaces
$$X_N=\mathrm{span}\{\bfpsi_1,\dots,\bfpsi_N\},\quad N\in\mn,$$
and let $P_N:L^2(\mt)\rightarrow X_N$ be the projection onto $X_N$ which also acts as a linear
projection $P_N:W^{l,2}(\mt)\rightarrow X_N$.

The aim of this section is to find a unique solution to the finite-dimensional approximation of \eqref{eq:approx}. Namely, we consider
\begin{subequations}\label{eq:galerkin}
 \begin{align}
  \dif \varrho+\diver(\varrho\bu)\dif t&=\varepsilon\Delta\varrho\,\dif t,\label{eq:galerkin11}\\
  \dif(\varrho\bu)+\big[\diver(\varrho\bu\otimes\bu)-\nu\Delta\bu-(\lambda+\nu)\nabla\diver\bfu\nonumber\\
+\,a\nabla \varrho^\gamma+\delta\nabla\varrho^\beta+\varepsilon\nabla\bu\nabla\varrho\big]\dif t&=\varPhi^{N}(\varrho,\varrho\bu) \,\dif W,\label{eq:galerkin2}\\
  \varrho(0)=\varrho_0,\qquad (\varrho\bu)(0)&=\bq_0.\label{eq:galerkin3}
 \end{align}
\end{subequations}
The equation \eqref{eq:galerkin2} is to be understood in the dual space $X^\ast_N$ .
The coefficient in the stochastic term is defined as follows:
\begin{align}\label{eq:PhiepsN}
\varPhi^{N}(\rho,\bfq)e_k&=g_k^{N}(\rho,\bfq),\qquad g_k^{N}(\rho,\bfq)=\mathcal M^{\frac{1}{2}}[\rho]P_N\bigg(\frac{g_k(\rho,\bfq)}{\sqrt{\rho}}\bigg),
\end{align}
where for $\varrho\in L^1(\mt)$ with $\varrho\geq0$ a.e.
\begin{equation}\label{M}
\mathcal{M}[\varrho]:X_N\longrightarrow X^*_N,\quad\big\langle\mathcal{M}[\varrho]\bfv,\bfw\big\rangle=\int_{\mt}\rho\,\bfv\cdot\bfw\,\dif x,\quad \bfv,\,\bfw\in X_N.
\end{equation}
Note that we can identify $X_N^\ast$ with $X_N$ via the natural embedding such that $\mathcal M[\rho]$ is a positive symmetric semidefinite operator on a Hilbert space having a unique square root in the same class. It follows from the definition of $\mathcal{M}[\rho]$ that
$$\mathcal{M}[\rho]\bfv=P_N(\rho\bfv).$$
Note further that we can extend $\mathcal M[\rho]$ to $L^2(\mt)$ in case 
of bounded $\rho$ or to $W^{l,2}(\mt)$ if $\rho\in L^2(\mt)$ by setting
$$\mathcal{M}[\rho]\bfv=P_N(\rho \,P_N \bfv).$$
More details on the properties of $\mathcal{M}$ can be found in \cite[Section 2.2]{feireisl1} and in Appendix \ref{sec:appendix}.

The initial condition $(\varrho_0,\bfq_0)$ is a random variable with the law $\Gamma$, where $\Gamma$ is a Borel probability measure on $C^{2+\kappa}(\mt)\times C^2(\mt)$, with $\kappa>0$, satisfying 
$$\Gamma\Big\{(\rho,\bfq)\in C^{2+\kappa}(\mt)\times C^2(\mt);\;0<\underline{\rho}\leq \rho\leq \overline{\rho}\Big\}=1,$$
and for all $p\in[2,\infty)$
\begin{equation}\label{initial2}
\int_{C^{2+\kappa}_x\times C^2_x}\bigg\|\frac{1}{2}\frac{|\bfq|^2}{\rho}+\frac{a}{\gamma-1}\rho^\gamma+\frac{\delta}{\beta-1}\rho^\beta\bigg\|_{L^1_x}^p\,\dif\Gamma(\rho,\bfq)\leq C.
\end{equation}

As in \cite[Section 2]{feireisl1}, the system \eqref{eq:galerkin} can be equivalently rewritten as a fixed point problem
\begin{equation}\label{eq:galerkin1}
\begin{split}
\bu(t)&=\mathcal{M}^{-1}\big[\mathcal{S}(\bu)(t)\big]\bigg(\bq_0^\ast+\int_0^t\mathcal{N}\big[\mathcal{S}(\bu),\bu\big]\dif s\\
&\qquad+\int_0^t\varPhi^{N}\big(\mathcal{S}(\bu),\mathcal{S}(\bu)\bu\big)\,\dif W\bigg).
\end{split}
\end{equation}
In the brackets the stochastic integral is interpreted as an element of $X_N^\ast$.
Here $\mathcal{S}(\bfu)$ is a unique classical solution to \eqref{eq:galerkin11} with a strictly positive initial condition $\varrho_0\in C^{2+\kappa}(\mt)$, i.e. $0<\underline{\varrho}\leq\varrho_0\leq\overline{\varrho}$.
This classical solution exists (and belongs to $C([0,T];C^{2+\kappa}(\mt))$) provided $\bfu\in C([0,T],C^2(\mt))$. A maximum principle applies in this
case such that for all $x\in\mt$
\begin{align}\label{eq:max}
\underline{\varrho} \exp\bigg(-\int_0^t\|\Div\bfu\|_\infty\ds\bigg)\leq\mathcal{S}(\bu)(t,x)\leq\overline\varrho\exp\bigg(\int_0^t\|\Div\bfu\|_\infty\ds\bigg).
\end{align}
For the properties of $\mathcal S$ we refer to \cite[Lemma 2.2]{feireisl1}.
 The operators
$\mathcal{M}[\varrho]$
are invertible provided $\varrho$ is strictly positive. We further define
$$
\big\langle\mathcal{N}[\varrho,\bfu],\bfpsi\big\rangle=\int_{\mt}\big[\nu\Delta\bfu-\diver(\varrho\bfu\otimes\bfu)+\nabla\big((\lambda+\nu)\diver\bfu-a\varrho^\gamma-\delta\varrho^\beta\big)-\varepsilon\nabla\bfu\nabla\varrho\big]\cdot\bfpsi\,\dif x
$$
for all $\bfpsi\in X_N$. Note that for $\varrho$ and $\bfu$ satisfying the conditions above $\mathcal{N}[\varrho,\bfu]$ is well-defined.
In order to study \eqref{eq:galerkin1}, we shall fix some notation.
For $\mathbf{v}=\sum_{i=1}^N\alpha_i\bfpsi_i\in X_N$ and $R\in\mn$ let us define the following truncation operators
$$\bfv^R=\sum_{i=1}^N\theta_R(\alpha_i)\alpha_i\bfpsi_i.$$
Here $\theta_R$ is a smooth cut-off function with support in $[-2R,2R]$ such that $\theta(z)=1$ on $[-R,R]$. Note that by construction the mapping $\Theta_R:\bfv\mapsto \bfv^R$ satisfies
\begin{align}\label{thetaR}
\Theta_R:X_N\longrightarrow X_N,\quad \|\Theta_R(\bfv)-\Theta_R(\bfu)\|_{X_N}\leq C(N)\|\bfv-\bfu\|_{X_N},
\end{align}
for all $\bfu,\bfv\in X_N$.

Let $N\in\mn$, $R\in\mn$ be fixed. In the first step, we will solve the following problem \eqref{eq:galerkin1a} by using the Banach fixed point theorem in the Banach space $\mathcal{B}=L^2(\Omega;C([0,T_{\ast}];X_N))$ with $T_*$ sufficiently small. Repeating the same technique shows existence and uniqueness on the whole time interval $[0,T]$. Finally we pass to the limit as $R\rightarrow\infty$. Consider
\begin{equation}\label{eq:galerkin1a}
\begin{split}
\bu(t)&=\mathcal{M}^{-1}\big[\mathcal{S}\big(\bu^R\big)(t)\big]\bigg[\big(\varrho_0\bfu_0^R\big)^*+\int_0^t\mathcal{N}\Big[\mathcal{S}\big(\bu^R\big),\bu^R\Big]\dif s\\
&\qquad+\Theta_R\bigg(\int_0^t\varPhi^{N}\Big(\mathcal{S}\big(\bu^R\big),\mathcal{S}\big(\bu^R\big)\bu^R\Big)\,\dif W\bigg)\bigg]
\end{split}
\end{equation}
with $\bfu_0=\mathcal M^{-1}[\varrho_0]\bfq_0^*$. Note that now we have
$\bfu(0)=\bfu_0^R$.
Let $\mathscr{T}:\mathcal{B}\rightarrow\mathcal{B}$ be the operator defined by the above right hand side. We will show that it is a contraction.
The deterministic part $\mathscr{T}_{det}$ can be estimated using the approach of \cite[Section 2.3]{feireisl1} and there holds
$$
\|\mathscr{T}_{det}\bfu-\mathscr{T}_{det}\bfv\|_\mathcal{B}^2\leq T_{\ast}C(N,R,T_\ast)\|\bfu-\bfv\|_{\mathcal B}^2,
$$
where the constant does not depend on the initial condition.
In several points one needs the fact that we are working on a finite dimensional space: equivalence of norms is used and also Lipschitz continuity of $\mathcal M^{-1}$ in $\varrho$ (see \cite[(2.12)]{feireisl1}).
Let us focus on the stochastic part $\mathscr{T}_{sto}$. We have
\begin{equation*}
\begin{split}
\|\mathscr{T}_{sto}\bfu&-\mathscr{T}_{sto}\bfv\|_\mathcal{B}^2=\stred\sup_{0\leq t\leq T_{\ast}}\bigg\|\mathcal{M}^{-1}\big[\mathcal{S}\big(\bu^R\big)(t)\big]\Theta_R\bigg(\int_0^t\varPhi^{N}\Big(\mathcal{S}\big(\bu^R\big),\mathcal{S}\big(\bu^R\big)\bu^R\Big)\,\dif W\bigg)\\
&\hspace{1.5cm}\qquad-\mathcal{M}^{-1}\big[\mathcal{S}\big(\bfv^R\big)(t)\big]\Theta_R\bigg(\int_0^t\varPhi^{N}\Big(\mathcal{S}\big(\bfv^R\big),\mathcal{S}\big(\bfv^R\big)\bfv^R\Big)\,\dif W\bigg)\bigg\|_{X_N}^2\\
&\leq C\stred\sup_{0\leq t\leq T_{\ast}}\Big\|\mathcal{M}^{-1}\big[\mathcal{S}\big(\bu^R\big)(t)\big]-\mathcal{M}^{-1}\big[\mathcal{S}\big(\bfv^R\big)(t)\big]\Big\|_{\mathcal L(X_N^\ast,X_N)}^2\\
&\hspace{1.5cm}\qquad\times\bigg\|\Theta_R\bigg(\int_0^t\varPhi^{N}\Big(\mathcal{S}\big(\bu^R\big),\mathcal{S}\big(\bu^R\big)\bu^R\Big)\,\dif W\bigg)\bigg\|_{X_N}^2\\
&\;+ C\stred\sup_{0\leq t\leq T_{\ast}}\Big\|\mathcal{M}^{-1}\big[\mathcal{S}\big(\bfv^R\big)(t)\big]\Big\|_{\mathcal L(X_N^\ast,X_N)}^2\bigg\|\Theta_R\bigg(\int_0^t\varPhi^{N}\Big(\mathcal{S}\big(\bu^R\big),\mathcal{S}\big(\bu^R\big)\bu^R\Big)\,\dif W\bigg)\\
&\hspace{1.5cm}\qquad-\Theta_R\bigg(\int_0^t\varPhi^{N}\Big(\mathcal{S}\big(\bfv^R\big),\mathcal{S}\big(\bfv^R\big)\bfv^R\Big)\,\dif W\bigg)\bigg\|_{X_N}^2\\
&=\mathscr S_1+\mathscr S_2.
\end{split}
\end{equation*}
As a consequence of the assumption $\underline{\rho}>0$
we have by the definition of $\mathcal M$, \eqref{eq:max} and equivalence of norms a.s.
\begin{align*}
&\Big\|\mathcal{M}^{-1}\big[\mathcal{S}\big(\bfv^R\big)(t)\big]\Big\|_{\mathcal L(X_n^\ast,X_n)}^2\leq\Big(\inf_{x\in\mt}\mathcal{S}\big(\bfv^R\big)(t)\Big)^{-1}\\
&\quad \leq\Big(\underline{\rho}\exp\Big(-\int_0^{T_\ast}\|\Div \bfv^R\|_\infty\,\dif s\Big)\Big)^{-1}\leq C(N,R).
\end{align*}
Hence we gain by Burgholder-Davis-Gundy inequality
\begin{equation*}
\begin{split}
\mathscr S_2&\leq C(N,R)\,\stred\sup_{0\leq t\leq T_\ast}\bigg\|\int_0^t\varPhi^{N}\Big(\mathcal{S}\big(\bu^{R}\big),\mathcal{S}\big(\bu^{R}\big)\bu^{R}\Big)-\varPhi^{N}\Big(\mathcal{S}\big(\bfv^R\big),\mathcal{S}\big(\bfv^{R}\big)\bfv^{R}\Big)\,\dif W\bigg\|_{X_N}^2\\
&\leq C(N,R)\,\stred\int_0^{T_{\ast}}\sum_{k\geq1}\Big\|\, g_k^{N}\Big(\mathcal{S}\big(\bu^{R}\big),\mathcal{S}\big(\bu^{R}\big)\bu^{R}\Big)
-\, g_k^{N}\Big(\mathcal{S}\big(\bfv^{R}\big),\mathcal{S}\big(\bfv^{R}\big)\bfv^{R}\Big)\Big\|_{X_N}^2\dif s.
\end{split}
\end{equation*}
Due to the construction of $g_k^{N}$ in \eqref{eq:PhiepsN} we have
\begin{align}\label{eq:I}
\begin{aligned}
I&=\sum_{k\geq1}\Big\|\, g_k^{N}\Big(\mathcal{S}\big(\bu^R\big),\mathcal{S}\big(\bu^R\big)\bu^R\Big)
-\, g_k^{N}\Big(\mathcal{S}\big(\bfv^R\big),\mathcal{S}\big(\bfv^R\big)\bfv^R\Big)\Big\|_{X_N}^2\\
&\leq C\Big\|\mathcal{M}^\frac{1}{2}\big[\mathcal{S}\big(\bfu^R\big)\big]-\mathcal{M}^\frac{1}{2}\big[\mathcal{S}\big(\bfv^R\big)\big]\Big\|^2_{\mathcal{L}(X_N,X_N)}\sum_{k\geq1} \bigg\| \frac{g_k\big(\mathcal{S}\big(\bu^R\big),\mathcal{S}\big(\bu^R\big)\bu^{R}\big)}{\sqrt{\mathcal{S}\big(\bfu^R\big)}}\bigg\|^2_{L^2}\\
&\;+C\Big\|\mathcal{M}^\frac{1}{2}\big[S\big(\bfv^R\big)\big]\Big\|_{\mathcal{L}(X_N,X_N)}^2\sum_{k\geq1}\bigg\|\frac{g_k\big(\mathcal{S}\big(\bu^R\big),\mathcal{S}\big(\bu^R\big)\bu^{R}\big)}{\sqrt{\mathcal{S}\big(\bfu^R\big)}}-\frac{g_k\big(\mathcal{S}\big(\bfv^R\big),\mathcal{S}\big(\bfv^R\big)\bfv^{R}\big)}{\sqrt{\mathcal{S}\big(\bfv^R\big)}}\bigg\|_{L^2}^2\\
&=I_1+I_2.
\end{aligned}
\end{align}
Concerning the first term on the above right hand side, we apply Lemma \ref{lemma:aux}, \eqref{growth1}, \eqref{eq:max} and \cite[Lemma 2.2]{feireisl1} and obtain
\begin{align*}
\begin{aligned}
I_1&\leq C(N,R)\big\|\mathcal{S}\big(\bfu^R\big)-\mathcal{S}\big(\bfv^R\big)\big\|^2_{L^2}\leq T_\ast C(N,R,T_\ast)\sup_{0\leq t\leq T_\ast} \big\|\bfu^R-\bfv^R\big\|_{X_N}^2.
\end{aligned}
\end{align*}
%
For the second term on the right hand side of \eqref{eq:I} we make use of \eqref{23}
and conclude
\begin{align*}
I_2&\leq C(N,R)\Big(\big\|\mathcal{S}\big(\bfu^R\big)-\mathcal{S}\big(\bfv^R\big)\big\|_{L^2}^2+\big\|\bfu^R-\bfv^R\big\|_{L^2}^2\Big)\\
&\leq T_\ast C(N,R,T_\ast)\sup_{0\leq t\leq T_\ast} \big\|\bfu^R-\bfv^R\big\|_{X_N}^2,
\end{align*}
where we applied \cite[Lemma 2.2]{feireisl1} and the Lipschitz continuity  of
$$(\rho,\bfq)\mapsto \sum_{k\geq 1}\frac{g_k(\rho,\bfq)}{\sqrt{\rho}}.$$
The latter follows from \eqref{growth1} and \eqref{growth2} since we only consider $\rho\geq C(N,R)>0$. Consequently,
\begin{equation*}
\begin{split}
\mathscr S_2
&\leq T_{\ast}C(N,R,T_\ast)\|\bfu-\bfv\|_{\mathcal B}^2.
\end{split}
\end{equation*}
For $\mathscr S_1$ we have by \cite[(2.10), (2.12)]{feireisl1}
\begin{equation*}
\begin{split}
\mathscr S_1&\leq C(N,R)\,\stred\sup_{0\leq t\leq {T_{\ast}}}\Big\|\mathcal{S}\big(\bu^R\big)(t)-\mathcal{S}\big(\bfv^R\big)(t)\Big\|_{L^1}^2\\
&\leq T_{\ast}C(N,R,T_\ast)\,\stred\sup_{0\leq t\leq {T_{\ast}}}\big\|\bu^R-\bfv^R\big\|_{X_N}^2=T_{\ast}C(N,R,T_\ast)\|\bfu-\bfv\|_{\mathcal B}^2
\end{split}
\end{equation*}
hence plugging all together we have shown that
\begin{equation*}
\|\mathscr{T}_{sto}\bfu-\mathscr{T}_{sto}\bfv\|_\mathcal{B}^2\leq T_{\ast}C(N,R,T_\ast)\|\bfu-\bfv\|_{\mathcal B}^2.
\end{equation*}
Since we know that also the deterministic part in \eqref{eq:galerkin1a} is a contraction if $T_\ast$ is sufficiently small, we obtain
\begin{equation*}
\|\mathscr{T}\bfu-\mathscr{T}\bfv\|_\mathcal{B}^2\leq \kappa\|\bfu-\bfv\|_{\mathcal B}^2
\end{equation*}
with $\kappa\in(0,1)$. This allows us to apply Banach's fixed point theorem and we obtain a unique solution to \eqref{eq:galerkin1a} on the interval $[0,T_\ast]$. Extension of this existence and uniqueness result to the whole interval $[0,T]$ can be done by considering $kT_\ast,\,k\in\mn,$ as the new times of origin and solving \eqref{eq:galerkin1a} on each subinterval $[kT_\ast,(k+1)T_\ast]$. Note that the time $T_*$ chosen above does not depend on the initial datum.

\subsection{Passage to the limit as $R\rightarrow\infty$}

It follows from the previous section that for every $N\in\mn$ and $R\in\mn$ there exists a unique solution to \eqref{eq:galerkin1a}. As the next step, we keep $N$ fixed, denote the solution to \eqref{eq:galerkin1a} by $\tilde\bfu_R$ and we pass to the limit as $R\rightarrow \infty$ to obtain the existence of a unique solution to \eqref{eq:galerkin}. Towards this end, let us define
\begin{align*}
\tau_R&=\inf\Big\{t\in[0,T];\big\|\tilde\bfu_R(t)\big\|_{L^2}\geq R\Big\}\wedge\inf\bigg\{t\in[0,T];\bigg\|\int_0^t\varPhi^{N}\big(\mathcal{S}(\tilde\bfu_R),\mathcal{S}(\tilde\bfu_R)\tilde\bfu_R\big)\,\dif W\bigg\|_{L^2}\geq R\bigg\}
\end{align*}
(with the convention $\inf\emptyset=T$). Note that $\tau_R$ defines an $(\mf_t)$-stopping time and let $\tilde\varrho_R=\mathcal{S}(\tilde\bfu_R)$. Then $(\tilde\varrho_R,\tilde\bfu_R)$ is the unique solution to \eqref{eq:galerkin} on $[0,\tau_R)$. Besides, due to uniqueness, if $R'>R$ then $\tau_{R'}\geq\tau_R$ and $(\tilde\varrho_{R'},\tilde\bfu_{R'})=(\tilde\varrho_R,\tilde\bfu_{R})$ on $[0,\tau_R)$. Therefore, one can define $(\tilde\varrho,\tilde\bfu)$ by $(\tilde\varrho,\tilde \bfu):=(\tilde\varrho_R,\tilde\bfu_R)$ on $[0,\tau_R)$. In order to make sure that $(\tilde\varrho,\tilde\bfu)$ is defined on the whole time interval $[0,T]$, i.e. the blow up cannot occur in a finite time, we proceed with the basic energy estimate that will be used several times throughout the paper.

\begin{proposition}\label{prop:apriori1}
Let $p\in[1,\infty)$. Then the following estimate holds true
\begin{align}
&\stred\bigg[\sup_{0\leq t\leq T}\int_{\mt}\Big(\frac{1}{2}\tilde\varrho_R|\tilde\bu_R|^2+\frac{a}{\gamma-1}\tilde\varrho^\gamma_R+\frac{\delta}{\beta-1}\tilde\varrho^\beta_R\Big)\,\dif x\label{eq:apriori112}\\
&+\int_0^T\int_{\mt}\nu|\nabla\tilde\bu_R|^2+(\lambda+\nu)|\diver\tilde\bu_R|^2\,\dif x\,\dif s+\varepsilon\int_0^T\int_{\mt}\big(a\gamma\tilde\varrho_R^{\gamma-2}+\delta\beta\tilde\varrho_R^{\beta-2}\big)|\nabla\tilde\varrho_R|^2\,\dif x\,\dif s\bigg]^p\nonumber\\
& \leq C\bigg(1+\stred\bigg[\int_{\mt}\Big(\frac{1}{2}\varrho_0|\bu_0|^2+\frac{a}{\gamma-1}\varrho_0^\gamma+\frac{\delta}{\beta-1}\varrho_0^\beta\Big)\,\dif x\bigg]^p \bigg)\nonumber
\end{align}
with a constant independent of $R$, $N$, $\varepsilon$ and $\delta$.

\begin{proof}
In order to obtain this a priori estimate we observe that restricting ourselves to $[0,\tau_R)$ the two equations \eqref{eq:galerkin1a} and \eqref{eq:galerkin} coincide and we apply It\^{o}'s formula to the functional
$$f:L^2(\mt)\times X^*_N\longrightarrow \mr,\;(\rho,\bq)\longmapsto\frac{1}{2}\big\langle \bfq,\mathcal{M}^{-1}[\rho]\bfq\big\rangle,$$
where $\rho=\tilde\varrho_R$ and $\bfq= \tilde\varrho_R\tilde\bfu_R$. This corresponds exactly to testing by $ \tilde\bfu_R$ in the deterministic case. Indeed, there holds
$$\partial_\bfq f(\rho,\bfq)=\mathcal{M}^{-1}[\rho]\bfq\in X_N,\qquad\partial^2_{\bfq}f(\rho,\bfq)=\mathcal{M}^{-1}[\rho]\in \mathcal{L}(X^*_N,X_N)$$
and
$$\partial_\rho f(\rho,\bfq)=-\frac{1}{2}\big\langle\bfq,\mathcal{M}^{-1}[\rho]\mathcal{M}[\,\cdot\,]\mathcal{M}^{-1}[\rho]\bfq\big\rangle\in \mathcal{L}(L^2(\mt),\mr),$$
and therefore
$$f\big(\tilde\varrho_R,\tilde\varrho_R\tilde\bfu_R\big)=\frac{1}{2}\int_{\mt} \tilde\varrho_R|\tilde\bfu_R|^2\,\dif x,$$
$$\partial_\bfq f\big(\tilde\varrho_R,\tilde\varrho_R\tilde\bfu_R\big)=\tilde\bfu_R,\qquad\partial^2_{\bfq}f\big(\tilde\varrho_R,\tilde\varrho_R\tilde\bfu_R\big)=\mathcal{M}^{-1}[\tilde\varrho_R],$$
$$\partial_\rho f\big(\tilde\varrho_R,\tilde\varrho_R\tilde\bfu_R\big)=-\frac{1}{2}|\tilde\bfu_R|^2.$$
We obtain	
\begin{align*}
\frac{1}{2}\int_ {\mathbb T^3}&  \tilde\varrho(t\wedge\tau_R)\big| \tilde\bfu(t\wedge\tau_R)\big|^2\dx=\frac{1}{2}\int_ {\mathbb T^3}  \varrho_0| \bfu_0|^2\dx-\nu\int_0^{t\wedge\tau_R}\int_ {\mathbb T^3}|\nabla \tilde\bfu|^2\dxs\\
&\quad-(\lambda+\nu)\int_0^{t\wedge\tau_R}\int_{\mt}|\diver\tilde\bfu|^2\dxs\\
&\quad+\int_0^{t\wedge\tau_R}\int_ {\mathbb T^3}\varrho \tilde\bfu\otimes \tilde\bfu:\nabla \tilde\bfu\dxs-\varepsilon\int_0^{t\wedge\tau_R}\int_ {\mathbb T^3}\nabla\tilde\bfu\nabla\tilde \varrho\cdot\tilde\bfu\dxs\\
&\quad+a\int_0^{t\wedge\tau_R}\int_ {\mathbb T^3}\tilde \varrho^\gamma\Div \tilde\bfu\dxs+\delta\int_0^{t\wedge\tau_R}\int_ {\mathbb T^3}\tilde \varrho^\beta\Div \tilde\bfu\dxs\\
&\quad+\sum_{k\geq1}\int_0^{t\wedge\tau_R}\int_ {\mathbb T^3} \tilde\bfu\cdot g^{N}_k(\tilde\varrho, \tilde\varrho \tilde\bfu)\dx\,\dif\beta_k(\sigma)
+\frac{\varepsilon}{2}\int_0^{t\wedge\tau_R}\int_ {\mathbb T^3}\nabla| \tilde\bfu|^2\cdot\nabla\tilde \varrho\,\dif x\,\dif\sigma\\
&\quad-\frac{1}{2}\int_0^{t\wedge\tau_R}\int_{\mt}\nabla|\tilde\bfu|^2\cdot\tilde\varrho\tilde\bfu\,\dif x\,\dif \sigma+\frac{1}{2}\sum_{k\geq1}\int_0^{t\wedge\tau_R}\big\langle\mathcal{M}^{-1}[\tilde \varrho]g_k^{N}(\tilde\varrho,\tilde\varrho\tilde\bfu),g_k^{N}(\tilde\varrho,\tilde\varrho\tilde\bfu)\big\rangle\,\dif\sigma\\
&=J_1+\cdots+J_{11}.
\end{align*}
Now, we observe that $J_5+J_9=0$, $J_4+J_{10}=0$,
\begin{equation*}
\begin{split}
J_6&=-\frac{a}{\gamma-1}\int_0^{t\wedge\tau_R}\int_{\mt}\partial_t\tilde\varrho^\gamma\,\dif x\,\dif \sigma-\varepsilon a\gamma\int_0^{t\wedge\tau_R}\int_{\mt}\tilde\varrho^{\gamma-2}|\nabla\tilde\varrho|^2\,\dif x\,\dif \sigma,
\end{split}
\end{equation*}
similarly for $J_7$. Due to definitions of $g^{N}$ and $\mathcal{M}^{-1}$ we have 
\begin{align*}
\sum_{k\geq 1}\big\langle\mathcal{M}^{-1}[\tilde\varrho]g_k^{N}(\tilde\varrho,\tilde\varrho\tilde\bfu)&,g_k^{N}(\tilde\varrho,\tilde\varrho\tilde\bfu)\big\rangle=\sum_{k\geq 1}\big\langle \mathcal{M}^{-\frac{1}{2}}[\tilde\varrho]g_k^{N}(\tilde\varrho,\tilde\varrho\tilde\bfu),\mathcal{M}^{-\frac{1}{2}}[\tilde\varrho]g_k^{N}(\tilde\varrho,\tilde\varrho\tilde\bfu)\big\rangle\\
&=\sum_{k\geq 1}\int_{\mt}\Big|P_N\Big( \frac{g_k(\tilde\varrho,\tilde\varrho\tilde\bfu)}{\sqrt{\tilde\varrho}}\Big)\Big|^2\,\dif x
\leq\sum_{k\geq 1}\int_{\mt}\Big|\frac{g_k(\tilde\varrho,\tilde\varrho\tilde\bfu)}{\sqrt{\tilde\varrho}}\Big|^2\,\dif x\\
&\leq C\int_{\mt}\big(\tilde\varrho+\tilde\varrho^\gamma+\tilde\varrho|\tilde\bfu|^2\big)\,\dif x.
\end{align*}
Here we also used continuity of $P_N$ on $L^2(\mt)$ and \eqref{growth1}.
We get
\begin{equation*}
J_{11}
\leq \,C\int_0^{t\wedge\tau_R}\int_{\mt}(1+\tilde\varrho^\gamma+\tilde\varrho|\tilde\bfu|^2)\,\dif x\,\dif \sigma.
\end{equation*}
Hence according to the Gronwall lemma we can write
\begin{align*}
&\stred\int_ {\mathbb T^3} \Big(\frac{1}{2} \tilde\varrho(t\wedge\tau_R)\big|\tilde \bfu(t\wedge\tau_R)\big|^2+\frac{a}{\gamma-1}\tilde\varrho^\gamma(t\wedge\tau_R)+\frac{\delta}{\beta-1}\tilde\varrho^\beta(t\wedge\tau_R)\Big)\dx\\
&\quad+\stred\bigg[\int_0^{t\wedge\tau_R}\int_ {\mathbb T^3}\nu |\nabla \tilde\bfu|^2+(\lambda+\nu)|\diver\tilde\bfu|^2+\varepsilon\big(a\gamma\tilde\varrho^{\gamma-2}+\delta\beta\tilde\varrho^{\beta-2}\big)|\nabla\tilde\varrho|^2\,\dif x\,\dif s\bigg]\\
& \leq C\bigg(1+\stred\int_ {\mathbb T^3} \Big(\frac{1}{2} \varrho_0| \bfu_0|^2+\frac{a}{\gamma-1}\varrho_0^\gamma+\frac{\delta}{\beta-1}\varrho_0^\beta\Big)\dif x\bigg).
\end{align*}
Let us now take supremum in time, $p$-th power and expectation. For the stochastic integral $J_8$ we make use of the Burkholder-Davis-Gundy inequality and the assumption \eqref{growth1} to obtain, for all $t\in[0,T]$,
\begin{equation*}
\begin{split}
\E\sup_{0\leq s\leq t\wedge\tau_R}&|J_8|^{p}\leq C\,\E\bigg[\int_0^{t\wedge\tau_R}\sum_{k\geq1}\bigg(\int_ {\mathbb T^3}\tilde \bfu\cdot g^{N}_k\big(\tilde\varrho,\tilde\varrho\tilde \bfu\big)\dx\bigg)^2\dif s\bigg]^{\frac{p}{2}}\\
&\hspace{-.5cm}= C\,\E\bigg[\int_0^{t\wedge\tau_R}\sum_{k\geq1}\bigg(\int_ {\mathbb T^3}\mathcal M^{\frac{1}{2}}[\tilde\varrho]\tilde \bfu\cdot P_N\bigg(\frac{g_k\big(\tilde\varrho,\tilde\varrho \tilde \bfu\big)}{\sqrt{\tilde\varrho}}\bigg)\dx\bigg)^2\dif s\bigg]^{\frac{p}{2}}\\
&\hspace{-.5cm}\leq C\,\stred\bigg[\int_0^{t\wedge\tau_R}\bigg(\int_{\mt}\big|\mathcal M^{\frac{1}{2}}[\tilde\varrho]\tilde\bu\big|^2\,\dif x\bigg)\bigg(\int_{\mt}\Big|\frac{g_k\big(\tilde\varrho,\tilde\varrho \tilde \bfu\big)}{\sqrt{\tilde\varrho}}\Big|^2\,\dif x\bigg)\,\dif s\bigg]^{\frac{p}{2}}\\&\hspace{-.5cm}\leq C\,\stred\bigg[\int_0^{t\wedge\tau_R}\bigg(\int_{\mt}\mathcal M[\tilde\varrho]\tilde\bfu\cdot\tilde\bu\,\dif x\bigg)\bigg(\int_{\mt}\big(\tilde\varrho+\tilde\varrho^\gamma+\tilde\varrho|\tilde\bfu|^2\big)\,\dif x\bigg)\,\dif s\bigg]^{\frac{p}{2}}\\
&\hspace{-.5cm}\leq \kappa \,\stred\bigg(\sup_{t\wedge\tau_R}\int_{\mt}\tilde\varrho|\tilde\bu|^2\,\dif x\bigg)^p\,\dif s+C(\kappa)\,\stred\int_0^{t\wedge\tau_R}\bigg(\int_{\mt}\big(\tilde\varrho+\tilde\varrho^\gamma+\tilde\varrho|\tilde\bfu|^2\big)\,\dif x\bigg)^p\,\dif s.
\end{split}
\end{equation*}
Finally, taking $\kappa$ small enough and using the Gronwall lemma completes the proof.
\end{proof}
\end{proposition}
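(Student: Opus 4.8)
The plan is to reproduce, rigorously and at the level of the Galerkin system, the formal energy identity obtained in the deterministic theory by testing the momentum balance with $\tilde\bfu_R$. The natural object to track is $f(\varrho,\bq):=\tfrac12\langle\bq,\mathcal{M}^{-1}[\varrho]\bq\rangle$ evaluated at $(\tilde\varrho_R,\tilde\varrho_R\tilde\bfu_R)$, since $f(\tilde\varrho_R,\tilde\varrho_R\tilde\bfu_R)=\tfrac12\int_{\mt}\tilde\varrho_R|\tilde\bfu_R|^2\dx$. First I would note that on $[0,\tau_R]$ the fixed-point formulation \eqref{eq:galerkin1a} and the system \eqref{eq:galerkin11}--\eqref{eq:galerkin2} coincide; moreover $\|\tilde\bfu_R(t)\|_{L^2}\le R$ there, so by equivalence of norms on $X_N$ and the maximum principle \eqref{eq:max} one gets two-sided bounds $0<c(N,R)\le\tilde\varrho_R\le C(N,R)$. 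Along such a trajectory $f$ is smooth (matrix inversion of the positive-definite $\mathcal M[\varrho]$ depends analytically on $\varrho\in L^2$), so It\^o's formula applies and every quantity below is a priori finite. It is essential that $\tilde\varrho_R=\mathcal S(\tilde\bfu_R)$ solves, for fixed $\omega$, the \emph{deterministic} regularized continuity equation \eqref{eq:galerkin11}, hence is of finite variation in time --- so $\varrho$ contributes no quadratic and no cross variation, and the only It\^o correction is $\tfrac12\sum_k\langle\mathcal M^{-1}[\tilde\varrho_R]g_k^N,g_k^N\rangle$.

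The main step is then the It\^o expansion itself. Plugging $\partial_\bq f=\mathcal M^{-1}[\varrho]\bq$, $\partial^2_\bq f=\mathcal M^{-1}[\varrho]$, $\partial_\varrho f=-\tfrac12\langle\bq,\mathcal M^{-1}[\varrho]\mathcal M[\,\cdot\,]\mathcal M^{-1}[\varrho]\bq\rangle$ into the formula, using \eqref{eq:galerkin11}--\eqref{eq:galerkin2} and integrating by parts in $x$, one obtains an identity for $\tfrac12\int_{\mt}\tilde\varrho_R|\tilde\bfu_R|^2\dx$ consisting of: the two good dissipation terms $\nu|\nabla\tilde\bfu_R|^2$ and $(\lambda+\nu)|\diver\tilde\bfu_R|^2$ (with the right sign), a convective term, the $\varepsilon\nabla\tilde\bfu_R\nabla\tilde\varrho_R$ term, the two pressure work terms $a\int\tilde\varrho_R^\gamma\diver\tilde\bfu_R$ and $\delta\int\tilde\varrho_R^\beta\diver\tilde\bfu_R$, a martingale term $J_8=\sum_k\int_0^{\cdot}\int_{\mt}\tilde\bfu_R\cdot g_k^N\,\dx\,\dif\beta_k$, the It\^o correction $J_{11}$, and two contributions from $\partial_\varrho f\,\dif\tilde\varrho_R$ coming from the transport part $-\diver(\tilde\varrho_R\tilde\bfu_R)$ and the viscous part $\varepsilon\Delta\tilde\varrho_R$ of \eqref{eq:galerkin11}. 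The heart of the matter is to verify the cancellations: the $\partial_\varrho f$-transport term exactly kills the convective term, and the $\partial_\varrho f$-viscous term exactly kills the $\varepsilon\nabla\tilde\bfu_R\nabla\tilde\varrho_R$ term (both after an integration by parts, using $\nabla\tilde\bfu_R\nabla\tilde\varrho_R\cdot\tilde\bfu_R=\tfrac12\nabla|\tilde\bfu_R|^2\cdot\nabla\tilde\varrho_R$). The two pressure terms are rewritten via the renormalized continuity equation: multiplying \eqref{eq:galerkin11} by $\gamma\tilde\varrho_R^{\gamma-1}$ (resp.\ $\beta\tilde\varrho_R^{\beta-1}$) and integrating over $\mt$ gives $\int_{\mt}\tilde\varrho_R^\gamma\diver\tilde\bfu_R=-\tfrac1{\gamma-1}\tfrac{\dif}{\dif t}\int_{\mt}\tilde\varrho_R^\gamma-\varepsilon\gamma\int_{\mt}\tilde\varrho_R^{\gamma-2}|\nabla\tilde\varrho_R|^2$, which shifts $\tfrac{a}{\gamma-1}\int_{\mt}\tilde\varrho_R^\gamma$ and $\tfrac{\delta}{\beta-1}\int_{\mt}\tilde\varrho_R^\beta$ into the stored energy on the left and produces the non-negative dissipation $\varepsilon a\gamma\,\tilde\varrho_R^{\gamma-2}|\nabla\tilde\varrho_R|^2$, $\varepsilon\delta\beta\,\tilde\varrho_R^{\beta-2}|\nabla\tilde\varrho_R|^2$ there as well. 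Finally $J_{11}$ is estimated through $\mathcal M^{-\frac12}[\tilde\varrho_R]g_k^N=P_N(g_k/\sqrt{\tilde\varrho_R})$, contractivity of $P_N$ on $L^2(\mt)$ and \eqref{growth1}, giving a bound by $C\int_{\mt}(1+\tilde\varrho_R^\gamma+\tilde\varrho_R|\tilde\bfu_R|^2)\dx$.

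To close, I would take the supremum over $s\le t$, raise to the $p$-th power, take expectations. The only stochastic term left is $J_8$; for it I use the Burkholder--Davis--Gundy inequality, rewrite $\int_{\mt}\tilde\bfu_R\cdot g_k^N\dx=\int_{\mt}\mathcal M^{\frac12}[\tilde\varrho_R]\tilde\bfu_R\cdot P_N(g_k/\sqrt{\tilde\varrho_R})\dx$, apply Cauchy--Schwarz in $x$ and then sum in $k$ with \eqref{growth1}, and use $\|\mathcal M^{\frac12}[\tilde\varrho_R]\tilde\bfu_R\|_{L^2}^2=\int_{\mt}\tilde\varrho_R|\tilde\bfu_R|^2\dx$; splitting the resulting product by Young's inequality bounds $\stred\sup_s|J_8|^p$ by $\kappa\,\stred\big(\sup_s\int_{\mt}\tilde\varrho_R|\tilde\bfu_R|^2\dx\big)^p+C(\kappa)\,\stred\big(\int_0^t\int_{\mt}(\tilde\varrho_R+\tilde\varrho_R^\gamma+\tilde\varrho_R|\tilde\bfu_R|^2)\dx\,\dif s\big)^p$. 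Choosing $\kappa$ small absorbs the first term into the left-hand side; conservation of mass $\int_{\mt}\tilde\varrho_R(t)\dx=\int_{\mt}\varrho_0\dx$ (together with \eqref{initial2}) accounts for the additive $1$ in \eqref{eq:apriori112}; and Jensen's inequality in time reduces the rest to a form to which Gronwall's lemma applies. I would stress that the $N,R$-dependent bounds and the norm equivalence on $X_N$ are invoked only to guarantee finiteness (so Gronwall is legitimate) and smoothness of $f$, whereas all constants from \eqref{growth1}, BDG and Gronwall depend solely on $p$, $T$ and the structural constant in the growth hypotheses, hence are independent of $R$, $N$, $\varepsilon$, $\delta$.

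I expect the principal obstacle to be the rigorous application of It\^o's formula to the composite, density-dependent functional $f$ and the precise bookkeeping of the resulting eleven terms: one must make sure the density enters only through a finite-variation process (so the sole correction is $J_{11}$), and that the $\partial_\varrho f$-contributions generated by substituting \eqref{eq:galerkin11} are exactly what cancels the convective and $\varepsilon$-correction terms, while the renormalization step correctly converts the pressure work into stored pressure energy plus a favourable higher-order dissipation. The subsequent absorption and Gronwall argument is then routine.
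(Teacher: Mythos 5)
Your proposal reproduces the paper's own argument essentially step for step: the same energy functional $f(\rho,\bq)=\tfrac12\langle\bq,\mathcal M^{-1}[\rho]\bq\rangle$, the same It\^o expansion into eleven terms, the same pairwise cancellations ($J_4+J_{10}=0$, $J_5+J_9=0$), the same renormalization of the pressure work, the same $P_N$-contractivity estimate for the It\^o correction, and the same BDG plus Gronwall closing argument. The only addition is a slightly more explicit discussion of why It\^o's formula is legitimate (finite variation of $\tilde\varrho_R$ in time, two-sided density bounds on $[0,\tau_R]$, smooth dependence of $\mathcal M^{-1}$ on $\varrho$), which the paper leaves implicit; this is correct and a welcome clarification but does not constitute a different proof.
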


\begin{corollary}
It holds that
$$\prst\Big(\sup_{R\in\mn}\tau_R=T\Big)=1$$
and as a consequence the process $(\tilde\varrho,\tilde\bfu)$ is the unique solution to \eqref{eq:galerkin} on $[0,T]$.

\begin{proof}
Since
\begin{align}\label{22}
\prst\Big(\sup_{R\in\mn}\tau_R<T\Big)\leq\prst\big(\tau_R<T\big)&\leq\prst\Big(\sup_{0\leq t\leq T}\|\tilde \bfu_R(t)\|_{L^2}\geq R\Big)\\
&\;\; +\prst\bigg(\sup_{0\leq t\leq T}\bigg\|\int_0^t\varPhi^{N}\big(\mathcal{S}(\tilde\bfu_R),\mathcal{S}(\tilde\bfu_R)\tilde\bfu_R\big)\,\dif W\bigg\|_{L^2}\geq R\bigg)\nonumber
\end{align}
for all $R$, it is enough to show that the right hand side converges to zero as $R\rightarrow\infty$.
To this end, we recall the maximum principle for $\tilde\varrho_R$ \eqref{eq:max} and gain
$$\underline{\varrho}\exp\Big(-\int_0^t\|\Div\tilde \bfu_R\|_\infty\,ds\Big)\leq\tilde\varrho_R(t,x)\leq \overline{\varrho}\exp\Big(\int_0^t\|\Div\tilde \bfu_R\|_\infty\,ds\Big).$$
Since $\tilde\bfu_R\in\mathcal{B }=L^2(\Omega;C([0,T];X_N))$ and all the norms on $X_N$ are equivalent, the above left hand side can be further estimated from below by
$$\underline{\varrho}\exp\Big(-T-c\,\int_0^T\|\nabla\tilde \bfu_R\|^2_{L^2}\,ds\Big)\leq\tilde\varrho_R(t,x).$$
Plugging this into \eqref{eq:apriori112} we infer that
\begin{equation}\label{eq:apriori11}
\stred\bigg[\exp\Big(-c\int_0^T\|\nabla\tilde \bfu_R\|^2_{L^2}\,ds\Big)\sup_{0\leq t\leq T}\|\tilde\bfu_R\|^2_{L^2}\bigg]\leq \tilde c.
\end{equation}
Next, let us fix two increasing sequences $(a_R)$ and $(b_R)$ such that $a_R,\,b_R\rightarrow\infty$ and $a_R\,\mathrm{e}^{b_R}=R$ for each $R\in\mn$. As in \cite{franco}, we introduce the following events 
\begin{align*}
A&=\bigg[\exp\Big(-c\int_0^T\|\nabla\tilde \bfu_R\|^2_{L^2}\,ds\Big)\sup_{0\leq t\leq T}\|\tilde\bfu_R\|^2_{L^2}\leq a_R\bigg]\\
B&=\bigg[c\,\int_0^T\|\nabla\tilde\bfu_R\|_{L^2}^2\dif t\leq b_R\bigg]\\
C&=\bigg[\sup_{0\leq t\leq T}\|\tilde\bfu_R\|^2_{L^2}\leq a_R\,\mathrm{e}^{b_R}\bigg].
\end{align*}
Then $A\cap B\subset C$ because on $A\cap B$ there holds that
\begin{align*}
\sup_{0\leq t\leq T}\|\tilde\bfu_R\|_{L^2}^2&=\mathrm{e}^{b_R}\mathrm{e}^{-b_R}\sup_{0\leq t\leq T}\|\tilde\bfu_R\|_{L^2}^2\\
&\leq\mathrm{e}^{b_R}\exp\Big(-c\,\int_0^T\|\nabla\tilde \bfu_R\|^2_{L^2}\,ds\Big)\sup_{0\leq t\leq T}\|\tilde\bfu_R\|_{L^2}^2\leq\mathrm{e}^{b_R} a_R.
\end{align*}
Furthermore, according to \eqref{eq:apriori112}, \eqref{eq:apriori11} and the Chebyshev inequality
\begin{align*}
\prst(A)\geq 1-\frac{C}{a_R},\qquad\prst(B)\geq 1-\frac{C}{b_R}.
\end{align*}
Due to the general inequality for probabilities $\prst(C)\geq \prst(A)+\prst(B)-1$ we deduce that
$$\prst(C)\geq 1-\frac{C}{a_R}-\frac{C}{b_R}\longrightarrow 1,\qquad R\rightarrow\infty.$$
This yields the desired convergence of the first term on the right hand side of \eqref{22}.

For the second term, we have due to equivalence of norms on $X_N$ and Burkholder-Davis-Gundy inequality
\begin{align*}
\stred\sup_{0\leq t\leq T}\bigg\|\int_0^t\varPhi^{N}(\tilde\varrho_R,\tilde\varrho_R\tilde\bfu_R)\,\dif W\bigg\|_{L^2}^2&\leq C\,\stred\sup_{0\leq t\leq T}\bigg\|\int_0^t\varPhi^{N}(\tilde\varrho_R,\tilde\varrho_R\tilde\bfu_R)\,\dif W\bigg\|_{W^{-l,2}}^2\\
&\leq C\,\stred\int_0^T\sum_{k\geq 1}\Big\|\mathcal{M}^\frac{1}{2}[\tilde\varrho_R]P_N\frac{g_k(\tilde\varrho_R,\tilde\varrho_R\tilde\bfu_R)}{\sqrt{\tilde\varrho_R}}\Big\|_{W^{-l,2}}^2\,\dif r.
\end{align*}
Next, there holds
\begin{align}\label{a1}
\begin{aligned}
\sum_{k\geq 1}\Big\|\mathcal{M}^\frac{1}{2}[\tilde\varrho_R]P_N\frac{g_k(\tilde\varrho_R,\tilde\varrho_R\tilde\bfu_R)}{\sqrt{\tilde\varrho_R}}\Big\|_{W^{-l,2}}^2&=\sum_{k\geq 1}\sup_{\substack{\bfpsi\in W^{l,2}\\ \|\bfpsi\|_{W^{l,2}}\leq 1}}\Big|\Big\langle\mathcal{M}^\frac{1}{2}[\tilde\varrho_R]P_N\frac{g_k(\tilde\varrho_R,\tilde\varrho_R\tilde\bfu_R)}{\sqrt{\tilde\varrho_R}},\bfpsi\Big\rangle\Big|^2\\
&=\sum_{k\geq 1}\sup_{\substack{\bfpsi\in W^{l,2}\\ \|\bfpsi\|_{W^{l,2}}\leq 1}}\Big|\Big\langle P_N\frac{g_k(\tilde\varrho_R,\tilde\varrho_R\tilde\bfu_R)}{\sqrt{\tilde\varrho_R}},\mathcal{M}^\frac{1}{2}[\varrho_N]\bfpsi\Big\rangle\Big|^2\\
&\leq \sum_{k\geq 1}\Big\|\frac{g_k(\tilde\varrho_R,\tilde\varrho_R\tilde\bfu_R)}{\sqrt{\tilde\varrho_R}}\Big\|_{L^2}^2\sup_{\substack{\bfpsi\in W^{l,2}\\ \|\bfpsi\|_{W^{l,2}}\leq 1}}\big\|\mathcal{M}^\frac{1}{2}[\tilde\varrho_R]\bfpsi\big\|_{L^2}^2.
\end{aligned}
\end{align}
We further estimate using \eqref{growth1}
\begin{align}\label{a2}
\begin{aligned}
\sum_{k\geq 1}\Big\|\frac{g_k(\tilde\varrho_R,\tilde\varrho_R\tilde\bfu_R)}{\sqrt{\tilde\varrho_R}}\Big\|_{L^2}^2&\leq C\big(1+\|\tilde\varrho_R\|_{L^\gamma}^\gamma+\|\sqrt{\tilde\varrho_R}\tilde\bfu_R\|_{L^2}^2\big)
\end{aligned}
\end{align}
and
\begin{align}\label{a3}
\begin{aligned}
\sup_{\substack{\bfpsi\in W^{l,2}\\ \|\bfpsi\|_{W^{l,2}}\leq 1}}&\big\|\mathcal{M}^\frac{1}{2}[\tilde\varrho_R]\bfpsi\big\|_{L^2}^2=\sup_{\substack{\bfpsi\in W^{l,2}\\ \|\bfpsi\|_{W^{l,2}}\leq 1}}\big\langle\mathcal{M}[\tilde\varrho_R]\bfpsi,\bfpsi\big\rangle\\
&\leq \sup_{\substack{\bfpsi\in W^{l,2}\\ \|\bfpsi\|_{W^{l,2}}\leq 1}}\big\|\mathcal{M}[\tilde\varrho_R]\bfpsi\big\|_{L^2}\leq \sup_{\substack{\bfpsi\in W^{l,2}\\ \|\bfpsi\|_{W^{l,2}}\leq 1}}\big\|\tilde\varrho_R P_N\bfpsi\big\|_{L^2}\\
&\leq \sup_{\substack{\bfpsi\in W^{l,2}\\ \|\bfpsi\|_{W^{l,2}}\leq 1}}\big\|\tilde\varrho_R\big\|_{L^2}\| P_N\bfpsi\|_{L^\infty}\leq C\sup_{\substack{\bfpsi\in W^{l,2}\\ \|\bfpsi\|_{W^{l,2}}\leq 1}}\big\|\tilde\varrho_R\big\|_{L^2}\| P_N\bfpsi\|_{W^{l,2}}\\
&\leq C\sup_{\substack{\bfpsi\in W^{l,2}\\ \|\bfpsi\|_{W^{l,2}}\leq 1}}\big\|\tilde\varrho_R\big\|_{L^2}\|\bfpsi\|_{W^{l,2}}\leq C\|\tilde\varrho_R\|_{L^2}.
\end{aligned}
\end{align}
Altogether we deduce
\begin{align*}
\stred\sup_{0\leq t\leq T}&\bigg\|\int_0^t\varPhi^{N}(\tilde\varrho_R,\tilde\varrho_R\tilde\bfu_R)\,\dif W\bigg\|_{L^2}^2\leq C\,\stred\int_0^T\|\tilde\varrho_R\|_{L^2}\big(1+\|\tilde\varrho_R\|_{L^\gamma}^\gamma+\|\sqrt{\tilde\varrho_R}\tilde\bfu_N\|_{L^2}^2\big)\,\dif r\\
&\leq C\bigg(1+\stred\sup_{0\leq t\leq T}\|\tilde\varrho_R\|_{L^2}^{2}+\stred\sup_{0\leq t\leq T}\|\tilde\varrho_R\|_{L^\gamma}^{2\gamma}+\stred\sup_{0\leq t\leq T}\|\sqrt{\tilde\varrho_R}\tilde\bfu_N\|_{L^2}^{4}\bigg)\leq C,
\end{align*}
where we used \eqref{eq:apriori112} with $p=2$. Finally, the convergence of the second term on the right hand side of \eqref{22} follows from Chebyshev's inequality and the proof is complete.
\end{proof}
\end{corollary}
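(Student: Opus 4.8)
The plan is to deduce everything from showing that $\prst(\tau_R<T)\to 0$ as $R\to\infty$. Since $\tau_{R'}\geq\tau_R$ for $R'>R$, the events $\{\tau_R<T\}$ are decreasing, so this limit forces $\prst(\sup_{R\in\mn}\tau_R=T)=1$; on that full-measure set $(\tilde\varrho,\tilde\bfu)$ is defined on all of $[0,T]$ and solves \eqref{eq:galerkin} there, while uniqueness on $[0,T]$ is inherited from the pathwise uniqueness already established on each $[0,\tau_R)$. By the very definition of $\tau_R$ and a union bound, it then suffices to bound $\prst\big(\sup_{[0,T]}\|\tilde\bfu_R\|_{L^2}\geq R\big)$ and $\prst\big(\sup_{[0,T]}\|\int_0^\cdot\varPhi^{N}(\tilde\varrho_R,\tilde\varrho_R\tilde\bfu_R)\,\dif W\|_{L^2}\geq R\big)$ separately.

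For the stochastic-integral term I would argue directly by Chebyshev's inequality. On $X_N$ all norms are equivalent, so the $L^2$-norm of the martingale is controlled by its $W^{-l,2}$-norm, and by Burkholder--Davis--Gundy its second moment is bounded by $\stred\int_0^T\sum_k\|\mathcal M^{1/2}[\tilde\varrho_R]P_N(g_k/\sqrt{\tilde\varrho_R})\|_{W^{-l,2}}^2\,\dif r$. Estimating $\mathcal M^{1/2}[\tilde\varrho_R]$ in $\mathcal L(W^{l,2},L^2)$ by $C\|\tilde\varrho_R\|_{L^2}^{1/2}$ (using $W^{l,2}\hookrightarrow L^\infty$ for $l>5/2$) and $\sum_k\|g_k/\sqrt{\tilde\varrho_R}\|_{L^2}^2$ by $C(1+\|\tilde\varrho_R\|_{L^\gamma}^\gamma+\|\sqrt{\tilde\varrho_R}\tilde\bfu_R\|_{L^2}^2)$ via \eqref{growth1}, and then invoking Proposition~\ref{prop:apriori1} with $p=2$ (together with Young's inequality to split the product into contributions of $\|\tilde\varrho_R\|_{L^2}^2$, $\|\tilde\varrho_R\|_{L^\gamma}^{2\gamma}$ and $\|\sqrt{\tilde\varrho_R}\tilde\bfu_R\|_{L^2}^4$), one obtains a bound uniform in $R$, so this probability tends to $0$.

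The remaining term is where the genuine difficulty lies. Proposition~\ref{prop:apriori1} controls $\stred\sup_t\int_{\mt}\tilde\varrho_R|\tilde\bfu_R|^2$, not $\stred\sup_t\|\tilde\bfu_R\|_{L^2}^2$, and to pass between the two one needs a lower bound on $\tilde\varrho_R$. The maximum principle \eqref{eq:max} only gives $\tilde\varrho_R\geq\underline\varrho\exp(-\int_0^T\|\Div\tilde\bfu_R\|_\infty\,\dif s)$, which after equivalence of norms on $X_N$ degenerates like $\exp(-c\int_0^T\|\nabla\tilde\bfu_R\|_{L^2}^2\,\dif s)$; hence the clean consequence of the energy estimate is only the pair of uniform-in-$R$ bounds $\stred\big[\exp(-c\int_0^T\|\nabla\tilde\bfu_R\|_{L^2}^2\,\dif s)\sup_{[0,T]}\|\tilde\bfu_R\|_{L^2}^2\big]\leq\tilde c$ and $\stred\int_0^T\|\nabla\tilde\bfu_R\|_{L^2}^2\,\dif s\leq\tilde c$.

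To extract decay of $\prst(\sup_{[0,T]}\|\tilde\bfu_R\|_{L^2}^2\geq R)$ from these, I would pick increasing sequences $a_R,b_R\to\infty$ with $a_R\mathrm{e}^{b_R}=R$ and observe that the intersection of $A=\{\exp(-c\int_0^T\|\nabla\tilde\bfu_R\|_{L^2}^2\,\dif s)\sup_{[0,T]}\|\tilde\bfu_R\|_{L^2}^2\leq a_R\}$ and $B=\{c\int_0^T\|\nabla\tilde\bfu_R\|_{L^2}^2\,\dif s\leq b_R\}$ is contained in $C=\{\sup_{[0,T]}\|\tilde\bfu_R\|_{L^2}^2\leq a_R\mathrm{e}^{b_R}=R\}$ (on $A\cap B$ one multiplies by $\mathrm{e}^{b_R}\mathrm{e}^{-b_R}$ and uses the bound defining $B$). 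Chebyshev applied to the two uniform bounds gives $\prst(A)\geq 1-C/a_R$ and $\prst(B)\geq 1-C/b_R$, so $\prst(C)\geq\prst(A)+\prst(B)-1\geq 1-C/a_R-C/b_R\to 1$, which is precisely the needed decay and completes the argument. I expect this decoupling step to be the main obstacle: because the lower bound on the approximate density is only exponentially small in the velocity, no uniform-in-$R$ bound on $\stred\sup_t\|\tilde\bfu_R\|_{L^2}^2$ is available, and one must instead trade the exponential weight for the probabilistic splitting above; everything else is routine.
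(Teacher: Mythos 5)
Your proposal is correct and follows essentially the same route as the paper: the reduction to showing $\prst(\tau_R<T)\to 0$, the Burkholder--Davis--Gundy and Hilbert--Schmidt estimate for the stochastic-integral term combined with Proposition~\ref{prop:apriori1} at $p=2$, and above all the exponential-weight difficulty handled via the $A\cap B\subset C$ decomposition with $a_R\mathrm{e}^{b_R}=R$ and the Fr\'echet-type inequality $\prst(C)\geq\prst(A)+\prst(B)-1$. The only cosmetic difference is the order in which you treat the two terms; the key ideas and all the estimates coincide with the paper's proof.
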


\section{The viscous approximation}
\label{sec:viscous}

In this section, we continue with our proof of Theorem \ref{thm:main} and prove existence of a martingale solution to the viscous approximation \eqref{eq:approx} with the initial law $\Gamma$ (see the beginning of Section \ref{sec:galerkin} for its definition), where $\varepsilon,\delta$ are fixed. In particular, we justify the passage to the limit in \eqref{eq:galerkin} as $N\rightarrow\infty$. Let $(\varrho_N,\bfu_N)$ denote the solution to \eqref{eq:galerkin} and observe that by the same approach as in Proposition \ref{prop:apriori1} it can be shown that it satisfies the corresponding a priori estimate uniformly in $N$. In fact, there holds for any $p<\infty$
\begin{align}
&\stred\bigg[\sup_{0\leq t\leq T}\int_{\mt}\Big(\frac{1}{2}\varrho_N|\bu_N|^2+\frac{a}{\gamma-1}\varrho^\gamma_N+\frac{\delta}{\beta-1}\varrho^\beta_N\Big)\,\dif x\label{eq:aprioriN}\\
&+\int_0^T\int_{\mt}\nu|\bu_N|^2+(\lambda+\nu)|\diver\bu_N|^2\,\dif x\,\dif s+\varepsilon\int_0^T\int_{\mt}\big(a\gamma\varrho_N^{\gamma-2}+\delta\beta\varrho_N^{\beta-2}\big)|\nabla\varrho_N|^2\,\dif x\,\dif s\bigg]^p\nonumber\\
& \leq C_p\bigg(1+\stred\bigg[\int_{\mt}\Big(\frac{1}{2}\varrho_0|\bu_0|^2+\frac{a}{\gamma-1}\varrho_0^\gamma+\frac{\delta}{\beta-1}\varrho_0^\beta\Big)\,\dif x\bigg]^p \bigg)\nonumber
\end{align}
uniformly in $N$, $\varepsilon$ and $\varrho$. 
Thus we obtain uniform bounds in the following spaces
\begin{align}
\bfu_N&\in L^p(\Omega;L^2(0,T;W^{1,2}(\mt))),\label{apv1}\\
\sqrt{\varrho_N}\bfu_N&\in L^p(\Omega;L^\infty(0,T;L^2(\mt))),\label{aprhov1}\\
\varrho_N&\in L^p(\Omega;L^\infty(0,T;L^\beta(\mt))),\label{aprho1}\\
\sqrt{\varepsilon\delta}\,(\varrho_N)^{\beta/2}&\in L^p(\Omega;L^2(0,T;W^{1,2}(\mt))).\label{aprhoo1}
\end{align}
We recall that $\beta>\max\set{\tfrac{9}{2},\gamma}$. Here $p\in[1,\infty)$ is arbitrary due to \eqref{initial2} and the estimate of $\bfu_N\in L^p(\Omega;L^2(0,T;L^2(\mt)))$ is obtained as in \cite[Remark 5.1, page 4]{Li2}.
Besides, testing \eqref{eq:galerkin11} by $\varrho_N$ yields
\begin{align*}
\int_{\mathbb T^3}|\varrho_N|^2\dx+2\varepsilon\int_0^t\int_{\mathbb T^3}|\nabla\varrho_N|^2\dxs=\int_{\mathbb T^3}|\varrho_0|^2\dx-\int_{0}^t\int_{\mathbb T^3}\Div\bfu_N\,|\varrho_N|^2\dxs.
\end{align*}
And therefore since $\beta>\max\{\frac{9}{2},\gamma\}$, (\ref{apv1}) and (\ref{aprho1}) imply for any $p\in[1,\infty)$
\begin{align*}
\E\bigg[\int_0^T\int_{\mathbb T^3}\varepsilon\,|\nabla\varrho_N|^2\dxs\bigg]^p\leq C\,\E\bigg[1+\int_{0}^T\int_{\mathbb T^3}|\nabla\bfu_N|^2\dxs+\int_{0}^T\int_{\mathbb T^3}|\varrho_N|^4\dxs\bigg]^p\leq C.
\end{align*}
This yields the uniform bound
\begin{equation}\label{est:nablarho1}
\sqrt{\varepsilon}\varrho_N\in L^p(\Omega;L^2(0,T;W^{1,2}(\mt))).
\end{equation}
Moreover, from \eqref{aprho1} and \eqref{aprhoo1} we obtain by interpolation that
$$\stred\bigg[\int_0^T\|\varrho_N^\beta\|_{L^2(\mt)}^{2}\,\dif t\bigg]^p\leq \,\stred\bigg[\sup_{0\leq t\leq T}\|\varrho_N^\beta\|_{L^1(\mt)}\bigg]^{p}+\,\stred\bigg[\int_0^T\|\varrho_N^\beta\|^{\frac{3}{2}}_{L^3(\mt)}\,\dif t\bigg]^{2p}\leq C.$$
In particular we obtain a uniform bound
\begin{equation}\label{aprhobeta}
\varrho_N\in L^p(\Omega;L^{\beta+1}(Q)),\quad Q=(0,T)\times\mt,
\end{equation}
for all $p\in[1, \infty)$ as $\beta>\max\{\frac{9}{2},\gamma\}$.

\subsection{Compactness and identification of the limit}

Let us now prepare the setup for our compactness method. We define the path space $\mathcal{X}=\mathcal{X}_\varrho\times\mathcal{X}_\bu\times\mathcal{X}_{\varrho\bfu}\times\mathcal{X}_{\varrho_0}\times\mathcal{X}_W$ where\footnote{If a topological space $X$ is equipped with the weak topology we write $(X,w)$.}
\begin{align*}
\mathcal{X}_\varrho&=C_w([0,T];L^\beta(\mt))\cap L^4(0,T;L^4(\mt))\cap (L^2(0,T;W^{1,2}(\mt)),w),\\
\mathcal{X}_\bu&=\big(L^2(0,T;W^{1,2}(\mt)),w\big),\quad
\mathcal{X}_{\varrho\bfu}=C_w([0,T];W^{-\frac{1}{2},2}(\mt)),\\
\mathcal{X}_{\varrho_0}&=L^2(\mt),\quad\qquad\qquad\quad\qquad\,\mathcal{X}_W=C([0,T];\mathfrak{U}_0).
\end{align*}
Let us denote by $\mu_{\varrho_N}$, $\mu_{\bu_N}$, $\mu_{P_N(\varrho_N\bfu_N)}$ and $\mu_{\varrho_0}$, respectively, the law of $\varrho_N$, $\bu_N$, $P_N(\varrho_N\bfu_N)$ and $\varrho_N(0)=\varrho_0$ on the corresponding path space. By $\mu_W$ we denote the law of $W$ on $\mathcal{X}_W$. The joint law of all variables on $\mathcal{X}$ is denoted by $\mu^N$.

\begin{proposition}\label{prop:bfutightness}
The set $\{\mu_{\bu_N};\,N\in\mn\}$ is tight on $\mathcal{X}_\bu$.

\begin{proof} 
The proof follows directly from \eqref{apv1}. Indeed, for any $R>0$ the set
$$B_R=\big\{\bu\in L^2(0,T;W^{1,2}(\mt));\, \|\bu\|_{L^2(0,T;W^{1,2}(\mt))}\leq R\big\}$$
is relatively compact in $\mathcal{X}_\bu$ and
\begin{equation*}
 \begin{split}
  \mu_{\bu_N}(B_R^c)=\prst\big(\|\bu_N\|_{L^2(0,T;W^{1,2}(\mt))}\geq R\big)\leq\frac{1}{R}\stred\|\bu_N\|_{L^2(0,T;W^{1,2}(\mt))}\leq \frac{C}{R}
 \end{split}
\end{equation*}
which yields the claim.
\end{proof}
\end{proposition}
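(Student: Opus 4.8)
The plan is to exploit the uniform a priori bound \eqref{apv1} together with the elementary fact that $\mathcal{X}_\bu=(L^2(0,T;W^{1,2}(\mt)),w)$ is a separable reflexive Banach space endowed with its weak topology, so that norm-bounded balls are relatively compact in $\mathcal{X}_\bu$. Concretely, for $R>0$ I would consider the ball
$$B_R=\big\{\bu\in L^2(0,T;W^{1,2}(\mt));\ \|\bu\|_{L^2(0,T;W^{1,2}(\mt))}\leq R\big\},$$
which by the Banach--Alaoglu theorem and reflexivity is weakly compact, hence relatively compact in $\mathcal{X}_\bu$. On this ball the weak topology is moreover metrizable since $L^2(0,T;W^{1,2}(\mt))$ is separable, so there is no ambiguity in speaking of tightness on the non-metrizable space $\mathcal{X}_\bu$.

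First I would invoke \eqref{apv1} with $p=1$, which gives $\sup_{N\in\mn}\stred\|\bu_N\|_{L^2(0,T;W^{1,2}(\mt))}\leq C$ with $C$ independent of $N$. Then Chebyshev's inequality yields, uniformly in $N$,
$$\mu_{\bu_N}(B_R^c)=\prst\big(\|\bu_N\|_{L^2(0,T;W^{1,2}(\mt))}> R\big)\leq\frac{1}{R}\,\stred\|\bu_N\|_{L^2(0,T;W^{1,2}(\mt))}\leq\frac{C}{R}.$$
Given $\eta>0$ it then suffices to pick $R=C/\eta$ to obtain $\mu_{\bu_N}(B_R^c)\leq\eta$ for every $N$, and since $B_R$ is relatively compact in $\mathcal{X}_\bu$ this is precisely the assertion that $\{\mu_{\bu_N};\,N\in\mn\}$ is tight.

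I do not expect any genuine obstacle here: the proposition is a direct consequence of the uniform energy estimate of the previous subsection and of the compactness of bounded sets in the weak topology of a reflexive space. The only point deserving a word of care is that $\mathcal{X}_\bu$ is not metrizable, so one should appeal to weak compactness of balls in a separable reflexive Banach space (Banach--Alaoglu together with reflexivity, or equivalently Eberlein--\v{S}mulian) rather than to naive sequential compactness; this is standard and reappears in the same form when establishing tightness of $\mu_{\varrho_N}$ on the weak component of $\mathcal{X}_\varrho$.
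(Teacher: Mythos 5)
Your argument is essentially the paper's proof: relative compactness of norm-balls in the weak topology of the reflexive separable space $L^2(0,T;W^{1,2}(\mt))$, combined with the uniform moment bound \eqref{apv1} and Chebyshev's inequality. The extra remarks on Banach--Alaoglu/Eberlein--\v{S}mulian and metrizability of bounded sets are correct amplifications, not a different route.
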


\begin{proposition}\label{prop:rhotight}
The set $\{\mu_{\varrho_N};\,N\in\mn\}$ is tight on $\mathcal{X}_\varrho$. 
\begin{proof}
Due to \eqref{aprhov1} and \eqref{aprho1} we obtain that
\begin{equation}\label{estrhou}
\{\varrho_N\bfu_N\}\quad\text{is bounded in}\quad L^p(\Omega;L^\infty(0,T;L^{\frac{2\beta}{\beta+1}}(\mt)))
\end{equation}
hence $\{\diver(\varrho_N\bu_N)\}$ is bounded in $L^{p}(\Omega;L^\infty(0,T;W^{-1,\frac{2\beta}{\beta+1}}(\mt)))$ and similarly $\{\varepsilon\Delta\varrho_N\}$ is bounded in $L^p(\Omega;L^\infty(0,T;W^{-2,2}(\mt)))$. As a consequence,
$$\stred\|\varrho_N\|^{p}_{C^{0,1}([0,T];W^{-2,\frac{2\beta}{\beta+1}}(\mt))}\leq C$$
due the continuity equation \eqref{eq:galerkin11}.
Now, the required tightness in $C_w([0,T];L^\beta(\mt))$ follows by a similar reasoning as in Proposition \ref{prop:bfutightness} together with the compact embedding (see \cite[Corollary B.2]{on2})
$$L^\infty(0,T;L^\beta(\mt))\cap C^{0,1}([0,T];W^{-2,\frac{2\beta}{\beta+1}}(\mt))\overset{c}{\hookrightarrow} C_w([0,T];L^\beta(\mt)).$$

Next, observe that by applying interpolation to \eqref{aprho1} and \eqref{est:nablarho1} we obtain
\begin{align*}
\stred\int_0^T\|\varrho_N\|_{W^{\frac{1}{2},\frac{4\beta}{\beta+2}}}^4\dif t\leq \stred\sup_{0\leq t\leq T}\|\varrho_N\|_{L^\beta}^{4}+\,\stred\bigg[\int_0^T\|\varrho_N\|_{W^{1,2}}^2\dif t\bigg]^{2}\leq C.
\end{align*}
Since $W^{\kappa,q}$ is compactly embedded into $L^4$ we make use of the Aubin-Lions compact embedding
$$L^4(0,T;W^{\kappa,q}(\mt))\cap C^{0,1}([0,T];W^{-2,\frac{2\beta}{\beta+1}}(\mt))\overset{c}{\hookrightarrow} L^4(Q)$$
and conclude as in Proposition \ref{prop:bfutightness}.

Tightness in $(L^2(0,T;W^{1,2}(\mt)),w)$ follows directly from \eqref{est:nablarho1} which completes the proof.
\end{proof}
\end{proposition}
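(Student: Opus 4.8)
The plan is to exploit that a subset of $\mathcal{X}_\varrho$ is relatively compact exactly when it is relatively compact in each of the three constituent spaces $C_w([0,T];L^\beta(\mt))$, $L^4(0,T;L^4(\mt))$ and $\big(L^2(0,T;W^{1,2}(\mt)),w\big)$; hence it is enough to prove tightness of $\{\mu_{\varrho_N}\}$ in each of these topologies separately and then intersect the resulting compact sets. The input is the a priori information already at hand — the bound \eqref{aprho1} in $L^p(\Omega;L^\infty_tL^\beta_x)$, the bound \eqref{est:nablarho1} in $L^p(\Omega;L^2_tW^{1,2}_x)$ (legitimate here since $\varepsilon>0$ is fixed), and \eqref{aprhov1} — plus one single time-regularity estimate read off from the regularized continuity equation \eqref{eq:galerkin11}. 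In each topology the conclusion then follows from a suitable compact embedding together with Chebyshev's inequality, exactly as in Proposition \ref{prop:bfutightness}.

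First I would derive the time regularity. From \eqref{eq:galerkin11} we have $\partial_t\varrho_N=\varepsilon\Delta\varrho_N-\diver(\varrho_N\bfu_N)$. By \eqref{aprho1} (and $\beta>2$) the term $\varepsilon\Delta\varrho_N$ is bounded in $L^p(\Omega;L^\infty(0,T;W^{-2,2}(\mt)))$, while Hölder's inequality applied to \eqref{aprhov1} and \eqref{aprho1} gives a bound for $\varrho_N\bfu_N$ in $L^p(\Omega;L^\infty(0,T;L^{2\beta/(\beta+1)}(\mt)))$, hence for $\diver(\varrho_N\bfu_N)$ in $L^p(\Omega;L^\infty(0,T;W^{-1,2\beta/(\beta+1)}(\mt)))$. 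Passing to the weaker of the two target spaces yields, for every $p\in[1,\infty)$,
\[
\stred\big\|\varrho_N\big\|^p_{C^{0,1}([0,T];W^{-2,2\beta/(\beta+1)}(\mt))}\leq C
\]
uniformly in $N$.

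With this in hand I would treat the three topologies in turn. (i) \emph{Tightness in $C_w([0,T];L^\beta(\mt))$}: balls in $L^\infty(0,T;L^\beta(\mt))\cap C^{0,1}([0,T];W^{-2,2\beta/(\beta+1)}(\mt))$ are relatively compact in $C_w([0,T];L^\beta(\mt))$ by the compact embedding \cite[Corollary B.2]{on2}; combining \eqref{aprho1} with the estimate above gives a uniform $L^p(\Omega)$-bound of $\varrho_N$ in that intersection space, so Chebyshev's inequality gives tightness. (ii) \emph{Tightness in $L^4(Q)$}: interpolating \eqref{aprho1} with \eqref{est:nablarho1} produces a uniform bound of $\varrho_N$ in $L^p(\Omega;L^4(0,T;W^{\kappa,q}(\mt)))$ for suitable $\kappa\in(0,1)$, $q$ with $W^{\kappa,q}(\mt)\overset{c}{\hookrightarrow}L^4(\mt)$; coupled with the Hölder-in-time bound above, the Aubin–Lions lemma gives
\[
L^4(0,T;W^{\kappa,q}(\mt))\cap C^{0,1}([0,T];W^{-2,2\beta/(\beta+1)}(\mt))\overset{c}{\hookrightarrow}L^4(Q),
\]
and one concludes as before. (iii) \emph{Tightness in $\big(L^2(0,T;W^{1,2}(\mt)),w\big)$}: immediate from \eqref{est:nablarho1}, since closed balls of the reflexive space $L^2(0,T;W^{1,2}(\mt))$ are weakly compact and Chebyshev controls the probability of lying outside them.

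The step I expect to require the most care is the interpolation in (ii): one must exhibit exponents $\kappa,q$ for which simultaneously $W^{\kappa,q}(\mt)$ embeds compactly into $L^4(\mt)$ in three dimensions and the Gagliardo–Nirenberg-type interpolation between $L^\beta$ and $W^{1,2}$ is strong enough that the $L^4(0,T;W^{\kappa,q})$ bound really follows from control of $L^\infty_tL^\beta_x\cap L^2_tW^{1,2}_x$ (e.g.\ $\kappa=\tfrac12$, $q=\tfrac{4\beta}{\beta+2}$, for which the standing assumption $\beta>\tfrac{9}{2}$ is more than sufficient). Everything else is a routine combination of Chebyshev's inequality with the cited compact embeddings.
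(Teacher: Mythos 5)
Your proposal matches the paper's proof essentially step by step: the same time-regularity estimate in $C^{0,1}([0,T];W^{-2,2\beta/(\beta+1)}(\mt))$ read off from \eqref{eq:galerkin11}, the same compact embedding into $C_w([0,T];L^\beta(\mt))$, the same interpolation with exponents $\kappa=\tfrac12$, $q=\tfrac{4\beta}{\beta+2}$ followed by Aubin--Lions for the $L^4(Q)$ factor, and the same weak-compactness argument for the $W^{1,2}$ factor, all closed out by Chebyshev. The only cosmetic difference is that you state the ``intersect compact sets'' strategy explicitly up front, which the paper leaves implicit.
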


\begin{proposition}\label{prop:rhoutight}
The set $\{\mu_{P_N(\varrho_N\bu_N)};\,N\in\mn\}$ is tight on $\mathcal{X}_{\varrho\bu}$.

\begin{proof}
First, we shall study time regularity of $P_N(\varrho_N\bu_N)$. Towards this end, let us decompose $P_N(\varrho_N\bu_N)$ into two parts, namely, $P_N(\varrho_N\bu_N)(t)=Y^N(t)+Z^N(t)$, where
\begin{equation*}
 \begin{split}
Y^N(t)&=P_N\bfq(0)-\int_0^tP_N\big[\diver(\varrho_N\bu_N\otimes\bu_N)+\nu\Delta\bu_N+(\lambda+\nu)\nabla\diver\bfu_N\\
&\hspace{3.8cm}-a\nabla \varrho_N^\gamma-\delta\nabla \varrho_N^\beta\big]\dif s+\int_0^t\,\varPhi^N(\varrho_N,\varrho_N\bu_N) \,\dif W(s),\\
Z^N(t)&=\varepsilon\int_0^tP_N\big[\nabla\bu_N\nabla\varrho_N\big]\,\dif s,
 \end{split}
\end{equation*}
and consider them separately.

{\em H\"older continuity of $(Z^N)$.}
We show that there exists $\kappa\in(0,1)$ such that
\begin{equation}\label{eq:holderZ}
\stred\|Z^N\|_{C^\kappa([0,T];W^{-l,2}(\mt))}\leq C.
\end{equation}
To this end, we observe that according to \eqref{apv1}, \eqref{aprho1} and the embedding $W^{1,2}(\mt)\hookrightarrow L^6(\mt)$ there holds
$$\stred\|\varrho_N\bu_N\|^p_{L^2_tL_x^{\frac{6\beta}{\beta+6}}}\leq C\,\stred\sup_{0\leq t\leq T}\|\varrho_N\|_{L^\beta_x}^{2p}+C\,\stred\|\bu_N\|_{L^2_tL^6_x}^{2p}\leq C.$$
By interpolation with \eqref{estrhou} (and noticing that $\beta>4$) there exists $r>2$ such that we have a uniform bound in
$$\varrho_N\bu_N\in L^p(\Omega;L^r(0,T;L^2(\mt))).$$
Now we have all in hand to apply maximal regularity estimates to \eqref{eq:galerkin11} with
$$\diver(\varrho_N\bfu_N)\in L^p(\Omega;L^r(0,T;W^{-1,2}(\mt)))$$
as a right hand side and deduce a uniform estimate in
\begin{equation}\label{estnablarho}
\varrho_N\in L^p(\Omega;L^r(0,T;W^{1,2}(\mt))).
\end{equation}
Finally, we combine this with \eqref{apv1} and the continuity of $P_N$ on $W^{-l,2}(\mt)$ and \eqref{eq:holderZ} follows.

{\em H\"older continuity of $(Y^N)$.}
As the next step, we prove that there exist $\vartheta>0$ and $m>5/2$ such that
\begin{equation}\label{eq:holderY1}
\stred\big\|Y^N\|_{C^\vartheta([0,T];W^{-m,2}(\mt))}\leq C.
\end{equation}

Let us now estimate the stochastic integral. Due to Burkholder-Davis-Gundy inequality we obtain for any $\theta\geq 2$
\begin{align*}
\stred&\bigg\|\int_s^t\varPhi^N(\varrho_N,\varrho_N\bfu_N)\,\dif W\bigg\|_{W^{-l,2}}^\theta\leq C\,\stred\bigg(\int_s^t\sum_{k\geq 1}\Big\|\mathcal{M}^\frac{1}{2}[\varrho_N]P_N\frac{g_k(\varrho_N,\varrho_N\bfu_N)}{\sqrt{\varrho_N}}\Big\|_{W^{-l,2}}^2\,\dif r\bigg)^{\theta/2}.
\end{align*}
Here, we can apply the estimates established in \eqref{a1} - \eqref{a3}
and deduce
\begin{align*}
\stred\bigg\|\int_s^t&\varPhi^N(\varrho_N,\varrho_N\bfu_N)\,\dif W\bigg\|_{W^{-l,2}}^\theta\leq C\,\stred\bigg(\int_s^t\|\varrho_N\|_{L^2}\big(1+\|\varrho_N\|_{L^\gamma}^\gamma+\|\sqrt{\varrho_N}\bfu_N\|_{L^2}^2\big)\,\dif r\bigg)^{\theta/2}\\
&\leq C|t-s|^{\theta/2}\,\stred\bigg[\sup_{0\leq t\leq T}\|\varrho_N\|_{L^2}\big(1+\|\varrho_N\|_{L^\gamma}^\gamma+\|\sqrt{\varrho_N}\bfu_N\|_{L^2}^2\big)\bigg]^{\theta/2}\\
&\leq C|t-s|^{\theta/2}\bigg(1+\stred\sup_{0\leq t\leq T}\|\varrho_N\|_{L^2}^{\theta}+\stred\sup_{0\leq t\leq T}\|\varrho_N\|_{L^\gamma}^{\theta\gamma}+\stred\sup_{0\leq t\leq T}\|\sqrt{\varrho_N}\bfu_N\|_{L^2}^{2\theta}\bigg)\\
&\leq C|t-s|^{\theta/2}.
\end{align*}
%
By the Kolmogorov continuity criterion we conclude that for any $\sigma\in[0,1/2)$
$$\stred\,\bigg\|\int_0^t\varPhi^N(\varrho_N,\varrho_N\bu_N)\,\dif W\bigg\|_{C^\sigma([0,T];W^{-l,2}(\mt))}\leq C.$$
Besides, from \eqref{apv1} and \eqref{estrhou} we get
\begin{equation}\label{est:rhobfu2}
\varrho_N\bfu_N\otimes\bfu_N\in L^q(\Omega;L^2(0,T;L^\frac{6\beta}{4\beta+3}(\mt)))
\end{equation}
uniformly in $N$ and therefore
\begin{equation}\label{b1}
\{\diver(\varrho_N\bu_N\otimes\bu_N)\}\quad\text{is bounded in}\quad L^p(\Omega;L^2(0,T;W^{-1,\frac{6\beta}{4\beta+3}}(\mt))).
\end{equation}
As a consequence of \eqref{apv1} and \eqref{aprhobeta}
\begin{align}
\{\nu\Delta \bu_N+(\lambda+\nu)\nabla\diver\bfu_N\}\quad&\text{is bounded in}\quad L^p(\Omega;L^2(0,T;W^{-1,2}(\mt))),\label{b2}\\
\{a\nabla \varrho_N^\gamma+\delta\nabla\varrho_N^\beta\}\quad&\text{is bounded in}\quad L^p(\Omega;L^{\frac{\beta+1}{\beta}}(0,T;W^{-1,{\frac{\beta+1}{\beta}}}(\mt))).\label{b3}
\end{align}
Since $l>\frac{5}{2}$ there holds
$$W^{-1,\frac{6\beta}{4\beta+3}}(\mt)\hookrightarrow W^{-l,2}(\mt),\qquad W^{-1,\frac{\beta+1}{\beta}}(\mt)\hookrightarrow W^{-l,2}(\mt)$$
and thanks to uniform boundedness of $P_N$ on $W^{-l,2}(\mt)$ and $W^{-1,2}(\mt)$ it follows that
\begin{equation*}
\begin{split}
\big\{P_N\diver(\varrho_N\bu_N\otimes\bu_N)\big\}\quad&\text{is bounded in}\quad L^p(\Omega;L^{2}(0,T;W^{-l,{2}}(\mt))),\\
\big\{P_N\big(\nu\Delta \bu_N+(\lambda+\nu)\nabla\diver\bfu_N\big)\big\}\quad&\text{is bounded in}\quad L^p(\Omega;L^{2}(0,T;W^{-1,{2}}(\mt))),\\
\big\{P_N\big(a\nabla \varrho_N^\gamma+\delta\nabla\varrho_N^\beta\big)\big\}\quad&\text{is bounded in}\quad L^p(\Omega;L^{\frac{\beta+1}{\beta}}(0,T;W^{-l,{2}}(\mt))).
\end{split}
\end{equation*}
Finally, \eqref{eq:holderY1} follows for some $m>l$.

{\em Conclusion.} Collecting the above results we obtain that
$$\stred\|P_N(\varrho_N\bfu_N)\|_{C^\tau([0,T];W^{-m,2}(\mt)}\leq C$$
for some $\tau\in(0,1)$ and $m>\frac{5}{2}$. This implies the desired tightness by making use of \eqref{estrhou}, uniform boundedness of $P_N$ on $W^{-\frac{1}{2},2}(\mt)$, the embedding $L^\frac{2\beta}{\beta+1}(\mt)\hookrightarrow W^{-\frac{1}{2},2}(\mt)$ together with the compact embedding (see \cite[Corollary B.2]{on2})
$$L^\infty(0,T;L^\frac{2\beta}{\beta+1}(\mt))\cap C^{\tau}([0,T];W^{-m,2}(\mt))\overset{c}{\hookrightarrow} C_w([0,T];L^\frac{2\beta}{\beta+1}(\mt)).$$
\end{proof}
\end{proposition}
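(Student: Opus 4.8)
The plan is to mimic the argument used for the analogous tightness statements (Propositions~\ref{prop:bfutightness}--\ref{prop:rhotight}): split $P_N(\varrho_N\bu_N)(t)=Y^N(t)+Z^N(t)$, where $Z^N$ isolates the troublesome term $\varepsilon\int_0^t P_N[\nabla\bu_N\nabla\varrho_N]\,\dif s$ and $Y^N$ collects all the remaining deterministic terms together with the stochastic integral, and to show that each piece is uniformly (in $N$) bounded in expectation in some H\"older class $C^\kappa([0,T];W^{-m,2}(\mt))$ with $\kappa\in(0,1)$ and $m>\tfrac52$. Once this is done, combining with the uniform bound \eqref{estrhou} on $\varrho_N\bu_N$ in $L^p(\Omega;L^\infty(0,T;L^{2\beta/(\beta+1)}(\mt)))$, the embedding $L^{2\beta/(\beta+1)}(\mt)\hookrightarrow W^{-1/2,2}(\mt)$, boundedness of $P_N$ on $W^{-1/2,2}(\mt)$, and the compact embedding $L^\infty(0,T;W^{-1/2,2}(\mt))\cap C^\kappa([0,T];W^{-m,2}(\mt))\overset{c}{\hookrightarrow}C_w([0,T];W^{-1/2,2}(\mt))$, the tightness follows by exactly the ball-in-the-compact-set argument of Proposition~\ref{prop:bfutightness}.

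For $Y^N$ I would treat the deterministic terms and the stochastic integral separately. The convective term, the viscous term and the (artificial) pressure, after applying $\diver$ or $\nabla$, are uniformly bounded in spaces of the form $L^p(\Omega;L^s(0,T;W^{-1,q}(\mt)))$ by \eqref{apv1}, \eqref{est:rhobfu2} and \eqref{aprhobeta}; composing with $P_N$ and using $W^{-1,q}(\mt)\hookrightarrow W^{-m,2}(\mt)$ for $m$ large turns the corresponding time integrals into a uniform bound in $W^{1,s}(0,T;W^{-m,2}(\mt))\hookrightarrow C^{1-1/s}([0,T];W^{-m,2}(\mt))$. For the stochastic integral I would apply the Burkholder--Davis--Gundy inequality together with the Hilbert--Schmidt estimate already established in \eqref{a1}--\eqref{a3}, obtaining $\stred\|\int_s^t\varPhi^N(\varrho_N,\varrho_N\bu_N)\,\dif W\|_{W^{-l,2}}^\theta\le C|t-s|^{\theta/2}$ for every $\theta\ge2$, and then the Kolmogorov continuity criterion yields a uniform bound in $C^\sigma([0,T];W^{-l,2}(\mt))$ for any $\sigma<1/2$. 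This gives \eqref{eq:holderY1}.

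The main obstacle is the H\"older regularity of $Z^N$, i.e.\ \eqref{eq:holderZ}, since $\nabla\bu_N\nabla\varrho_N$ is a product of two gradients and the energy estimate only provides $\bu_N\in L^p(\Omega;L^2(0,T;W^{1,2}(\mt)))$ and $\sqrt\varepsilon\,\varrho_N\in L^p(\Omega;L^2(0,T;W^{1,2}(\mt)))$ (see \eqref{est:nablarho1}), which is not quite enough to place the product in a fixed time-integrable negative Sobolev space. The remedy is to upgrade the time integrability of $\nabla\varrho_N$: interpolating \eqref{apv1}, \eqref{aprho1} and the Sobolev embedding $W^{1,2}(\mt)\hookrightarrow L^6(\mt)$ gives $\varrho_N\bu_N\in L^p(\Omega;L^r(0,T;L^2(\mt)))$ for some $r>2$, hence $\diver(\varrho_N\bu_N)\in L^p(\Omega;L^r(0,T;W^{-1,2}(\mt)))$, and parabolic maximal $L^r$-regularity applied to the regularized continuity equation \eqref{eq:galerkin11} yields $\varrho_N\in L^p(\Omega;L^r(0,T;W^{1,2}(\mt)))$. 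Pairing this with $\bu_N\in L^p(\Omega;L^2(0,T;W^{1,2}(\mt)))$ via H\"older's inequality in time gives a uniform bound for $\nabla\bu_N\nabla\varrho_N$ in $L^p(\Omega;L^{r'}(0,T;L^1(\mt)))$ with $r'>1$; composing with $P_N$, using $L^1(\mt)\hookrightarrow W^{-l,2}(\mt)$ and integrating in time then gives $Z^N$ uniformly bounded in $C^{1-1/r'}([0,T];W^{-l,2}(\mt))$, which is \eqref{eq:holderZ}. Collecting the two H\"older bounds and running the compactness argument described above completes the proof.
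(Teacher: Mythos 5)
Your proposal follows essentially the same route as the paper's own proof: the same decomposition $P_N(\varrho_N\bu_N)=Y^N+Z^N$, the same interpolation-plus-parabolic-maximal-regularity trick to upgrade $\nabla\varrho_N$ to $L^r_t$ for some $r>2$ and thereby control $Z^N$, the same Burkholder--Davis--Gundy and Kolmogorov argument for the stochastic integral in $Y^N$, and the same compact-embedding lemma to conclude. The only cosmetic deviation is that in the final step you state the compact embedding with $W^{-1/2,2}(\mt)$ as the intermediate space whereas the paper works with $L^{2\beta/(\beta+1)}(\mt)$ (and then embeds into $W^{-1/2,2}$ afterward); both variants are valid and yield the claimed tightness on $\mathcal{X}_{\varrho\bfu}$.
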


Since also the laws $\mu_{\varrho_0}$ and $\mu_W$, respectively, are tight as being Radon measures on the Polish spaces $\mathcal{X}_{\varrho_0}$ and $\mathcal{X}_W$, respectively, we can deduce tightness of the joint laws $\mu^N$.

\begin{corollary}\label{cor:tight}
The set $\{\mu^N;\,N\in\mn\}$ is tight on $\mathcal{X}$. 
\end{corollary}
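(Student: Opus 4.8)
The plan is to derive the tightness of the joint laws $\mu^N$ from the tightness of each of the marginals. Propositions~\ref{prop:bfutightness}, \ref{prop:rhotight} and \ref{prop:rhoutight} already give that $\{\mu_{\bu_N};\,N\in\mn\}$, $\{\mu_{\varrho_N};\,N\in\mn\}$ and $\{\mu_{P_N(\varrho_N\bu_N)};\,N\in\mn\}$ are tight on $\mathcal{X}_\bu$, $\mathcal{X}_\varrho$ and $\mathcal{X}_{\varrho\bu}$, respectively. The remaining two families $\{\mu_{\varrho_0}\}$ and $\{\mu_W\}$ are constant families consisting of a single Borel probability measure on the Polish spaces $\mathcal{X}_{\varrho_0}=L^2(\mt)$ and $\mathcal{X}_W=C([0,T];\mathfrak{U}_0)$; every Borel probability measure on a Polish space is Radon, hence inner regular with respect to compact sets, so these are trivially tight.

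Given $\eta>0$, I would pick compact sets $K_\varrho\subset\mathcal{X}_\varrho$, $K_\bu\subset\mathcal{X}_\bu$, $K_{\varrho\bu}\subset\mathcal{X}_{\varrho\bu}$, $K_{\varrho_0}\subset\mathcal{X}_{\varrho_0}$ and $K_W\subset\mathcal{X}_W$ such that
\[
\mu_{\varrho_N}(\mathcal{X}_\varrho\setminus K_\varrho)\le\tfrac{\eta}{5},\qquad \mu_{\bu_N}(\mathcal{X}_\bu\setminus K_\bu)\le\tfrac{\eta}{5},\qquad \mu_{P_N(\varrho_N\bu_N)}(\mathcal{X}_{\varrho\bu}\setminus K_{\varrho\bu})\le\tfrac{\eta}{5}
\]
uniformly in $N$, and likewise $\mu_{\varrho_0}(\mathcal{X}_{\varrho_0}\setminus K_{\varrho_0})\le\tfrac{\eta}{5}$ and $\mu_W(\mathcal{X}_W\setminus K_W)\le\tfrac{\eta}{5}$. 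Then $K:=K_\varrho\times K_\bu\times K_{\varrho\bu}\times K_{\varrho_0}\times K_W$ is compact in the (finite) product space $\mathcal{X}$. Since the marginals of $\mu^N$ onto the five factors are exactly $\mu_{\varrho_N}$, $\mu_{\bu_N}$, $\mu_{P_N(\varrho_N\bu_N)}$, $\mu_{\varrho_0}$ and $\mu_W$, a union bound over the five complementary slabs of the form $\mathcal{X}_\varrho\times\cdots\times(\mathcal{X}_\bullet\setminus K_\bullet)\times\cdots\times\mathcal{X}_W$ yields $\mu^N(\mathcal{X}\setminus K)\le\eta$ for every $N\in\mn$. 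As $\eta>0$ was arbitrary, $\{\mu^N;\,N\in\mn\}$ is tight on $\mathcal{X}$.

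There is no substantive obstacle here: the whole point is that $\mathcal{X}$ is a product of only finitely many factors, so no countable exhaustion or diagonal argument is needed and the compactness of $K$ is immediate from the fact that finite products of compact sets are compact. The argument is purely measure-theoretic and does not interact with the weak topologies or the quasi-Polish structure of the factors beyond this elementary point; the genuine work has already been carried out in the three preceding propositions.
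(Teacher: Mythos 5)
Your proof is correct and is exactly the argument the paper invokes (the paper just states the conclusion in one sentence immediately before the corollary, noting that $\mu_{\varrho_0}$ and $\mu_W$ are tight as Radon measures on Polish spaces and that tightness of the marginals gives tightness of the joint laws on the finite product). Your write-up merely spells out the standard union-bound-over-slabs argument that makes this precise.
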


The path space $\mathcal{X}$ is not a Polish space and so our compactness argument is based on the Jakubowski-Skorokhod representation theorem instead of the classical Skorokhod representation theorem, see \cite{jakubow}. To be more precise, passing to a weakly convergent subsequence $\mu^N$ (and denoting by $\mu$ the limit law) we infer the following result.

\begin{proposition}\label{prop:skorokhod}
There exists a probability space $(\tilde\Omega,\tilde\mf,\tilde\prst)$ with $\mathcal{X}$-valued Borel measurable random variables $(\tilde\varrho_N,\tilde\bu_N,\tilde\bfq_N,\tilde\varrho_{0,N},\tilde W_N)$, $N\in\mn$, and $(\tilde\varrho,\tilde\bu,\tilde\bfq,\tilde\varrho_0,\tilde W)$ such that
\begin{enumerate}
 \item the law of $(\tilde\varrho_N,\tilde\bu_N,\tilde\bfq_N,\tilde\varrho_{0,N},\tilde W_N)$ is given by $\mu^N$, $n\in\mn$,
\item the law of $(\tilde\varrho,\tilde\bu,\tilde \bfq,\tilde\varrho_0,\tilde W)$, denoted by $\mu$, is a Radon measure,
 \item $(\tilde\varrho_N,\tilde\bu_N,\tilde\bfq_N,\tilde{\varrho}_{0,N},\tilde W_N)$ converges $\,\tilde{\prst}$-almost surely to $(\tilde\varrho,\tilde{\bu},\tilde\bfq,\tilde\varrho_0,\tilde{W})$ in the topology of $\mathcal{X}$.
\end{enumerate}
\end{proposition}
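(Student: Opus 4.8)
The plan is to invoke the Jakubowski--Skorokhod representation theorem directly, having already verified its hypotheses. Recall that this theorem applies on a so-called quasi-Polish space, i.e. a topological space $X$ on which there exists a countable family of continuous functions $X \to \mathbb{R}$ that separates points. The first step is therefore to check that the path space $\mathcal{X} = \mathcal{X}_\varrho \times \mathcal{X}_\bu \times \mathcal{X}_{\varrho\bfu} \times \mathcal{X}_{\varrho_0} \times \mathcal{X}_W$ belongs to this class. Each factor is either a Polish space ($\mathcal{X}_{\varrho_0} = L^2(\mt)$ and $\mathcal{X}_W = C([0,T];\mathfrak{U}_0)$ with their norm topologies) or a separable Banach space equipped with its weak topology, a space $C_w([0,T];E)$ of weakly continuous functions with values in a separable reflexive Banach space $E$, or a finite intersection of such; all of these are quasi-Polish, as recalled in \cite[Section 3]{onsebr}, and a countable product of quasi-Polish spaces is again quasi-Polish. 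Hence $\mathcal{X}$ admits a separating sequence of continuous real-valued functions.

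Next I would feed the tightness already established in Corollary \ref{cor:tight} into Jakubowski's theorem. Since $\{\mu^N\}$ is tight on the quasi-Polish space $\mathcal{X}$, it is relatively compact in the topology of weak convergence of probability measures (this is where one uses that tightness yields relative compactness of laws; the argument goes through exactly as in the Polish setting once a separating sequence of continuous functions is available). Thus there is a subsequence, which we do not relabel, along which $\mu^N \to \mu$ weakly for some Borel probability measure $\mu$ on $\mathcal{X}$; since $\mu$ is tight it is in particular Radon. The Jakubowski--Skorokhod theorem now furnishes a probability space $(\tilde\Omega,\tilde\mf,\tilde\prst)$ and $\mathcal{X}$-valued Borel random variables $(\tilde\varrho_N,\tilde\bu_N,\tilde\bfq_N,\tilde\varrho_{0,N},\tilde W_N)$ and $(\tilde\varrho,\tilde\bu,\tilde\bfq,\tilde\varrho_0,\tilde W)$ such that the law of the $N$-th tuple is $\mu^N$, the law of the limit tuple is $\mu$, and the $N$-th tuple converges $\tilde\prst$-a.s. to the limit in the topology of $\mathcal{X}$. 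This is precisely the three assertions (a)--(c) of the proposition.

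The bulk of the verification is therefore bookkeeping: one must confirm that each of $\mathcal{X}_\varrho$, $\mathcal{X}_\bu$ and $\mathcal{X}_{\varrho\bfu}$ genuinely falls under the quasi-Polish umbrella. For $\mathcal{X}_\bu = (L^2(0,T;W^{1,2}(\mt)),w)$ this is the standard fact that a separable Hilbert space with its weak topology is quasi-Polish, the separating functionals being (a countable dense subset of) the dual. For $\mathcal{X}_\varrho = C_w([0,T];L^\beta(\mt)) \cap L^4(Q) \cap (L^2(0,T;W^{1,2}(\mt)),w)$ one uses that $L^\beta(\mt)$ is separable and reflexive (as $\beta > 1$), so $C_w([0,T];L^\beta(\mt))$ is quasi-Polish with separating functions $f \mapsto \langle f(t), \phi \rangle$ for $t$ rational and $\phi$ in a countable dense set, combined with the $L^4(Q)$ norm topology and the weak $L^2_tW^{1,2}_x$ topology; an intersection of quasi-Polish topologies on the same underlying set is quasi-Polish. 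The case $\mathcal{X}_{\varrho\bfu} = C_w([0,T];W^{-1/2,2}(\mt))$ is entirely analogous, $W^{-1/2,2}(\mt)$ being a separable Hilbert space. The one point that requires a little care --- and is the closest thing here to a genuine obstacle --- is making sure the measures $\mu_{\varrho_N}$, $\mu_{\bu_N}$, $\mu_{P_N(\varrho_N\bu_N)}$, $\mu_{\varrho_0}$, $\mu_W$ are well-defined Borel measures on these topological spaces and that the joint law $\mu^N$ on the product is genuinely the one to which tightness was proved; this is exactly the content of Corollary \ref{cor:tight} together with the measurability statements recorded when the a priori estimates were established, so nothing new is needed. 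With these ingredients the proof is complete.
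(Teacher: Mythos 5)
Your proposal takes exactly the same route as the paper: the proposition is stated there as an immediate consequence of the Jakubowski--Skorokhod representation theorem applied to the tight sequence $\{\mu^N\}$ on the quasi-Polish space $\mathcal{X}$, with the paper offering essentially no proof beyond a pointer to Corollary~\ref{cor:tight} and the citation \cite{jakubow}. Your write-up correctly fills in the unstated bookkeeping --- verifying that each factor of $\mathcal{X}$ (separable Banach spaces with their weak topologies, $C_w$-spaces with values in separable reflexive spaces, and finite suprema of such topologies) admits a countable separating family of continuous functionals, hence is quasi-Polish --- and then invokes Jakubowski's theorem to extract the subsequence and the a.s.\ convergent representatives.
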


We are immediately able to identify $(\tilde\varrho_{0,N},\tilde\bfq_N),\,N\in\mn,$ and $(\tilde\varrho_0,\tilde\bfq)$.

\begin{lemma}\label{lemma:identif}
There holds $\tilde\prst$-a.s. that
\begin{align*}
(\tilde\varrho_{0,N},\tilde\bfq_N)&=(\tilde\varrho_N(0),P_N(\tilde\varrho_N\tilde\bfu_N)),
\qquad(\tilde\varrho_0,\tilde\bfq)=(\tilde\varrho(0),\tilde\varrho\tilde\bfu).
\end{align*}

\begin{proof}
The first statement follows from the equality of joint laws of $(\varrho_N,\bfu_N,P_N(\varrho_N\bfu_N),\varrho_N(0))$ and $(\tilde\varrho_N,\tilde\bfu_N,\tilde\bfq_N,\tilde\varrho_{0,N})$. Identification of $\tilde\varrho_0$ follows from the a.s. convergence
$$\tilde\varrho_N\rightarrow\tilde\varrho\qquad\text{in}\qquad C_w([0,T];L^\beta(\mt))$$ and in order to identify the limit $\tilde\bfq$, note that
$$\tilde\varrho_N\tilde\bfu_N\rightharpoonup \tilde\varrho\tilde\bfu\quad\text{ in }\quad L^1(0,T;L^1(\mt))\quad\tilde\prst\text{-a.s.}$$
as a consequence of the convergence of $\tilde\varrho_N$ and $\tilde\bfu_N$ in $\mathcal{X}_\varrho$ and $\mathcal{X}_\bfu$, respectively. Clearly, this also identifies the limit of $P_N(\tilde\varrho_N\tilde\bfu_N)$ with $\tilde\varrho\tilde\bfu$.
\end{proof}
\end{lemma}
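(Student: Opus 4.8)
The plan is to exploit two things: first, that passing to the tilde-space preserves joint laws, hence preserves any relation between components that holds $\prst$-a.s. on the original space and is expressible through a countable family of measurable conditions; second, that the a.s. convergences supplied by Proposition \ref{prop:skorokhod} are strong enough to pass to the limit in the defining identities. For the first statement, on the original probability space we have the identities $\varrho_N(0)=\varrho_0$ (the initial condition in \eqref{eq:galerkin3}, which we read off the definition of $\mu_{\varrho_N}$) and $\bfq_N:=P_N(\varrho_N\bfu_N)$ by the very definition of the variable whose law is $\mu_{P_N(\varrho_N\bfu_N)}$. Both can be phrased as: for every $\psi\in C^\infty(\mt)$ and every $t$ in a countable dense subset of $[0,T]$ (using the continuity of $t\mapsto\langle\varrho_N(t),\psi\rangle$ resp.\ $t\mapsto\langle\bfq_N(t),\psi\rangle$ on $\mathcal X_\varrho$, $\mathcal X_{\varrho\bfu}$), $\langle\tilde\varrho_{0,N},\psi\rangle=\langle\tilde\varrho_N(0),\psi\rangle$ and $\langle\tilde\bfq_N(t),\psi\rangle=\langle\tilde\varrho_N(t)\tilde\bfu_N(t),P_N\psi\rangle$ in an appropriate sense. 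Since these are Borel conditions on $\mathcal X$ that hold $\mu^N$-a.s., equality of laws gives them $\tilde\prst$-a.s. for $(\tilde\varrho_N,\tilde\bfu_N,\tilde\bfq_N,\tilde\varrho_{0,N})$, which is exactly the first claim. (One minor care is needed: the product $\tilde\varrho_N\tilde\bfu_N$ must be identified as a function, not merely a distribution; but $\tilde\varrho_N\in L^4(Q)$ and $\tilde\bfu_N\in L^2(0,T;L^6(\mt))$ by the topology of $\mathcal X$, so the product lies in $L^1(Q)$ and the pairing with $P_N\psi\in C^\infty$ makes sense.)

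For the identification of $\tilde\varrho_0$, I would use the a.s.\ convergence $\tilde\varrho_N\to\tilde\varrho$ in $C_w([0,T];L^\beta(\mt))$, which in particular gives $\tilde\varrho_N(0)\rightharpoonup\tilde\varrho(0)$ in $L^\beta(\mt)$ $\tilde\prst$-a.s. On the other hand $\tilde\varrho_{0,N}\to\tilde\varrho_0$ in $L^2(\mt)$ $\tilde\prst$-a.s.\ by the topology of $\mathcal X_{\varrho_0}$. Combining with the first statement $\tilde\varrho_{0,N}=\tilde\varrho_N(0)$ yields $\tilde\varrho(0)=\tilde\varrho_0$ $\tilde\prst$-a.s.

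For $\tilde\bfq$, the point is to pass to the limit in $\tilde\bfq_N=P_N(\tilde\varrho_N\tilde\bfu_N)$. By the topology of $\mathcal X$ we have $\tilde\varrho_N\to\tilde\varrho$ strongly in $L^4(Q)$ and $\tilde\bfu_N\rightharpoonup\tilde\bfu$ weakly in $L^2(0,T;W^{1,2}(\mt))$, hence (since the strong convergence in $L^4\subset L^2$ lets us pass the product) $\tilde\varrho_N\tilde\bfu_N\rightharpoonup\tilde\varrho\tilde\bfu$ in $L^1(Q)$ $\tilde\prst$-a.s.; indeed for any fixed test function $\bfphi\in C^\infty(\overline Q)$ one splits $\langle\tilde\varrho_N\tilde\bfu_N-\tilde\varrho\tilde\bfu,\bfphi\rangle=\langle(\tilde\varrho_N-\tilde\varrho)\tilde\bfu_N,\bfphi\rangle+\langle\tilde\varrho(\tilde\bfu_N-\tilde\bfu),\bfphi\rangle$ and the first term vanishes by strong--weak pairing ($\tilde\bfu_N$ bounded in $L^2(Q)$ after passing to a subsequence, or using the a.s.\ finiteness of the $\mathcal X_\bfu$-norm) while the second vanishes by weak convergence against $\tilde\varrho\bfphi\in L^2(Q)$. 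Finally, since $\tilde\bfq_N\to\tilde\bfq$ in $C_w([0,T];W^{-1/2,2}(\mt))$ and $P_N\to\mathrm{Id}$ strongly on test functions, the projector $P_N$ disappears in the limit and one obtains $\tilde\bfq=\tilde\varrho\tilde\bfu$ $\tilde\prst$-a.s. The only mildly delicate point is the interchange of the product limit with the (removal of the) projection $P_N$; this is handled because $P_N$ is uniformly bounded on $W^{-1/2,2}$ and converges to the identity there, so testing against a fixed smooth $\bfphi$ we may replace $\langle P_N(\tilde\varrho_N\tilde\bfu_N),\bfphi\rangle$ by $\langle\tilde\varrho_N\tilde\bfu_N,P_N\bfphi\rangle$ and use $P_N\bfphi\to\bfphi$ in $W^{1/2,2}$ together with the $L^1(Q)$-weak convergence just established. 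I expect this last bookkeeping — tracking the projection and the two convergences simultaneously — to be the only place requiring genuine care; everything else is a direct transfer of laws or a routine strong--weak product argument.
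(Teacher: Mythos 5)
Your proof is correct and follows essentially the same route as the paper: transfer of the defining identities via equality of joint laws (with the extra care you take to phrase them as Borel conditions), identification of $\tilde\varrho_0$ via the $C_w([0,T];L^\beta)$-convergence, and identification of $\tilde\bfq$ via the strong--weak product argument for $\tilde\varrho_N\tilde\bfu_N\rightharpoonup\tilde\varrho\tilde\bfu$ in $L^1(Q)$, then removing $P_N$ by testing against smooth $\bfphi$. The paper is terser (the step "Clearly, this also identifies the limit of $P_N(\tilde\varrho_N\tilde\bfu_N)$" is not unpacked), whereas you spell out the strong--weak splitting and the commutation of the limit with the removal of $P_N$; that bookkeeping is exactly where the "clearly" is hiding work, and your handling of it via self-adjointness of $P_N$ in $L^2$ and $P_N\bfphi\to\bfphi$ in $W^{l,2}\hookrightarrow L^\infty$ is the right way to close it.
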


From \eqref{eq:aprioriN} and equality of joint laws we deduce
\begin{align}
&\tilde\stred\bigg[\sup_{0\leq t\leq T}\int_{\mt}\Big(\frac{1}{2}\tilde\varrho_N|\tilde\bu_N|^2+\frac{a}{\gamma-1}\tilde\varrho^\gamma_N+\frac{\delta}{\beta-1}\tilde\varrho^\beta_N\Big)\,\dif x\label{eq:aprioriNtilde}\\
&+\int_0^T\int_{\mt}\nu|\tilde\bu_N|^2+(\lambda+\nu)|\diver\tilde\bu_N|^2\,\dif x\,\dif s\bigg]\nonumber\\
& \leq C_p\bigg(1+\tilde\stred\bigg[\int_{\mt}\Big(\frac{1}{2}\frac{|\tilde\bfq_N(0)|^2}{\tilde\varrho_{0,N}}+\frac{a}{\gamma-1}\tilde\varrho_{0,N}^\gamma+\frac{\delta}{\beta-1}\tilde\varrho_{0,N}^\beta\Big)\,\dif x\bigg]^p \bigg)\nonumber\\
&=C_p\bigg(1+\int_{L^\beta_x\times L^\frac{2\beta}{\beta+1}_x}\bigg\|\frac{1}{2}\frac{|\bfq|^2}{\rho}+\frac{a}{\gamma-1}\rho^\gamma+\frac{\delta}{\beta-1}\rho^\beta\bigg\|_{L^1_x}^p\,\dif\Gamma(\rho,\bfq)\bigg)\leq C(p,\Gamma)\nonumber
\end{align}
uniformly in $N$, $\varepsilon$ and $\varrho$. 

Based on Proposition \ref{prop:skorokhod} and \eqref{eq:aprioriNtilde} we are going to achieve a series of further
convergences after taking not relabelled subsequences.

\begin{corollary}\label{cor:con}
The following convergence holds true $\tilde\prst$-a.s.
\begin{align}
\tilde{\varrho}_N  \tilde{\bfu}_N\otimes  \tilde{\bfu}_N&\rightharpoonup  \tilde{\varrho}  \tilde{\bfu}\otimes  \tilde{\bfu}\qquad\text{in}\qquad L^1(0,T;L^{1}( \mathbb T^3)).\label{conv:rhovv1}
\end{align}
\end{corollary}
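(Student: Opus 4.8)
The plan is to deduce \eqref{conv:rhovv1} from the almost sure convergences furnished by Proposition~\ref{prop:skorokhod} together with a \emph{strong} convergence of the momentum in a negative Sobolev space, and then to pass to the limit in the quadratic term by a strong--weak duality argument; everything below is understood to hold $\tilde\prst$-a.s. First I would record, using Proposition~\ref{prop:skorokhod}, Lemma~\ref{lemma:identif} and interpolation: $\tilde\varrho_N\to\tilde\varrho$ in $L^p(Q)$ for every $p<\beta$ and in $C_w([0,T];L^\beta(\mt))$, so in particular $(\tilde\varrho_N)$ is bounded in $L^\infty(0,T;L^\beta(\mt))$; $\tilde\bfu_N\rightharpoonup\tilde\bfu$ weakly in $L^2(0,T;W^{1,2}(\mt))$, hence also in $L^2(0,T;L^6(\mt))$, where $(\tilde\bfu_N)$ stays bounded; and $P_N(\tilde\varrho_N\tilde\bfu_N)\to\tilde\varrho\tilde\bfu$ in $C_w([0,T];W^{-\frac12,2}(\mt))$. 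Combining the first two with H\"older's inequality and $\beta>3$ yields the uniform-in-$N$ bound $\tilde\varrho_N\tilde\bfu_N\in L^2(0,T;L^{\frac{6\beta}{\beta+6}}(\mt))\hookrightarrow L^2(Q)$.

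The heart of the argument is the improvement $\tilde\varrho_N\tilde\bfu_N\to\tilde\varrho\tilde\bfu$ strongly in $L^2(0,T;W^{-1,2}(\mt))$. For $P_N(\tilde\varrho_N\tilde\bfu_N)$ this follows because for each $t$ one has $P_N(\tilde\varrho_N\tilde\bfu_N)(t)\rightharpoonup\tilde\varrho\tilde\bfu(t)$ in $W^{-\frac12,2}(\mt)$ while $(P_N(\tilde\varrho_N\tilde\bfu_N)(t))_N$ is bounded in $W^{-\frac12,2}(\mt)$ uniformly in $t$; since $W^{-\frac12,2}(\mt)$ embeds compactly into $W^{-1,2}(\mt)$, the weak limit is in fact a strong limit, $P_N(\tilde\varrho_N\tilde\bfu_N)(t)\to\tilde\varrho\tilde\bfu(t)$ in $W^{-1,2}(\mt)$ for every $t$, and the uniform bound allows one to pass to the limit under the time integral by dominated convergence. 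The Galerkin error $(\mathrm{Id}-P_N)(\tilde\varrho_N\tilde\bfu_N)$ is then disposed of using the $L^2(Q)$-bound above and the spectral properties of the basis $(\bfpsi_n)$, which give $\|(\mathrm{Id}-P_N)v\|_{W^{-1,2}(\mt)}\le\eta_N\|v\|_{L^2(\mt)}$ with $\eta_N\to0$. (Alternatively, the same conclusion can be obtained from the H\"older-in-time bound on $P_N(\varrho_N\bfu_N)$ already established in the proof of Proposition~\ref{prop:rhoutight} together with an Arzel\`a--Ascoli argument.)

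With this in hand, fix $\bfvarphi\in C^\infty(\overline Q;\mr^{3\times3})$. Since $\tilde\varrho_N\tilde\bfu_N\in L^2(Q)\subset L^2(0,T;W^{-1,2}(\mt))$, the quantity $\int_Q\tilde\varrho_N\tilde\bfu_N\otimes\tilde\bfu_N:\bfvarphi\,\dxt$ equals the duality pairing $\int_0^T\langle\tilde\varrho_N\tilde\bfu_N,\bfvarphi\tilde\bfu_N\rangle_{W^{-1,2}(\mt),W^{1,2}(\mt)}\,\dt$. Multiplication by the smooth field $\bfvarphi$ is a bounded, hence weakly continuous, operation on $L^2(0,T;W^{1,2}(\mt))$, so $\bfvarphi\tilde\bfu_N\rightharpoonup\bfvarphi\tilde\bfu$ weakly there and stays bounded; combined with the strong convergence $\tilde\varrho_N\tilde\bfu_N\to\tilde\varrho\tilde\bfu$ in $L^2(0,T;W^{-1,2}(\mt))$, the standard strong--weak duality principle forces the pairing to converge to $\int_0^T\langle\tilde\varrho\tilde\bfu,\bfvarphi\tilde\bfu\rangle\,\dt=\int_Q\tilde\varrho\tilde\bfu\otimes\tilde\bfu:\bfvarphi\,\dxt$. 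Together with the uniform bound for $\tilde\varrho_N\tilde\bfu_N\otimes\tilde\bfu_N$ provided by \eqref{est:rhobfu2} (transferred to the new probability space through equality of laws), this identifies the limit and gives \eqref{conv:rhovv1}.

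The step I expect to be the main obstacle is precisely the strong convergence of the momentum: the Jakubowski--Skorokhod theorem delivers $P_N(\tilde\varrho_N\tilde\bfu_N)$ only in the ``weak in space, weak in time'' space $C_w([0,T];W^{-\frac12,2}(\mt))$, whereas the identification of the genuinely nonlinear term $\tilde\varrho_N\tilde\bfu_N\otimes\tilde\bfu_N$, in which \emph{both} factors converge only weakly, forces one to upgrade one factor to norm convergence in a weaker space. Extracting this relies on the compact Sobolev embedding and on a careful treatment of the finite-dimensional projection $P_N$, and it is here that the higher integrability of the density (recall $\beta>\max\{9/2,\gamma\}$, in particular $\beta>3$) is what makes the estimates close.
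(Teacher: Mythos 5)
Your argument is correct but routes through a different mechanism than the paper. The paper exploits the self-adjointness of $P_N$ and the fact that $\tilde\bfu_N\in X_N$ to write
\[
\int_{\mt} P_N(\tilde\varrho_N\tilde\bfu_N)\cdot\tilde\bfu_N\dx=\int_{\mt}\tilde\varrho_N|\tilde\bfu_N|^2\dx,
\]
so the strong convergence of $P_N(\tilde\varrho_N\tilde\bfu_N)$ in $L^2(0,T;W^{-1,2}(\mt))$ (from $C_w([0,T];W^{-1/2,2})$ plus the compact embedding) paired with $\tilde\bfu_N\rightharpoonup\tilde\bfu$ in $L^2(0,T;W^{1,2}(\mt))$ gives convergence of the $L^2(Q)$-norms of $\sqrt{\tilde\varrho_N}\tilde\bfu_N$; combined with the weak $L^2(Q)$ convergence of $\sqrt{\tilde\varrho_N}\tilde\bfu_N$, norm-plus-weak in a Hilbert space upgrades to strong, and $a\mapsto a\otimes a$ then yields \emph{strong} $L^1(Q)$ convergence of the convective term, which is a bit more than the weak convergence you establish. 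Your route instead needs to handle the untruncated momentum $\tilde\varrho_N\tilde\bfu_N$, which costs the estimate $\|(\mathrm{Id}-P_N)v\|_{W^{-1,2}}\le\eta_N\|v\|_{L^2}$ with $\eta_N\to0$; this is indeed available, since an $L^2$-orthonormal, $W^{l,2}$-orthogonal system of smooth functions is necessarily a spectral basis of $\mathrm{I}-\Delta$, but it is precisely the extra structural input the paper's self-adjointness trick sidesteps. Your strong--weak pairing of $\tilde\varrho_N\tilde\bfu_N$ against $\bfvarphi\tilde\bfu_N$ then delivers the claim. Two practical remarks. First, the paper's stronger conclusion (strong $L^2(Q)$ convergence of $\sqrt{\tilde\varrho_N}\tilde\bfu_N$) is reused immediately afterwards, e.g.\ in the proof of Lemma~\ref{lem:strongq}; your weaker conclusion does not supply that asset. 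Second, the uniform bounds you invoke ($\tilde\varrho_N\tilde\bfu_N\in L^2(Q)$, $\tilde\varrho_N\in L^\infty_tL^\beta_x$, the bound on the convective term from \eqref{est:rhobfu2}) are moment bounds in $\tilde\Omega$, not $\tilde\prst$-a.s.\ bounds in $N$; to run the argument a.s.\ as stated you must pass to a further subsequence on which these quantities are a.s.\ bounded (harmless, but should be said), whereas the paper's derivation of a.s.\ boundedness of $\sqrt{\tilde\varrho_N}\tilde\bfu_N$ comes directly from the convergence of the norms in \eqref{eq:tight2''}.
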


\begin{proof}
From Proposition \ref{prop:skorokhod} and Lemma \ref{lemma:identif}, we gain
\begin{align}\label{eq:tight2''}
\begin{aligned}
\Big\|\sqrt{\tilde\varrho_N}\tilde\bfu_N\Big\|_{L^2_tL^2_x}^2&=\int_0^T\int_{\mt}P_N(\tilde{\varrho}_N  \tilde{\bfu}_N)\cdot\tilde\bfu_N\dxt\\
&\longrightarrow \int_0^T\int_{\mt}\tilde{\varrho}  \tilde{\bfu}\cdot\tilde\bfu\dxt=\Big\|\sqrt{\tilde\varrho}\tilde\bfu\,\Big\|_{L^2_tL^2_x}^2\quad\tilde\prst\text{-a.s.}
\end{aligned}
\end{align}
In the last step we used the compact embedding
$$L^{\frac{2\beta}{\beta+1}}(\mt)\overset{c}{\hookrightarrow}W^{-1,2}(\mt)$$
which implies together with Proposition \ref{prop:skorokhod} and Lemma \ref{lemma:identif} that $$P_N(\tilde{\varrho}_N  \tilde{\bfu}_N)\rightarrow \tilde\varrho\tilde\bfu\qquad\text{in}\qquad L^2(0,T;W^{-1,2}(\mt))\quad\tilde\p\text{-a.s.}$$
According to \eqref{eq:tight2''}, we infer that for almost every $\omega$, the sequence $\big(\sqrt{\tilde\varrho}_N\tilde\bfu_N(\omega)\big)$ is bounded in $L^2(0,T;L^2(\mt))$. Hence combining weak and strong convergence from Proposition \ref{prop:skorokhod} implies
\begin{align}\label{eq:tight1''}
\sqrt{\tilde\varrho_N}\tilde\bfu_N\rightharpoonup\sqrt{\tilde\varrho}\tilde\bfu\qquad\text{in}\qquad L^2(0,T;L^2(\mt))\quad\tilde\prst\text{-a.s.}
\end{align}
So \eqref{conv:rhovv1} follows by combining \eqref{eq:tight2''} and \eqref{eq:tight1''}.
\end{proof}

Let us now fix some notation that will be used in the sequel. We denote by $\bfr_t$ the operator of restriction to the interval $[0,t]$ acting on various path spaces. In particular, if $X$ stands for one of the path spaces $\mathcal{X}_\varrho,\,\mathcal{X}_{\bfu},\,\mathcal{X}_{\varrho\bfu}$ or $\mathcal{X}_{W}$ and $t\in[0,T]$, we define
\begin{align}\label{restr}
\bfr_t:X\rightarrow X|_{[0,t]},\quad f\mapsto f|_{[0,t]}.
\end{align}
Clearly, $ \bfr_t$ is a continuous mapping.
Let $(\tilde{\mf}_t)$ be the $\tilde{\prst}$-augmented canonical filtration of the process $(\tilde\varrho,\tilde{\bu},\tilde{W})$, respectively, that is
\begin{equation}\label{filtr}
\begin{split}
\tilde{\mf}_t&=\sigma\big(\sigma\big(\bfr_t\tilde\varrho, \,\bfr_t\tilde{\bu},\,\bfr_t\tilde{W}\big)\cup\big\{N\in\tilde{\mf};\;\tilde{\prst}(N)=0\big\}\big),\quad t\in[0,T].
\end{split}
\end{equation}

%
%
%
%
%
%
%
%
%
%
%

Finally, we have all in hand to conclude this Section by the following existence result.

\begin{proposition}\label{prop:martsol111}
$\big((\tilde{\Omega},\tilde{\mf},(\tilde{\mf}_t),\tilde{\prst}),\tilde\varrho,\tilde{\bu},\tilde{W}\big)$
is a weak martingale solution to \eqref{eq:approx} with the initial law $\Gamma$. 
\end{proposition}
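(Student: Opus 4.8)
## Proof proposal

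The plan is to pass to the limit as $N\to\infty$ in the finite-dimensional system \eqref{eq:galerkin}, using the almost sure convergences from Proposition \ref{prop:skorokhod}, the identification in Lemma \ref{lemma:identif}, the additional convergence \eqref{conv:rhovv1} from Corollary \ref{cor:con}, and the uniform bounds \eqref{eq:aprioriNtilde}. The first step is to check that the new variables solve the same approximate equations on the new probability space: since $\tilde W_N$ has the same law as $W$, it is a cylindrical Wiener process with respect to a suitable filtration, and the equality of joint laws together with a standard argument (as in \cite{on1,on2}) transfers the equation \eqref{eq:galerkin} — continuity equation, momentum equation tested against $\bfpsi_1,\dots,\bfpsi_N$, and the stochastic integral — to the tuple $(\tilde\varrho_N,\tilde\bfu_N,\tilde W_N)$ verbatim. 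Then I would rewrite the momentum equation in its weak (distributional) form against fixed test functions $\bfvarphi\in C^\infty(\mt)$ and a fixed time $t$, ready to pass to the limit term by term.

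Next I would treat the limit in each term. The linear terms $\nu\langle\nabla\tilde\bfu_N,\nabla\bfvarphi\rangle$ and $(\lambda+\nu)\langle\diver\tilde\bfu_N,\diver\bfvarphi\rangle$ pass by the weak convergence $\tilde\bfu_N\rightharpoonup\tilde\bfu$ in $L^2(0,T;W^{1,2})$; the convective term uses \eqref{conv:rhovv1}; the pressure terms $a\langle\tilde\varrho_N^\gamma,\diver\bfvarphi\rangle+\delta\langle\tilde\varrho_N^\beta,\diver\bfvarphi\rangle$ use the strong $L^4(Q)$ convergence of $\tilde\varrho_N$ from $\mathcal X_\varrho$ (together with \eqref{aprhobeta} to upgrade to $L^{\beta+1}$ convergence by interpolation/uniform integrability), so $\tilde\varrho_N^\gamma\to\tilde\varrho^\gamma$ and $\tilde\varrho_N^\beta\to\tilde\varrho^\beta$ strongly in $L^1(Q)$; the artificial viscous term $\varepsilon\langle\nabla\tilde\bfu_N\nabla\tilde\varrho_N,\bfvarphi\rangle$ converges using weak convergence of $\nabla\tilde\bfu_N$ in $L^2$ and strong convergence of $\nabla\tilde\varrho_N$ in $L^2(Q)$ coming from \eqref{est:nablarho1} and the Aubin--Lions type argument behind \eqref{estnablarho}; and the $\varepsilon\Delta\tilde\varrho_N$ term in the continuity equation passes by the weak $L^2(0,T;W^{1,2})$ convergence. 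The projections $P_N$ disappear in the limit since $P_N\bfvarphi\to\bfvarphi$ and $P_N$ is uniformly bounded on the relevant spaces. Finally, the energy estimate \eqref{eq:aprioriNtilde} passes to $(\tilde\varrho,\tilde\bfu)$ by weak lower semicontinuity, yielding the integrability claimed in Definition \ref{def:sol} for the viscous level.

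The main obstacle is the passage to the limit in the stochastic integral $\int_0^t\varPhi^N(\tilde\varrho_N,\tilde\varrho_N\tilde\bfu_N)\,\dif\tilde W_N$. This is handled by the method of \cite{on1,on2}: I would introduce the martingales
\[
\tilde M_N(t)=\big\langle\tilde\varrho_N\tilde\bfu_N(t)-\tilde\varrho_N\tilde\bfu_N(0),\bfvarphi\big\rangle-\int_0^t\big(\text{deterministic terms}\big)\,\dif s,
\]
show via the equality of laws and the uniform moment bounds that $\tilde M_N$ is a square-integrable $(\tilde{\mf}^N_t)$-martingale whose quadratic variation equals $\int_0^t\sum_k\langle g_k^N(\tilde\varrho_N,\tilde\varrho_N\tilde\bfu_N),\bfvarphi\rangle^2\,\dif s$ and whose cross-variation with $\tilde\beta_k^N$ equals $\int_0^t\langle g_k^N(\tilde\varrho_N,\tilde\varrho_N\tilde\bfu_N),\bfvarphi\rangle\,\dif s$. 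Passing to the limit in these identities — which requires the a.s. convergence of $(\tilde\varrho_N,\tilde\bfu_N,\tilde W_N)$ together with convergence of $g_k^N(\tilde\varrho_N,\tilde\varrho_N\tilde\bfu_N)\to g_k(\tilde\varrho,\tilde\varrho\tilde\bfu)$ (using the strong convergence of $\tilde\varrho_N$, the convergence $\sqrt{\tilde\varrho_N}\tilde\bfu_N\rightharpoonup\sqrt{\tilde\varrho}\tilde\bfu$ from \eqref{eq:tight1''}, continuity of $P_N$ and the growth bounds \eqref{growth1}--\eqref{growth2}), plus the Vitali/uniform integrability argument from the moment estimates to upgrade a.s. to $L^1(\tilde\Omega)$ convergence — identifies the limit martingale $\tilde M$ with $\int_0^t\varPhi(\tilde\varrho,\tilde\varrho\tilde\bfu)\,\dif\tilde W$ relative to the filtration \eqref{filtr}. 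A delicate point is the measurability/adaptedness: one must check that $\tilde\bfu$ is $(\tilde{\mf}_t)$-adapted and that $\varPhi(\tilde\varrho,\tilde\varrho\tilde\bfu)$ is predictable, which follows from the continuity of the restriction operators $\bfr_t$ and the a.s. convergence in the path space. Assembling all the limits then shows that $\big((\tilde\Omega,\tilde\mf,(\tilde{\mf}_t),\tilde\prst),\tilde\varrho,\tilde\bfu,\tilde W\big)$ satisfies \eqref{eq:approx} in the required weak sense with initial law $\Gamma$, completing the proof.
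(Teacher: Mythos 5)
Your overall architecture matches the paper's proof of Proposition~\ref{prop:martsol111} closely: transfer of the equations via equality of laws, term-by-term limit passage, and identification of the stochastic integral through the martingale-plus-quadratic/cross-variation method of \cite{on1,on2}, using the functionals $M,N^N,N_k^N$. However, there is a genuine gap in how you justify convergence of the artificial viscous term $\varepsilon\langle\nabla\tilde\bfu_N\nabla\tilde\varrho_N,\bfvarphi\rangle$. You claim strong convergence of $\nabla\tilde\varrho_N$ in $L^2(Q)$ ``coming from \eqref{est:nablarho1} and the Aubin--Lions type argument behind \eqref{estnablarho}.'' But \eqref{est:nablarho1} and \eqref{estnablarho} are only uniform \emph{bounds}; the path space $\mathcal{X}_\varrho$ contains $L^2(0,T;W^{1,2}(\mt))$ only with its weak topology, so Proposition~\ref{prop:skorokhod} yields only $\nabla\tilde\varrho_N\rightharpoonup\nabla\tilde\varrho$. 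An Aubin--Lions compactness based on \eqref{estnablarho} would upgrade this to strong convergence in $L^2(0,T;W^{1-\sigma,2})$ for $\sigma>0$, never in $W^{1,2}$ itself, so it does not deliver strong convergence of the gradient. The product $\nabla\tilde\bfu_N\nabla\tilde\varrho_N$ of two merely weakly convergent sequences therefore cannot be handled this way. The paper closes this gap via a separate argument (Lemma~\ref{eq:mass}): both $(\tilde\varrho_N,\tilde\bfu_N)$ and the limit $(\tilde\varrho,\tilde\bfu)$ are shown to solve the regularized continuity equation \eqref{eq:approx1}; testing each equation by its own density yields two energy identities, and passing to the limit in the identities forces convergence of the $L^2(Q)$ norms $\|\nabla\tilde\varrho_N\|_{L^2_{t,x}}\to\|\nabla\tilde\varrho\|_{L^2_{t,x}}$, which combined with the weak convergence upgrades it to strong convergence. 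Your proposal omits this step entirely.

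A secondary point: to pass to the limit in the nonlinear noise coefficients $g_k^N(\tilde\varrho_N,\tilde\varrho_N\tilde\bfu_N)$, you invoke only the weak convergence $\sqrt{\tilde\varrho_N}\tilde\bfu_N\rightharpoonup\sqrt{\tilde\varrho}\tilde\bfu$, but weak convergence in the momentum variable does not suffice for the pointwise convergence of a nonlinear map. The paper also establishes \emph{strong} convergence $\tilde\varrho_N\tilde\bfu_N\to\tilde\varrho\tilde\bfu$ in $L^q(\tilde\Omega\times Q)$ (Lemma~\ref{lem:strongq}), and moreover needs a separate argument for the operator convergence $\mathcal M_N^{1/2}[\tilde\varrho_N]\to\sqrt{\tilde\varrho}$ (via a power-series representation of the square root of a positive semidefinite operator). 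These pieces are necessary for \eqref{eq:gkN} and are not present in your sketch.
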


We divide the proof into two parts. First, we prove that the equation \eqref{eq:approx1} holds true and establish strong convergence of $\nabla\tilde\varrho_N$ in $L^2(0,T;L^2(\mt))$ a.s. Second, we focus on the momentum equation \eqref{eq:approx2} and employ a new general method of constructing martingale solutions to SPDEs, that does not rely on any kind of martingale representation theorem and therefore holds independent interest especially in situations where these representation theorems are no longer available.

\begin{lemma}\label{eq:mass}
$\big(\tilde\varrho,\tilde{\bu}\big)$
is a weak solution to \eqref{eq:approx1}, i.e. for all $\psi\in C^\infty(\mt)$ and
and all $t\in[0,T]$ there holds $\tilde\prst$-a.s.
\begin{align*}
\big\langle\tilde\varrho(t),\psi\big\rangle&=\big\langle\tilde\varrho(0),\psi\big\rangle+\int_0^t\big\langle\tilde\varrho\tilde\bfu,\nabla\psi\big\rangle\,\dif s-\varepsilon\int_0^t\big\langle\nabla\tilde\varrho,\nabla\psi\big\rangle\,\dif s.
\end{align*}
 Furthermore, $\tilde\prst$-a.s.
$$\nabla\tilde\varrho_N\rightarrow\nabla\tilde\varrho\qquad\text{in}\qquad L^2(0,T;L^2(\mt)).$$

\begin{proof}
Let us we define, for all $t\in[0,T]$ and $\psi\in C^\infty(\mt)$, the functional
$$L(\rho,\bfq)_t(\psi)=\langle\rho(t),\psi\rangle-\langle\rho(0),\psi\rangle-\int_0^t\langle\bfq,\nabla\psi\rangle\,\dif s+\varepsilon\int_0^t\langle\nabla\rho,\nabla\psi\rangle\,\dif s.$$
For notational simplicity we neglect the $\psi$-dependence in the following. The mapping $(\rho,\bfq)\mapsto L(\rho,\bfq)_t$ is continuous on $\mathcal{X}_\varrho\times\mathcal{X}_{\varrho\bu}$. Hence the laws of $L(\varrho_N,\varrho_N\bfu_N)_t$ and $L(\tilde\varrho_N,\tilde\varrho_N\tilde\bfu_N)_t$ coincide and since $(\varrho_N,\varrho_N\bfu_N)$ solves \eqref{eq:galerkin11} we deduce that
\begin{align*}
\tilde\stred\big|L(\tilde\varrho_N,\tilde\varrho_N\tilde\bfu_N)_t\big|^2=\stred\big|L(\varrho_N,\varrho_N\bfu_N)_t\big|^2=0.
\end{align*}
Next, we pass to the limit on the left hand side by \eqref{aprho1}, \eqref{estrhou} and the Vitali convergence theorem which verifies \eqref{eq:approx1}.

In order to prove the strong convergence of $\nabla \tilde \varrho_N$, we recall that due to Proposition \ref{prop:skorokhod} there holds $\tilde\p$-a.s.
$$\nabla\tilde\varrho_N\rightharpoonup\nabla\tilde\varrho\qquad\text{in}\qquad L^2(0,T;L^2(\mt)).$$
Hence in order to prove strong convergence it is sufficient to establish convergence of the norms in $L^2(0,T;L^2(\mt)).$ Since both $(\tilde\varrho_N,\tilde\bfu_N)$ and $(\tilde\varrho,\tilde\bfu)$ solve \eqref{eq:approx1}, we shall test by $\tilde\varrho_N$ and $\tilde\varrho$, respectively, to obtain $\tilde\p$-a.s.
\begin{align*}
\|\tilde\varrho_N(t)\|_{L^2}^2+2\varepsilon\int_0^t\|\nabla\tilde\varrho_N\|_{L^2}^2\,\dif s&=\|\tilde\varrho_N(0)\|^2_{L^2}-\int_{0}^t\int_{\mathbb T^3}\Div\tilde\bfu_N\,|\tilde\varrho_N|^2\,\dif x\,\dif s,\\
\|\tilde\varrho(t)\|_{L^2}^2+2\varepsilon\int_0^t\|\nabla\tilde\varrho\|_{L^2}^2\,\dif s&=\|\tilde\varrho(0)\|^2_{L^2}-\int_{0}^t\int_{\mathbb T^3}\Div\tilde\bfu\,|\tilde\varrho|^2\,\dif x\,\dif s.
\end{align*}
Due to Proposition \ref{prop:skorokhod} we pass to the limit in the first term on the left hand side (after taking a subsequence) as well as in both terms on the right hand side. This implies $\tilde\p$-a.s.
$$\|\nabla\tilde\varrho_N\|_{L^2_{t,x}}\rightarrow\|\nabla\tilde\varrho\|_{L^2_{t,x}}$$
and completes the proof.
\end{proof}
\end{lemma}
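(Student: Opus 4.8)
My plan is to establish the two assertions in turn, the second following from the first together with an energy identity for the limit density.

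\emph{Weak form of \eqref{eq:approx1}.} For fixed $\psi\in C^\infty(\mt)$ and $t\in[0,T]$ I would introduce the functional
$$L(\rho,\bfq)_t=\langle\rho(t),\psi\rangle-\langle\rho(0),\psi\rangle-\int_0^t\langle\bfq,\nabla\psi\rangle\,\dif s+\varepsilon\int_0^t\langle\nabla\rho,\nabla\psi\rangle\,\dif s,$$
which is (sequentially) continuous on $\mathcal{X}_\varrho\times\mathcal{X}_{\varrho\bu}$: the evaluation $\rho\mapsto\langle\rho(t),\psi\rangle$ is continuous on $C_w([0,T];L^\beta(\mt))$, the map $\bfq\mapsto\int_0^t\langle\bfq,\nabla\psi\rangle\,\dif s$ on $C_w([0,T];W^{-1/2,2}(\mt))$, and $\rho\mapsto\int_0^t\langle\nabla\rho,\nabla\psi\rangle\,\dif s$ on $(L^2(0,T;W^{1,2}(\mt)),w)$. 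Since $(\varrho_N,\varrho_N\bfu_N)$ solves \eqref{eq:galerkin11} pathwise we have $L(\varrho_N,\varrho_N\bfu_N)_t=0$ $\prst$-a.s., and equality of joint laws (together with Lemma \ref{lemma:identif}, which matches the product structure on the new space) transfers this to $\tilde\stred|L(\tilde\varrho_N,\tilde\varrho_N\tilde\bfu_N)_t|^2=\stred|L(\varrho_N,\varrho_N\bfu_N)_t|^2=0$. Then I would pass to the limit using the a.s.\ convergences of Proposition \ref{prop:skorokhod}, the weak convergence $\tilde\varrho_N\tilde\bfu_N\rightharpoonup\tilde\varrho\tilde\bfu$ in $L^1(0,T;L^1(\mt))$ noted in the proof of Lemma \ref{lemma:identif}, and the uniform bounds \eqref{aprho1}, \eqref{estrhou} (which supply the uniform integrability in $\omega$ for the Vitali convergence theorem), concluding $\tilde\stred|L(\tilde\varrho,\tilde\varrho\tilde\bfu)_t|^2=0$. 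A density argument in $\psi$ and the weak time-continuity $\tilde\varrho\in C_w([0,T];L^\beta(\mt))$ upgrade this to the stated identity for all $t$ and all $\psi$ simultaneously, $\tilde\prst$-a.s.

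\emph{Strong convergence of $\nabla\tilde\varrho_N$.} Proposition \ref{prop:skorokhod} already provides $\nabla\tilde\varrho_N\rightharpoonup\nabla\tilde\varrho$ weakly in $L^2(0,T;L^2(\mt))$ $\tilde\prst$-a.s., so it is enough to prove $\|\nabla\tilde\varrho_N\|_{L^2(0,T;L^2)}\to\|\nabla\tilde\varrho\|_{L^2(0,T;L^2)}$. Testing \eqref{eq:approx1} by the respective densities (legitimate for $\tilde\varrho_N$, which are classical solutions, and for $\tilde\varrho$ since \eqref{estnablarho} yields $\tilde\varrho\in L^r(0,T;W^{1,2}(\mt))$ with $r>2$, whence $\partial_t\tilde\varrho\in L^r(0,T;W^{-1,2}(\mt))$ and the usual chain rule for $t\mapsto\|\tilde\varrho(t)\|_{L^2}^2$ applies) gives
$$\|\tilde\varrho_N(t)\|_{L^2}^2+2\varepsilon\int_0^t\|\nabla\tilde\varrho_N\|_{L^2}^2\,\dif s=\|\tilde\varrho_N(0)\|_{L^2}^2-\int_0^t\int_{\mt}\Div\tilde\bfu_N\,|\tilde\varrho_N|^2\,\dif x\,\dif s$$
and the same identity with $(\tilde\varrho,\tilde\bfu)$ in place of $(\tilde\varrho_N,\tilde\bfu_N)$. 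Passing to the limit: the initial terms converge because $\tilde\varrho_N(0)=\tilde\varrho_{0,N}\to\tilde\varrho_0$ \emph{strongly} in $L^2(\mt)$ (the $\mathcal{X}_{\varrho_0}$-component); $\|\tilde\varrho_N(t)\|_{L^2}^2\to\|\tilde\varrho(t)\|_{L^2}^2$ for a.e.\ $t$, along a subsequence, from the strong convergence $\tilde\varrho_N\to\tilde\varrho$ in $L^4(Q)$; and the cubic term converges since $\Div\tilde\bfu_N\rightharpoonup\Div\tilde\bfu$ weakly in $L^2(Q)$ while $|\tilde\varrho_N|^2\to|\tilde\varrho|^2$ strongly in $L^2(Q)$, again by the $L^4(Q)$-convergence. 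Comparing the two limiting identities forces $\int_0^t\|\nabla\tilde\varrho_N\|_{L^2}^2\,\dif s\to\int_0^t\|\nabla\tilde\varrho\|_{L^2}^2\,\dif s$ for a.e.\ $t$; the left-hand sides being nondecreasing in $t$ and the limit continuous, this holds for every $t$, in particular $t=T$, and with the weak convergence it gives $\nabla\tilde\varrho_N\to\nabla\tilde\varrho$ strongly in $L^2(0,T;L^2(\mt))$ $\tilde\prst$-a.s.

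I do not expect a serious obstacle here: the proof is essentially a matter of organizing the convergences delivered by the Skorokhod representation, the decisive inputs being the \emph{strong} $L^4(Q)$-convergence of the densities and the \emph{strong} $L^2(\mt)$-convergence of the initial data, which linearize the quadratic and cubic terms against the merely weak convergence of $\nabla\tilde\varrho_N$ and $\tilde\bfu_N$, together with weak lower semicontinuity. The one point deserving a remark is the chain rule used to test the limiting parabolic equation \eqref{eq:approx1} by $\tilde\varrho$, and this is unproblematic precisely because the viscous regularization $\varepsilon>0$ buys the extra integrability \eqref{estnablarho}.
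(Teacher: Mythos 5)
Your proof follows the same route as the paper: transfer the equation via equality of laws and continuity of the functional $L$ on $\mathcal{X}_\varrho\times\mathcal{X}_{\varrho\bfu}$, then derive the strong convergence of $\nabla\tilde\varrho_N$ from the $L^2$ energy identity for the parabolic continuity equation by combining weak convergence of the gradients with convergence of the norms. You add some welcome detail the paper glosses over — verifying continuity of $L$ component by component, justifying the chain rule needed to test the limit equation by $\tilde\varrho$ via the extra integrability \eqref{estnablarho}, and the monotonicity argument upgrading the a.e.-$t$ norm convergence to $t=T$ — but the core argument is identical.
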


\begin{lemma}\label{lem:strongq}
We have for all $1\leq q<\tfrac{2\beta}{\beta+1}$ where $\beta>\max\set{\tfrac{9}{2},\gamma}$
$$\tilde\varrho_N\tilde{\bfu}_N\rightarrow\tilde\varrho\tilde{\bfu}\qquad\text{in}\qquad L^q(\tilde\Omega\times Q).$$
\end{lemma}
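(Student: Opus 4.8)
The plan is to factorise $\tilde\varrho_N\tilde\bfu_N=\sqrt{\tilde\varrho_N}\cdot\big(\sqrt{\tilde\varrho_N}\tilde\bfu_N\big)$ and to combine the strong convergence of $\tilde\varrho_N$ in $L^4(Q)$ (coming from the topology of $\mathcal{X}_\varrho$) with the strong $L^2(Q)$-convergence of $\sqrt{\tilde\varrho_N}\tilde\bfu_N$ already contained in the proof of Corollary~\ref{cor:con}; the resulting $\tilde\prst$-a.s.\ convergence is then upgraded to convergence in $L^q(\tilde\Omega\times Q)$ by the Vitali convergence theorem.

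First I would record the two ingredients. By Proposition~\ref{prop:skorokhod}, $\tilde\varrho_N\to\tilde\varrho$ in $L^4(Q)$, hence in $L^1(Q)$, $\tilde\prst$-a.s.; using the elementary bound $|\sqrt a-\sqrt b|^2\le|a-b|$ for $a,b\ge0$ this gives $\sqrt{\tilde\varrho_N}\to\sqrt{\tilde\varrho}$ in $L^2(Q)$ $\tilde\prst$-a.s. Since in addition $\tilde\varrho_N$ is bounded in $L^\infty(0,T;L^\beta(\mt))$ $\tilde\prst$-a.s.\ (cf.\ \eqref{aprho1} and equality of laws), $\sqrt{\tilde\varrho_N}$ is bounded in $L^{2\beta}(Q)$, and interpolation yields $\sqrt{\tilde\varrho_N}\to\sqrt{\tilde\varrho}$ in $L^s(Q)$ $\tilde\prst$-a.s.\ for every $s\in[2,2\beta)$. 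On the other hand, \eqref{eq:tight1''} and \eqref{eq:tight2''} from the proof of Corollary~\ref{cor:con} say that $\sqrt{\tilde\varrho_N}\tilde\bfu_N$ converges to $\sqrt{\tilde\varrho}\tilde\bfu$ weakly in $L^2(Q)$ together with its norms, hence strongly in $L^2(Q)$ $\tilde\prst$-a.s.

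Next I would split
\[
\tilde\varrho_N\tilde\bfu_N-\tilde\varrho\tilde\bfu=\big(\sqrt{\tilde\varrho_N}-\sqrt{\tilde\varrho}\big)\,\sqrt{\tilde\varrho_N}\tilde\bfu_N+\sqrt{\tilde\varrho}\,\big(\sqrt{\tilde\varrho_N}\tilde\bfu_N-\sqrt{\tilde\varrho}\tilde\bfu\big)
\]
and estimate by Hölder's inequality. For the first term, $\sqrt{\tilde\varrho_N}\tilde\bfu_N$ is bounded in $L^2(Q)$ while $\sqrt{\tilde\varrho_N}-\sqrt{\tilde\varrho}\to0$ in $L^s(Q)$ for all $s<2\beta$, so the product tends to $0$ in $L^q(Q)$ for $\tfrac1q=\tfrac1s+\tfrac12$; letting $s\uparrow2\beta$ covers every $q<\tfrac{2\beta}{\beta+1}$. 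For the second term, $\sqrt{\tilde\varrho}\in L^{2\beta}(Q)$ and $\sqrt{\tilde\varrho_N}\tilde\bfu_N-\sqrt{\tilde\varrho}\tilde\bfu\to0$ in $L^2(Q)$, which already gives convergence in $L^{2\beta/(\beta+1)}(Q)$, a fortiori in $L^q(Q)$ for $q<\tfrac{2\beta}{\beta+1}$. Hence $\tilde\varrho_N\tilde\bfu_N\to\tilde\varrho\tilde\bfu$ in $L^q(Q)$ $\tilde\prst$-a.s.\ for every $1\le q<\tfrac{2\beta}{\beta+1}$.

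Finally, to integrate over $\tilde\Omega$ I would invoke Vitali's theorem. By \eqref{eq:aprioriNtilde}, \eqref{estrhou} and equality of laws, $\tilde\varrho_N\tilde\bfu_N$ is bounded in $L^{p'}(\tilde\Omega;L^\infty(0,T;L^{2\beta/(\beta+1)}(\mt)))$ for every $p'\in[1,\infty)$, and since $Q$ has finite time span $\|\tilde\varrho_N\tilde\bfu_N\|_{L^q(Q)}\le C\|\tilde\varrho_N\tilde\bfu_N\|_{L^\infty_tL^{2\beta/(\beta+1)}_x}$; therefore $\{\|\tilde\varrho_N\tilde\bfu_N-\tilde\varrho\tilde\bfu\|_{L^q(Q)}^q\}_N$ is bounded in $L^{p'}(\tilde\Omega)$ for all $p'$, hence uniformly integrable. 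Combined with the $\tilde\prst$-a.s.\ convergence to $0$ this yields $\tilde\varrho_N\tilde\bfu_N\to\tilde\varrho\tilde\bfu$ in $L^q(\tilde\Omega\times Q)$. The only genuinely delicate point is the $\tilde\prst$-a.s.\ strong $L^2(Q)$-convergence of $\sqrt{\tilde\varrho_N}\tilde\bfu_N$, which has been secured already in Corollary~\ref{cor:con}; what remains here is routine interpolation bookkeeping together with the uniform moment bounds needed for Vitali's theorem.
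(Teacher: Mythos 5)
Your proof follows the same route as the paper's: both pass through the strong $L^2(Q)$ convergence of $\sqrt{\tilde\varrho_N}\tilde\bfu_N$ (weak convergence plus convergence of norms from Corollary~\ref{cor:con}), combine it with the a.s.\ $L^4(Q)$ convergence of the density, and finish via Vitali using the moment bound~\eqref{estrhou}. Your interpolation to $L^s(Q)$ for $s<2\beta$ together with the explicit H\"older split simply spells out the step the paper compresses into ``the higher integrability from~\eqref{estrhou} implies the claim.''
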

\begin{proof}
Similar to the proof of \eqref{conv:rhovv1} we have $\tilde \p$-a.s.
\begin{align*}
\int_Q \varrho_N|\tilde\bfu_N|^2\dxt=\int_Q \varrho_N\tilde\bfu_N\cdot\tilde\bfu_N\dxt\longrightarrow \int_Q \varrho\tilde\bfu\cdot\tilde\bfu\dxt,
\end{align*}
so $\sqrt{\tilde\varrho_N}\tilde\bfu_N\rightarrow \sqrt{\tilde\varrho}\tilde\bfu$ in $L^2(Q)$. Combining this with Proposition \ref{prop:skorokhod} (and taking a subsequence) yields $\tilde \p$-a.s.
$$\tilde\varrho_N\tilde{\bfu}_N\rightarrow\tilde\varrho\tilde{\bfu}\qquad\text{in}\qquad L^1(Q).$$
The higher integrability from \eqref{estrhou} implies the claim.
\end{proof}

\begin{proposition}\label{prop:limit1}
The process $\tilde{W}$ is a $(\tilde{\mf}_t)$-cylindrical Wiener process, where the filtration $(\tilde\mf_t)$ was defined in \eqref{filtr}. Besides,
$$\big((\tilde{\Omega},\tilde{\mf},(\tilde{\mf}_t),\tilde{\prst}),\tilde\varrho,\tilde{\bu},\tilde{W}\big)$$
is a finite energy weak martingale solution to \eqref{eq:approx}.\footnote{This has to be understood in the sense of Definition \ref{def:sol} via an obvious modification by adding the artificial viscosity and artificial pressure terms.}

\begin{proof}


The first part of the claim follows immediately from the fact that $\tilde W_N$ has the same law as $W$. As a consequence, there exists a collection of mutually independent real-valued $(\tilde{\mf}_t)$-Wiener processes $(\tilde{\beta}^{N}_k)_{k\geq1}$ such that $\tilde{W}_N=\sum_{k\geq1}\tilde{\beta}^{N}_k e_k$ , i.e. there exists a collection of mutually independent real-valued $(\tilde{\mf}_t)$-Wiener processes $(\tilde{\beta}_k)_{k\geq1}$ such that $\tilde{W}=\sum_{k\geq1}\tilde{\beta}_k e_k$.

Let us now define for all $t\in[0,T]$ and $\bfvarphi\in \cup_{N\in\mn}X_N$ the functionals (we neglect the dependence on the fixed test-function in the notation)
\begin{equation*}
\begin{split}
M(\rho,\bfv,\bfq)_t&=\big\langle\bfq(t),\bfvarphi\big\rangle-\big\langle \bfq(0),\bfvarphi\big\rangle+\int_0^t\big\langle\bfq\otimes\bfv,\nabla\bfvarphi\big\rangle\,\dif r-\nu\int_0^t\big\langle\nabla\bfv,\nabla\bfvarphi\big\rangle\,\dif r\\
&\quad-(\lambda+\nu)\int_0^t\big\langle\diver\bfv,\diver\bfvarphi\big\rangle\,\dif r+a\int_0^t\big\langle\rho^\gamma,\diver\bfvarphi\big\rangle\,\dif r+\delta\int_0^t\big\langle\rho^\beta,\diver\bfvarphi\big\rangle\,\dif r\\
&\quad-\varepsilon\int_0^t\big\langle\nabla \bfv\nabla\rho,\bfvarphi\big\rangle\,\dif r,\\
\end{split}
\end{equation*}
\begin{align*}
N^N(\rho,\bfq)_t&=\sum_{k\geq1}\int_0^t\big\langle g_k^N(\rho, \bfq ),\bfvarphi\big\rangle^2\,\dif r,&\qquad N(\rho,\bfq)_t&=\sum_{k\geq1}\int_0^t\big\langle g_k(\rho, \bfq ),\bfvarphi\big\rangle^2\,\dif r\\
N_k^N(\rho,\bfq)_t&=\int_0^t\big\langle g_k^N(\rho,\bfq),\bfvarphi\big\rangle\,\dif r,&\qquad N_k(\rho,\bfq)_t&=\int_0^t\big\langle g_k(\rho,\bfq),\bfvarphi\big\rangle\,\dif r.
\end{align*}
Let $M(\rho,\bfv,\bfq)_{s,t}$ denote the increment $M(\rho,\bfv,\bfq)_{t}-M(\rho,\bfv,\bfq)_{s}$ and similarly for the other processes. 
Note that the proof will be complete once we show that the process $M(\tilde\varrho,\tilde\bfu,\tilde\varrho\tilde\bfu)$ is a $(\tilde\mf_t)$-martingale and its quadratic and cross variations satisfy, respectively,
\begin{equation}\label{mart}
\begin{split}
\langle\!\langle M(\tilde\varrho,\tilde\bfu,\tilde\varrho\tilde\bfu)\rangle\!\rangle&=N(\tilde\varrho,\tilde\varrho\tilde\bfu),\quad\qquad\langle\!\langle M(\tilde\varrho,\tilde\bfu,\tilde\varrho\tilde\bfu),\tilde \beta_k\rangle\!\rangle=N_k(\tilde\varrho,\tilde\varrho\tilde\bfu).
\end{split}
\end{equation}
Indeed, in that case we have due to bilinearity of the cross-variation
\begin{align*}
&\bigg\langle\!\!\!\bigg\langle M(\tilde\varrho,\tilde\bu,\tilde\varrho\tilde\bfu)-\int_0^\tec \big\langle\varPhi(\tilde\varrho,\tilde\varrho\tilde\bu)\,\dif \tilde W,\bfvarphi\big\rangle\bigg\rangle\!\!\!\bigg\rangle\\
&=\langle\!\langle M(\tilde\varrho,\tilde\bfu,\tilde\varrho\tilde\bfu)\rangle\!\rangle-2\sum_{k\geq 1}\int_0^\tec\langle g_k(\tilde\varrho,\tilde\varrho\tilde\bfu),\bfvarphi\rangle\,\dif \langle\!\langle M(\tilde\varrho,\tilde\bfu,\tilde\varrho\tilde\bfu),\tilde\beta_k\rangle\!\rangle+\bigg\langle\!\!\!\bigg\langle\int_0^\tec \big\langle\varPhi(\tilde\varrho,\tilde\varrho\tilde\bu)\,\dif \tilde W,\bfvarphi\big\rangle\bigg\rangle\!\!\!\bigg\rangle\\
&=0
\end{align*}
and \eqref{eq:approx2} is satisfied.

Let us verify \eqref{mart}. To this end, we claim that with the above uniform estimates in hand, the mappings
$$(\rho,\bfv,\bfq)\mapsto M(\rho,\bfv,\bfq)_t,\quad\,(\rho,\bfv,\bfq)\mapsto N^N(\rho,\bfq)_t,\,\quad (\rho,\bfv,\bfq)\mapsto N^N_k(\rho,\bfq)_t$$
and
$$(\rho,\bfv,\bfq)\mapsto N(\rho,\bfq)_t,\,\quad (\rho,\bfv,\bfq)\mapsto N_k(\rho,\bfq)_t$$
are well-defined and measurable on a subspace of $\mathcal{X}_\varrho\times\mathcal{X}_\bu\times\mathcal{X}_{\varrho\bfu}$ where the joint law of $(\tilde\varrho,\tilde\bfu,\tilde\bfq)$ is supported, i.e. where all the uniform estimates hold true and $(\varrho(t))_{\mt}$ is bounded in $t$ and $\omega$.
Indeed, in the case of $N^N(\rho,\bfq)_t$ we have similarly to \eqref{a1} - \eqref{a3}
\begin{align}\label{34}
\sum_{k\geq1}\int_0^t\big\langle g_k^N(\rho, \bfq ),\bfvarphi\big\rangle^2\,\dif s&\leq C\int_0^t\|\rho\|_{L^2}\int_{\mt}\Big(1+\rho^\gamma+\frac{|\bfq|^2}{\rho}\Big)\,\dif x\,\dif s,
\end{align}
for $N(\rho,\bfq)_t$ by \eqref{growth1} similarly to \eqref{stochest}
\begin{align*}
\sum_{k\geq 1}\int_0^t\big\langle g_k(\rho,\bfq),\bfvarphi\big\rangle^2\,\dif s&\leq C\sum_{k\geq 1}\int_0^t\|\, g_k(\rho,\bfq)\|_{L^1}^2\,\dif s\\
&\leq C\int_0^t\int_{\mt}\Big(\rho +\rho^{\gamma}+\frac{|\bfq|^2}{\rho}\Big)\,\dif x\,\dif s
\end{align*}
Both are finite due to \eqref{aprho1} and \eqref{est:rhobfu2}.
$M(\rho,\bfv,\bfq)$, $N^N_k(\rho,\bfv)_t\,$ and $N_k(\rho,\bfv)_t\,$ can be handled similarly and therefore, the following random variables have the same laws
\begin{align*}
M(\varrho_N,\bu_N,\varrho_N\bfu_N)&\distr M(\tilde\varrho_N,\tilde\bu_N,\tilde\varrho_N\tilde\bfu_N),\\
N^N(\varrho_N,\varrho_N\bfu_N)&\distr N^N(\tilde\varrho_N,\tilde\varrho_N\tilde\bfu_N),\\
N^N_k(\varrho_N,\varrho_N\bfu_N)&\distr N^N_k(\tilde\varrho_N,\tilde\varrho_N\tilde\bfu_N).
\end{align*}

Let us now fix times $s,t\in[0,T]$ such that $s<t$ and let
$$h:\mathcal{X}_\varrho|_{[0,s]}\times\mathcal{X}_\bfu|_{[0,s]}\times\mathcal{X}_W|_{[0,s]}\rightarrow [0,1]$$
be a continuous function.
Since
$$M(\varrho_N,\bu_N,\varrho_N\bfu_N)_t=\int_0^t\big\langle\varPhi^N(\varrho_N,\varrho_N\bu_N)\,\dif W,\bfvarphi\big\rangle=\sum_{k\geq1}\int_0^t\big\langle g_k^N(\varrho_N,\varrho_N\bu_N),\bfvarphi\big\rangle\,\dif\beta_k$$
is a square integrable $(\mf_t)$-martingale, we infer that
$$\big[M(\varrho_N,\bu_N,\varrho_N\bfu_N)\big]^2-N^N(\varrho_N,\varrho_N\bfu_N),\quad M(\varrho_N,\bu_N,\varrho_N\bfu_N)\beta_k-N_k^N(\varrho_N,\varrho_N\bfu_N)$$
are $(\mf_t)$-martingales.
Besides, it follows from the equality of laws that (recall that the restriction operator $\bfr_s$ was defined in \eqref{restr})
\begin{equation}\label{exp11}
\begin{split}
&\tilde{\stred}\,h\big(\bfr_s\tilde\varrho_N, \bfr_s\tilde{\bu}_N,\bfr_s\tilde{W}_N\big)\big[M(\tilde\varrho_N,\tilde\bu_N,\tilde\varrho_N\tilde\bfu_N)_{s,t}\big]\\
&=\stred\,h\big(\bfr_s\varrho_N, \bfr_s\bu_N, \bfr_s W_N\big)\big[M(\varrho_N,\bu_N,\varrho_N\bfu_N)_{s,t}\big]=0,
\end{split}
\end{equation}
\begin{equation}\label{exp21}
\begin{split}
&\tilde{\stred}\,h\big(\bfr_s\tilde\varrho_N, \bfr_s\tilde{\bu}_N,\bfr_s\tilde{W}_N\big)\bigg[[M(\tilde\varrho_N,\tilde\bu_N,\tilde\varrho_N\tilde\bfu_N)^2]_{s,t}-N^N(\tilde\varrho_N,\tilde\varrho_N\tilde\bfu_N)_{s,t}\bigg]\\
&=\stred\,h\big(\bfr_s\varrho_N, \bfr_s\bu_N, \bfr_s W_N\big)\bigg[[M(\varrho_N,\bu_N,\varrho_N\bfu_N)^2]_{s,t}-N^N(\varrho_N,\varrho_N\bfu_N)_{s,t}\bigg]=0,
\end{split}
\end{equation}
\begin{equation}\label{exp31}
\begin{split}
&\tilde{\stred}\,h\big(\bfr_s\tilde\varrho_N, \bfr_s\tilde{\bu}_N,\bfr_s\tilde{W}_N\big)\bigg[[M(\tilde\varrho_N,\tilde\bu_N,\tilde\varrho_N\tilde\bfu_N)\tilde{\beta}_k^N]_{s,t}-N_k^N(\tilde\varrho_N,\tilde\varrho_N\tilde\bfu_N)_{s,t}\bigg]\\
&=\stred\,h\big(\bfr_s\varrho_N, \bfr_s\bu_N, \bfr_s W_N\big)\bigg[[M(\varrho_N,\bu_N,\varrho_N\bfu_N)\beta_k]_{s,t}-N_k^N(\varrho_N,\varrho_N\bfu_N)_{s,t}\bigg]=0.
\end{split}
\end{equation}

As the next step, we employ the assumptions \eqref{growth1} and \eqref{growth2} and the estimates \eqref{apv1}, \eqref{aprho1}, \eqref{estrhou}, \eqref{estnablarho}, \eqref{est:rhobfu2} together with Proposition \ref{prop:skorokhod}, Corollary \ref{cor:con}, Lemma \ref{eq:mass}, Lemma \ref{lem:strongq} and the Vitali convergence theorem, pass to the limit in \eqref{exp11}, \eqref{exp21} and \eqref{exp31} and establish the following identities that justify \eqref{mart}
\begin{align*}
\tilde{\stred}\,h\big(\bfr_s\tilde\varrho, \bfr_s\tilde{\bu},\bfr_s\tilde{W}\big)\big[M(\tilde\varrho,\tilde\bu,\tilde\varrho\tilde\bfu)_{s,t}\big]=0,\\
\tilde{\stred}\,h\big(\bfr_s\tilde\varrho, \bfr_s\tilde{\bu},\bfr_s\tilde{W}\big)\bigg[[M(\tilde\varrho,\tilde\bu,\tilde\varrho\tilde\bfu)^2]_{s,t}-N(\tilde\varrho,\tilde\varrho\tilde\bfu)_{s,t}\bigg]=0,\\
\tilde{\stred}\,h\big(\bfr_s\tilde\varrho, \bfr_s\tilde{\bu},\bfr_s\tilde{W}\big)\bigg[[M(\tilde\varrho,\tilde\bu,\tilde\varrho\tilde\bfu)\tilde{\beta}_k]_{s,t}-N_k(\tilde\varrho,\tilde\varrho\tilde\bfu)_{s,t}\bigg]=0.
\end{align*}
Let us comment on the passage to the limit in the terms coming from the stochastic integral, i.e. $N^N(\tilde\varrho_N,\tilde\varrho_N\tilde\bu_N)$ and $N_k^N(\tilde\varrho_N,\tilde\varrho_N\tilde\bu_N)$. The convergence in \eqref{exp31} being easier, let us only focus on \eqref{exp21} in detail.
As a first step we aim to show for all $ k\in\mn$ that
\begin{align}\label{eq:gkN}
\big\langle g_k^N(\tilde\varrho_N,\tilde\varrho_N\tilde\bfu_N),\bfvarphi\big\rangle\rightarrow\big\langle g_k(\tilde\varrho,\tilde\varrho\tilde\bfu),\bfvarphi\big\rangle\qquad\tilde\prst\otimes\mathcal{L}\text{-a.e.}
\end{align}
We first remark that by definition and the symmetry of $\mathcal M_N[\varrho]$ we have
\begin{align*}
\big\langle g_k^N(\tilde\varrho_N,\tilde\varrho_N\tilde\bfu_N),\bfvarphi\big\rangle=\bigg\langle\mathcal M_N^{\frac{1}{2}}[\tilde\varrho_N]P_N \Big(\frac{g_k(\tilde\varrho_N,\tilde\varrho_N\tilde\bfu_N)}{\sqrt{\tilde\varrho_N}}\Big),\bfvarphi\bigg\rangle=\bigg\langle\frac{g_k(\tilde\varrho_N,\tilde\varrho_N\tilde\bfu_N)}{\sqrt{\tilde\varrho_N}},\mathcal M_N^{\frac{1}{2}}[\tilde\varrho_N]\bfvarphi\bigg\rangle.
\end{align*}
As a consequence of the strong convergences in Proposition \ref{prop:skorokhod} and Lemma \ref{lem:strongq} we have (at least after taking a subsequence)
\begin{align}\label{eq:gkN1}
\frac{g_k(\tilde\varrho_N,\tilde\varrho_N\tilde\bfu_N)}{\sqrt{\tilde\varrho_N}}\longrightarrow \frac{g_k(\tilde\varrho,\tilde\varrho\tilde\bfu)}{\sqrt{\tilde\varrho}}\quad\text{in}\quad L^2(\mt)\quad\tilde\p\otimes\mathcal{L}\text{-a.e.}
\end{align}
where we also used \eqref{growth1}, \eqref{growth2} and the a priori estimates.
Moreover, for every $\bfv\in W^{l,2}(\mt)$, using again strong convergence of $\tilde\varrho_N$, the embedding $W^{l,2}(\mt)\hookrightarrow L^\infty(\mt)$ and continuity of $P_N$ on $L^2(\mt)$ and $W^{l,2}(\mt)$, we have
\begin{align*}
\big\|\mathcal{M}_N[\tilde\varrho_N]\bfv-\tilde\varrho\bfv\big\|_{L^2}&\leq\big\|P_N\big((\tilde\varrho_N-\tilde\varrho)\,P_N \bfv\big)\big\|_{L^2}+\big\|P_N(\tilde\varrho \,P_N \bfv)-P_N(\tilde\varrho\bfv)\big\|_{L^2}\\
&\quad+\big\|P_N(\tilde\varrho\bfv)-\tilde\varrho\bfv\big\|_{L^2}\\
&\leq c\|\tilde\varrho_N-\tilde\varrho\|_{L^2}\|\bfv\|_{W^{l,2}}+c\|\tilde\varrho\|_{L^2}\big\|P_N\bfv-\bfv\big\|_{W^{l,2}}+\big\|P_N(\tilde\varrho\bfv)-\tilde\varrho\bfv\big\|_{L^2}\\
&\longrightarrow0\qquad\tilde\p\otimes\mathcal{L}\text{-a.e}.
\end{align*}
Hence $\mathcal{M}_N[\tilde\varrho_N]\,\cdot\rightarrow\tilde\varrho\,\cdot$ pointwise as an operator from $W^{l,2}(\mt)\rightarrow L^2(\mt)$. From
a formal point of view it should follow that
$$\mathcal M_N^{ \frac{1}{2}}[\tilde\varrho_N]\,\cdot\rightarrow \sqrt{\tilde\varrho}\,\cdot\qquad\tilde\p\otimes\mathcal{L}\text{-a.e}.$$
in the same sense (recalling that the square root of a positive semidefinite operator is unique). In order to make this argument rigorous we extend $\mathcal M_N[\rho]$ to an operator $W^{-l,2}(\mt)\rightarrow W^{-l,2}(\mt)$ (thus we stay in the same space). So we set
\begin{align*}
&\overline{\mathcal M}_N[\tilde\varrho_N]:W^{-l,2}(\mt)\rightarrow W^{-l,2}(\mt),\\
&\overline{\mathcal M}_N[\tilde\varrho_N]\bfPsi(\bfv)=\langle\mathcal M_N[\tilde\varrho_N]\bfPhi,\bfv\rangle_2,\quad 
\bfPsi(\bfw)=\langle\bfPhi,\bfw\rangle_{l,2},
\end{align*}
where $\bfw,\bfv,\bfPhi\in W^{l,2}(\mt)$. Now we have
\begin{align*}
\overline{\mathcal M}_N[\tilde\varrho_N]\bfPsi\rightarrow \langle\tilde\varrho\,\bfPhi,\cdot\rangle_2\qquad\tilde\p\otimes\mathcal{L}\text{-a.e}.
\end{align*}
pointwise as an operator from $W^{-l,2}(\mt)\rightarrow W^{-l,2}(\mt)$ and hence \begin{align*}
\overline{\mathcal M}^{\frac{1}{2}}_N[\tilde\varrho_N]\bfPsi\rightarrow \big\langle\sqrt{\tilde\varrho}\,\bfPhi,\cdot\big\rangle_2\qquad\tilde\p\otimes\mathcal{L}\text{-a.e}.
\end{align*}
The latter can be easily justified by using the series expansion
\begin{align*}
A^{\frac{1}{2}}=\sum_{k=0}^\infty c_k(\mathrm{I}-A)^k,\quad c_k\in\R,\quad\sum_{k=0}^\infty|c_k|<\infty,
\end{align*}
which holds for every symmetric positive semidefinite operator $A$ on some real Hilbert space $\mathscr H$ with $\sup_{\|z\|_{\mathscr H}\leq1}\langle Az,z\rangle_{\mathscr H}\leq 1$. Finally, we gain
\begin{align}\label{eq:gkN2}
\mathcal M^{\frac{1}{2}}_N[\tilde\varrho_N]\,\cdot\rightarrow \sqrt{\tilde\varrho}\,\cdot\qquad\tilde\p\otimes\mathcal L\text{-a.e.}
\end{align}
pointwise as an operator from $W^{l,2}(\mt)\rightarrow L^2(\mt)$. Plugging \eqref{eq:gkN1} and \eqref{eq:gkN2} together we have shown \eqref{eq:gkN}.

The convergence
\begin{align*}
\sum_{k\geq1}\big\langle g^N_k(\tilde\varrho_N,\tilde\varrho_N\tilde\bfu_N),\bfvarphi\big\rangle^2\rightarrow \sum_{k\geq 1}\big\langle g_k(\tilde\varrho,\tilde\varrho\tilde\bfu),\bfvarphi\big\rangle^2\qquad\tilde\prst\otimes\mathcal{L}\text{-a.e}.
\end{align*}
follows once we show that
\begin{equation}\label{convL2}
\big\langle\varPhi^N(\tilde\varrho_N,\tilde\varrho_N\tilde\bfu_N)\,\cdot\,,\bfvarphi\big\rangle\rightarrow\big\langle\varPhi(\tilde\varrho,\tilde\varrho\tilde\bfu)\,\cdot\,,\bfvarphi\big\rangle\qquad\text{in}\qquad L_2(\mathfrak{U};\mr)\qquad\tilde\prst\otimes\mathcal{L}\text{-a.e.}
\end{equation}
To this end, we estimate
\begin{align*}
I&=\big\|\big\langle\varPhi^N(\tilde\varrho_N,\tilde\varrho_N\tilde\bfu_N)\,\cdot\,,\bfvarphi\big\rangle-\big\langle\varPhi(\tilde\varrho,\tilde\varrho\tilde\bfu)\,\cdot\,,\bfvarphi\big\rangle\big\|_{L_2(\mathfrak{U};\mr)}\\
&\leq \big\|\big\langle\varPhi^N(\tilde\varrho_N,\tilde\varrho_N\tilde\bfu_N)\,\cdot\,,\bfvarphi\big\rangle-\big\langle\varPhi(\tilde\varrho_N,\tilde\varrho_N\tilde\bfu_N)\,\cdot\,,\bfvarphi\big\rangle\big\|_{L_2(\mathfrak{U};\mr)}\\
&\qquad+\big\|\big\langle\varPhi(\tilde\varrho_N,\tilde\varrho_N\tilde\bfu_N)\,\cdot\,,\bfvarphi\big\rangle-\big\langle\varPhi(\tilde\varrho,\tilde\varrho\tilde\bfu)\,\cdot\,,\bfvarphi\big\rangle\big\|_{L_2(\mathfrak{U};\mr)}=I_1+I_2.
\end{align*}
The first term can be estimated as follows
\begin{align*}
I_1&\leq \bigg(\sum_k\Big\langle\frac{g_k(\tilde\varrho_N,\tilde\varrho_N\tilde\bfu_N)}{\sqrt{\tilde\varrho_N}},P_N\mathcal M_N^{\frac{1}{2}}[\tilde\varrho_N]\bfvarphi-\sqrt{\tilde\varrho_N}\bfvarphi\Big\rangle^2\bigg)^{\frac{1}{2}}\\
&\leq C\bigg(\,\sum_{k\geq1}\int_{\mt}\tilde\varrho_N^{-1}| g_{k}(\cdot,\tilde \varrho_N,\tilde\varrho_N\tilde\bfu_N)|^2\dx\bigg)^{\frac{1}{2}}\Big\|P_N\mathcal M_N^{\frac{1}{2}}[\tilde\varrho_N]\bfvarphi-\sqrt{\tilde\varrho_N}\bfvarphi\Big\|_{L_x^2}\\
&\leq C\bigg(\int_{\mt}\big(1+\tilde \varrho_N^\gamma+\tilde\varrho_N|\tilde\bfu_N|^2\big)\dx\bigg)^{\frac{1}{2}}\bigg(\Big\|P_N\mathcal M_N^{\frac{1}{2}}[\tilde\varrho_N]\bfvarphi-\sqrt{\tilde\varrho}\bfvarphi\Big\|_{L_x^2}+\Big\|(\sqrt{\tilde\varrho}-\sqrt{\tilde\varrho_N})\bfvarphi\Big\|_{L_x^2}\bigg).
\end{align*}
Therefore, using \eqref{aprhobeta}, Proposition \ref{prop:skorokhod} and \eqref{eq:gkN2} (and taking a subsequence) we deduce that 
\begin{align*}
\tilde\E \int_0^TI_1\dt\longrightarrow 0,\quad N\rightarrow\infty.
\end{align*}
Hence we gain $I_1\rightarrow 0$ for a.e. $(\omega, t)$ after taking a subsequence.
Moreover, we have using the Minkowski integral inequality, the mean value theorem, \eqref{growth1} and \eqref{growth2}
\begin{align*}
I_2&\leq C\bigg(\,\sum_{k\geq1}\big\| g_{k}(\tilde \varrho_N,\tilde\varrho_N\tilde\bfu_N)-g_{k}(\tilde \varrho,\tilde \varrho \tilde\bfu)\big\|_{L^1_x}^2\bigg)^{\frac{1}{2}}\\
&\leq C\int_{\mt}\!\bigg(\sum_{k\geq1}\big|g_{k}(\tilde \varrho_N,\tilde\varrho_N\tilde\bfu_N)-g_{k}(\tilde \varrho,\tilde \varrho \tilde\bfu)\big|^2\bigg)^{\frac{1}{2}}\dif x\\
&\leq C\int_{\mt}\Big(1+\tilde\varrho_N^{\frac{\gamma-1}{2}}+\tilde\varrho^{\frac{\gamma-1}{2}}\Big)\Big(|\tilde\varrho_N-\tilde\varrho|+|\tilde\varrho_N\tilde\bfu_N-\tilde\varrho\tilde\bfu|\Big)\,\dif x\\
&\leq C\bigg[\int_{\mt}\Big(1+\tilde\varrho_N^{\frac{\gamma-1}{2}}+\tilde\varrho^{\frac{\gamma-1}{2}}\Big)^{p}\,\dif x\bigg]^{\frac{1}{{p}}}\bigg[\int_{\mt}|\tilde\varrho_N-\tilde\varrho|^{q}+|\tilde\varrho_N\tilde\bfu_N-\tilde\varrho\tilde\bfu|^{q}\,\dif x\bigg]^{\frac{1}{q}}
\end{align*}
where the conjugate exponents $p,q\in(1,\infty)$ are chosen in such a way that
$$p\frac{\gamma-1}{2}<\gamma+1\qquad\text{and}\qquad q<\frac{2\gamma}{\gamma+1}.$$
Therefore, using \eqref{aprhobeta}, Proposition \ref{prop:skorokhod} and Lemma \ref{lem:strongq} (and taking a subsequence) we deduce 
\begin{align*}
\tilde\E \int_0^TI_2\dt\longrightarrow 0,\quad N\rightarrow\infty.
\end{align*}
and so for a subsequence $I_2\rightarrow 0$ for a.e. $(\omega, t)$ and \eqref{convL2} follows. Besides, since similarly to \eqref{a1} - \eqref{a3}, for all $p\geq 2$,
\begin{align*}
\tilde\stred\int_s^t&\big\|\big\langle\varPhi^N(\tilde \varrho_N,\tilde\varrho_N\tilde\bfu_N)\,\cdot,\bfvarphi\big\rangle\big\|_{L_2(\mathfrak{U};\mr)}^p\,\dif r\\
&\leq C\,\tilde\stred\int_s^t\|\tilde\varrho_N\|_{L^2}^{\frac{p}{2}}\Big(1+\|\tilde\varrho_N\|_{L^\gamma}^\gamma+\|\sqrt{\tilde\varrho_N}\tilde\bfu_N\|^2_{L^2}\Big)^{\frac{p}{2}}\dif r\\
&\leq C\bigg(1+\tilde\stred\sup_{0\leq t\leq T}\|\tilde\varrho_N\|_{L^\gamma}^{\gamma p}+\tilde\stred\sup_{0\leq t\leq T}\|\sqrt{\tilde\varrho_N}\tilde\bfu_N\|_{L^{2}}^{2p}\bigg)\leq C
\end{align*}
due to \eqref{aprhov1}, \eqref{aprho1}, we obtain the convergence in \eqref{exp21}.\\
By standard regularity theory for parabolic equations it can be shown that $\tilde\varrho$ satisfies the continuity equation a.e. and hence is also a solution in the renormalized sense. The energy inequality is a consequence of \eqref{eq:aprioriNtilde} together with the lower semi-continuity of the left-hand-side.
\end{proof}
\end{proposition}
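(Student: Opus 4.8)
The plan is to verify that the limit object $(\tilde\varrho,\tilde\bu,\tilde W)$ furnished by Proposition \ref{prop:skorokhod}, together with the stochastic basis $(\tilde\Omega,\tilde\mf,(\tilde\mf_t),\tilde\prst)$ with $(\tilde\mf_t)$ as in \eqref{filtr}, is a finite energy weak martingale solution of the approximate system \eqref{eq:approx} with initial law $\Gamma$. I would split the argument into four blocks: (i) $\tilde W$ is an $(\tilde\mf_t)$-cylindrical Wiener process; (ii) the continuity equation \eqref{eq:approx1} holds $\tilde\prst$-a.s., together with strong convergence $\nabla\tilde\varrho_N\to\nabla\tilde\varrho$ in $L^2((0,T)\times\mt)$; (iii) the momentum equation \eqref{eq:approx2} holds $\tilde\prst$-a.s.; (iv) the initial law, the energy inequality and the renormalized continuity equation. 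Blocks (i), (ii), (iv) are comparatively soft and the real work lies in (iii).

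For (i) I would note that each $\tilde W_N$ has the same law as $W$ on $\mathcal X_W$, hence is a cylindrical Wiener process, and writing $\tilde W_N=\sum_{k\geq1}\tilde\beta^N_k e_k$ the $\tilde\beta^N_k$ are mutually independent standard real Wiener processes; passing to the $\tilde\prst$-a.s.\ limit in $C([0,T];\mathfrak{U}_0)$ one obtains $\tilde W=\sum_{k\geq1}\tilde\beta_k e_k$ with the same structure, and independence of the increments of $\tilde W$ after time $s$ from $\bfr_s(\tilde\varrho,\tilde\bu,\tilde W)$, hence from $\tilde\mf_s$, is inherited from the original space, which makes $\tilde W$ an $(\tilde\mf_t)$-cylindrical Wiener process. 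For (ii) I would observe that the residual functional $L(\rho,\bfq)_t(\psi)=\langle\rho(t),\psi\rangle-\langle\rho(0),\psi\rangle-\int_0^t\langle\bfq,\nabla\psi\rangle\,\dif s+\varepsilon\int_0^t\langle\nabla\rho,\nabla\psi\rangle\,\dif s$ is continuous on $\mathcal X_\varrho\times\mathcal X_{\varrho\bu}$, so $\tilde\E|L(\tilde\varrho_N,\tilde\varrho_N\tilde\bu_N)_t|^2=\E|L(\varrho_N,\varrho_N\bu_N)_t|^2=0$ by \eqref{eq:galerkin11} and equality of laws; Proposition \ref{prop:skorokhod} together with the uniform bounds \eqref{aprho1}, \eqref{estrhou} and Vitali's theorem then allow one to pass $N\to\infty$, which yields \eqref{eq:approx1}. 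The strong convergence of $\nabla\tilde\varrho_N$ I would obtain by testing \eqref{eq:approx1} by $\tilde\varrho_N$ resp.\ $\tilde\varrho$, passing to the limit a.s.\ in every term except $\|\nabla\tilde\varrho_N\|_{L^2_{t,x}}^2$, and identifying $\|\nabla\tilde\varrho_N\|_{L^2_{t,x}}\to\|\nabla\tilde\varrho\|_{L^2_{t,x}}$, which upgrades the weak convergence of Proposition \ref{prop:skorokhod} to strong.

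For (iii) I would use the martingale-identification method of \cite{on1}. Fix $\bfvarphi\in\cup_{N\in\mn}X_N$; let $M(\rho,\bfv,\bfq)_t$ be the residuum of \eqref{eq:approx2} tested by $\bfvarphi$ with the stochastic term removed, and let $N^N,N^N_k$ (built from $g_k^N$) and $N,N_k$ (built from $g_k$) be the candidate quadratic and cross variations. Using \eqref{growth1}, \eqref{growth2} and the a priori bounds \eqref{apv1}, \eqref{aprho1}, \eqref{estrhou}, \eqref{est:rhobfu2}, \eqref{estnablarho} (the last for the $\varepsilon\nabla\bfv\nabla\rho$ term) one checks that these maps are well-defined and Borel on the subset of $\mathcal X_\varrho\times\mathcal X_\bu\times\mathcal X_{\varrho\bu}$ carrying the law of $(\tilde\varrho,\tilde\bu,\tilde\varrho\tilde\bu)$, so their laws are preserved under Skorokhod. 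On the original space $M(\varrho_N,\bu_N,\varrho_N\bu_N)=\int_0^\cdot\langle\varPhi^N(\varrho_N,\varrho_N\bu_N)\,\dif W,\bfvarphi\rangle$ is a square-integrable $(\mf_t)$-martingale, hence $[M]^2-N^N$ and $M\beta_k-N^N_k$ are $(\mf_t)$-martingales; by equality of laws the corresponding identities tested against $h(\bfr_s\cdot)$, with $h$ continuous and bounded, transfer to the tilde variables. It then remains to pass $N\to\infty$: weak convergence of $\tilde\varrho_N\tilde\bu_N\otimes\tilde\bu_N$ in $L^1$ is Corollary \ref{cor:con}; strong $L^q$ convergence of $\tilde\varrho_N\tilde\bu_N$ follows from matching $\|\sqrt{\tilde\varrho_N}\tilde\bu_N\|_{L^2}^2\to\|\sqrt{\tilde\varrho}\tilde\bu\|_{L^2}^2$ (via $P_N(\tilde\varrho_N\tilde\bu_N)\to\tilde\varrho\tilde\bu$ in $L^2(0,T;W^{-1,2}(\mt))$) together with weak-strong convergence and the higher integrability from \eqref{estrhou}; the deterministic terms converge by Proposition \ref{prop:skorokhod} and Vitali, equi-integrability being supplied by the uniform moment bound \eqref{eq:aprioriNtilde}. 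Once $\langle\!\langle M(\tilde\varrho,\tilde\bu,\tilde\varrho\tilde\bu)\rangle\!\rangle=N(\tilde\varrho,\tilde\varrho\tilde\bu)$ and $\langle\!\langle M(\tilde\varrho,\tilde\bu,\tilde\varrho\tilde\bu),\tilde\beta_k\rangle\!\rangle=N_k(\tilde\varrho,\tilde\varrho\tilde\bu)$ are established, bilinearity of the cross variation shows that $M(\tilde\varrho,\tilde\bu,\tilde\varrho\tilde\bu)-\int_0^\cdot\langle\varPhi(\tilde\varrho,\tilde\varrho\tilde\bu)\,\dif\tilde W,\bfvarphi\rangle$ has vanishing quadratic variation, hence is identically zero; this is \eqref{eq:approx2} tested by $\bfvarphi$, and since $\cup_{N}X_N$ is dense in $W^{l,2}(\mt)$ the equation holds for all smooth test functions. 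For (iv), the energy inequality follows from \eqref{eq:aprioriNtilde} and weak lower semicontinuity of the energy functional, the initial law is $\Gamma$ by Lemma \ref{lemma:identif}, and the renormalized continuity equation follows from \eqref{eq:approx1} by standard parabolic regularity.

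The hard part will be the passage $N\to\infty$ in the stochastic terms $N^N(\tilde\varrho_N,\tilde\varrho_N\tilde\bu_N)$ and $N^N_k(\tilde\varrho_N,\tilde\varrho_N\tilde\bu_N)$, where the finite-dimensional truncations hidden in $\varPhi^N$ must be removed. Concretely one needs $\langle g_k^N(\tilde\varrho_N,\tilde\varrho_N\tilde\bu_N),\bfvarphi\rangle\to\langle g_k(\tilde\varrho,\tilde\varrho\tilde\bu),\bfvarphi\rangle$ $\tilde\prst\otimes\mathcal L$-a.e.; via the self-adjointness identity $\langle g_k^N,\bfvarphi\rangle=\langle g_k(\tilde\varrho_N,\tilde\varrho_N\tilde\bu_N)/\sqrt{\tilde\varrho_N},\mathcal M_N^{1/2}[\tilde\varrho_N]\bfvarphi\rangle$ this splits into a strong $L^2(\mt)$ convergence of $g_k(\tilde\varrho_N,\tilde\varrho_N\tilde\bu_N)/\sqrt{\tilde\varrho_N}$ — which follows from the strong convergences of $\tilde\varrho_N$ and $\tilde\varrho_N\tilde\bu_N$ together with \eqref{growth1}, \eqref{growth2} — and the operator convergence $\mathcal M_N^{1/2}[\tilde\varrho_N]\to\sqrt{\tilde\varrho}\,\cdot$ pointwise from $W^{l,2}(\mt)$ to $L^2(\mt)$. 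The latter is the delicate point, since the square root does not commute with limits for free: I would first prove $\mathcal M_N[\tilde\varrho_N]\bfv\to\tilde\varrho\bfv$ in $L^2(\mt)$ for $\bfv\in W^{l,2}(\mt)$ (from strong $L^2$-convergence of $\tilde\varrho_N$, the embedding $W^{l,2}\hookrightarrow L^\infty$ and continuity of $P_N$), then extend $\mathcal M_N[\tilde\varrho_N]$ to a symmetric positive operator on $W^{-l,2}(\mt)$ so as to work in a single Hilbert space, and finally transfer the convergence to the square roots through the uniformly convergent expansion $A^{1/2}=\sum_{k\geq0}c_k(\mathrm I-A)^k$. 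Vitali's convergence theorem, with equi-integrability from the a priori bounds, then gives both $N^N\to N$ and $N^N_k\to N_k$ and closes the argument.
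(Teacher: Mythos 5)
Your proposal follows the same route as the paper: establish that $\tilde W$ is an $(\tilde\mf_t)$-cylindrical Wiener process from equality of laws, use the martingale/quadratic-variation/cross-variation identification method of \cite{on1} (transferring the martingale identities via the density-arguments $h(\bfr_s\cdot)$), and handle the finite-dimensional truncation $\varPhi^N\to\varPhi$ by factoring $\langle g_k^N,\bfvarphi\rangle = \langle g_k/\sqrt{\tilde\varrho_N},\mathcal M_N^{1/2}[\tilde\varrho_N]\bfvarphi\rangle$ and proving $\mathcal M_N^{1/2}[\tilde\varrho_N]\to\sqrt{\tilde\varrho}\,\cdot$ through the extension to $W^{-l,2}$ and the power-series representation of the square root, closing with Vitali. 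The only difference from the paper is organizational — you fold in the continuity-equation step and the strong convergence of $\tilde\varrho_N\tilde\bu_N$ rather than citing them from the preceding lemmas — but the mathematical content and the decomposition $I=I_1+I_2$ of the Hilbert--Schmidt norm error are identical to the paper's.
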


\section{The vanishing viscosity limit}
\label{sec:vanishingviscosity}

The aim of this Section is to study the limit $\varepsilon\rightarrow0$ in the approximate system \eqref{eq:approx} and to establish existence of a weak martingale solution with the initial law $\Gamma$ to
\begin{subequations}\label{eq:approximation2}
 \begin{align}
  \dif \varrho+\diver(\varrho\bu)\dif t&=0,\label{eq:approximation21}\\
  \dif(\varrho\bu)+\big[\diver(\varrho\bu\otimes\bu)-\nu\Delta\bu-(\lambda+\nu)\nabla\diver\bfu
+a\nabla \varrho^\gamma+\delta\nabla\varrho^\beta\big]\dif t&=\varPhi(\varrho,\varrho\bu) \,\dif W,\label{eq:approximation22}
 \end{align}
\end{subequations}
where $\delta>0$ and $\beta> \max\{\frac{9}{2},\gamma\}$. To this end, we recall that it was proved in Section \ref{sec:viscous} that for every $\varepsilon\in (0,1)$ there exists
$$\big((\tilde\Omega^\varepsilon,\tilde\mf^\varepsilon,(\tilde\mf^\varepsilon_t),\tilde\prst^\varepsilon),\tilde\varrho_\varepsilon,\tilde\bfu_\varepsilon,\tilde W_\varepsilon\big)$$
which is a weak martingale solution to \eqref{eq:approx}. It was shown in \cite{jakubow} that it is enough to consider only one probability space, namely,
$$(\tilde\Omega^\varepsilon,\tilde\mf^\varepsilon,\tilde\prst^\varepsilon)=\big([0,1],\mathcal{B}([0,1]),\mathcal{L}\big)\qquad\forall \varepsilon\in (0,1)$$
where $\mathcal{L}$ denotes the Lebesgue measure on $[0,1]$.
Moreover, we can assume without loss of generality that there exists one common Wiener process $W$ for all $\varepsilon$. Indeed, one could perform the compactness argument of the previous section for all the parameters from any chosen subsequence $\varepsilon_n$ at once by redefining
$$\mathcal{X}=\Big(\prod_{n\in\mn}\mathcal{X}_\varrho\times\mathcal{X}_\bfu\times\mathcal{X}_{\varrho\bfu}\Big)\times  \mathcal X_{\varrho_0}\times\mathcal{X}_W$$
and proving tightness for the following set of $\mathcal{X}$-valued random variables
$$\big\{\big((\varrho_{N,\varepsilon_1},\bfu_{N,\varepsilon_1},\varrho_{N,\varepsilon_1}\bfu_{N,\varepsilon_1}),(\varrho_{N,\varepsilon_2},\bfu_{N,\varepsilon_2},\varrho_{N,\varepsilon_2}\bfu_{N,\varepsilon_2}),\dots,\varrho_0,W\big);\,N\in\mn\big\}.$$
In order to further simplify the notation we also omit the tildas and denote the weak martingale solution found in Section \ref{sec:viscous} by
$$\big((\Omega,\mf,(\mf^\varepsilon_t),\prst),\varrho_\varepsilon,\bfu_\varepsilon,W\big).$$

The functions $\bfu_\varepsilon$ and $\varrho_\varepsilon$ satisfy the energy inequality, i.e. for any $p<\infty$ we have
\begin{align}
&\stred\bigg[\sup_{0\leq t\leq T}\int_{\mt}\Big(\frac{1}{2}\varrho_\varepsilon|\bu_\varepsilon|^2+\frac{a}{\gamma-1}\varrho^\gamma_\varepsilon+\frac{\delta}{\beta-1}\varrho^\beta_\varepsilon\Big)\,\dif x\label{eq:apriorivarepsilon}\\
&+\int_0^T\int_{\mt}\nu|\bu_\varepsilon|^2+(\lambda+\nu)|\diver\bu_\varepsilon|^2\,\dif x\,\dif s\bigg]^p\nonumber\\
&\leq \,C_p\bigg(1+\int_{L^\beta_x\times L^\frac{2\beta}{\beta+1}_x}\bigg\|\frac{1}{2}\frac{|\bfq|^2}{\rho}+\frac{a}{\gamma-1}\rho^\gamma+\frac{\delta}{\beta-1}\rho^\beta\bigg\|_{L^1_x}^p\,\dif\Gamma(\rho,\bfq)\bigg)\leq C(p,\Gamma)\nonumber
\end{align}
This means we have following uniform bounds
\begin{align}
 \bfu_\varepsilon&\in L^{p}(\Omega;L^2(0,T;W^{1,2}( \mathbb T^3))),\label{apv}\\
\sqrt{ \varrho_\varepsilon} \bfu_\varepsilon&\in L^{p}(\Omega;L^\infty(0,T;L^2( \mathbb T^3))),\label{aprhov}\\
 \varrho_\varepsilon&\in L^{p}(\Omega;L^\infty(0,T;L^\beta( \mathbb T^3))),\label{aprho}\\
  \varrho_\varepsilon\bfu_\varepsilon&\in L^{p}(\Omega;L^\infty(0,T;L^\frac{2\beta}{\beta+1}( \mathbb T^3))),\label{estrhou2}\\
\varrho_\varepsilon\bfu_\varepsilon\otimes\bfu_\varepsilon&\in L^p(\Omega;L^2(0,T;L^\frac{6\beta}{4\beta+3}(\mt))).\label{est:rhobfu22}
\end{align}
Besides, testing \eqref{eq:approx1} by $\varrho_\varepsilon$ gives
\begin{align}\label{est:nablarho}
\sqrt{\varepsilon}\nabla\varrho_\varepsilon\in L^p(\Omega;L^2(0,T;L^2(\mt)))
\end{align}
and consequently
\begin{align}
\varepsilon\nabla\varrho_\varepsilon\rightarrow 0\quad\text{in}\quad L^2(\Omega\times Q),\label{eq:van1}\\
\varepsilon\nabla\bfu_\varepsilon\nabla\varrho_\varepsilon\rightarrow 0\quad\text{in}\quad L^1(\Omega\times Q).\label{eq:van2}
\end{align}

As the next step, we improve the space integrability of the density.

\begin{proposition}\label{prop:higher}
There holds
\begin{equation}\label{eq:gamma+1}
\stred\int_0^T\int_{\mt}\big(a\varrho_\varepsilon^{\gamma+1}+\delta\varrho_\varepsilon^{\beta+1}\big)\,\dif x\,\dif t\leq C.
\end{equation}

\begin{proof}
In the deterministic case, this is achieved by testing \eqref{eq:approx2} with $$\Delta^{-1}\nabla\varrho_\varepsilon=\nabla\Delta^{-1}(\varrho_\varepsilon-(\varrho_\varepsilon)_{\mt}).$$ Here $\Delta^{-1}$ is the solution operator to the Laplace equation on the torus (the mean value of right hand side needs to vanish) which commutes with derivatives.
In the stochastic setting, we apply the It\^{o} formula to the functional $f(\rho, \bfq)=\int_ {\mathbb T^3} \bfq\cdot\Delta^{-1}\nabla \rho\dx$. Note that since $f$ is linear in $\bfq= \varrho_\varepsilon \bfu_\varepsilon$
and the quadratic variation of $\varrho_\varepsilon$ is zero, no correction terms appear in our calculation. We gain 
\begin{align}\label{eq:test}
\begin{aligned}
\int_ {\mathbb T^3} & \varrho_\varepsilon \bfu_\varepsilon\cdot \Delta^{-1}\nabla \varrho_\varepsilon\dx=\int_ {\mathbb T^3}  \varrho(0) \bfu(0)\cdot \Delta^{-1}\nabla \varrho(0)\dx\\
&\quad-\nu\int_0^t\int_ {\mathbb T^3} \nabla \bfu_\varepsilon:\nabla\Delta^{-1}\nabla \varrho_\varepsilon\dxs-(\lambda+\nu)\int_0^t\int_ {\mathbb T^3} \diver\bfu_\varepsilon\,\varrho_\varepsilon\dxs\\
&\quad+\int_0^t\int_ {\mathbb T^3}  \varrho_\varepsilon \bfu_\varepsilon\otimes \bfu_\varepsilon:\nabla\Delta^{-1} \nabla\varrho_\varepsilon\dxs\\
&\quad-\varepsilon\int_0^t\int_{\mathbb T^3}\nabla\bfu_\varepsilon\nabla\varrho_\varepsilon\cdot\Delta^{-1}\nabla\varrho_\varepsilon\dxs\\
&\quad+\int_0^t\int_ {\mathbb T^3} \big(a\varrho_\varepsilon^{\gamma+1}+\delta\varrho_\varepsilon^{\beta+1}\big)\dxs-\int_0^t(\varrho_\varepsilon)_{\mt}\int_ {\mathbb T^3} \big(a\varrho_\varepsilon^{\gamma}+\delta\varrho_\varepsilon^{\beta}\big)\dxs\\
&\quad+\sum_{k\geq 1}\int_0^t \int_ {\mathbb T^3}\Delta^{-1}\nabla\varrho_\varepsilon\cdot g_k(\varrho_\varepsilon, \varrho_\varepsilon \bfu_\varepsilon)\dx\,\dif \beta_k(\sigma)\\
&\quad+\varepsilon\int_0^t\int_ {\mathbb T^3} \varrho_\varepsilon\bfu_\varepsilon\nabla\varrho_\varepsilon\dxs-\int_0^t\int_{\mt}\varrho_\varepsilon\bfu_\varepsilon\Delta^{-1}\nabla\diver(\varrho_\varepsilon\bfu_\varepsilon)\dxs\\
&=J_1+\cdots +J_{10}.
\end{aligned}
\end{align}
Our goal is to find an estimate for the expectation of $J_6$ which means that we have to find suitable bounds for all the other terms. Let the term on the left hand side be denoted by $J_0$. There holds that
\begin{equation*}
\begin{split}
\stred|J_0|&\leq C\,\stred\|\Delta^{-1}\nabla\varrho_\varepsilon\|^2_{L^\infty(\mt)}+C\,\stred\int_{\mt}\varrho_\varepsilon|\bu_\varepsilon|^2\,\dif x.
\end{split}
\end{equation*}
Using the continuity of the operator $\Delta^{-1}\nabla$ and Sobolev's embedding theorem, we obtain for any $p\in(3,\beta)$ that
\begin{align}\label{eq:Linfty}
\begin{aligned}
\|\Delta^{-1}\nabla\varrho_\varepsilon\|_{L^\infty(\mt)}&\leq C\,\|\nabla^2\Delta^{-1}(\varrho_\varepsilon-(\varrho_\varepsilon)_{\mt})\|_{L^p(\mt)}\leq C\,\| \varrho_\varepsilon\|_{L^p(\mt)}.
\end{aligned}
\end{align}
Hence $\stred|J_0|\leq C$ due to \eqref{aprho}. Note that in particular we have shown that $\Delta^{-1}\nabla\varrho_\varepsilon\in L^p(\Omega;L^\infty(Q))$ uniformly in $\varepsilon$. Besides, $J_1$ can be estimated by the same argument. As $\varrho_\varepsilon\in L^2(\Omega\times Q)$ uniformly due to (\ref{aprho}) and $\beta\geq2$ we deduce that
 $\E|J_2|\leq C$ as a consequence of (\ref{apv}) and the continuity of the operator $\nabla\Delta^{-1}\nabla$. Similar arguments lead to the bound for $J_3$. The most critical term, $J_4$, can be estimated using the continuity of $\nabla\Delta^{-1}\nabla$, the Sobolev imbedding theorem, the H\"older inequality, (\ref{apv}) and (\ref{aprho})
\begin{align*}
\E|J_4|&\leq C\,\E\int_0^t\| \varrho_\varepsilon\|_3\| \bfu_\varepsilon\|_6^2\| \varrho_\varepsilon\|_3\,\dif s\leq C\,\E\bigg[\sup_{0\leq s\leq t}\| \varrho_\varepsilon\|^2_3\int_0^t\int_ {\mathbb T^3}|\bfu_\varepsilon|^2+|\nabla \bfu_\varepsilon|^2\,\dif x\,\dif s\bigg]\\
&\leq C\,\bigg(\E\sup_{0\leq s\leq t}\| \varrho_\varepsilon\|_\beta^{4}\bigg)^{\frac{1}{2}}\bigg(\E\bigg[\int_0^t\int_ {\mathbb T^3}|\bfu_\varepsilon|^2+|\nabla \bfu_\varepsilon|^2\dxt\bigg]^{2}\bigg)^{\frac{1}{2}}\leq C.
\end{align*}
For $J_5$ we have on account of \eqref{eq:Linfty}, \eqref{est:nablarho}, \eqref{apv} and \eqref{aprho}
\begin{align*}
\stred|J_5|&\leq\,\stred\sup_{0\leq s\leq t}\|\Delta^{-1}\nabla\varrho_\varepsilon\|_{L^\infty(\mt)}^2+\,\stred\bigg[\int_0^t\int_{\mt}|\nabla\bfu_\varepsilon|^2\dxt\bigg]^2\\
&\qquad+\,\stred\bigg[\int_0^t\int_{\mt}\varepsilon^2|\nabla\varrho_\varepsilon|^2\dxt\bigg]^2\leq C.
\end{align*}
By \eqref{aprho} we can easily bound the expectation of $J_7$. Let us now justify that the stochastic integral $J_8$ is a square integrable martingale and hence has zero expected value. Towards this end, we make use of the It\^o isometry and the assumption \eqref{growth1} as well as \eqref{eq:Linfty}, \eqref{aprhov} and \eqref{aprho} to obtain (recall \eqref{stochest} and $(\varrho_\varepsilon)_{\mt}=(\varrho_\varepsilon(0))_{\mt}\leq \overline{\varrho}$)
\begin{equation*}
\begin{split}
\stred&\bigg|\sum_{k\geq1}\int_0^t\int_{\mt}\Delta^{-1}\nabla\varrho_\varepsilon\cdot g_k (\varrho_\varepsilon,\varrho_\varepsilon\bu_\varepsilon)\,\dif x\,\dif\beta_k(s)\bigg|^2\\
&\qquad=\stred\int_0^t\sum_{k\geq 1}\bigg(\int_{\mt}\Delta^{-1}\nabla\varrho_\varepsilon \cdot g_k(\varrho_\varepsilon,\varrho_\varepsilon\bu_\varepsilon)\,\dif x\bigg)^2\dif s\\
&\qquad\leq \,\stred\bigg[\|\Delta^{-1}\nabla\varrho_\varepsilon\|_{L^\infty(Q)}^2\int_0^t\sum_{k\geq 1}\bigg(\int_{\mt}|g_k(\varrho_\varepsilon,\varrho_\varepsilon\bu_\varepsilon)|\,\dif x\bigg)^2\,\dif s\bigg]\\
&\qquad\leq \,C(\overline\varrho)\,\stred\bigg[\|\Delta^{-1}\nabla\varrho_\varepsilon\|_{L^\infty(Q)}^2\int_0^t\bigg(\sum_{k\geq 1}\int_{\mt}\varrho_\varepsilon^{-1}|g_k(\varrho_\varepsilon,\varrho_\varepsilon\bu_\varepsilon)|^2\,\dif x\bigg)\,\dif s\bigg]\\
&\qquad\leq \,C(\overline\varrho)\,\stred\|\Delta^{-1}\nabla\varrho_\varepsilon\|_{L^\infty(Q)}^4+C(\overline\varrho)\,\stred\bigg[\int_0^t\int_{\mt}\big(1+\varrho_\varepsilon^\gamma+\varrho_\varepsilon|\bfu_\varepsilon|^2\big)\,\dxs\bigg]^2\leq C(\overline\varrho).
\end{split}
\end{equation*}
We conclude that $\stred J_8=0$.
So the only remaining terms are $J_{9}$ and $J_{10}$ that can be estimated together. Indeed, due to the properties of the operator $\Delta^{-1}\nabla$
\begin{equation*}
\begin{split}
\stred J_9+\stred J_{10}&\leq\sqrt{\varepsilon}C\bigg(\stred\int_0^t\int_{\mt}|\varrho_\varepsilon\bu_\varepsilon|^2\,\dif x\,\dif s\bigg)^\frac{1}{2}\bigg(\stred\int_0^t\int_{\mt}|\sqrt{\varepsilon}\nabla\varrho_\varepsilon|^2\,\dif x\,\dif s\bigg)^\frac{1}{2}\\
&\qquad +C\,\stred\int_0^t\int_{\mt}|\varrho_\varepsilon\bu_\varepsilon|^2\,\dif x\,\dif s
\end{split}
\end{equation*}
which is finite since for any $p\in [1,\infty)$ and uniformly in $\varepsilon$
\begin{equation}\label{eq:L2}
\begin{split}
\varrho_\varepsilon\bu_\varepsilon\in L^p(\Omega;L^2(0,T;L^2(\mt)))
\end{split}
\end{equation}
which is a consequence of the fact that
$$\varrho_\varepsilon\in L^q(\Omega;L^\infty(0,T;L^3(\mt))),\qquad \bu_\varepsilon\in L^p(\Omega;L^2(0,T;L^6(\mt)))$$
uniformly in $\varepsilon.$
Plugging all together we obtain \eqref{eq:gamma+1} uniformly in $\varepsilon$.
\end{proof}
\end{proposition}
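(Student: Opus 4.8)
The plan is to carry over to the stochastic setting the classical deterministic device for gaining higher integrability of the density (see Lions \cite{Li1} and Feireisl--Novotn\'y--Petzeltov\'a \cite{feireisl1}), where one formally tests the momentum balance with $\nabla\Delta^{-1}(\varrho_\varepsilon-(\varrho_\varepsilon)_{\mt})$. Here $\Delta^{-1}$ denotes the solution operator of the Laplace equation on $\mt$ acting on mean-free data; it commutes with spatial derivatives, and the operators $\nabla\Delta^{-1}\nabla$ and $\nabla^2\Delta^{-1}$ are bounded on $L^q(\mt)$ for every $q\in(1,\infty)$. Since $\beta>\tfrac92>3$, \eqref{aprho} and the Sobolev embedding yield that $\Delta^{-1}\nabla\varrho_\varepsilon$ is bounded in $L^p(\Omega;L^\infty(Q))$ uniformly in $\varepsilon$; this is the estimate that will be used repeatedly.

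In the stochastic framework I would apply the It\^o formula to the functional $f(\rho,\bfq)=\int_\mt\bfq\cdot\Delta^{-1}\nabla\rho\dx$ with $\rho=\varrho_\varepsilon$ and $\bfq=\varrho_\varepsilon\bfu_\varepsilon$. The decisive structural point is that $f$ is \emph{linear} in the momentum variable $\bfq$ while the continuity equation \eqref{eq:approx1} carries no noise; consequently $\partial^2_\bfq f\equiv0$, the quadratic variation of $\varrho_\varepsilon$ vanishes, and the It\^o formula produces no correction terms. Substituting \eqref{eq:approx2} for $\dif(\varrho_\varepsilon\bfu_\varepsilon)$ and \eqref{eq:approx1} for $\dif\varrho_\varepsilon$ one is led to an identity of the form $J_0=J_1+\dots+J_{10}$ in which the pressure gradient, after integration by parts, contributes exactly $\int_0^t\int_\mt(a\varrho_\varepsilon^{\gamma+1}+\delta\varrho_\varepsilon^{\beta+1})\dxs$ together with the corrector $-\int_0^t(\varrho_\varepsilon)_{\mt}\int_\mt(a\varrho_\varepsilon^\gamma+\delta\varrho_\varepsilon^\beta)\dxs$; the latter is harmless because $(\varrho_\varepsilon)_{\mt}\leq\overline\varrho$ and $\varrho_\varepsilon\in L^p(\Omega;L^\infty(0,T;L^\gamma(\mt)))$. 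It then remains to take expectations and absorb every remaining term into a constant independent of $\varepsilon$.

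I would dispatch the remaining terms as follows. The left-hand side $J_0=\int_\mt\varrho_\varepsilon\bfu_\varepsilon\cdot\Delta^{-1}\nabla\varrho_\varepsilon\dx$ and the initial term are controlled by the uniform $L^\infty(Q)$-bound on $\Delta^{-1}\nabla\varrho_\varepsilon$ together with \eqref{aprhov}; the viscous terms use the $L^q$-boundedness of $\nabla\Delta^{-1}\nabla$ and \eqref{apv}; the artificial-viscosity term $\varepsilon\int\nabla\bfu_\varepsilon\nabla\varrho_\varepsilon\cdot\Delta^{-1}\nabla\varrho_\varepsilon$ and the two lower-order terms $\varepsilon\int\varrho_\varepsilon\bfu_\varepsilon\cdot\nabla\varrho_\varepsilon$ and $\int\varrho_\varepsilon\bfu_\varepsilon\cdot\Delta^{-1}\nabla\diver(\varrho_\varepsilon\bfu_\varepsilon)$ coming from the right-hand side of \eqref{eq:approx1} are handled via \eqref{est:nablarho} and the fact that $\varrho_\varepsilon\bfu_\varepsilon\in L^p(\Omega;L^2(Q))$, itself a consequence of \eqref{aprho} (giving $\varrho_\varepsilon\in L^q(\Omega;L^\infty(0,T;L^3(\mt)))$) and \eqref{apv} (giving $\bfu_\varepsilon\in L^p(\Omega;L^2(0,T;L^6(\mt)))$); finally the stochastic integral is shown to be a square-integrable, mean-zero $(\mf^\varepsilon_t)$-martingale by the It\^o isometry, the growth assumption \eqref{growth1} employed exactly as in \eqref{stochest}, and the $L^\infty(Q)$-bound on $\Delta^{-1}\nabla\varrho_\varepsilon$.

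The step I expect to be the main obstacle is the convective term $J_4=\int_0^t\int_\mt\varrho_\varepsilon\bfu_\varepsilon\otimes\bfu_\varepsilon:\nabla\Delta^{-1}\nabla\varrho_\varepsilon\dxs$, which is the most singular contribution already in the deterministic theory and which must now be closed \emph{inside an expectation}. I would bound its integrand by $\|\varrho_\varepsilon\|_{L^3}^2\|\bfu_\varepsilon\|_{L^6}^2$ (using once more the boundedness of $\nabla\Delta^{-1}\nabla$ on $L^3(\mt)$ and $W^{1,2}(\mt)\hookrightarrow L^6(\mt)$), pull $\sup_{0\leq s\leq t}\|\varrho_\varepsilon\|_{L^3}^2$ out of the time integral, and apply the Cauchy--Schwarz inequality in $\Omega$, invoking the fourth moment $\stred\sup_{0\leq t\leq T}\|\varrho_\varepsilon\|_{L^\beta}^4<\infty$ from \eqref{aprho} and the second moment $\stred\big(\int_0^T\|\bfu_\varepsilon\|_{W^{1,2}}^2\,\dif t\big)^2<\infty$ from \eqref{apv}. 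This is precisely where the availability of the energy estimate \eqref{eq:apriorivarepsilon} for arbitrary $p<\infty$ becomes essential. Collecting all these bounds gives \eqref{eq:gamma+1} with a constant independent of $\varepsilon$.
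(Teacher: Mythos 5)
Your proposal follows essentially the same route as the paper: It\^o's formula applied to $f(\rho,\bfq)=\int_\mt\bfq\cdot\Delta^{-1}\nabla\rho\dx$ with no correction terms because $f$ is linear in $\bfq$ and the continuity equation has no noise, the decomposition into $J_0,\dots,J_{10}$, the $L^\infty(Q)$-bound on $\Delta^{-1}\nabla\varrho_\varepsilon$, the identification of the convective term $J_4$ as critical and its treatment via H\"older in $x$, a supremum in time and Cauchy--Schwarz in $\Omega$, and the verification that the stochastic integral is a square-integrable martingale via It\^o isometry and \eqref{growth1}. One small imprecision: to control the corrector $J_7$ you invoke $\varrho_\varepsilon\in L^p(\Omega;L^\infty(0,T;L^\gamma(\mt)))$, which handles $a\varrho_\varepsilon^\gamma$ but not $\delta\varrho_\varepsilon^\beta$; you need the full bound \eqref{aprho} (or the $\delta\varrho^\beta$ control from the energy inequality), as the paper does.
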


\subsection{Compactness}
\label{subsec:compactness}

Let us define the path space $\mathcal{X}=\mathcal{X}_\varrho\times\mathcal{X}_\bu\times\mathcal{X}_{\varrho\bu}\times\mathcal{X}_W$ where
\begin{align*}
\mathcal{X}_\varrho&=C_w([0,T];L^\beta(\mt))\cap \big(L^{\beta+1}(Q),w\big),&\mathcal{X}_\bu&=\big(L^2(0,T;W^{1,2}(\mt)),w\big),\\
\mathcal{X}_{\varrho\bu}&=C_w([0,T];L^\frac{2\beta}{\beta+1}(\mt)),&\mathcal{X}_W&=C([0,T];\mathfrak{U}_0).
\end{align*}
Let us denote by $\mu_{\varrho_\varepsilon}$, $\mu_{\bu_\varepsilon}$ and $\mu_{\varrho_\varepsilon\bu_\varepsilon}$, respectively, the law of $\varrho_\varepsilon$, $\bu_\varepsilon$ and $\varrho_\varepsilon\bu_\varepsilon$ on the corresponding path space. By $\mu_W$ we denote the law of $W$ on $\mathcal{X}_W$ and their joint law on $\mathcal{X}$ is denoted by $\mu^\varepsilon$.

To proceed, it is necessary to establish tightness of $\{\mu^\varepsilon;\,\varepsilon\in(0,1)\}$. To this end, we observe that tightness of $\{\mu_{\bu_\varepsilon};\,\varepsilon\in(0,1)\}$ follows as in Proposition \ref{prop:bfutightness} using \eqref{apv}, tightness of $\{\mu_{\varrho_\varepsilon};\,\varepsilon\in(0,1)\}$ is as in Proposition \ref{prop:rhotight} using \eqref{aprho} and \eqref{eq:gamma+1} and tightness of $\mu_{W}$ is immediate and was discussed just before Corollary \ref{cor:tight}. So it only remains to show tightness for $\{\mu_{\varrho_\varepsilon\bfu_\varepsilon};\,\varepsilon\in(0,1)\}$ where the proof of Proposition \ref{prop:rhoutight} does not apply and requires some modifications.

\begin{proposition}\label{rhoutight1}
The set $\{\mu_{\varrho_\varepsilon\bu_\varepsilon};\,\varepsilon\in(0,1)\}$ is tight on $\mathcal{X}_{\varrho\bu}$.

\begin{proof}
We proceed similarly as in Proposition \ref{prop:rhoutight} and decompose $\varrho_\varepsilon\bu_\varepsilon$ into two parts, namely, $\varrho_\varepsilon\bu_\varepsilon(t)=Y^\varepsilon(t)+Z^\varepsilon(t)$, where
\begin{equation*}
 \begin{split}
Y^\varepsilon(t)&=\bfq(0)-\int_0^t\big[\diver(\varrho_\varepsilon\bu_\varepsilon\otimes\bu_\varepsilon)+\nu\Delta\bu_\varepsilon+(\lambda+\nu)\nabla\diver\bfu_\varepsilon\\
&\hspace{3.8cm}-a\nabla \varrho_\varepsilon^\gamma-\delta\nabla \varrho_\varepsilon^\beta\big]\dif s+\int_0^t\,\varPhi(\varrho_\varepsilon,\varrho_\varepsilon\bu_\varepsilon) \,\dif W(s),\\
Z^\varepsilon(t)&=\varepsilon\int_0^t\nabla\bu_\varepsilon\nabla\varrho_\varepsilon\,\dif s.
 \end{split}
\end{equation*}
By a similar approach as in Proposition \ref{prop:rhoutight}, we obtain H\"older continuity of $Y^\varepsilon$, namely, there exist $\vartheta>0$ and $m>3/2$ such that
\begin{equation*}
\stred\big\|Y^\varepsilon\|_{C^\vartheta([0,T];W^{-m,2}(\mt))}\leq C.
\end{equation*}
Indeed, concerning the stochastic integral, we obtain due to \eqref{growth1} (similarly to \eqref{stochest}) that
\begin{equation*}
\begin{split}
\stred&\,\bigg\|\int_s^t\varPhi(\varrho_\varepsilon,\varrho_\varepsilon\bu_\varepsilon)\,\dif W\bigg\|^\theta_{W^{-b,2}(\mt)}\leq C\,\stred\bigg(\int_s^t\sum_{k\geq1}\big\| g_k(\varrho_\varepsilon,\varrho_\varepsilon\bu_\varepsilon)\big\|_{W^
{-b,2}}^2\,\dif r\bigg)^\frac{\theta}{2}\\
&\leq C\,\stred\bigg(\int_s^t\sum_{k\geq1}\big\|g_k(\varrho_\varepsilon,\varrho_\varepsilon\bu_\varepsilon)\big\|_{L^
{1}}^2\,\dif r\bigg)^\frac{\theta}{2}\leq C\,\stred\bigg(\int_s^t\int_{\mt}(\varrho_\varepsilon+\varrho_\varepsilon|\bu_\varepsilon|^2+\varrho_\varepsilon^\gamma)\,\dif x\,\dif r\bigg)^{\theta/2}\\
&\leq C|t-s|^{\theta/2}\Big(1+\stred\sup_{0\leq t\leq T}\|\sqrt\varrho_\varepsilon\bu_\varepsilon\|_{L^{2}}^{\theta}+\stred\sup_{0\leq t\leq T}\|\varrho_\varepsilon\|_{L^\gamma}^{\theta\gamma/2}\Big)\leq C|t-s|^{\theta/2}
\end{split}
\end{equation*}
and the Kolmogorov continuity criterion applies. For the deterministic part, we make use of estimates \eqref{b1} - \eqref{b3} that are also valid uniformly in $\varepsilon$ (only employing \eqref{eq:gamma+1} instead of \eqref{aprhobeta}).

{\em Tightness of $(Z^\varepsilon)$.}
Next, we show that the set of laws $\{\prst\circ[Z^\varepsilon]^{-1};\,\varepsilon\in(0,1)\}$ is tight on $C([0,T];W^{-m,2}(\mt))$ for every $m>3/2$.
It follows immediately from \eqref{eq:van2} that (up to a subsequence)
$$\varepsilon\nabla\bu_\varepsilon\nabla \varrho_\varepsilon\rightarrow 0\quad\text{ in }\quad L^1(0,T;L^1(\mt))\quad\text{a.s.}$$
hence
$$Z^\varepsilon\rightarrow0\quad\text{ in }\quad C([0,T];L^1(\mt))\quad\text{a.s.}$$
This leads to convergence in law
$$Z^\varepsilon\overset{d\hspace{2.5mm}}{\rightarrow0}\quad\text{ on }\quad C([0,T];L^1(\mt))$$
and the claim follows as $L^1(\mt)\hookrightarrow W^{-m,2}(\mt)$ for $m>3/2$.

{\em Conclusion.} Let $\eta>0$ be given. According to tightness of $\{\prst\circ[Z^\varepsilon]^{-1}\}$ on $C([0,T];W^{-m,2}(\mt))$ there exists $A\subset C([0,T];W^{-m,2}(\mt))$ compact such that
$$\prst\big(Z^\varepsilon\notin A\big)<\eta/2.$$
Next, let use define the sets
\begin{equation*}
\begin{split}
B_{R}=&\big\{h\in L^\infty(0,T;L^\frac{2\beta}{\beta+1}(\mt));\,\|h\|_{L^\infty(0,T;L^\frac{2\beta}{\beta+1}(\mt))}\leq R\big\}\\
C_{R}=&\big\{h\in C^\vartheta([0,T];W^{-m,2}(\mt));\,\|h\|_{C^\vartheta([0,T];W^{-m,2}(\mt))}\leq R\big\}
\end{split}
\end{equation*}
and
$$K_R=B_{R}\cap \big(C_R+A\big).$$
Then it can be shown that $K_R$ is relatively compact in $\mathcal{X}_{\varrho\bu}$. The proof is based on the Arzel\`a-Ascoli theorem and follows closely the lines of the proof of \cite[Corollary B.2]{on2}.
As a consequence, we obtain
\begin{equation*}
\begin{split}
&\mu_{\varrho_\varepsilon\bu_\varepsilon}\big(K_R^c)=\prst\big([\varrho_\varepsilon\bu_\varepsilon\notin B_R]\cup[Y^\varepsilon+Z^\varepsilon\notin C_R+A]\big)\\
&\quad\leq\prst\Big(\|\varrho_\varepsilon\bu_\varepsilon\|_{L^\infty(0,T;L^\frac{2\beta}{\beta+1}(\mt))}>R\Big)+\prst\Big(\|Y^\varepsilon\|_{C^\vartheta([0,T];W^{-m,2}(\mt))}>R\Big)+\prst\big(Z^\varepsilon\notin A\big)\\
&\quad\leq \frac{C}{R}+\eta/2.
\end{split}
\end{equation*}
A suitable choice of $R$ completes the proof.
\end{proof}
\end{proposition}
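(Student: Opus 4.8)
The plan is to adapt the decomposition used in the proof of Proposition~\ref{prop:rhoutight} to the vanishing viscosity setting, the one genuinely new feature being that the artificial viscosity correction $\varepsilon\nabla\bu_\varepsilon\nabla\varrho_\varepsilon$ now carries no uniform-in-$\varepsilon$ bound strong enough to force time-equicontinuity: by \eqref{eq:van2} it merely vanishes, so it has to be handled on its own. Concretely, I would split $\varrho_\varepsilon\bu_\varepsilon(t)=Y^\varepsilon(t)+Z^\varepsilon(t)$, where $Y^\varepsilon$ collects the initial datum together with the time integrals of the convective term $\diver(\varrho_\varepsilon\bu_\varepsilon\otimes\bu_\varepsilon)$, the viscous terms $\nu\Delta\bu_\varepsilon+(\lambda+\nu)\nabla\diver\bu_\varepsilon$, the pressure terms $a\nabla\varrho_\varepsilon^\gamma+\delta\nabla\varrho_\varepsilon^\beta$ and the stochastic integral $\int_0^t\varPhi(\varrho_\varepsilon,\varrho_\varepsilon\bu_\varepsilon)\,\dif W$, while $Z^\varepsilon(t)=\varepsilon\int_0^t\nabla\bu_\varepsilon\nabla\varrho_\varepsilon\,\dif s$.

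First I would show that $Y^\varepsilon$ is uniformly H\"older continuous in time into a negative Sobolev space. For the deterministic terms this is a direct repetition of the estimates behind \eqref{b1}--\eqref{b3}, which remain valid uniformly in $\varepsilon$ once \eqref{eq:gamma+1} is used in place of \eqref{aprhobeta} and all the occurring negative Sobolev spaces are embedded into a common $W^{-m,2}(\mt)$ with $m>3/2$; this uses the uniform bounds \eqref{apv}, \eqref{aprho} and \eqref{est:rhobfu22}. For the stochastic integral, the Burkholder--Davis--Gundy inequality together with the growth assumption \eqref{growth1} and the bounds \eqref{aprhov}, \eqref{aprho} gives, for every $\theta\ge2$, an estimate $\stred\big\|\int_s^t\varPhi(\varrho_\varepsilon,\varrho_\varepsilon\bu_\varepsilon)\,\dif W\big\|_{W^{-b,2}(\mt)}^\theta\le C|t-s|^{\theta/2}$, so that the Kolmogorov continuity criterion yields H\"older-$\sigma$ regularity for any $\sigma<1/2$. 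Altogether $\stred\|Y^\varepsilon\|_{C^\vartheta([0,T];W^{-m,2}(\mt))}\le C$ uniformly in $\varepsilon$ for suitable $\vartheta>0$, $m>3/2$.

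Next, for $Z^\varepsilon$ I would use $\|Z^\varepsilon\|_{C([0,T];L^1(\mt))}\le\|\varepsilon\nabla\bu_\varepsilon\nabla\varrho_\varepsilon\|_{L^1(Q)}$, whose expectation tends to $0$ by \eqref{eq:van2}; hence $Z^\varepsilon\to0$ in probability, and therefore in law, on $C([0,T];L^1(\mt))$, so the family $\{\prst\circ[Z^\varepsilon]^{-1}\}$ is tight on $C([0,T];W^{-m,2}(\mt))$ via the embedding $L^1(\mt)\hookrightarrow W^{-m,2}(\mt)$. To conclude, fix $\eta>0$, choose a compact set $A\subset C([0,T];W^{-m,2}(\mt))$ with $\prst(Z^\varepsilon\notin A)<\eta/2$ for all $\varepsilon$, and put $B_R=\{h:\|h\|_{L^\infty(0,T;L^{\frac{2\beta}{\beta+1}}(\mt))}\le R\}$, $C_R=\{h:\|h\|_{C^\vartheta([0,T];W^{-m,2}(\mt))}\le R\}$ and $K_R=B_R\cap(C_R+A)$. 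The decisive point is that $K_R$ is relatively compact in $\mathcal{X}_{\varrho\bu}=C_w([0,T];L^{\frac{2\beta}{\beta+1}}(\mt))$: any sequence in $K_R$ is bounded in $L^\infty(0,T;L^{\frac{2\beta}{\beta+1}}(\mt))$ and, being a sum of a H\"older-equicontinuous family and a sequence in the compact set $A$, is equicontinuous with values in $W^{-m',2}(\mt)$ for $m'>m$; an Arzel\`a--Ascoli argument combined with the reflexivity of $L^{\frac{2\beta}{\beta+1}}(\mt)$, exactly as in \cite[Corollary~B.2]{on2}, then extracts a subsequence convergent in $C_w([0,T];L^{\frac{2\beta}{\beta+1}}(\mt))$. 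Finally, using \eqref{estrhou2}, the uniform H\"older bound on $Y^\varepsilon$ and Chebyshev's inequality, one estimates $\mu_{\varrho_\varepsilon\bu_\varepsilon}(K_R^c)\le\prst(\|\varrho_\varepsilon\bu_\varepsilon\|_{L^\infty(0,T;L^{\frac{2\beta}{\beta+1}}(\mt))}>R)+\prst(\|Y^\varepsilon\|_{C^\vartheta([0,T];W^{-m,2}(\mt))}>R)+\prst(Z^\varepsilon\notin A)\le C/R+\eta/2$, and a large enough $R$ gives $\mu_{\varrho_\varepsilon\bu_\varepsilon}(K_R^c)<\eta$.

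I expect the main obstacle to be the compactness of $K_R$ in the weakly-continuous topology: because $\varrho_\varepsilon\bu_\varepsilon$ is only a sum of a H\"older-bounded part and a ``compact'' part, rather than being itself bounded in a H\"older space, one cannot simply invoke the compact embedding used in Proposition~\ref{prop:rhoutight}, and the Arzel\`a--Ascoli argument of \cite[Corollary~B.2]{on2} must be carried out for the set $B_R\cap(C_R+A)$. The remaining steps are routine adaptations of the estimates already established in the passage $N\to\infty$.
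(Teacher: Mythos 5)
Your proposal is correct and follows essentially the same route as the paper: the same decomposition $\varrho_\varepsilon\bu_\varepsilon=Y^\varepsilon+Z^\varepsilon$, the same Hölder estimate for $Y^\varepsilon$ via \eqref{b1}--\eqref{b3} (with \eqref{eq:gamma+1} replacing \eqref{aprhobeta}) and BDG plus Kolmogorov for the stochastic integral, the same use of \eqref{eq:van2} to handle $Z^\varepsilon$, and the same $K_R=B_R\cap(C_R+A)$ conclusion via an Arzel\`a--Ascoli argument; the only cosmetic difference is that you pass from \eqref{eq:van2} to convergence in probability directly, whereas the paper invokes a.s.\ convergence along a subsequence, and both yield the needed convergence in law.
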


\begin{corollary}
The set $\{\mu^\varepsilon;\,\varepsilon\in(0,1)\}$ is tight on $\mathcal{X}$. 
\end{corollary}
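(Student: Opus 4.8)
The plan is to obtain tightness of the joint laws $\mu^\varepsilon$ on $\mathcal{X}=\mathcal{X}_\varrho\times\mathcal{X}_\bu\times\mathcal{X}_{\varrho\bu}\times\mathcal{X}_W$ directly from tightness of the four marginal families, all of which are already available: tightness of $\{\mu_{\bu_\varepsilon};\,\varepsilon\in(0,1)\}$ on $\mathcal{X}_\bu$ follows exactly as in Proposition \ref{prop:bfutightness} from the uniform bound \eqref{apv}; tightness of $\{\mu_{\varrho_\varepsilon};\,\varepsilon\in(0,1)\}$ on $\mathcal{X}_\varrho$ follows as in Proposition \ref{prop:rhotight}, now using \eqref{aprho} together with the improved integrability \eqref{eq:gamma+1}; tightness of $\{\mu_{\varrho_\varepsilon\bu_\varepsilon};\,\varepsilon\in(0,1)\}$ on $\mathcal{X}_{\varrho\bu}$ is precisely Proposition \ref{rhoutight1}; and tightness of the single law $\mu_W$ on $\mathcal{X}_W=C([0,T];\mathfrak{U}_0)$ is automatic, since $\mathcal{X}_W$ is Polish and a Borel probability measure on a Polish space is Radon, hence inner regular by compact sets. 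The only structural input I need beyond these is that a finite product of compact sets is compact in the product topology; this is Tychonoff's theorem and requires no metrizability of the factor spaces, so it applies even though $\mathcal{X}_\varrho$, $\mathcal{X}_\bu$ and $\mathcal{X}_{\varrho\bu}$ carry (partly) weak topologies and $\mathcal{X}$ is only quasi-Polish.

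Concretely, I would fix $\eta>0$ and, invoking the four marginal tightness statements, choose compact sets $K_\varrho\subset\mathcal{X}_\varrho$, $K_\bu\subset\mathcal{X}_\bu$, $K_{\varrho\bu}\subset\mathcal{X}_{\varrho\bu}$ and $K_W\subset\mathcal{X}_W$ such that, uniformly in $\varepsilon\in(0,1)$,
\[
\mu_{\varrho_\varepsilon}(K_\varrho^c)<\tfrac{\eta}{4},\quad \mu_{\bu_\varepsilon}(K_\bu^c)<\tfrac{\eta}{4},\quad \mu_{\varrho_\varepsilon\bu_\varepsilon}(K_{\varrho\bu}^c)<\tfrac{\eta}{4},\quad \mu_W(K_W^c)<\tfrac{\eta}{4}.
\]
Then $K:=K_\varrho\times K_\bu\times K_{\varrho\bu}\times K_W$ is compact in $\mathcal{X}$, and since
\[
K^c\subset (K_\varrho^c\times\mathcal{X}_\bu\times\mathcal{X}_{\varrho\bu}\times\mathcal{X}_W)\cup(\mathcal{X}_\varrho\times K_\bu^c\times\mathcal{X}_{\varrho\bu}\times\mathcal{X}_W)\cup\cdots,
\]
a union bound, together with the fact that the marginals of $\mu^\varepsilon$ are exactly $\mu_{\varrho_\varepsilon}$, $\mu_{\bu_\varepsilon}$, $\mu_{\varrho_\varepsilon\bu_\varepsilon}$ and $\mu_W$, yields $\mu^\varepsilon(K^c)<\eta$ for every $\varepsilon\in(0,1)$. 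As $\eta>0$ was arbitrary, $\{\mu^\varepsilon;\,\varepsilon\in(0,1)\}$ is tight on $\mathcal{X}$.

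In truth there is no genuine obstacle left at this stage: all the analytic work — the uniform energy inequality \eqref{eq:apriorivarepsilon}, the pressure estimate \eqref{eq:gamma+1} proved in Proposition \ref{prop:higher}, and in particular the decomposition $\varrho_\varepsilon\bu_\varepsilon=Y^\varepsilon+Z^\varepsilon$ with the Kolmogorov-type bound on $Y^\varepsilon$ and the a.s.\ vanishing of $Z^\varepsilon$ from \eqref{eq:van2}, carried out in Proposition \ref{rhoutight1} — has already been done. The corollary is merely the assembly of those facts through the product structure of $\mathcal{X}$. The single point worth recording explicitly is that the argument uses only compactness of finite products and not metrizability of the factors, so it remains compatible with the quasi-Polish setting in which the subsequent Jakubowski--Skorokhod representation theorem is applied (in parallel to Corollary \ref{cor:tight} in the $N\to\infty$ step).
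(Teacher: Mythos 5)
Your proof is correct and reproduces exactly the (implicit) argument in the paper: the paper establishes tightness of each marginal family --- $\mu_{\bu_\varepsilon}$, $\mu_{\varrho_\varepsilon}$, $\mu_{\varrho_\varepsilon\bu_\varepsilon}$ via Propositions \ref{prop:bfutightness}, \ref{prop:rhotight}, \ref{rhoutight1} (with the $\varepsilon$-uniform bounds substituted), and $\mu_W$ as a single Radon measure on the Polish space $\mathcal{X}_W$ --- and then deduces tightness of the joint law by the standard product argument that the paper leaves unstated, just as it did for Corollary \ref{cor:tight} in the $N\to\infty$ step. Your explicit remark that the union-bound-plus-Tychonoff step needs only compactness of finite products and no metrizability, and is therefore compatible with the quasi-Polish factors carrying weak topologies, is precisely the technical point that makes this routine passage safe in the present setting.
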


Now we have all in hand to apply the Jakubowski-Skorokhod representation theorem. It yields the following.

\begin{proposition}\label{prop:skorokhod1}
There exists a subsequence $\mu^\varepsilon$, a probability space $(\tilde\Omega,\tilde\mf,\tilde\prst)$ with $\mathcal{X}$-valued Borel measurable random variables $(\tilde\varrho_n,\tilde\bu_\varepsilon,\tilde\bq_\varepsilon,\tilde W_\varepsilon)$, $n\in\mn$, and $(\tilde\varrho,\tilde\bu,\tilde\bq,\tilde W)$ such that
\begin{enumerate}
 \item the law of $(\tilde\varrho_\varepsilon,\tilde\bu_\varepsilon,\tilde\bq_\varepsilon,\tilde W_\varepsilon)$ is given by $\mu^\varepsilon$, $\varepsilon\in(0,1)$,
\item the law of $(\tilde\varrho,\tilde\bu,\tilde\bq,\tilde W)$, denoted by $\mu$, is a Radon measure,
 \item $(\tilde\varrho_\varepsilon,\tilde\bu_\varepsilon,\tilde\bq_\varepsilon,\tilde W_\varepsilon)$ converges $\,\tilde{\prst}$-almost surely to $(\tilde\varrho,\tilde{\bu},\tilde\bq,\tilde{W})$ in the topology of $\mathcal{X}$.
\end{enumerate}
\end{proposition}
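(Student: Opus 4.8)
The plan is to apply the Jakubowski--Skorokhod representation theorem \cite{jakubow}, in exactly the same way as in the previous passage to the limit, Proposition \ref{prop:skorokhod}. Recall that this theorem extends the classical Skorokhod representation to topological spaces admitting a countable family of continuous real-valued functions that separate points (the so-called quasi-Polish spaces): on such a space, any tight sequence of Borel probability measures has a subsequence realisable by $\tilde\prst$-almost surely convergent random variables defined on one common probability space, and the limit law is Radon.

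First I would check that the path space $\mathcal X=\mathcal X_\varrho\times\mathcal X_\bu\times\mathcal X_{\varrho\bu}\times\mathcal X_W$ is quasi-Polish. This is by now routine: $\mathcal X_W=C([0,T];\mathfrak{U}_0)$ is Polish; the weak topologies on the separable reflexive spaces $L^{\beta+1}(Q)$ and $L^2(0,T;W^{1,2}(\mt))$ are quasi-Polish, a countable dense family of functionals from the dual serving to separate points; the spaces $C_w([0,T];L^\beta(\mt))$ and $C_w([0,T];L^{2\beta/(\beta+1)}(\mt))$ of weakly continuous Banach-space-valued functions are quasi-Polish with point-separating functions $f\mapsto\langle f(t),\psi\rangle$, $t$ rational and $\psi$ ranging over a countable dense subset of the dual; and a finite product of quasi-Polish spaces is quasi-Polish. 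The details can be found in \cite[Section 3]{onsebr} and \cite{on2}.

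With the tightness of $\{\mu^\varepsilon;\,\varepsilon\in(0,1)\}$ on $\mathcal X$ already established in the preceding Corollary, the Jakubowski--Skorokhod theorem then directly furnishes a subsequence (not relabelled), the probability space $(\tilde\Omega,\tilde\mf,\tilde\prst)$, and the random variables $(\tilde\varrho_\varepsilon,\tilde\bu_\varepsilon,\tilde\bq_\varepsilon,\tilde W_\varepsilon)$ and $(\tilde\varrho,\tilde\bu,\tilde\bq,\tilde W)$ satisfying (a) and (c). For (b), note that the limit law $\mu$ is the weak limit along the chosen subsequence of the tight family $\{\mu^\varepsilon\}$, hence is itself tight, and a tight Borel measure on a quasi-Polish space is Radon. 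I do not expect any genuine obstacle here: the argument is essentially a verbatim repetition of the compactness step in the viscous approximation, the only difference being the slightly different choice of the factors $\mathcal X_\varrho$ and $\mathcal X_{\varrho\bu}$, whose quasi-Polish character is verified exactly as before; the single point requiring a little attention is to exhibit the countable point-separating families of continuous functions on each factor, which is covered by the cited references.
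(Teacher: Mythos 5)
Your proposal is correct and follows exactly the same route as the paper: having established tightness of $\{\mu^\varepsilon\}$ on $\mathcal{X}$ in the preceding corollary, the result is a direct application of the Jakubowski--Skorokhod representation theorem, precisely as in Proposition \ref{prop:skorokhod}, and your verification that each factor of $\mathcal{X}$ is quasi-Polish (weak topologies on separable reflexive Banach spaces, spaces $C_w$ of weakly continuous functions, a Polish factor $C([0,T];\mathfrak{U}_0)$) is the standard and correct way to check the theorem's hypotheses.
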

Although the passage to the limit argument follows the same scheme as the one presented in Section \ref{sec:viscous}, the lack of strong convergence of the density does not allow us to identify the limit of the terms where the dependence on $\varrho$ and $\varrho\bfu$ is nonlinear, namely, the pressure and the stochastic integral. Therefore, the identification of the limit is split into two steps: the aim of the remainder of this subsection is to apply the convergence established by the Skorokhod representation theorem and pass to the limit in \eqref{eq:approx}. In the next subsection, we introduce a stochastic generalization of the technique based on regularity of the effective viscous flux, which is originally due to Lions \cite{Li2}. By this we establish strong convergence of the approximate densities and identify the pressure terms as well as the stochastic integral.

In order to not repeat ourselves we will often refer the reader to Section \ref{sec:viscous} in the sequel and present detailed proofs only when new arguments are necessary.
We remark that the energy inequality \eqref{eq:apriorivarepsilon} continues to hold on the new probability space. Moreover, Proposition \ref{prop:higher} continues to hold on the new probability space.

\begin{lemma}\label{lemma:identif2}
The following convergences hold true $\tilde\prst$-a.s.
\begin{align}
\tilde\varrho_\varepsilon\tilde\bu_\varepsilon&\rightarrow  \tilde{\varrho}  \tilde{\bfu}\qquad\text{in}\qquad L^2(0,T;W^{-1,2}( \mathbb T^3))\label{conv:rhov2}\\
\tilde{\varrho}_\varepsilon  \tilde{\bfu}_\varepsilon\otimes  \tilde{\bfu}_\varepsilon&\rightharpoonup  \tilde{\varrho}  \tilde{\bfu}\otimes  \tilde{\bfu}\qquad\text{in}\qquad L^1(0,T;L^{1}( \mathbb T^3))\label{conv:rhovv2}
\end{align}

\begin{proof}
See Lemma \ref{lemma:identif} and Corollary \ref{cor:con}.
\end{proof}
\end{lemma}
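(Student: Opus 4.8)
The plan is to follow the lines of Lemma~\ref{lemma:identif} and Corollary~\ref{cor:con}, with the essential difference that here $\mathcal{X}_\varrho$ carries only weak-type topologies, so no strong convergence of $\tilde\varrho_\varepsilon$ is at our disposal. First I would record the elementary identifications: since on the original probability space $\bfq_\varepsilon=\varrho_\varepsilon\bfu_\varepsilon$ and the joint laws of $(\varrho_\varepsilon,\bfu_\varepsilon,\varrho_\varepsilon\bfu_\varepsilon,W)$ and $(\tilde\varrho_\varepsilon,\tilde\bfu_\varepsilon,\tilde\bfq_\varepsilon,\tilde W_\varepsilon)$ agree, one gets $\tilde\bfq_\varepsilon=\tilde\varrho_\varepsilon\tilde\bfu_\varepsilon$ $\tilde\prst$-a.s.; moreover the uniform bounds \eqref{apv}--\eqref{est:rhobfu22}, \eqref{eq:gamma+1} and the energy inequality \eqref{eq:apriorivarepsilon} transfer verbatim to the tilde variables (and, after passing to a subsequence, the corresponding $L^p$-in-$\omega$ bounds yield a.s.\ bounds uniform in $\varepsilon$, which I will tacitly use below).

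For \eqref{conv:rhov2} I would exploit a compact embedding. By Proposition~\ref{prop:skorokhod1} we have $\tilde\bfq_\varepsilon\to\tilde\bfq$ in $C_w([0,T];L^{2\beta/(\beta+1)}(\mt))$ $\tilde\prst$-a.s.; for each fixed $t$ the weak convergence $\tilde\bfq_\varepsilon(t)\rightharpoonup\tilde\bfq(t)$ in $L^{2\beta/(\beta+1)}(\mt)$ becomes strong convergence in $W^{-1,2}(\mt)$, because $L^{2\beta/(\beta+1)}(\mt)$ is compactly embedded into $W^{-1,2}(\mt)$ for $\beta>3/2$, hence in particular for $\beta>9/2$. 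Together with the (a.s., $\varepsilon$- and $t$-uniform) bound $\sup_t\|\tilde\bfq_\varepsilon(t)\|_{L^{2\beta/(\beta+1)}}\leq C$ and dominated convergence in time this gives $\tilde\bfq_\varepsilon\to\tilde\bfq$ in $L^2(0,T;W^{-1,2}(\mt))$, i.e.\ \eqref{conv:rhov2}. The same argument applied to $\tilde\varrho_\varepsilon\to\tilde\varrho$ in $C_w([0,T];L^\beta(\mt))$ yields $\tilde\varrho_\varepsilon\to\tilde\varrho$ in $L^2(0,T;W^{-1,2}(\mt))$; pairing this strong convergence against the weak convergence $\tilde\bfu_\varepsilon\cdot\bfvarphi\rightharpoonup\tilde\bfu\cdot\bfvarphi$ in $L^2(0,T;W^{1,2}(\mt))$ (for $\bfvarphi\in C^\infty(\mt;\R^3)$) identifies the distributional limit of $\tilde\varrho_\varepsilon\tilde\bfu_\varepsilon$ as $\tilde\varrho\tilde\bfu$, so that $\tilde\bfq=\tilde\varrho\tilde\bfu$, exactly as in Lemma~\ref{lemma:identif}.

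For \eqref{conv:rhovv2} I would use the same strong--weak product structure. For a matrix test field $\mathbb{G}\in C^\infty(\mt;\R^{3\times3})$ write
\[
\int_Q\tilde\varrho_\varepsilon\tilde\bfu_\varepsilon\otimes\tilde\bfu_\varepsilon:\mathbb{G}\,\dxt=\int_0^T\big\langle\tilde\varrho_\varepsilon\tilde\bfu_\varepsilon,\;\mathbb{G}\,\tilde\bfu_\varepsilon\big\rangle_{W^{-1,2},W^{1,2}}\dt,
\]
and observe that $\tilde\varrho_\varepsilon\tilde\bfu_\varepsilon\to\tilde\varrho\tilde\bfu$ strongly in $L^2(0,T;W^{-1,2}(\mt))$ by \eqref{conv:rhov2} while $\mathbb{G}\,\tilde\bfu_\varepsilon\rightharpoonup\mathbb{G}\,\tilde\bfu$ weakly in $L^2(0,T;W^{1,2}(\mt))$; hence the right-hand side converges to $\int_Q\tilde\varrho\tilde\bfu\otimes\tilde\bfu:\mathbb{G}\,\dxt$, i.e.\ $\tilde\varrho_\varepsilon\tilde\bfu_\varepsilon\otimes\tilde\bfu_\varepsilon\to\tilde\varrho\tilde\bfu\otimes\tilde\bfu$ in $\mathcal{D}'(Q)$ $\tilde\prst$-a.s. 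Since \eqref{est:rhobfu22} provides an a.s.\ bound for $\tilde\varrho_\varepsilon\tilde\bfu_\varepsilon\otimes\tilde\bfu_\varepsilon$ in the reflexive space $L^2(0,T;L^{6\beta/(4\beta+3)}(\mt))$, every subsequence has a further subsequence converging weakly there, and by uniqueness of the distributional limit the weak limit is always $\tilde\varrho\tilde\bfu\otimes\tilde\bfu$; therefore the whole sequence converges weakly in $L^2(0,T;L^{6\beta/(4\beta+3)}(\mt))$ and a fortiori in $L^1(0,T;L^1(\mt))$, which is \eqref{conv:rhovv2}. (The same pairing with $\tilde\bfu_\varepsilon$ itself in the weak slot also yields the norm convergence $\|\sqrt{\tilde\varrho_\varepsilon}\tilde\bfu_\varepsilon\|_{L^2(Q)}^2=\int_Q\tilde\varrho_\varepsilon\tilde\bfu_\varepsilon\cdot\tilde\bfu_\varepsilon\to\int_Q\tilde\varrho\tilde\bfu\cdot\tilde\bfu=\|\sqrt{\tilde\varrho}\tilde\bfu\|_{L^2(Q)}^2$, mirroring the first step of Corollary~\ref{cor:con}.)

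I expect the main obstacle to be exactly this loss of strong convergence of the density: in Corollary~\ref{cor:con} the weak $L^2(Q)$ limit of $\sqrt{\tilde\varrho_N}\tilde\bfu_N$ is pinned down using strong $L^4(Q)$ convergence of $\tilde\varrho_N$, and that route is unavailable here, so every quantity nonlinear in $\varrho$ must instead be identified through strong $W^{-1,2}$ convergence of the momentum, which rests entirely on the compact embedding $L^{2\beta/(\beta+1)}(\mt)\hookrightarrow W^{-1,2}(\mt)$ (valid since $\beta>3/2$). The genuinely nonlinear-in-$\varrho$ terms that resist this device --- the pressure $a\tilde\varrho_\varepsilon^\gamma+\delta\tilde\varrho_\varepsilon^\beta$ and the stochastic integral $\varPhi(\tilde\varrho_\varepsilon,\tilde\varrho_\varepsilon\tilde\bfu_\varepsilon)$ --- are deliberately postponed to the effective-viscous-flux argument of the next subsection. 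A minor technical point recurring throughout is the upgrade from ``a.s.\ for each $\varepsilon$'' to ``a.s.\ uniformly in $\varepsilon$'', needed in the dominated-convergence and weak-compactness steps, which follows from the uniform boundedness principle applied to the $C_w$-convergent sequences, at worst after extracting a further subsequence.
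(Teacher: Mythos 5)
Your proof is correct and it addresses a genuine lacuna in the paper's terse ``see Lemma~\ref{lemma:identif} and Corollary~\ref{cor:con}.'' A literal transplant of Corollary~\ref{cor:con} would break down exactly where you flag it: the identification of the weak $L^2(Q)$ limit of $\sqrt{\tilde\varrho_N}\tilde\bfu_N$ there rests on the strong $L^4(Q)$ convergence of the density, which was available in Section~\ref{sec:viscous} because $\mathcal{X}_\varrho$ contained $L^4(0,T;L^4(\mt))$ with its norm topology (through the maximal regularity estimate \eqref{estnablarho}); in Subsection~\ref{subsec:compactness} that piece of the path space is dropped, precisely because \eqref{estnablarho} is not $\varepsilon$-uniform. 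Your substitute---pass to the limit in $\int_Q\tilde\varrho_\varepsilon\tilde\bfu_\varepsilon\otimes\tilde\bfu_\varepsilon:\mathbb{G}\,\dxt$ by pairing the strong $L^2(0,T;W^{-1,2})$ convergence of $\tilde\varrho_\varepsilon\tilde\bfu_\varepsilon$ (obtained from $C_w([0,T];L^{2\beta/(\beta+1)})$ and the compact embedding $L^{2\beta/(\beta+1)}(\mt)\hookrightarrow W^{-1,2}(\mt)$, valid for $\beta>3/2$) against the weak $L^2(0,T;W^{1,2})$ convergence of $\mathbb{G}\,\tilde\bfu_\varepsilon$, then upgrade from distributional to weak $L^1$ convergence via the uniform bound \eqref{est:rhobfu22}---is precisely the half of Corollary~\ref{cor:con}'s argument that does carry over (the duality device behind \eqref{eq:tight2''}), reorganised so as to bypass the weak-limit identification of $\sqrt{\tilde\varrho_\varepsilon}\tilde\bfu_\varepsilon$ altogether. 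What one gives up relative to Section~\ref{sec:viscous} is that the convergence of the convective tensor is now only weak in $L^1(Q)$ rather than strong, but the lemma claims only $\rightharpoonup$, which is all the subsequent effective-viscous-flux argument consumes. Your treatment of \eqref{conv:rhov2}, the identification $\tilde\bfq=\tilde\varrho\tilde\bfu$ by pairing strong $W^{-1,2}$ convergence of the density against weak $W^{1,2}$ convergence of $\tilde\bfu_\varepsilon\cdot\bfvarphi$, and the remark on obtaining a.s.\ $\varepsilon$-uniform bounds from the $C_w$-convergence are likewise correct.
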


Let $(\tilde{\mf}_t^\varepsilon)$ and $(\tilde{\mf}_t)$, respectively, be the $\tilde{\prst}$-augmented canonical filtration of the process $(\tilde\varrho_\varepsilon,\tilde{\bu}_\varepsilon,\tilde{W}_\varepsilon)$ and $(\tilde\varrho,\tilde{\bu},\tilde{W})$, respectively, that is
\begin{equation*}
\begin{split}
\tilde{\mf}_t^\varepsilon&=\sigma\big(\sigma\big(\bfr_t\tilde\varrho_\varepsilon,\,\bfr_t\tilde{\bu}_\varepsilon,\,\bfr_t \tilde{W}_\varepsilon\big)\cup\big\{N\in\tilde{\mf};\;\tilde{\prst}(N)=0\big\}\big),\quad t\in[0,T],\\
\tilde{\mf}_t&=\sigma\big(\sigma\big(\bfr_t\tilde\varrho, \,\bfr_t\tilde{\bu},\,\bfr_t\tilde{W}\big)\cup\big\{N\in\tilde{\mf};\;\tilde{\prst}(N)=0\big\}\big),\quad t\in[0,T].
\end{split}
\end{equation*}

We obtain the following result.

\begin{proposition}\label{prop:martsol}
For every $\varepsilon\in(0,1)$, $\big((\tilde{\Omega},\tilde{\mf},(\tilde{\mf}^\varepsilon_t),\tilde{\prst}),\tilde\varrho_\varepsilon,\tilde{\bu}_\varepsilon,\tilde{W}_\varepsilon\big)$ is a weak martingale solution to \eqref{eq:approx} with the initial law $\Gamma$. Furthermore, there exists $b> \frac{3}{2}$ together with a $W^{-b,2}(\mt)$-valued continuous square integrable $(\tilde\mf_t)$-martingale $\tilde M$ and
$$\tilde p\in L^\frac{\beta+1}{\beta}(\tilde\Omega\times Q)$$
such that 
$\big((\tilde{\Omega},\tilde{\mf},(\tilde{\mf}_t),\tilde{\prst}),\tilde\varrho,\tilde{\bu},\tilde p,\tilde{M}\big)$ is a weak martingale solution to
\begin{subequations}\label{eq:approximat22}
 \begin{align}
  \dif \tilde\varrho+\diver(\tilde\varrho\tilde\bu)\dif t&=0,\label{eq:approximat221}\\
  \dif(\tilde\varrho\tilde\bu)+\big[\diver(\tilde\varrho\tilde\bu\otimes\tilde\bu)-\nu\Delta\tilde\bu-(\lambda+\nu)\nabla\diver\tilde\bfu+\nabla\tilde p\, \big]\dif t&=\dif\tilde M\label{eq:approximat222}
 \end{align}
\end{subequations}
with the initial law $\Gamma$. Besides, \eqref{eq:approximat221} holds true in the renormalized sense.
\end{proposition}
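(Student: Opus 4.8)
The plan is to mimic the passage to the limit from Section~\ref{sec:viscous} as closely as possible, treating the two parts of the statement separately. First I would pass to the limit in the continuity equation~\eqref{eq:approx1}: since $\varepsilon\nabla\tilde\varrho_\varepsilon\to 0$ in $L^2(\tilde\Omega\times Q)$ by \eqref{eq:van1} and equality of laws (so that the argument of Lemma~\ref{eq:mass} transfers verbatim up to the $\varepsilon$-term), and since $\tilde\varrho_\varepsilon\to\tilde\varrho$ in $C_w([0,T];L^\beta(\mt))$ while $\tilde\varrho_\varepsilon\tilde\bfu_\varepsilon\rightharpoonup\tilde\varrho\tilde\bfu$ in $L^1(0,T;L^1(\mt))$ by Lemma~\ref{lemma:identif2}, the functional $L(\rho,\bfq)_t(\psi)$ from Lemma~\ref{eq:mass} (now without the $\varepsilon$-term) satisfies $\tilde\E|L(\tilde\varrho_\varepsilon,\tilde\varrho_\varepsilon\tilde\bfu_\varepsilon)_t|^2\to 0$, which gives \eqref{eq:approximat221}. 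The renormalized formulation then follows from standard parabolic regularity applied to the limit continuity equation, exactly as noted at the end of the proof of Proposition~\ref{prop:limit1}; here it is important that $\tilde\bfu\in L^2(\tilde\Omega;L^2(0,T;W^{1,2}(\mt)))$ by the lower-semicontinuity of \eqref{eq:apriorivarepsilon}.

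Next I would set up the martingale identification for the momentum equation following the method of Proposition~\ref{prop:limit1}. Define, for fixed $\bfvarphi\in C^\infty(\mt)$ and $t\in[0,T]$, the functional
\begin{align*}
M_\varepsilon(\rho,\bfv,\bfq)_t&=\langle\bfq(t),\bfvarphi\rangle-\langle\bfq(0),\bfvarphi\rangle+\int_0^t\langle\bfq\otimes\bfv,\nabla\bfvarphi\rangle\,\dif r-\nu\int_0^t\langle\nabla\bfv,\nabla\bfvarphi\rangle\,\dif r\\
&\quad-(\lambda+\nu)\int_0^t\langle\diver\bfv,\diver\bfvarphi\rangle\,\dif r+\varepsilon\int_0^t\langle\nabla\bfv\nabla\rho,\bfvarphi\rangle\,\dif r
\end{align*}
together with the pressure functional $P_\varepsilon(\rho)_t=-a\int_0^t\langle\rho^\gamma,\diver\bfvarphi\rangle\,\dif r-\delta\int_0^t\langle\rho^\beta,\diver\bfvarphi\rangle\,\dif r$, so that $M_\varepsilon(\varrho_\varepsilon,\bfu_\varepsilon,\varrho_\varepsilon\bfu_\varepsilon)_t+P_\varepsilon(\varrho_\varepsilon)_t=\int_0^t\langle\varPhi(\varrho_\varepsilon,\varrho_\varepsilon\bfu_\varepsilon)\,\dif W,\bfvarphi\rangle$ is a square integrable $(\mf_t^\varepsilon)$-martingale with quadratic variation $N(\varrho_\varepsilon,\varrho_\varepsilon\bfu_\varepsilon)_t$ and cross variation $N_k(\varrho_\varepsilon,\varrho_\varepsilon\bfu_\varepsilon)_t$ with $\beta_k$, using the notation introduced before \eqref{mart}. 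As in \eqref{exp11}--\eqref{exp31}, transfer the martingale identities to the new probability space via equality of laws, using continuity of the functionals on the support of the law and the uniform estimates \eqref{apv}--\eqref{est:rhobfu22} and \eqref{eq:gamma+1}. The $\varepsilon$-term $\varepsilon\int_0^t\langle\nabla\tilde\bfu_\varepsilon\nabla\tilde\varrho_\varepsilon,\bfvarphi\rangle\,\dif r$ drops out by \eqref{eq:van2}, the convective and viscous terms pass to the limit by Corollary~\ref{cor:con} and weak convergence of $\nabla\tilde\bfu_\varepsilon$, and the stochastic integral terms pass to the limit by the Vitali convergence theorem combined with the $L^2$-convergence $\varPhi(\tilde\varrho_\varepsilon,\tilde\varrho_\varepsilon\tilde\bfu_\varepsilon)\to\varPhi(\tilde\varrho,\tilde\varrho\tilde\bfu)$ which follows from the mean value theorem and \eqref{growth1}--\eqref{growth2} provided one has strong convergence of both $\tilde\varrho_\varepsilon$ and $\tilde\varrho_\varepsilon\tilde\bfu_\varepsilon$.

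The crux, and the main obstacle, is precisely that at this stage one does \emph{not} have strong convergence of the density. Consequently the nonlinear pressure $a\tilde\varrho_\varepsilon^\gamma+\delta\tilde\varrho_\varepsilon^\beta$ only converges weakly in $L^{(\beta+1)/\beta}(\tilde\Omega\times Q)$ — call the limit $\tilde p$, which exists and lies in $L^{(\beta+1)/\beta}(\tilde\Omega\times Q)$ thanks to Proposition~\ref{prop:higher} — and similarly the stochastic integrand does not converge. The way around this, exactly as in the deterministic theory, is to \emph{not} identify the stochastic term with $\varPhi$ at this stage but instead to package everything into an abstract martingale: one shows that the limit of $M_\varepsilon(\tilde\varrho_\varepsilon,\tilde\bfu_\varepsilon,\tilde\varrho_\varepsilon\tilde\bfu_\varepsilon)_t$ (with the pressure replaced by $-\int_0^t\langle\tilde p,\diver\bfvarphi\rangle\,\dif r$) is a continuous square integrable $(\tilde\mf_t)$-martingale; applying this for a countable dense family of test functions and invoking a martingale representation argument (or directly constructing the martingale as a $W^{-b,2}(\mt)$-valued process whose increments against test functions are the pointwise limits) yields the $W^{-b,2}(\mt)$-valued martingale $\tilde M$. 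Concretely I would show, for $s<t$ and $h$ a bounded continuous functional of $\bfr_s(\tilde\varrho,\tilde\bfu,\tilde W)$, that $\tilde\E\, h\big[\tilde M_t-\tilde M_s\big]=0$ and that $\tilde\E\, h\big[\tilde M_t\otimes\tilde M_t-\tilde M_s\otimes\tilde M_s\big]$ equals the limit of the corresponding bracket expressions — here uniform integrability from the higher moment bounds in \eqref{apv}--\eqref{est:rhobfu22} and \eqref{eq:gamma+1} lets one pass the limit under the expectation by Vitali. The energy inequality for $(\tilde\varrho,\tilde\bfu)$ follows from \eqref{eq:apriorivarepsilon} and weak lower semicontinuity, and the first assertion that $(\tilde\varrho_\varepsilon,\tilde\bfu_\varepsilon,\tilde W_\varepsilon)$ remains a weak martingale solution to \eqref{eq:approx} is immediate from equality of laws together with the fact that $\tilde W_\varepsilon$ is an $(\tilde\mf_t^\varepsilon)$-cylindrical Wiener process, argued as in the first paragraph of the proof of Proposition~\ref{prop:limit1}. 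The identification of $\tilde p$ with $a\tilde\varrho^\gamma+\delta\tilde\varrho^\beta$ and of $\tilde M$ with $\int_0^\cdot\varPhi(\tilde\varrho,\tilde\varrho\tilde\bfu)\,\dif\tilde W$ is deliberately deferred to the next subsection via the effective viscous flux argument.
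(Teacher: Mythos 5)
Your overall structure matches the paper's: pass to the limit in the continuity equation via Lemma~\ref{lemma:identif2}, set up the functionals $M_\varepsilon$, $N$, $N_k$ together with the identities \eqref{exp111}--\eqref{exp311} on the new probability space, let $\tilde p$ be the weak $L^{(\beta+1)/\beta}$-limit of the pressure (available thanks to Proposition~\ref{prop:higher}), and package the stochastic contribution into an abstract $W^{-b,2}$-valued martingale $\tilde M$ by passing to the limit in \eqref{exp111} alone. Splitting off the pressure as a separate functional $P_\varepsilon$ is cosmetic. Do note, however, that the paper constructs $\tilde M$ directly from the resulting identity~\eqref{exp111a} and the explicit representation $\tilde M_t=\tilde\varrho\tilde\bfu(t)-\tilde\varrho\tilde\bfu(0)-\int_0^t\diver(\tilde\varrho\tilde\bfu\otimes\tilde\bfu)\,\dif r+\cdots$, with square integrability and continuity read off from the uniform moment bounds; no martingale representation theorem is invoked (the whole point of the method used throughout the paper is to avoid one), and identification of the quadratic and cross variations of $\tilde M$ is deliberately deferred to Proposition~\ref{prop:martsol1}, since it cannot be carried out until strong convergence of the density is established. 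So of the two options you sketch for constructing $\tilde M$, only the direct construction is viable at this stage.

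There is, besides, one genuine gap. You assert that the renormalized formulation of \eqref{eq:approximat221} "follows from standard parabolic regularity applied to the limit continuity equation, exactly as noted at the end of the proof of Proposition~\ref{prop:limit1}." That argument does not transfer. The limit equation $\partial_t\tilde\varrho+\diver(\tilde\varrho\tilde\bfu)=0$ contains no diffusive term: it is a transport equation, not a parabolic one, and parabolic regularity (which the paper invokes only for the $\varepsilon$-level equation $\partial_t\varrho+\diver(\varrho\bfu)=\varepsilon\Delta\varrho$ with $\varepsilon>0$ fixed) says nothing about it. In particular one cannot conclude along these lines that $\tilde\varrho$ satisfies \eqref{eq:approximat221} a.e. The paper's actual argument is the DiPerna--Lions renormalization machinery: mollify the limit continuity equation in space by $S^m$ to obtain \eqref{eq:ren1}, invoke the commutation lemma to show that the remainder $\tilde r_m=\diver\big(S^m[\tilde\varrho]\tilde\bfu-S^m[\tilde\varrho\tilde\bfu]\big)$ vanishes in $L^1(\tilde\Omega\times Q)$ --- this is precisely where $\tilde\bfu\in L^2(0,T;W^{1,2}(\mt))$ together with the higher integrability $\tilde\varrho\in L^{\beta+1}(\tilde\Omega\times Q)$ from Proposition~\ref{prop:higher} is used --- and then multiply by $b'(S^m[\tilde\varrho])$ and send $m\to\infty$ to obtain \eqref{eq:rendelta}. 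You correctly flag that $\tilde\bfu\in L^2 W^{1,2}$ is needed, so you had the right hypotheses in mind, but the tool you cite would not deliver the conclusion.
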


\begin{proof}
The passage to the limit in \eqref{eq:approx1} employs \eqref{conv:rhov2} together with the arguments of Lemma \ref{eq:mass}. Concerning the passage to the limit in \eqref{eq:approx2}, we follow the approach of Proposition \ref{prop:limit1} and define for all $t\in[0,T]$ and $\bfvarphi\in C^\infty(\mt)$ the functionals
\begin{equation*}
\begin{split}
M_\varepsilon(\rho,\bfv,\bfq)_t&=\big\langle\bfq(t),\bfvarphi\big\rangle-\big\langle \bfq(0),\bfvarphi\big\rangle+\int_0^t\big\langle\bfq\otimes\bfv,\nabla\bfvarphi\big\rangle\,\dif r-\nu\int_0^t\big\langle\nabla\bfv,\nabla\bfvarphi\big\rangle\,\dif r\\
&\quad-(\lambda+\nu)\int_0^t\big\langle\diver\bfv,\diver\bfvarphi\big\rangle\,\dif r+a\int_0^t\big\langle\rho^\gamma,\diver\bfvarphi\big\rangle\,\dif r+\delta\int_0^t\big\langle\rho^\beta,\diver\bfvarphi\big\rangle\,\dif r\\
&\quad-\varepsilon\int_0^t\big\langle\nabla \bfv\nabla\rho,\bfvarphi\big\rangle\,\dif r,\\
N(\rho,\bfq)_t&=\sum_{k\geq1}\int_0^t\big\langle g_k(\rho, \bfq ),\bfvarphi\big\rangle^2\,\dif r,\\
N_k(\rho,\bfq)_t&=\int_0^t\big\langle g_k(\rho,\bfq),\bfvarphi\big\rangle\,\dif r,
\end{split}
\end{equation*}
and deduce that
\begin{equation}\label{exp111}
\begin{split}
&\tilde{\stred}\,h\big(\bfr_s\tilde\varrho_\varepsilon, \bfr_s\tilde{\bu}_\varepsilon,\bfr_s\tilde{W}_\varepsilon\big)\big[M_{\varepsilon}(\tilde\varrho_\varepsilon,\tilde\bu_\varepsilon,\tilde\varrho_\varepsilon\tilde\bfu_\varepsilon)_{s,t}\big]=0,
\end{split}
\end{equation}
\begin{equation}\label{exp211}
\begin{split}
&\tilde{\stred}\,h\big(\bfr_s\tilde\varrho_\varepsilon, \bfr_s\tilde{\bu}_\varepsilon,\bfr_s\tilde{W}_\varepsilon\big)\bigg[[M_{\varepsilon}(\tilde\varrho_\varepsilon,\tilde\bu_\varepsilon,\tilde\varrho_\varepsilon\tilde\bfu_\varepsilon)^2]_{s,t}-N(\tilde\varrho_\varepsilon,\tilde\varrho_\varepsilon\tilde\bfu_\varepsilon)_{s,t}\bigg]=0,
\end{split}
\end{equation}
\begin{equation}\label{exp311}
\begin{split}
&\tilde{\stred}\,h\big(\bfr_s\tilde\varrho_\varepsilon, \bfr_s\tilde{\bu}_\varepsilon,\bfr_s\tilde{W}_\varepsilon\big)\bigg[[M_{\varepsilon}(\tilde\varrho_\varepsilon,\tilde\bu_\varepsilon,\tilde\varrho_\varepsilon\tilde\bfu_\varepsilon)\tilde{\beta}_k^\varepsilon]_{s,t}-N_k(\tilde\varrho_\varepsilon,\tilde\varrho_\varepsilon\tilde\bfu_\varepsilon)_{s,t}\bigg]=0,
\end{split}
\end{equation}
which implies the first part of the statement.

As the next step, we will pass to the limit in \eqref{exp111}. We apply \eqref{est:rhobfu22} and \eqref{conv:rhovv2} for the convective term, \eqref{apv}, \eqref{est:nablarho} and \eqref{eq:van2} for the term involving the artificial viscosity $\varepsilon$. In the case of the pressure, we see that according to \eqref{eq:gamma+1} there exists $\tilde p\in L^\frac{\beta+1}{\beta}(\tilde\Omega\times Q)$ such that
$$a\tilde\varrho_\varepsilon^\gamma+\delta\tilde\varrho_\varepsilon^\beta\rightharpoonup\tilde p\qquad\text{in}\qquad L^\frac{\beta+1}{\beta}(\tilde\Omega\times Q)$$
hence in view of \eqref{aprho} we deduce
\begin{align*}
\tilde{\stred}&\,h\big(\bfr_s\tilde\varrho_\varepsilon, \bfr_s\tilde{\bu}_\varepsilon,\bfr_s\tilde{W}_\varepsilon\big)\bigg[a\int_0^t\big\langle\tilde\varrho_\varepsilon^\gamma,\diver\bfvarphi\big\rangle\,\dif r+\delta\int_0^t\big\langle\tilde\varrho_\varepsilon^\beta,\diver\bfvarphi\big\rangle\,\dif r\bigg]\\
&\qquad\rightarrow\tilde{\stred}\,h\big(\bfr_s\tilde\varrho, \bfr_s\tilde{\bu},\bfr_s\tilde{W}\big)\bigg[\int_0^t\big\langle\tilde p,\diver\bfvarphi\big\rangle\,\dif r\bigg].
\end{align*}
Convergence of the remaining terms is obvious and therefore we have proved that
\begin{equation}\label{exp111a}
\begin{split}
&\tilde{\stred}\,h\big(\bfr_s\tilde\varrho, \bfr_s\tilde{\bu},\bfr_s\tilde{W}\big)\big[\big\langle\tilde M,\bfvarphi\big\rangle_{s,t}\big]=0,
\end{split}
\end{equation}
where
\begin{align*}
\tilde M_{t}&=\tilde\varrho\tilde\bfu(t)-\tilde\varrho\tilde\bfu(0)-\int_0^t\diver(\tilde\varrho\tilde\bfu\otimes\tilde\bfu)\,\dif r+\nu\int_0^t\Delta\tilde\bfu\,\dif r\\
&\qquad+(\lambda+\nu)\int_0^t\nabla\diver\tilde\bfu\,\dif r-\int_0^t\nabla\tilde p\,\dif r.
\end{align*}
Hence $\tilde M$ is a continuous $(\tilde\mf_t)$-martingale and possesses moments of any order due to our uniform estimates.

To conclude the proof, we will show that $(\tilde\varrho,\tilde \bfu)$ solves the continuity equation in the renormalized sense. We apply to \eqref{eq:approximation21} a standard smoothing operator $S^m$ (which is the convolution with an approximation to the identity in space) such that $\tilde{\p}\otimes\mathcal L^{4}$-a.e. on $\tilde{\Omega}\times Q$
\begin{align}\label{eq:ren1}
\partial_t S^m[\tilde{\varrho}]+\Div\big(S^m[\tilde{\varrho}]\tilde{\bfu}\big)
=\Div\big(S^m[\tilde{\varrho}]\tilde{\bfu}-S^m[\tilde{\varrho}\tilde{\bfu}]\big).
\end{align}
Setting $\tilde{r}_m:=\Div\big(S^m[\tilde{\varrho}]\tilde{\bfu}-S_m[\tilde{\varrho}\tilde{\bfu}]\big)$ we infer from the commutation lemma (see e.g. \cite[Lemma 2.3]{Li1}) that $\tilde\p\otimes\mathcal L^1$-a.e.
\begin{align*}
\|\tilde{r}_m\|_{L^q_x}\leq \|\tilde{\bfu}\|_{W^{1,2}_x}\|\tilde{\varrho}\|_{L^{\beta+1}_x},\qquad \tfrac{1}{q}=\tfrac{1}{2}+\tfrac{1}{\beta+1},
\end{align*}
as well as $\tilde{r}_m\rightarrow0$ in $L^1(\mathbb T^3)$.
Both together imply $\tilde{r}_m\rightarrow0$ in $L^1(\tilde{\Omega}\times Q)$. Let $b:\R\rightarrow\R$ be a $C^1$-function with compact support.
We multiply
(\ref{eq:ren1}) by $b'(S^m[\tilde{\varrho}])$ to obtain
\begin{align*}
\partial_t b(S^m[\tilde{\varrho}])&+\Div\big(b(S^m[\tilde{\varrho}])\tilde{\bfu}\big)
+\big(b'(S^m[\tilde{\varrho}])S^m[\tilde{\varrho}]-b(S^m[\tilde{\varrho}])\big)\Div\tilde{\bfu}
=\tilde{r}_mb'(S^m[\tilde{\varrho}]).
\end{align*}
As $b'$ is bounded the right hand side vanishes for $m\rightarrow\infty$ (in the $L^1(\tilde{\Omega}\times Q)$-sense) and we gain
\begin{align}\label{eq:rendelta}
\partial_t b(\tilde{\varrho})&+\Div\big(b(\tilde{\varrho})\tilde{\bfu}\big)
+\big(b'(\tilde{\varrho})\tilde{\varrho}-b(\tilde{\varrho})\big)\Div\tilde{\bfu}
=0
\end{align}
in the sense of distributions, i.e.
\begin{align*}
\int_Q b(\tilde{\varrho})\,\partial_t\phi\dxt &=-\int_Q\big(b(\tilde{\varrho})\tilde{\bfu}\big)\cdot\nabla\phi\dxt
+\int_Q\big(b'(\tilde{\varrho})\tilde{\varrho}-b(\tilde{\varrho})\big)\Div\tilde{\bfu}\,\phi\dxt\\
&-\int_{\mt} b(\tilde{\varrho}(0))\phi(0)\dx
\end{align*}
for all $\phi\in C^\infty([0,T]\times \mt)$ with $\phi(T)=0$ which is equivalent to 
\begin{align*}
\int_{\mt} b(\tilde{\varrho})\,\psi\dx &=\int_{\mt} b(\tilde{\varrho}(0))\psi(0)\dx+\int_0^t\int_{\mt}\big(b(\tilde{\varrho})\tilde{\bfu}\big)\cdot\nabla\psi\dxs\\
&-\int_0^t\int_{\mt}\big(b'(\tilde{\varrho})\tilde{\varrho}-b(\tilde{\varrho})\big)\Div\tilde{\bfu}\,\psi\dxs
\end{align*}
for all $\psi\in C^\infty(\mt)$.
\end{proof}

\subsection{Strong convergence of density}
\label{subsec:strongconvdensity}

In the first step, we proceed as in Proposition \ref{prop:higher} and test \eqref{eq:approx2} by $\Delta^{-1}\nabla \tilde\varrho_\varepsilon$, that is, we apply It\^{o}'s formula to the function $f(\rho,\bfq)=\int_{\mt}\bfq\cdot\Delta^{-1}\nabla\rho\dx$ which yields the corresponding version of \eqref{eq:test}. Let us also keep the same notation, i.e. we denote by $J_0$ the term on the left hand side and by $J_1,\dots J_{10}$ the terms on the right hand side. Taking the expectation we observe that the stochastic integral $J_8$ is a martingale. Similarly for the limit equation we obtain 
\begin{align}\label{eq:hjhj}
\begin{aligned}
\tilde\stred\int_ {\mathbb T^3} & \tilde\varrho\tilde \bfu\cdot \Delta^{-1}\nabla \tilde\varrho\dx=\tilde\stred\int_ {\mathbb T^3}  \tilde\varrho\tilde \bfu(0)\cdot \Delta^{-1}\nabla \tilde\varrho(0)\dx\\
&\quad-\nu\,\tilde\stred\int_0^t\int_ {\mathbb T^3} \nabla \tilde\bfu:\nabla\Delta^{-1} \nabla\tilde\varrho\dxs-(\lambda+\nu)\,\tilde\stred\int_0^t\int_ {\mathbb T^3} \diver\tilde\bfu\,\tilde\varrho\dxs\\
&\quad+\tilde\stred\int_0^t\int_ {\mathbb T^3}  \tilde\varrho\tilde \bfu\otimes \tilde\bfu:\nabla\Delta^{-1}\nabla \tilde\varrho\dxs+\tilde\stred\int_0^t\int_ {\mathbb T^3}\tilde \varrho\,\tilde p\dxs\\
&\quad-\tilde\stred\int_0^t(\tilde\varrho)_{\mt}\int_ {\mathbb T^3}\tilde p\dxs-\tilde\stred\int_0^t\int_{\mt}\tilde\varrho\tilde\bfu\nabla\Delta^{-1}\diver(\tilde\varrho\tilde\bfu)\dxs\\
&=\tilde\stred K_1+\cdots +\tilde\stred K_{7}.
\end{aligned}
\end{align}
To see why expectation of the stochastic integral vanishes, let us recall that, at this level, the It\^o formula can only be applied after a preliminary step of mollification. That is, mollification of \eqref{eq:approximat22} and application of the 1-dimensional It\^o formula to the product (where $x\in\mt$ is fixed)
$$(\tilde\varrho\tilde\bfu)^\kappa(x) \big(\Delta^{-1}\nabla\tilde\varrho\big)^\kappa(x).$$
This yields a stochastic integral of the form
$$\int_0^t \big(\Delta^{-1}\nabla\tilde\varrho\big)^\kappa(s,x)\,\dif\tilde M^\kappa(s,x).$$
Now, we observe that by \eqref{eq:approximat221}
$$\tilde \varrho\in L^q(\Omega;C^{0,1}([0,T];W^{-1,\frac{2\beta}{\beta+1}}(\mt))).$$
Hence $\big(\Delta^{-1}\nabla\tilde\varrho\big)^\kappa$ is a process with Lipschitz continuous trajectories and values in $C^\infty(\mt)$.
Consequently, we may use the integration by parts formula which follows easily from the It\^o formula applied to the product
$$\big(\Delta^{-1}\nabla\tilde\varrho\big)^\kappa(x)\tilde M^\kappa(x)$$
and infer that
\begin{align*}
&\int_0^t\big(\Delta^{-1}\nabla\tilde\varrho\big)^\kappa(s,x)\,\dif\tilde M^\kappa(s,x)\\
&\quad=\big(\Delta^{-1}\nabla\tilde\varrho\big)^\kappa(t,x)\,\tilde M^\kappa(t,x)-\int_0^t\tilde M^\kappa(s,x)\,\dif\big(\Delta^{-1}\nabla\tilde\varrho\big)^\kappa(s,x).
\end{align*}
But this necessarily implies that
\begin{equation}\label{exp}
\tilde\stred\int_0^t\big(\Delta^{-1}\nabla\tilde\varrho\big)^\kappa(s,x)\,\dif\tilde M^\kappa(s,x)=0.
\end{equation}
Indeed, let $A$ be a square integrable adapted process of bounded variation, let $N$ be a square integrable continuous martingale with $N_0=0$ and let $0=t_0\leq t_1\leq \cdots\leq t_n=t$ be a partition of $[0,t]$. Define
$$N^\Pi_s=\sum_{k=1}^n N_{t_k}\ind_{(t_{k-1},t_k]}(s).$$
Then it holds
\begin{align*}
\stred\int_0^tN^\Pi_s\,\dif A_s&=\stred\sum_{k=1}^n N_{t_k}(A_{t_k}-A_{t_{k-1}})=\stred\bigg[\sum_{k=1}^n N_{t_k}A_{t_k}-\sum_{k=0}^{n-1} N_{t_{k+1}}A_{t_k}\bigg]\\
&=\stred [N_tA_t]-\stred\sum_{k=0}^{n-1}A_{t_k}(N_{t_{k+1}}-N_{t_k})=\stred [N_tA_t]
\end{align*}
and letting the mesh size of the partition vanish we obtain by dominated convergence theorem
\begin{align*}
\stred\int_0^tN_s\,\dif A_s&=\stred [N_tA_t].
\end{align*}
Accordingly \eqref{exp} follows and \eqref{eq:hjhj} is justified.

Therefore, we obtain 
\begin{equation}\label{eq:kkkn}
 \begin{split}
 \tilde\stred\int_0^t\int_{\mt}&\big(a\tilde\varrho_\varepsilon^\gamma+\delta\tilde\varrho^\beta_\varepsilon
-(\lambda+2\nu)\diver\tilde\bu_\varepsilon\big)\tilde\varrho_\varepsilon\,\dif x\,\dif t=\tilde\stred\big[J_0-J_1-J_5+J_7-J_9\big]\\
&\qquad\quad+\tilde\stred\int_0^t\int_{\mt}\tilde u^i_\varepsilon\big(\tilde\varrho_\varepsilon\mathcal{R}_{ij}[\tilde\varrho_\varepsilon\tilde u^{j}_\varepsilon]-\tilde\varrho_\varepsilon\tilde u^{j}_n\mathcal{R}_{ij}[\tilde\varrho_\varepsilon]\big)\,\dif x\,\dif \sigma,
 \end{split}
\end{equation}
and
\begin{equation}\label{kkk}
\begin{split}
\tilde\stred\int_0^t\int_{\mt}&\big(\tilde p-(\lambda+2\nu)\diver\tilde\bu\big)\tilde\varrho\,\dif x\,\dif t=\tilde\stred\big[K_0-K_1+K_6\big]\\
&\qquad\quad+\tilde\stred\int_0^t\int_{\mt}\tilde u^i\big(\tilde\varrho\mathcal{R}_{ij}[\tilde\varrho\tilde u^{j}]-\tilde\varrho\tilde u^{j}\mathcal{R}_{ij}[\tilde\varrho]\big)\,\dif x\,\dif \sigma,
\end{split}
\end{equation}
where we used the Einstein summation convention. The operator $\mathcal{R}$ is defined by $\mathcal{R}_{ij}=\partial_j\Delta^{-1}\partial_i$. Now, by definition of $\tilde p$ and the convergence
$$(\tilde\varrho_\varepsilon(t))_{\mt}\rightarrow (\tilde\varrho(t))_{\mt}\quad\text{for a.e.}\quad(\omega,t)$$
together with \eqref{aprho}
it follows that $\tilde\stred J_7\rightarrow\tilde\stred K_6$. Moreover, it can be shown that $\tilde\stred J_5\rightarrow0$ and $\tilde\stred J_9\rightarrow0.$ Indeed,
\begin{equation*}
 \begin{split}
\tilde\stred |J_5|&\leq\,C\sqrt{\varepsilon}\,\Big(\tilde\stred\|\nabla\Delta^{-1}\tilde\varrho_\varepsilon\|^3_{L^\infty(Q)} +\tilde\stred\|\nabla\tilde\bu_\varepsilon\|^3_{L^2(0,T;L^2(\mt))}+\tilde\stred\|\sqrt{\varepsilon}\nabla\tilde\varrho_\varepsilon\|^3_{L^2(0,T;L^2(\mt))}\Big)\\
&\leq \,C\sqrt{\varepsilon}
 \end{split}
\end{equation*}
and
\begin{equation*}
 \begin{split}
\tilde\stred| J_9|&\leq\,C\sqrt{\varepsilon}\,\Big(\tilde\stred\|\tilde\varrho_\varepsilon\|^3_{L^\infty(0,T;L^3(\mt))} +\tilde\stred\|\tilde\bu_\varepsilon\|^3_{L^2(0,T;L^6(\mt))}+\tilde\stred\|\sqrt{\varepsilon}\nabla\tilde\varrho_\varepsilon\|^3_{L^2(0,T;L^2(\mt))}\Big)\\
&\leq \,C\sqrt{\varepsilon}.
 \end{split}
\end{equation*}
Next, we prove that $\tilde\stred J_0\rightarrow\tilde\stred K_0$ and similarly $\tilde\stred J_1\rightarrow\tilde\stred K_1$. Due to Proposition \ref{prop:skorokhod1}, \eqref{conv:rhov2} and the compactness of the operator $\Delta^{-1}\nabla $ on $L^\beta(\mt)$ we have for any fixed $t\in[0,T]$,
$$\Delta^{-1}\nabla\tilde\varrho_\varepsilon(t)\rightarrow\Delta^{-1}\nabla\tilde\varrho(t)\quad\text{in}\quad L^\beta(\mt)\quad\tilde\prst\text{-a.s.},$$
$$\tilde\varrho_\varepsilon\tilde\bu_\varepsilon(t)\rightharpoonup\tilde\varrho\tilde\bu(t)\quad\text{in}\quad L^\frac{2\beta}{\beta+1}(\mt)\quad\tilde\prst\text{-a.s.}$$
Hence due to the assumption $\beta> 4$
$$\int_{\mt}\tilde\varrho_\varepsilon\tilde\bu_\varepsilon(t)\cdot\Delta^{-1}\nabla\tilde\varrho_\varepsilon(t)\,\dif x\rightarrow\int_{\mt}\tilde\varrho\tilde\bu(t)\cdot \Delta^{-1}\nabla\tilde\varrho(t)\,\dif x\quad\tilde\prst\text{-a.s.}$$
This, together with the following bound, for all $p\geq1$,
\begin{equation*}
\begin{split}
\tilde\stred\,&\bigg|\int_{\mt}\tilde\varrho_\varepsilon\tilde\bu_\varepsilon(t)\cdot\Delta^{-1}\nabla\tilde\varrho_\varepsilon(t)\,\dif x\bigg|^p\\
&\quad\leq C\,\tilde\stred\|\Delta^{-1}\nabla\tilde\varrho_\varepsilon\|_{L^\infty(\mt)}^{2p}+C\,\tilde\stred\bigg[\int_{\mt}|\tilde\varrho_\varepsilon\tilde\bu_\varepsilon|\dif x\bigg]^p\leq C
\end{split}
\end{equation*}
yields the claim.

Now we come to the crucial point. In order to establish convergence of the left hand side of \eqref{eq:kkkn} to the left hand side of \eqref{kkk}, we need to verify convergence of the remaining term on the right hand side of \eqref{eq:kkkn} to the corresponding one in \eqref{kkk}. Since $ \tilde\bfu_\varepsilon$ is weakly convergent in $L^2(\Omega;L^2(0,T;W^{1,2}(\mt)))$, we have to show that
$ \tilde\varrho_\varepsilon\mathcal R[ \Tilde\varrho_\varepsilon \tilde\bfu_\varepsilon]- \tilde\varrho_\varepsilon \tilde\bfu_\varepsilon\mathcal R[ \tilde\varrho_\varepsilon]$ converges strongly in $L^2(\Omega;L^2(0,T;W^{-1,2}(\mt)))$. For the identification of the limit we make use of the div-curl lemma.

From Proposition \ref{prop:skorokhod1} we obtain that
$$\tilde\varrho_\varepsilon\rightharpoonup \tilde\varrho\quad\text{in}\quad L^{\beta}(\mt)\quad \tilde\prst\otimes\mathcal{L}\text{-a.e.},$$
$$\tilde\varrho_\varepsilon\tilde\bu_\varepsilon\rightharpoonup\tilde\varrho\tilde\bu\quad\text{in}\quad L^\frac{2\beta}{\beta+1}(\mt)\quad\tilde\prst\otimes\mathcal{L}\text{-a.e.}$$
Hence we can apply \cite[Lemma 3.4]{feireisl1} to conclude that
$$\tilde\varrho_\varepsilon\mathcal{R}_{ij}[\tilde\varrho_\varepsilon\tilde\bu_\varepsilon]-\tilde\varrho_\varepsilon\tilde\bu_\varepsilon\mathcal{R}_{ij}[\tilde\varrho_\varepsilon]\rightharpoonup\tilde\varrho\mathcal{R}_{ij}[\tilde\varrho\tilde\bu]-\tilde\varrho\tilde\bu\mathcal{R}_{ij}[\tilde\varrho]\quad\text{in}\quad L^r(\mt)\quad \tilde\prst\otimes\mathcal{L}\text{-a.e.},$$
where
$$\frac{1}{r}=\frac{1}{\beta}+\frac{\beta+1}{2\beta}<\frac{5}{6}$$
provided $\beta>\frac{9}{2}$. Therefore $L^r(\mt)$ is compactly embedded into $W^{-1,2}(\mt)$ and as a consequence,
$$\tilde\varrho_\varepsilon\mathcal{R}_{ij}[\tilde\varrho_\varepsilon\tilde\bu_\varepsilon]-\tilde\varrho_\varepsilon\tilde\bu_\varepsilon\mathcal{R}_{ij}[\tilde\varrho_\varepsilon]\rightarrow\tilde\varrho\mathcal{R}_{ij}[\tilde\varrho\tilde\bu]-\tilde\varrho\tilde\bu\mathcal{R}_{ij}[\tilde\varrho]\quad\text{in}\quad W^{-1,2}(\mt)\quad \tilde\prst\otimes\mathcal{L}\text{-a.e.}$$
Moreover, it is possible to show that for any $p\in\big(2,\frac{\beta}{2}\big)$ (using continuity of $\mathcal R_{ij}$, H\"older's inequality as well as Proposition \ref{prop:skorokhod1})
\begin{equation*}
\begin{split}
\tilde\stred\int_0^T&\big\|\tilde\varrho_\varepsilon \mathcal{R}_{ij}[\tilde\varrho_\varepsilon\tilde\bu_\varepsilon]-\tilde\varrho_\varepsilon\tilde\bu_\varepsilon \mathcal{R}_{ij}[\tilde\varrho_\varepsilon]\big\|_{W^{-1,2}(\mt)}^p\\
&\leq C\,\tilde\stred\int_0^T\|\tilde\varrho_\varepsilon\|_{L^{\beta}(\mt)}^{2p}\dif t+C\,\tilde\stred\sup_{0\leq t\leq T}\|\tilde\varrho_\varepsilon\tilde\bu_\varepsilon\|_{L^\frac{2\beta}{\beta+1}(\mt)}^{2p}\leq C
\end{split}
\end{equation*}
which gives the desired convergence
$$\tilde\varrho_\varepsilon\mathcal{R}_{ij}[\tilde\varrho_\varepsilon\tilde\bu_\varepsilon]-\tilde\varrho_\varepsilon\tilde\bu_\varepsilon\mathcal{R}_{ij}[\tilde\varrho_\varepsilon]\rightarrow\tilde\varrho\mathcal{R}_{ij}[\tilde\varrho\tilde\bu]-\tilde\varrho\tilde\bu\mathcal{R}_{ij}[\tilde\varrho]\quad\text{in}\quad L^2(\Omega;L^2(0,T;W^{-1,2}(\mt))).$$
Thus we conclude that
\begin{align}\label{eq:eq}
\begin{aligned}
\tilde\stred\int_{Q}&\tilde u^i_\varepsilon\big(\tilde\varrho_\varepsilon\mathcal{R}_{ij}[\tilde\varrho_\varepsilon\tilde u^{j}_\varepsilon]-\tilde\varrho_\varepsilon\tilde u^{j}_\varepsilon\mathcal{R}_{ij}[\tilde\varrho_\varepsilon]\big)\,\dif x\,\dif t\\
&\qquad\rightarrow \tilde\stred\int_{Q}\tilde u^i\big(\tilde\varrho\mathcal{R}_{ij}[\tilde\varrho\tilde u^{j}]-\tilde\varrho\tilde u^{j}\mathcal{R}_{ij}[\tilde\varrho]\big)\,\dif x\,\dif t
\end{aligned}
\end{align}
and accordingly
\begin{align}\label{eq:flux}
\tilde\E\int_{Q} \big( a\tilde{\varrho}_\varepsilon^\gamma+\delta\tilde\varrho_\varepsilon^\beta-(\lambda+2\nu)\Div \tilde{\bfu}_\varepsilon\big)\, \tilde{\varrho}_\varepsilon\dxt\rightarrow\tilde\E\int_Q \big( \tilde{p}-(\lambda+2\nu)\Div \tilde{\bfu}\big)\, \tilde{\varrho}\dxt.
\end{align}
As the next step, we intend to prove the following
\begin{align}\label{eq:fluxfinal}
\limsup_{\varepsilon\rightarrow \infty} \tilde\E\int_Q \big(a\tilde{\varrho}_\varepsilon^{\gamma}+\delta\tilde\varrho_\varepsilon^\beta\big)\tilde\varrho_\varepsilon\dxt\leq \tilde\E\int_Q \tilde{\varrho}\, \tilde{p}\dxt.
\end{align}
Towards this end, we make use of the continuity equation \eqref{eq:approx1} and its limit equation in the renormalized form. We consider function $b:[0,\infty)\rightarrow\R$ which is convex and globally Lipschitz continuous. As $\tilde{\varrho}_\varepsilon$ solves \eqref{eq:approx1} a.e. we gain
\begin{align*}
\partial_t b (\tilde{\varrho}_\varepsilon)+\Div(b(\tilde{\varrho}_\varepsilon)\tilde{\bfu}_\varepsilon)+(b'(\tilde{\varrho}_\varepsilon)\tilde{\varrho}_\varepsilon-b(\tilde{\varrho}_\varepsilon))\Div \tilde{\bfu}_\varepsilon-\varepsilon\Delta b(\tilde{\varrho}_\varepsilon)\leq0
\end{align*}
$\p\otimes\mathcal L^4$-a.e.
and hence
\begin{align*}
\int_0^T\int_{\mathbb T^3}\big(b'(\tilde{\varrho}_\varepsilon)\tilde{\varrho}_\varepsilon-b(\tilde{\varrho}_\varepsilon)\big)\Div \tilde{\bfu}_\varepsilon\dxt\leq \int_{\mathbb T^3}b\big(\tilde{\varrho}_\varepsilon(0)\big)\dx-\int_{\mathbb T^3}b\big(\tilde{\varrho}_\varepsilon(T)\big)\dx.
\end{align*}
For $b(z)=z\ln z$ we have
\begin{align}\label{eq:renorm}
\int_0^T\int_{\mathbb T^3}\tilde{\varrho}_\varepsilon\Div \tilde{\bfu}_\varepsilon\dxt\leq \int_{\mathbb T^3}\tilde{\varrho}_\varepsilon(0)\ln \tilde{\varrho}_\varepsilon(0)\dx-\int_{\mathbb T^3}\tilde{\varrho}_\varepsilon(T)\ln \tilde{\varrho}_\varepsilon(T)\dx.
\end{align}
Since the limit functions $(\tilde{\varrho},\tilde{\bfu})$ solve \eqref{eq:approximat221} in the renormalized sense as shown in Proposition \ref{prop:martsol}, it follows that
\begin{align}\label{eq:renormlimit}
\int_0^T\int_{\mathbb T^3}\tilde{\varrho}\Div \tilde{\bfu}\dxt= \int_{\mathbb T^3}\tilde{\varrho}(0)\ln \tilde{\varrho}(0)\dx-\int_{\mathbb T^3}\tilde{\varrho}(T)\ln \tilde{\varrho}(T)\dx.
\end{align}
If we combine (\ref{eq:renorm}) and (\ref{eq:renormlimit}) with the weak lower semicontinuity of $\rho\mapsto \int_{\mathbb T^3}\rho\ln\rho\dx$, the fact that for every $\varepsilon$
the law of $\tilde\varrho_\varepsilon(0)$ coincides $\tilde\varrho(0)$ and is given by the projection of $\Gamma$ to the first coordinate, we deduce that
\begin{align}\label{eq:renormlimit2}
\limsup_{\varepsilon\rightarrow 0}\tilde\stred\int_0^T\int_{\mathbb T^3}\tilde{\varrho}_\varepsilon\Div \tilde{\bfu}_\varepsilon\dxt\leq \tilde\stred\int_0^T\int_{\mathbb T^3}\tilde{\varrho}\Div \tilde{\bfu}\dxt.
\end{align}
Using (\ref{eq:flux}) and (\ref{eq:renormlimit2}) we compute
\begin{align*}
&\limsup_{\varepsilon\rightarrow0} \tilde\E\int_Q \big(a\tilde{\varrho}_\varepsilon^{\gamma}+\delta\tilde\varrho_\varepsilon^\beta\big)\tilde\varrho_\varepsilon\dxt\bigg]\\
&\leq \lim_{\varepsilon\rightarrow0}\tilde\E\int_Q ( a\tilde{\varrho}_\varepsilon^\gamma+\delta\tilde{\varrho}_\varepsilon^\beta-(\lambda+2\nu)\Div \tilde{\bfu}_\varepsilon\big)\, \tilde{\varrho}_\varepsilon\dxt
+(\lambda+2\nu)\limsup_{\varepsilon\rightarrow0}\tilde\E\int_Q \Div \tilde{\bfu}_\varepsilon\, \tilde{\varrho}_\varepsilon\dxt\\
&\leq \tilde\E\int_Q \big( \tilde{p}-(\lambda+2\nu)\Div \tilde{\bfu}\big)\, \tilde{\varrho}\dxt
+(\lambda+2\nu)\tilde\E\int_Q \Div \tilde{\bfu}\, \tilde{\varrho}\dxt=\tilde\E\int_Q \tilde{\varrho}\, \tilde{p}\dxt
\end{align*}
which completes the proof of (\ref{eq:fluxfinal}).
The rest of the proof uses monotonicity of the mapping $t\mapsto t^\gamma$ and the Minty trick similarly to \cite[Section 3.5]{feireisl1}. We deduce that $\tilde p=a\tilde\varrho^\gamma+\delta\tilde\varrho^\beta$ and consequently the following strong convergence holds true
\begin{equation}\label{strongconv}
\tilde\varrho_\varepsilon\rightarrow\tilde\varrho\qquad\tilde\prst\otimes\mathcal{L}^4\text{-a.e.}
\end{equation}
Following the ideas of Lemma \ref{lem:strongq} we also have
\begin{equation}\label{strongconvq}
\tilde\varrho_\varepsilon\tilde{\bfu}_\varepsilon\rightarrow
\tilde\varrho\tilde\bfu\qquad\text{in}\quad L^q(\tilde\Omega\times Q)
\end{equation}
for all $q<\frac{2\beta}{\beta+1}$.
With this in hand, we can finally identify the limit in the stochastic term.

\begin{proposition}\label{prop:martsol1}
$\big((\tilde{\Omega},\tilde{\mf},(\tilde{\mf}_t),\tilde{\prst}),\tilde\varrho,\tilde{\bu},\tilde{W}\big)$
is a finite energy weak martingale solution to \eqref{eq:approximation2} with the initial law $\Gamma$.\footnote{This has to be understood in the sense of Definition \ref{def:sol} via an obvious modification by adding the artificial pressure.}
\end{proposition}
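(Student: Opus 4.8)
Most of the assertion is already in hand. Proposition~\ref{prop:martsol} has produced a $W^{-b,2}(\mt)$-valued continuous square integrable $(\tilde\mf_t)$-martingale $\tilde M$ such that $(\tilde\varrho,\tilde\bfu,\tilde p,\tilde M)$ solves the momentum balance \eqref{eq:approximat222}, has shown that $\tilde\varrho$ satisfies the continuity equation \eqref{eq:approximation21} in the renormalized sense, and has identified $\tilde W$ as an $(\tilde\mf_t)$-cylindrical Wiener process; moreover the previous subsection has shown $\tilde p=a\tilde\varrho^\gamma+\delta\tilde\varrho^\beta$ together with the strong convergences \eqref{strongconv} and \eqref{strongconvq}. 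Substituting $\tilde p$ into \eqref{eq:approximat222} one sees that
\[
\tilde M_t=\tilde\varrho\tilde\bfu(t)-\tilde\varrho\tilde\bfu(0)-\int_0^t\big[\diver(\tilde\varrho\tilde\bfu\otimes\tilde\bfu)-\nu\Delta\tilde\bfu-(\lambda+\nu)\nabla\diver\tilde\bfu+a\nabla\tilde\varrho^\gamma+\delta\nabla\tilde\varrho^\beta\big]\,\dif r .
\]
Hence the plan is: first identify $\tilde M$ with the stochastic integral $\int_0^\tec\varPhi(\tilde\varrho,\tilde\varrho\tilde\bfu)\,\dif\tilde W$ (which is well defined, the integrand being progressively measurable and, by \eqref{growth1} together with the a priori bounds and the fact that $(\tilde\varrho(t))_{\mt}\leq\overline\varrho$, square integrable with values in $L_2(\mathfrak U;W^{-b,2}(\mt))$), so that the displayed formula becomes exactly \eqref{eq:approximation22}; second, verify the energy inequality.

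For the identification of $\tilde M$ I would repeat the argument of Proposition~\ref{prop:limit1} verbatim. Fixing $\bfvarphi\in C^\infty(\mt)$, times $s<t$ and a bounded continuous function $h$ of the restricted paths $\bfr_s\tilde\varrho,\bfr_s\tilde\bfu,\bfr_s\tilde W$, I would start from \eqref{exp111}, \eqref{exp211}, \eqref{exp311} and pass to the limit $\varepsilon\to0$. The limit of \eqref{exp111}, i.e.\ the martingale property of $\tilde M$, was already obtained as \eqref{exp111a}, so it remains only to pass to the limit in \eqref{exp211} and \eqref{exp311}; the only terms requiring attention there are the quadratic- and cross-variation correctors $N(\tilde\varrho_\varepsilon,\tilde\varrho_\varepsilon\tilde\bfu_\varepsilon)_{s,t}$, $N_k(\tilde\varrho_\varepsilon,\tilde\varrho_\varepsilon\tilde\bfu_\varepsilon)_{s,t}$ and the brackets $[M_\varepsilon^2]_{s,t}$, $[M_\varepsilon\tilde\beta_k^\varepsilon]_{s,t}$. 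Here I would use that each $g_k$ is continuous, so that \eqref{strongconv} and \eqref{strongconvq} yield $g_k(\tilde\varrho_\varepsilon,\tilde\varrho_\varepsilon\tilde\bfu_\varepsilon)\to g_k(\tilde\varrho,\tilde\varrho\tilde\bfu)$ $\tilde\prst\otimes\mathcal L$-a.e., while the growth bounds \eqref{growth1}, \eqref{growth2}, the higher integrability \eqref{eq:gamma+1} and the estimate \eqref{eq:apriorivarepsilon} furnish the equi-integrability needed to invoke Vitali's theorem; this step is in fact simpler than in Proposition~\ref{prop:limit1}, since no Faedo--Galerkin projections $\mathcal M_N$ intervene and the coefficient is genuinely $\varPhi$ rather than $\varPhi^N$. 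Passing to the limit one gets $\langle\!\langle\langle\tilde M,\bfvarphi\rangle\rangle\!\rangle=N(\tilde\varrho,\tilde\varrho\tilde\bfu)$ and $\langle\!\langle\langle\tilde M,\bfvarphi\rangle,\tilde\beta_k\rangle\!\rangle=N_k(\tilde\varrho,\tilde\varrho\tilde\bfu)$, and the bilinearity computation of Proposition~\ref{prop:limit1} then forces $\langle\tilde M_t,\bfvarphi\rangle=\langle\int_0^t\varPhi(\tilde\varrho,\tilde\varrho\tilde\bfu)\,\dif\tilde W,\bfvarphi\rangle$ $\tilde\prst$-a.s.\ for every $t$; since $\bfvarphi$ is arbitrary, $\tilde M_t=\int_0^t\varPhi(\tilde\varrho,\tilde\varrho\tilde\bfu)\,\dif\tilde W$ in $W^{-b,2}(\mt)$. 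I expect this passage to the limit in the stochastic correctors to be the main obstacle, and it is precisely the point where the strong convergence \eqref{strongconv} of the densities delivered by the effective-viscous-flux argument of the previous subsection is indispensable, since without it one could not commute the limit with the nonlinear coefficients $g_k$.

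The energy inequality I would obtain by passing to the limit $\varepsilon\to0$ in \eqref{eq:apriorivarepsilon}, which holds uniformly in $\varepsilon$ on the new probability space: the left-hand side is weakly lower semicontinuous with respect to the convergences of Proposition~\ref{prop:skorokhod1} and Corollary~\ref{cor:con} (weak convergence of $\tilde\bfu_\varepsilon$ in $L^2(0,T;W^{1,2}(\mt))$ for the dissipation, $\sqrt{\tilde\varrho_\varepsilon}\tilde\bfu_\varepsilon\rightharpoonup\sqrt{\tilde\varrho}\tilde\bfu$ in $L^2(Q)$ and convergence of $\tilde\varrho_\varepsilon^\gamma,\tilde\varrho_\varepsilon^\beta$ for the internal and artificial-pressure energy), whereas the right-hand side is independent of $\varepsilon$, depending only on $(\tilde\varrho(0),\tilde\varrho\tilde\bfu(0))$ whose law equals $\Gamma$ for every $\varepsilon$; Fatou's lemma then accommodates the supremum in time, the $p$-th power and the expectation. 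Together with the renormalized continuity equation already contained in Proposition~\ref{prop:martsol} and the obvious adaptedness and measurability of $\tilde\varrho,\tilde\bfu,\tilde\varrho\tilde\bfu$, this shows that $\big((\tilde\Omega,\tilde\mf,(\tilde\mf_t),\tilde\prst),\tilde\varrho,\tilde\bfu,\tilde W\big)$ is a finite energy weak martingale solution to \eqref{eq:approximation2} in the sense of Definition~\ref{def:sol} with the artificial pressure term added.
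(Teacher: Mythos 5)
Your proposal follows essentially the same route as the paper: Proposition~\ref{prop:martsol} already supplies the renormalized continuity equation, the martingale $\tilde M$ and identity~\eqref{exp111a}, so it remains to pass to the limit in~\eqref{exp211} and~\eqref{exp311} to obtain the quadratic- and cross-variation identities~\eqref{exp211a}, \eqref{exp311a}, using the pointwise convergence of $g_k(\tilde\varrho_\varepsilon,\tilde\varrho_\varepsilon\tilde\bfu_\varepsilon)$ supplied by the strong density convergence~\eqref{strongconv} (and~\eqref{strongconvq}) together with the growth bounds and Vitali's theorem, and then the bilinearity argument of Proposition~\ref{prop:limit1} identifies $\tilde M$ with the stochastic integral; the energy inequality follows from lower semicontinuity exactly as you describe. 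Your added observations — that the absence of the projections $\mathcal M_N$ makes this simpler than the Faedo--Galerkin step, and that the effective-viscous-flux strong convergence is precisely what makes the limit in the nonlinear coefficients possible — are correct and just expand on what the paper states very tersely.
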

\begin{proof}
According to Proposition \ref{prop:martsol}, it remains to show that
$$\tilde M=\int_0^\tec\varPhi(\tilde\varrho,\tilde\varrho\tilde\bfu)\,\dif\tilde W.$$
Towards this end, it is enough to pass to the limit in \eqref{exp211}, \eqref{exp311} and establish
\begin{equation}\label{exp211a}
\begin{split}
&\tilde{\stred}\,h\big(\bfr_s\tilde\varrho, \bfr_s\tilde{\bu},\bfr_s\tilde{W}\big)\bigg[\big[\langle\tilde M,\bfvarphi\rangle^2\big]_{s,t}-\sum_{k\geq 1}\int_s^t\big\langle g_k(\tilde\varrho,\tilde\varrho\tilde\bfu),\bfvarphi\big\rangle^2\,\dif r\bigg]=0,
\end{split}
\end{equation}
\begin{equation}\label{exp311a}
\begin{split}
&\tilde{\stred}\,h\big(\bfr_s\tilde\varrho, \bfr_s\tilde{\bu},\bfr_s\tilde{W}\big)\bigg[\big[\langle\tilde M,\bfvarphi\rangle\tilde{\beta}_k\big]_{s,t}-\int_s^t\big\langle g_k(\tilde\varrho,\tilde\varrho\tilde\bfu),\bfvarphi\big\rangle\,\dif r\bigg]=0.
\end{split}
\end{equation}
The convergence in the terms that involve $M_{\varepsilon}(\tilde\varrho_\varepsilon,\tilde\bu_\varepsilon,\tilde\varrho_\varepsilon\tilde\bfu_\varepsilon)$ follows from a similar reasoning as in Proposition \ref{prop:martsol} together with the fact that, due to our estimates, $M_{\varepsilon}(\tilde\varrho_\varepsilon,\tilde\bu_\varepsilon,\tilde\varrho_\varepsilon\tilde\bfu_\varepsilon)$ possesses moments of any order (uniformly in $\varepsilon$). The convergence in terms coming from the stochastic integral can be justified similarly to Proposition \ref{prop:limit1} and therefore we omit the details.
Again the energy inequality follows from lower semi-continuity.
\end{proof}

\section{The limit in the artificial pressure}

In this final section we let $\delta\rightarrow0$ in the approximate system \eqref{eq:approximation2} and complete the proof of Theorem \ref{thm:main}.

As the first step, let us construct initial laws $\Lambda^\delta$ that satisfy the assumptions of Section \ref{sec:galerkin} and that approximate the given law $\Lambda$ in a suitable sense. To this end, let $(\rho,\bfq)$ be random variables having the law $\Lambda$ defined on some probability space $(\Omega,\mf,\p)$. Then one can find random variables $\rho_\delta$ with values in $C^{2+\kappa}(\mt)$, for some $\kappa>0$, such that
$$0<\delta\leq \rho_\delta\leq\delta^{-\frac{1}{2\beta}},\quad (\rho_\delta)_{\mt}\leq 2M\quad\text{a.s.},\quad\rho_\delta\to\rho\quad\text{in}\quad L^p(\Omega;L^\gamma(\mt)) \quad\forall p\in[1,\infty).$$
Next, setting
$$\tilde\bfq_\delta=\begin{cases}
				\bfq\sqrt{\frac{\rho_\delta}{\rho}},&\text{if }\rho>0,\\
				0,&\text{if }\rho=0,
				\end{cases}
$$
it follows from the assumptions on $\Lambda$ that
$$\frac{|\tilde\bfq_\delta|^2}{\rho_\delta}\in L^p(\Omega;L^1(\mt)) \quad\forall p\in[1,\infty)$$
uniformly in $\delta,$ and we can find random variables $h_\delta$ with values in $C^2(\mt)$ such that
$$\frac{\tilde\bfq_\delta}{\sqrt{\rho_\delta}}-h_\delta\to 0\quad\text{in}\quad L^p(\Omega;L^2(\mt))\quad\forall p\in[1,\infty).$$
Let $\bfq_\delta=h_\delta\sqrt{\rho_\delta}$. Then
$$\frac{|\bfq_\delta|^2}{\rho_\delta}\in L^p(\Omega;L^1(\mt)) \quad\forall p\in[1,\infty)$$
uniformly in $\delta$
and
\begin{align*}
\bfq_\delta&\rightarrow \bfq\quad\text{in}\quad L^p(\Omega; L^1(\mt))\quad\forall p\in[1,\infty)\\
\frac{\bfq_\delta}{\sqrt{\rho_\delta}}&\rightarrow \frac{\bfq}{\sqrt{\rho}}\quad\text{in}\quad L^p(\Omega; L^2(\mt))\quad\forall p\in[1,\infty).
\end{align*}
Finally, we define $\Lambda^\delta=\p\circ (\rho_\delta,\bfq_\delta)^{-1}$.

As discussed at the beginning of Section \ref{sec:vanishingviscosity}, without any loss of generality one can suppose that for every $\delta\in(0,1)$ there exists
$$\big((\Omega,\mf,(\mf^\delta_t),\prst),\varrho_\delta,\bfu_\delta,W\big)$$
which is a finite energy weak martingale solution to \eqref{eq:approximation2} with the initial law $\Lambda^\delta$.
Moreover, due to the construction of the approximate initial laws $\Lambda^\delta$, the term appearing on the right hand side of the corresponding energy inequality (cf. \eqref{eq:apriorivarepsilon}) satisfies
$$\int_{C^{2+\kappa}_x\times C^2_x}\bigg\|\frac{1}{2}\frac{|\bfq|^2}{\rho}+\frac{a}{\gamma-1}\rho^\gamma+\frac{\delta}{\beta-1}\rho^\beta\bigg\|_{L^1_x}^p\,\dif\Lambda^\delta(\rho,\bfq)\rightarrow \int_{L^\gamma_x\times L^{\frac{2\gamma}{\gamma+1}}_x}\bigg\|\frac{1}{2}\frac{|\bfq|^2}{\rho}+\frac{a}{\gamma-1}\rho^\gamma\bigg\|_{L^1_x}^p\,\dif\Lambda(\rho,\bfq) $$
for all $p\in[1,\infty)$ and, in addition,
$\Lambda^\delta\overset{*}{\rightharpoonup}\Lambda$ weakly in the sense of measures on $L^\gamma(\mt)\times L^{q}(\mt)$ where $q<\frac{2\gamma}{\gamma+1}$.
Furthermore, we obtain the following uniform bounds
\begin{align}
 \bfu_\delta&\in L^{p}(\Omega;L^2(0,T;W^{1,2}( \mathbb T^3))),\label{apvdelta}\\
\sqrt{ \varrho_\delta} \bfu_\delta&\in L^{p}(\Omega;L^\infty(0,T;L^2( \mathbb T^3))),\label{aprhovdelta}\\
 \varrho_\delta&\in L^{p}(\Omega;L^\infty(0,T;L^\gamma( \mathbb T^3))),\label{aprhodelta}\\
 \delta \varrho_\delta^\beta&\in L^p(\Omega;L^\infty(0,T;L^1( \mathbb T^3))),\label{aprho2delta}\\
  \varrho_\delta\bfu_\delta&\in L^{p}(\Omega;L^\infty(0,T;L^\frac{2\gamma}{\gamma+1}( \mathbb T^3))),\label{estrhou2delta}\\
\varrho_\delta\bfu_\delta\otimes\bfu_\delta&\in L^p(\Omega;L^2(0,T;L^\frac{6\gamma}{4\gamma+3}(\mt))).\label{est:rhobfu22delta}
\end{align}

Let us now improve integrability of the density.

\begin{proposition}
There holds for all $\Theta\leq \frac{2}{3}\gamma-1$
\begin{align}\label{eq:gamma+theta}
\E\int_0^T\int_{\mt}\big(a\varrho_\delta^{\gamma+\Theta}+\delta\varrho_\delta^{\beta+\Theta}\big)\dxt\leq C.
\end{align}

\begin{proof}
In the deterministic case one has to test with 
$$\nabla\Delta^{-1}\big( \varrho^\Theta-( \varrho^\Theta)_ {\mathbb T^3}\big)=\Delta^{-1}\nabla \varrho^\Theta,$$
where $\Theta>0$. In order to do this rigorously we have to replace the map $z\mapsto z^\Theta$ by some function $b\in C^{1}(\R)$ with compact support in order to use the renormalized continuity equation.
So we apply It\^{o}'s formula to the functional $f(\bfq,g)=\int_ {\mathbb T^3} \bfq\cdot\Delta^{-1}\nabla g\dx$. Note that $f$ is linear in $\bfq= \varrho \bfu$
and the quadratic variation of $g= b(\varrho)$ is zero. Hence we do not need a correction term. We gain
\begin{align*}
\E J_0&=\E\int_ {\mathbb T^3}  \varrho_\delta \bfu_\delta\cdot \Delta^{-1}\nabla b(\varrho_\delta)\dx\\&=\E\int_ {\mathbb T^3}  \varrho_\delta \bfu_\delta(0)\cdot \Delta^{-1}\nabla b(\varrho_\delta(0))\dx+\nu\E\int_0^t\int_ {\mathbb T^3} \nabla \bfu_\delta:\nabla\Delta^{-1}\nabla b(\varrho_\delta)\dx\\
&\qquad+(\lambda+\nu)\E\int_0^t\int_ {\mathbb T^3} \Div \bfu_\delta\,b(\varrho_\delta)\dx+\E\int_0^t\int_ {\mathbb T^3}  \varrho \bfu_\delta\otimes \bfu_\delta:\nabla\Delta^{-1}\nabla b(\varrho_\delta)\dxs\\
&\qquad+\E\int_0^t\int_ {\mathbb T^3} \big(\varrho_\delta^{\gamma}+\delta\varrho_\delta^{\beta}\big)b(\varrho_\delta)\dxs
-\E\int_0^t(b(\varrho_\delta))_{\mt}\int_ {\mathbb T^3} \big(\varrho_\delta^{\gamma}+\delta\varrho_\delta^{\beta}\big)\dxs\\
&\qquad+\E\int_0^t\int_ {\mathbb T^3}\Delta^{-1}\Div( \varrho_\delta\bfu_\delta)\,\dd( b(\varrho_\delta))\dx
=\E J_1+\cdots+\E J_7.
\end{align*}
This can be justified as done in \eqref{eq:hjhj}. For $J_7$
we use the renormalized continuity equation which reads as
\begin{align*}
\partial_t b(\varrho_\delta)+\Div\big(b(\varrho_\delta)\bfu_\delta\big)+\big(b'(\varrho_\delta)\varrho_\delta-b(\varrho_\delta)\big)\Div\bfu_\delta=0
\end{align*}
such that
\begin{align*}
\int_ {\mathbb T^3}  b(\varrho_\delta(t))\,\varphi\dx&=\int_0^t\int_ {\mathbb T^3} b(\varrho_\delta) \bfu_\delta\cdot\nabla\varphi\dxs-\int_0^t\int_ {\mathbb T^3} \big(b'(\varrho_\delta)\varrho_\delta-b(\varrho_\delta)\big) \Div \bfu_\delta\,\varphi\dxs
\end{align*}
and 
\begin{align*}
J_7&=\int_0^t\int_ {\mathbb T^3}\Delta^{-1}\Div( \varrho_\delta\bfu_\delta)\,\dd( b(\varrho_\delta))\dx\\
&=\int_0^t\int_ {\mathbb T^3} b(\varrho_\delta) \bfu_\delta\cdot\nabla\Delta^{-1}\Div( \varrho_\delta\bfu_\delta)\dxs\\
&\qquad-\int_0^t\int_ {\mathbb T^3} \big(b'(\varrho_\delta)\varrho_\delta-b(\varrho_\delta)\big) \Div \bfu\,\Delta^{-1}\Div( \varrho_\delta\bfu_\delta)\dxs\\
&=J_7^1+J_7^2
\end{align*}
Now we use a sequence of compactly supported smooth functions $b_m$ to approximate $z\mapsto z^\Theta$ and gain
\begin{align*}
\E J_0&=\E\int_ {\mathbb T^3}  \varrho_\delta \bfu_\delta\cdot \Delta^{-1}\nabla\varrho^\Theta_\delta\dx\\&=\E\int_ {\mathbb T^3}  \varrho_\delta \bfu_\delta(0)\cdot \Delta^{-1}\nabla\varrho_\delta^\Theta(0)\dx
+\nu\E\int_0^t\int_ {\mathbb T^3} \nabla \bfu_\delta:\nabla\Delta^{-1}\nabla\varrho^\Theta_\delta\dx\\
&\;+(\lambda+\nu)\E\int_0^t\int_ {\mathbb T^3} \Div \bfu_\delta\,\varrho^\Theta_\delta\dx
+\E\int_0^t\int_ {\mathbb T^3}  \varrho \bfu_\delta\otimes \bfu_\delta:\nabla\Delta^{-1}\nabla\varrho_\delta^\Theta\dxs\\
&\;+\E\int_0^t\int_ {\mathbb T^3} \big(\varrho_\delta^{\gamma+\Theta}+\delta\varrho_\delta^{\beta+\Theta}\big)\dxs-\E\int_0^t(\varrho_\delta^\Theta)_{\mt}\int_ {\mathbb T^3} \big(\varrho_\delta^{\gamma}+\delta\varrho_\delta^{\beta}\big)\dxs\\
&\;+\E\int_0^t\int_ {\mathbb T^3} \varrho_\delta^\Theta \bfu\cdot\nabla\Delta^{-1}\Div( \varrho_\delta\bfu_\delta)\dxs+(1-\Theta)\E\int_0^t\int_ {\mathbb T^3} \Delta^{-1}\nabla(\varrho_\delta^\Theta \Div \bfu_\delta) \varrho_\delta\bfu_\delta\dxs\\
&=\E J_1+\cdots+\E J_6+\E J_7^1+\E J_7^2.
\end{align*}
We want to bound the term $J_5$, so we have to estimate all the others.
We have $$(\varrho_\delta^\Theta)_ {\mathbb T^3}\leq (1+\varrho_\delta)_ {\mathbb T^3}=(1+\varrho_\delta(0))_ {\mathbb T^3}\leq C$$ provided $\Theta\leq1$. So (\ref{aprho2delta}) yields $\E J_6\leq C$. 
The most critical term is $J_4$ which we estimate by
\begin{align*}
\E J_4&\leq\,\E\int_0^t\| \varrho_\delta\|_\gamma\| \bfu_\delta\|_6^2\| \varrho^{\Theta}_\delta\|_r\dt,
\end{align*}
where $r:=\frac{3\gamma}{2\gamma-3}$. We proceed, using continuity of $\nabla\Delta^{-1}\nabla$, by
\begin{align*}
\E J_4&\leq\,C\,\E\Big(\sup_{0\leq s\leq t}\| \varrho_\delta\|_\gamma\Big)\Big(\sup_{0\leq s\leq t}\| \varrho_\delta^{\Theta}\|_r\Big)\int_0^t\|\nabla \bfu_\delta\|_2^2+\| \bfu_\delta\|_2^2\ds\\
&\leq\,C\,\bigg(\E\sup_{0\leq s\leq t}\| \varrho_\delta\|_\gamma^{q_1}\bigg)^{\frac{1}{q_1}}\bigg(\E\sup_{0\leq s\leq t}\| \varrho_\delta^{\Theta}\|_r^{q_2}\bigg)^{\frac{1}{q_2}}\bigg(\E\bigg[\int_0^t\|\nabla \bfu_\delta\|_2^2+\| \bfu_\delta\|_2^2\ds\bigg]^{q_3}\bigg)^{\frac{1}{q_3}}
\end{align*}
as a consequence of H\"older's inequality ($\frac{1}{q_1}+\frac{1}{q_2}+\frac{1}{q_2}=1$, for instance $q_1=q_2=q_3=3$). We need to choose $r$ such that $\Theta r\leq \gamma$ which is equivalent to $\Theta\leq \frac{2}{3}\gamma-1$. Then we conclude from (\ref{apvdelta}) and (\ref{aprhodelta}) that $\E J_4\leq C$. In order to estimate $J_0$ we use the following estimate which follows from the continuity of $\nabla\Delta^{-1}\nabla$ and Sobolev's Theorem for  $q=\frac{6\gamma}{5\gamma-3}\in(1,3)$
\begin{align*}
\|\Delta^{-1}\nabla \varrho_\delta^\Theta\|_{L^{\frac{3q}{3-q}}( \mathbb T^3)}&\leq \,C\,\|\nabla\Delta^{-1}\nabla \varrho_\delta^\Theta\|_{L^{q}( \mathbb T^3)}\leq\,C\,\| \varrho_\delta^\Theta\|_{L^{q}( \mathbb T^3)}.
\end{align*}
We gain $|\E J_0|\leq C$ as a consequence of \eqref{aprhodelta} and \eqref{estrhou2delta} by choosing $\Theta\leq \frac{5}{6}\gamma-\frac{1}{2}$.
We have due to the continuity of $\nabla\Delta^{-1}\nabla$
\begin{align*}
\E J_2&\leq \,\E\bigg[\int_0^t\int_ {\mathbb T^3} |\nabla \bfu_\delta|^2\dxs\bigg]+\E\bigg[\int_0^t\int_ {\mathbb T^3} | \varrho_\delta|^{2\Theta}\dxs\bigg]\leq C
\end{align*}
provided $\Theta\leq\gamma/2$. Similarly for $J_3$.
Choosing $p=\frac{6\gamma}{5\gamma-6}$ and $q=\frac{6\gamma}{7\gamma-6}$ there holds
\begin{align*}
\E J_7^2&\leq\,\E\int_0^t\| \varrho_\delta\|_\gamma\| \bfu_\delta\|_6\| \Delta^{-1}\nabla(\varrho^{\Theta}_\delta\Div\bfu_\delta)\|_p\ds\\
&\leq\,\E\int_0^t\| \varrho_\delta\|_\gamma\| \bfu_\delta\|_6\| \varrho^{\Theta}_\delta\Div\bfu_\delta\|_q\ds\\
&\leq\,C\,\E\bigg[\Big(\sup_{0\leq s\leq t}\| \varrho_\delta\|_\gamma\Big)\Big(\int_0^t\|\nabla \bfu_\delta\|_2^2+\| \bfu_\delta\|_2^2\ds\Big)^{\frac{1}{2}}\Big(\int_0^t\|\varrho_\delta^\Theta\Div\bfu_\delta\|_q^2\ds\Big)^\frac{1}{2}\bigg]\\
&\leq\,C\,\bigg(\E\sup_{0\leq s\leq t}\| \varrho_\delta\|_\gamma^{q_1}\bigg)^{\frac{1}{q_1}}\bigg(\E\bigg[\int_0^t\|\nabla \bfu_\delta\|_2^2+\| \bfu_\delta\|_2^2\ds\bigg]^{\frac{q_2}{2}}\bigg)^{\frac{1}{q_2}}\bigg(\E\bigg[\int_0^t\|\varrho_\delta^\Theta\Div\bfu_\delta\|_q^2\ds\bigg]^{\frac{q_3}{2}}\bigg)^{\frac{1}{q_3}}.
\end{align*}
The first two terms are uniformly bounded on account of (\ref{apvdelta}) and (\ref{aprhodelta}). For the third one we estimate (note that $q<2$ as $\gamma>\frac{3}{2}$)
\begin{align*}
\E\bigg[\int_0^t&\|\varrho_\delta^\Theta\Div\bfu_\delta\|_q^2\ds\bigg]\\&\leq \E\bigg[\int_0^t\Big(\int_ {\mathbb T^3}|\Div\bfu_\delta|^2\dx\Big)\Big(\int_ {\mathbb T^3}\varrho_\delta^{\Theta\frac{2q}{2-q}}\dx\Big)^{\frac{2-q}{q}}\ds\bigg]\\
&\leq \E\bigg[\Big(\sup_{0\leq s\leq t}\int_ {\mathbb T^3}\varrho_\delta^{\Theta\frac{2q}{2-q}}\dx\Big)^{\frac{2-q}{q}}\int_0^t\int_ {\mathbb T^3}|\Div\bfu_\delta|^2\dxs\bigg]\\
&\leq \bigg(\E\bigg[\Big(\sup_{0\leq s\leq t}\int_ {\mathbb T^3}\varrho_\delta^{\Theta\frac{2q}{2-q}}\dx\Big)^{\frac{2-q}{q}}\bigg]^{q_1}\bigg)^{\frac{1}{q_1}}\bigg(\E\bigg[\int_0^t\int_ {\mathbb T^3}|\Div\bfu_\delta|^2\dxs\bigg]^{q_2}\bigg)^{\frac{1}{q_2}}.
\end{align*}
By (\ref{apvdelta}) and (\ref{aprhodelta}) it is bounded provided $\Theta\frac{2q}{2-q}\leq\gamma$
which is aquivalent to $\Theta\leq\frac{2}{3}\gamma-1$. Hence $E[J_7^2]$ is uniformly bounded.
Moreover, we have as $p< 6$ (due to $\gamma>\frac{3}{2}$)
\begin{align*}
\E[|J_7^1|]&\leq\,C\,\E\bigg[\int_0^t\| \varrho_\delta\|_\gamma\| \bfu_\delta\|_6\| \Delta^{-1}\nabla(\Div(\varrho^{\Theta}_\delta\bfu_\delta))\|_p\dt\bigg]\\
&\leq\,C\,\E\bigg[\int_0^t\| \varrho_\delta\|_\gamma\| \bfu_\delta\|_6\| \varrho^{\Theta}_\delta\bfu_\delta\|_p\dt\bigg]\\
&\leq\,C\,\E\bigg[\int_0^t\| \varrho_\delta\|_\gamma\| \bfu_\delta\|^2_6\| \varrho^{\Theta}_\delta\|_r\dt\bigg],
\end{align*}
where $r=\frac{3\gamma}{2\gamma-3}$. We proceed by
\begin{align*}
\E[|J_7^1|]&\leq\,C\,\E\bigg[\Big(\sup_{0\leq s\leq t}\| \varrho_\delta\|_\gamma\Big)\Big(\sup_{0\leq s\leq t}\| \varrho_\delta^{\Theta}\|_r\Big)\int_0^t\|\bfu_\delta\|_2^2+\|\nabla \bfu_\delta\|_2^2\ds\bigg]\\
&\leq\,C\,\bigg(\E\sup_{0\leq s\leq t}\| \varrho_\delta\|_\gamma^{q_1}\bigg)^{\frac{1}{q_1}}\bigg(\E\sup_{0\leq s\leq t}\| \varrho_\delta^{\Theta}\|_r^{q_2}\bigg)^{\frac{1}{q_2}}\bigg(\E\bigg[\int_0^t\|\bfu_\delta\|_2^2+\|\nabla \bfu_\delta\|_2^2\ds\bigg]^{q_3}\bigg)^{\frac{1}{q_3}}\leq C,
\end{align*}
using again \eqref{apvdelta} and \eqref{aprhodelta}.
Finally, we can conclude for all $\Theta\leq\frac{2}{3}\gamma-1$ the claimed estimate.
\end{proof}
\end{proposition}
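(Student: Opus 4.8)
The plan is to transfer to the stochastic setting the classical deterministic argument for the ``pressure plus $\Theta$'' estimate, namely to test the momentum equation \eqref{eq:approximation22} with $\nabla\Delta^{-1}\big(\varrho_\delta^\Theta-(\varrho_\delta^\Theta)_{\mt}\big)=\Delta^{-1}\nabla\varrho_\delta^\Theta$; this is the natural analogue of the test function already used with a small exponent (and with the artificial viscosity still present) in Proposition~\ref{prop:higher}. Since $z\mapsto z^\Theta$ is not $C^1$ at the origin when $\Theta<1$ and is not compactly supported, I would first regularize it by a sequence $b_m\in C^1(\R)$ with compact support, apply It\^o's formula to the functional $f(\bfq,g)=\int_{\mt}\bfq\cdot\Delta^{-1}\nabla g\,\dx$ with $\bfq=\varrho_\delta\bfu_\delta$ and $g=b_m(\varrho_\delta)$, and pass to the limit $m\to\infty$ using that $(\varrho_\delta,\bfu_\delta)$ satisfies the renormalized continuity equation by Proposition~\ref{prop:martsol1}. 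Because $f$ is linear in $\bfq$ and $g=b_m(\varrho_\delta)$ has zero quadratic variation (the continuity equation carries no noise), no It\^o correction term appears; moreover the single stochastic term $\sum_{k\geq1}\int_0^t\int_{\mt}\Delta^{-1}\nabla b_m(\varrho_\delta)\cdot g_k(\varrho_\delta,\varrho_\delta\bfu_\delta)\,\dx\,\dif\beta_k$ is a genuine square--integrable martingale and therefore has vanishing expectation. Rigorously this is seen exactly as in \eqref{eq:test}--\eqref{eq:hjhj}: one mollifies the equations in space, uses the one--dimensional It\^o formula and an integration by parts that exploits the Lipschitz--in--time, spatially smooth character of $\big(\Delta^{-1}\nabla\varrho_\delta\big)^\kappa$ (which follows from the continuity equation), then removes the mollification, while the Burkholder--Davis--Gundy bound together with \eqref{growth1} controls the martingale. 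Passing $m\to\infty$ yields an identity $\E J_0=\sum_j\E J_j$ in which the quantity $\E\int_0^t\int_{\mt}\big(\varrho_\delta^{\gamma+\Theta}+\delta\varrho_\delta^{\beta+\Theta}\big)\,\dif x\,\dif s$ stands isolated.

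The heart of the proof is then to estimate all the remaining terms $\E J_j$ \emph{uniformly in} $\delta$ in terms of the energy estimate (cf.\ \eqref{eq:apriorivarepsilon}). One uses repeatedly the Calder\'on--Zygmund continuity of $\nabla\Delta^{-1}\nabla$ and of $\Delta^{-1}\nabla$ on $L^p(\mt)$, the Sobolev embeddings $W^{1,2}(\mt)\hookrightarrow L^6(\mt)$ and $W^{1,q}(\mt)\hookrightarrow L^{3q/(3-q)}(\mt)$ for $q<3$, H\"older's inequality, and the uniform bounds \eqref{apvdelta}--\eqref{est:rhobfu22delta}. Most terms are routine: the two viscous terms, the initial term, and the term with factor $(\varrho_\delta^\Theta)_{\mt}$ are controlled once one records that $(\varrho_\delta^\Theta)_{\mt}\le(1+\varrho_\delta)_{\mt}=(1+\varrho_\delta(0))_{\mt}\le C$ for $\Theta\le1$ by conservation of total mass. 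The two genuinely restrictive contributions are the convective term $\E\int\varrho_\delta\bfu_\delta\otimes\bfu_\delta:\nabla\Delta^{-1}\nabla\varrho_\delta^\Theta$, which one splits by H\"older as $\|\varrho_\delta\|_\gamma\,\|\bfu_\delta\|_6^2\,\|\varrho_\delta^\Theta\|_r$ with $r=3\gamma/(2\gamma-3)$ and hence needs $\Theta r\le\gamma$, and the term coming from the renormalized equation, of the form $\E\int\varrho_\delta^\Theta\,\Div\bfu_\delta\,\Delta^{-1}\Div(\varrho_\delta\bfu_\delta)$, which requires $\|\varrho_\delta^\Theta\Div\bfu_\delta\|_q$ to be integrable and thus $\Theta\tfrac{2q}{2-q}\le\gamma$; both conditions reduce exactly to $\Theta\le\tfrac23\gamma-1$, whereas the remaining terms (e.g.\ bounding $J_0$ via $\Delta^{-1}\nabla\varrho_\delta^\Theta\in L^{3q/(3-q)}$ with $q=6\gamma/(5\gamma-3)$, or the viscous terms) only impose the weaker constraints $\Theta\le\tfrac56\gamma-\tfrac12$ and $\Theta\le\gamma/2$. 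Collecting the bounds and taking the supremum in $t\in[0,T]$ gives \eqref{eq:gamma+theta}, with a constant independent of $\delta$.

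I expect the main obstacle to be the rigorous justification of the It\^o formula for $f(\bfq,g)$ at this low level of integrability together with the proof that the stochastic integral has zero expectation --- which, as indicated above, goes through the mollify / one--dimensional It\^o / integrate by parts / de--mollify scheme used around \eqref{eq:hjhj} and relies crucially on the fact that $\Delta^{-1}\nabla\varrho_\delta$ has Lipschitz--in--time trajectories. The secondary, more technical difficulty is the bookkeeping of the whole family of H\"older exponents so as to reach the sharp threshold $\Theta\le\tfrac23\gamma-1$ while keeping every constant independent of $\delta$.
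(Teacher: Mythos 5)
Your proposal is correct and follows essentially the same route as the paper: the same choice of functional $f(\bfq,g)=\int_{\mt}\bfq\cdot\Delta^{-1}\nabla g\,\dx$, the same regularization by compactly supported $b_m$ combined with the renormalized continuity equation, the same mollify/1D-It\^o/integrate-by-parts scheme (cf.\ \eqref{eq:hjhj}) to show the stochastic integral has zero expectation, and the same identification of the convective term and the $J_7^2$-type term as the binding constraints, each yielding $\Theta\le\tfrac23\gamma-1$. The intermediate bounds and exponent choices ($r=3\gamma/(2\gamma-3)$, $q=6\gamma/(5\gamma-3)$, the weaker thresholds $\Theta\le\tfrac56\gamma-\tfrac12$ and $\Theta\le\gamma/2$) match the paper's as well.
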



Now we can perform the compactness argument similarly to Subsection \ref{subsec:compactness}. More precisely, we set $\mathcal{X}=\mathcal{X}_\varrho\times\mathcal{X}_\bu\times\mathcal{X}_{\varrho\bu}\times\mathcal{X}_W$ where
\begin{align*}
\mathcal{X}_\varrho&=C_w([0,T];L^\gamma(\mt))\cap \big(L^{\gamma+\Theta}(Q),w\big),&\mathcal{X}_\bu&=\big(L^2(0,T;W^{1,2}(\mt)),w\big),\\
\mathcal{X}_{\varrho\bu}&=C_w([0,T];L^\frac{2\gamma}{\gamma+1}(\mt)),&\mathcal{X}_W&=C([0,T];\mathfrak{U}_0)
\end{align*}
and remark that the only change lies in the proof of tightness for $\{\mu_{\varrho_\delta\bfu_\delta};\,\delta\in(0,1)\}$.

\begin{proposition}
The set $\{\mu_{\varrho_\delta\bu_\delta};\,\delta\in(0,1)\}$ is tight on $\mathcal{X}_{\varrho\bu}$.

\begin{proof}
We proceed similarly as in Proposition \ref{prop:rhoutight} and Proposition \ref{rhoutight1} and decompose $\varrho_\delta\bu_\delta$ into two parts, namely, $\varrho_\delta\bu_\delta(t)=Y^\delta(t)+Z^\delta(t)$, where
\begin{equation*}
 \begin{split}
Y^\delta(t)&=\bfq(0)-\int_0^t\big[\diver(\varrho_\delta\bu_\delta\otimes\bu_\delta)+\nu\Delta\bu_\delta+(\lambda+\nu)\nabla\diver\bfu_\delta-a\nabla \varrho_\delta^\gamma\big]\dif s\\
&\hspace{3.8cm}+\int_0^t\,\varPhi(\varrho_\delta,\varrho_\delta\bu_\delta) \,\dif W(s),\\
Z^\delta(t)&=-\delta\int_0^t\nabla \varrho_\delta^\beta\,\dif s.
 \end{split}
\end{equation*}
By the approach of Proposition \ref{rhoutight1} (where we employ \eqref{eq:gamma+theta} instead of \eqref{eq:gamma+1}), we obtain H\"older continuity of $Y^\delta$, namely, there exist $\vartheta>0$ and $m>3/2$ such that
\begin{equation*}
\stred\big\|Y^\delta\|_{C^\vartheta([0,T];W^{-m,2}(\mt))}\leq C.
\end{equation*}

Next, we show that the set of laws $\{\prst\circ[Z^\delta]^{-1};\,\delta\in(0,1)\}$ is tight on $C([0,T];W^{-1,\frac{\beta+\Theta}{\beta}}(\mt))$ and the conclusion follows by the lines of Proposition \ref{rhoutight1}.
There holds due to \eqref{eq:gamma+theta} that (up to a subsequence)
$$\delta\varrho_\delta^\beta\rightarrow 0\quad\text{ in }\quad L^{\frac{\beta+\Theta}{\beta}}(Q)\quad\text{a.s.}$$
hence
$$\delta\nabla\varrho_\delta^\beta\rightarrow 0\quad\text{ in }\quad L^{\frac{\beta+\Theta}{\beta}}(0,T;W^{-1,\frac{\beta+\Theta}{\beta}}(\mt))\quad\text{a.s.}$$
and
$$Z^\delta\rightarrow0\quad\text{ in }\quad C([0,T];W^{-1,\frac{\beta+\Theta}{\beta}}(\mt))\quad\text{a.s.}$$
This leads to the convergence in law
$$Z^\delta\overset{d\hspace{2.5mm}}{\rightarrow0}\quad\text{ on }\quad C([0,T];W^{-1,\frac{\beta+\Theta}{\beta}}(\mt))$$
and the claim follows.
\end{proof}
\end{proposition}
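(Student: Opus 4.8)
The plan is to imitate the decomposition of the momentum used in Proposition~\ref{prop:rhoutight} and Proposition~\ref{rhoutight1}. Write $\varrho_\delta\bu_\delta=Y^\delta+Z^\delta$, where $Z^\delta(t)=-\delta\int_0^t\nabla\varrho_\delta^\beta\,\dif s$ carries the artificial pressure and $Y^\delta$ collects the remaining contributions, namely the convective term $-\int_0^t\diver(\varrho_\delta\bu_\delta\otimes\bu_\delta)\,\dif s$, the viscous terms $\nu\Delta\bu_\delta+(\lambda+\nu)\nabla\diver\bu_\delta$, the genuine pressure $a\int_0^t\nabla\varrho_\delta^\gamma\,\dif s$, and the stochastic integral $\int_0^t\varPhi(\varrho_\delta,\varrho_\delta\bu_\delta)\,\dif W$. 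I would first establish a uniform bound $\stred\|Y^\delta\|_{C^\vartheta([0,T];W^{-m,2}(\mt))}\le C$ for some $\vartheta>0$ and $m>3/2$. The stochastic integral is treated by the Burkholder--Davis--Gundy inequality together with the growth assumption \eqref{growth1}, exactly as in \eqref{stochest} and in Proposition~\ref{rhoutight1}, giving $\stred\|\int_s^t\varPhi(\varrho_\delta,\varrho_\delta\bu_\delta)\,\dif W\|_{W^{-b,2}}^\theta\le C|t-s|^{\theta/2}$ for all $\theta\ge2$, so that the Kolmogorov continuity criterion applies. For the deterministic terms I would reuse the bounds \eqref{b1}--\eqref{b3}, now with $\varrho_\delta$ in place of $\varrho_N$ and with \eqref{eq:gamma+theta} replacing \eqref{aprhobeta}: by \eqref{est:rhobfu22delta} the convective term is bounded in $L^p(\Omega;L^2(0,T;W^{-1,\frac{6\gamma}{4\gamma+3}}(\mt)))$, by \eqref{apvdelta} the viscous terms in $L^p(\Omega;L^2(0,T;W^{-1,2}(\mt)))$, and by \eqref{eq:gamma+theta} the term $a\nabla\varrho_\delta^\gamma$ in $L^p(\Omega;L^{\frac{\gamma+\Theta}{\gamma}}(0,T;W^{-1,\frac{\gamma+\Theta}{\gamma}}(\mt)))$. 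All these spaces embed into $C^\vartheta([0,T];W^{-m,2}(\mt))$ once $m$ is taken large enough for the three-dimensional Sobolev embeddings $W^{-1,\frac{6\gamma}{4\gamma+3}}(\mt),\,W^{-1,\frac{\gamma+\Theta}{\gamma}}(\mt)\hookrightarrow W^{-m,2}(\mt)$ to hold, with $\vartheta$ the smallest of the resulting time-H\"older exponents.

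Second, I would show that $Z^\delta$ disappears in the limit. Combining \eqref{eq:gamma+theta} with H\"older's inequality in space and time yields
\[
\|\delta\varrho_\delta^\beta\|_{L^{\frac{\beta+\Theta}{\beta}}(Q)}=\delta^{\frac{\Theta}{\beta+\Theta}}\Big(\delta\!\int_Q\varrho_\delta^{\beta+\Theta}\,\dxt\Big)^{\frac{\beta}{\beta+\Theta}},
\]
and since, by \eqref{eq:gamma+theta} and Jensen's inequality, the right-hand side has vanishing $L^1(\Omega)$-norm as $\delta\to0$, along a subsequence one gets $\delta\varrho_\delta^\beta\to0$ in $L^{\frac{\beta+\Theta}{\beta}}(Q)$ $\prst$-a.s. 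Consequently $\delta\nabla\varrho_\delta^\beta\to0$ in $L^{\frac{\beta+\Theta}{\beta}}(0,T;W^{-1,\frac{\beta+\Theta}{\beta}}(\mt))$ and $Z^\delta\to0$ in $C([0,T];W^{-1,\frac{\beta+\Theta}{\beta}}(\mt))$ $\prst$-a.s., so the laws $\prst\circ[Z^\delta]^{-1}$ converge weakly to the Dirac mass at $0$ and, in particular, are tight on $C([0,T];W^{-m,2}(\mt))$ because $W^{-1,\frac{\beta+\Theta}{\beta}}(\mt)\hookrightarrow W^{-m,2}(\mt)$ (as $\frac{\beta+\Theta}{\beta}>1$ and $m$ is large).

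Finally I would combine the two ingredients exactly as in the concluding part of Proposition~\ref{rhoutight1}. Given $\eta>0$, choose a compact $A\subset C([0,T];W^{-m,2}(\mt))$ with $\prst(Z^\delta\notin A)<\eta/2$, set
\[
B_R=\big\{h;\ \|h\|_{L^\infty(0,T;L^\frac{2\gamma}{\gamma+1}(\mt))}\le R\big\},\qquad C_R=\big\{h;\ \|h\|_{C^\vartheta([0,T];W^{-m,2}(\mt))}\le R\big\},
\]
and $K_R=B_R\cap(C_R+A)$. An Arzel\`a-Ascoli argument along the lines of \cite[Corollary B.2]{on2}, using the compact embedding of $L^\frac{2\gamma}{\gamma+1}(\mt)$ into $W^{-m,2}(\mt)$, shows that $K_R$ is relatively compact in $\mathcal{X}_{\varrho\bu}=C_w([0,T];L^\frac{2\gamma}{\gamma+1}(\mt))$, and then
\[
\mu_{\varrho_\delta\bu_\delta}(K_R^c)\le\prst\big(\|\varrho_\delta\bu_\delta\|_{L^\infty(0,T;L^\frac{2\gamma}{\gamma+1})}>R\big)+\prst\big(\|Y^\delta\|_{C^\vartheta}>R\big)+\prst(Z^\delta\notin A)\le\frac{C}{R}+\frac{\eta}{2}
\]
by \eqref{estrhou2delta} and the H\"older bound on $Y^\delta$; a large $R$ finishes the proof. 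I expect the only genuinely delicate point to be the exponent bookkeeping in the H\"older estimate for $Y^\delta$: unlike in the viscous step, the pressure now enjoys only the modest integrability gain \eqref{eq:gamma+theta} with $\Theta\le\frac{2}{3}\gamma-1$, so one must check that $\frac{\gamma+\Theta}{\gamma}>1$ still buys a strictly positive time-H\"older exponent and an admissible target space $W^{-m,2}(\mt)$ --- and this is exactly where the hypothesis $\gamma>3/2$, which forces $\Theta>0$, enters.
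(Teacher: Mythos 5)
Your proposal follows the same decomposition $\varrho_\delta\bu_\delta=Y^\delta+Z^\delta$, the same H\"older bound for $Y^\delta$ by the scheme of Proposition~\ref{rhoutight1} (with \eqref{eq:gamma+theta} in place of \eqref{eq:gamma+1}), and the same convergence-in-law argument for $Z^\delta$ as the paper, so this is essentially the paper's proof. The extra details you supply — the explicit factoring $\|\delta\varrho_\delta^\beta\|_{L^{\frac{\beta+\Theta}{\beta}}(Q)}=\delta^{\frac{\Theta}{\beta+\Theta}}(\delta\int_Q\varrho_\delta^{\beta+\Theta}\dxt)^{\frac{\beta}{\beta+\Theta}}$ combined with Jensen, and the remark that $\gamma>3/2$ is exactly what makes $\Theta>0$ and hence gives a strictly positive time-H\"older exponent $\frac{\Theta}{\gamma+\Theta}$ for the pressure contribution — are correct and fill in steps the paper treats by reference.
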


We apply the Jakubowski-Skorokhod representation theorem and mimicking the technique of Subsection \ref{subsec:compactness}. We obtain the existence of a probability space $(\tilde\Omega,\tilde\mf,\tilde\prst)$ and $\mathcal{X}$-valued random variables $(\tilde\varrho_\delta,\tilde\bu_\delta,\tilde W_\delta)$, $\delta\in(0,1)$, and $(\tilde\varrho,\tilde\bu,\tilde W)$ together with their $\tilde\prst$-augmented canonical filtrations $(\tilde\mf_t^\delta)$ and $(\tilde\mf_t)$, respectively, such that the corresponding counterparts of Lemma \ref{lemma:identif2} and Proposition \ref{prop:martsol} are valid. Let us summarize the result in the following proposition.

\begin{proposition}\label{prop:neu}
The following convergences hold true $\tilde\prst$-a.s.
\begin{align*}
\tilde\bu_\delta&\rightharpoonup  \tilde{\bfu}\qquad\text{in}\qquad L^2([0,T];W^{1,2}( \mathbb T^3)),\\
\tilde{\varrho}_\delta&\rightarrow   \tilde{\varrho}\qquad\text{in}\qquad C_w(0,T;L^{\gamma}( \mathbb T^3)),\label{conv:rho2}\\
\tilde\varrho_\delta\tilde\bu_\delta&\rightarrow  \tilde{\varrho}  \tilde{\bfu}\qquad\text{in}\qquad C_w([0,T];L^{\frac{2\gamma}{\gamma+1}}( \mathbb T^3)),\\
\tilde{\varrho}_\delta  \tilde{\bfu}_\delta\otimes  \tilde{\bfu}_\delta&\rightharpoonup  \tilde{\varrho}  \tilde{\bfu}\otimes  \tilde{\bfu}\qquad\text{in}\qquad L^1(0,T;L^{1}( \mathbb T^3)),
\end{align*}
Furthermore, for every $\delta\in(0,1)$, $\big((\tilde{\Omega},\tilde{\mf},(\tilde{\mf}^\delta_t),\tilde{\prst}),\tilde\varrho_\delta,\tilde{\bu}_\delta,\tilde{W}_\delta\big)$ is a weak martingale solution to \eqref{eq:approximation2} with the initial law $\Lambda^\delta$ and there exists $b> \frac{3}{2}$ together with a $W^{-b,2}(\mt)$-valued continuous square integrable $(\tilde\mf_t)$-martingale $\tilde M$ and
$$\tilde p\in L^\frac{\gamma+\Theta}{\gamma}(\tilde\Omega\times Q)$$
such that 
$\big((\tilde{\Omega},\tilde{\mf},(\tilde{\mf}_t),\tilde{\prst}),\tilde\varrho,\tilde{\bu},\tilde p,\tilde{M}\big)$ is a weak martingale solution to
\begin{subequations}\label{eq:limiteq}
 \begin{align}
  \dif \tilde\varrho+\diver(\tilde\varrho\tilde\bu)\dif t&=0,\\
  \dif(\tilde\varrho\tilde\bu)+\big[\diver(\tilde\varrho\tilde\bu\otimes\tilde\bu)-\nu\Delta\tilde\bu-(\lambda+\nu)\nabla\diver\tilde\bfu+\nabla\tilde p\, \big]\dif t&=\dif\tilde M
 \end{align}
\end{subequations}
with the initial law $\Lambda$.

\begin{proof}
Let us only make a short remark concerning the pressure: $a\tilde\varrho_\delta^\gamma$ converges to $\tilde p$ in $L^\frac{\gamma+\Theta}{\gamma}(\tilde\Omega\times Q)$ whereas the artificial pressure $\delta\tilde\varrho_\delta^\beta$ vanishes as $\delta\rightarrow 0$.
\end{proof}
\end{proposition}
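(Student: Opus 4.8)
The plan is to run, one last time, the by-now standard three-step scheme of Section~\ref{sec:vanishingviscosity}: uniform estimates and tightness, the Jakubowski--Skorokhod representation theorem, and identification of the limit via the functional-equation method. The only genuinely new ingredient is that the pressure must now be split into the physical part $a\varrho_\delta^\gamma$, which converges weakly to an a~priori unidentified limit $\tilde p$, and the artificial part $\delta\varrho_\delta^\beta$, which has to be shown to vanish.

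First I would establish tightness of $\{\mu^\delta;\,\delta\in(0,1)\}$ on $\mathcal X$. Tightness of $\mu_{\bu_\delta}$ follows from \eqref{apvdelta} as in Proposition~\ref{prop:bfutightness}, tightness of $\mu_{\varrho_\delta}$ from \eqref{aprhodelta} and \eqref{eq:gamma+theta} as in Proposition~\ref{prop:rhotight}, tightness of $\mu_{\varrho_\delta\bu_\delta}$ is the content of the preceding proposition, and $\mu_W$ is Radon on a Polish space. Applying the Jakubowski--Skorokhod theorem gives a probability space $(\tilde\Omega,\tilde\mf,\tilde\prst)$ with random variables $(\tilde\varrho_\delta,\tilde\bu_\delta,\tilde\bq_\delta,\tilde W_\delta)$ of law $\mu^\delta$ converging $\tilde\prst$-a.s.\ in $\mathcal X$ to $(\tilde\varrho,\tilde\bu,\tilde\bq,\tilde W)$. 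As in Lemma~\ref{lemma:identif} one identifies $\tilde\bq_\delta=\tilde\varrho_\delta\tilde\bu_\delta$ and $\tilde\bq=\tilde\varrho\tilde\bu$; the four convergences of the statement are then read off from the almost sure convergence in $\mathcal X_\varrho$, $\mathcal X_\bu$ and $\mathcal X_{\varrho\bu}$, together with the argument of Corollary~\ref{cor:con} for the convective term $\tilde\varrho_\delta\tilde\bu_\delta\otimes\tilde\bu_\delta$. By equality of laws the energy inequality \eqref{eq:apriorivarepsilon} and the estimate \eqref{eq:gamma+theta} persist for the tilde variables.

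For each fixed $\delta$ I would check, exactly as in the first part of Proposition~\ref{prop:martsol} and in Proposition~\ref{prop:limit1}, that $((\tilde\Omega,\tilde\mf,(\tilde\mf_t^\delta),\tilde\prst),\tilde\varrho_\delta,\tilde\bu_\delta,\tilde W_\delta)$ is a weak martingale solution to \eqref{eq:approximation2} with initial law $\Lambda^\delta$: one introduces the functionals $M_\delta(\rho,\bfv,\bfq)_t$, $N(\rho,\bfq)_t$, $N_k(\rho,\bfq)_t$ attached to \eqref{eq:approximation22}, to its quadratic variation and to its cross-variation with $\beta_k$, and transfers the defining martingale identities from the original space by equality of laws. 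To reach the limit system \eqref{eq:limiteq} I would then pass to the limit $\delta\to0$ in the first-moment identity only (the analogue of \eqref{exp111}): using \eqref{conv:rhovv2}-type weak convergence for the convective term, the bounds \eqref{apvdelta}--\eqref{est:rhobfu22delta} for the viscous terms, the pressure convergence below, and Vitali's theorem, one obtains a continuous square-integrable $W^{-b,2}(\mt)$-valued $(\tilde\mf_t)$-martingale $\tilde M$, $b>3/2$, with moments of every order, such that
\begin{align*}
\tilde M_t&=\tilde\varrho\tilde\bfu(t)-\tilde\varrho\tilde\bfu(0)-\int_0^t\diver(\tilde\varrho\tilde\bfu\otimes\tilde\bfu)\,\dif r+\nu\int_0^t\Delta\tilde\bfu\,\dif r\\
&\qquad+(\lambda+\nu)\int_0^t\nabla\diver\tilde\bfu\,\dif r-\int_0^t\nabla\tilde p\,\dif r.
\end{align*}

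The only delicate point is the pressure. By \eqref{eq:gamma+theta} the family $a\tilde\varrho_\delta^\gamma$ is bounded in $L^{(\gamma+\Theta)/\gamma}(\tilde\Omega\times Q)$, so along a further subsequence $a\tilde\varrho_\delta^\gamma\rightharpoonup\tilde p$ weakly there; by \eqref{aprho2delta} and \eqref{eq:gamma+theta} one gets $\delta\tilde\varrho_\delta^\beta\to0$ (e.g.\ in $L^{(\beta+\Theta)/\beta}(\tilde\Omega\times Q)$, since $\delta\tilde\varrho_\delta^\beta=\delta^{\Theta/(\beta+\Theta)}(\delta\tilde\varrho_\delta^{\beta+\Theta})^{\beta/(\beta+\Theta)}$), so the artificial pressure drops out of the momentum balance --- precisely what the short remark records. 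The initial law is $\Lambda$ because $\Lambda^\delta\overset{*}{\rightharpoonup}\Lambda$ and the right-hand side of the energy inequality converges accordingly. The hard part --- exactly as at the vanishing-viscosity stage --- is that strong convergence of $\tilde\varrho_\delta$ is not available yet, so neither can $\tilde p$ be identified with $a\tilde\varrho^\gamma$ nor can $\tilde M$ be rewritten as $\int_0^\tec\varPhi(\tilde\varrho,\tilde\varrho\tilde\bu)\,\dif\tilde W$; both identifications, via the effective-viscous-flux argument and the technique of Proposition~\ref{prop:martsol1}, are postponed to the remainder of the section in full analogy with Subsection~\ref{subsec:strongconvdensity}.
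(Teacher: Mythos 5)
Your proposal is correct and follows the same route the paper takes: the paper's own proof is just the one-line pressure remark, deferring the compactness, Jakubowski--Skorokhod, and martingale-identification steps to the paragraph preceding the proposition (which says to mimic Subsection~\ref{subsec:compactness} and Propositions~\ref{prop:martsol}, \ref{prop:limit1}). Your explicit factorization $\delta\tilde\varrho_\delta^\beta=\delta^{\Theta/(\beta+\Theta)}(\delta\tilde\varrho_\delta^{\beta+\Theta})^{\beta/(\beta+\Theta)}$ is a clean quantitative justification of the authors' claim that the artificial pressure vanishes.
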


Let us proceed with an application of the fundamental theorem on Young measures that will be used several times in what follows. The result is taken from \cite[Theorem 4.2.1, Corollary 4.2.10]{malek} and modified to our setting.

\begin{corollary}\label{cor:young}
Let $z_n:\mt\rightarrow\mr$ be a sequence of functions weakly converging in $L^p(\mt)$ for some $p\in[1,\infty)$. Then there exists a Young measure $\nu$ such that for every $H\in C(\mr)$ satisfying for some $q>0$ the growth condition
$$|H(\xi)|\leq C(1+|\xi|^q)\qquad\forall \xi\in \mr$$
it holds that
$$H(z_n)\rightharpoonup \bar H\quad\text{in}\quad L^r(\mt)\qquad\text{where}\quad
\bar H(x)=\langle\nu_x, H\rangle,$$
provided
$$1<r\leq\frac{p}{q}.$$

\end{corollary}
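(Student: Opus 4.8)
\textbf{Proof proposal for Corollary \ref{cor:young}.}

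The plan is to invoke the fundamental theorem on Young measures in the form cited from \cite{malek} and then perform a routine uniform-integrability argument to upgrade the resulting weak-$*$ convergence of $H(z_n)$ to weak convergence in $L^r(\mt)$. First I would recall the statement of \cite[Theorem 4.2.1]{malek}: since $(z_n)$ is bounded in $L^p(\mt)$ (being weakly convergent, it is bounded), the sequence of associated Young measures $\{\delta_{z_n(x)}\}$ is, after passing to a not relabelled subsequence, weak-$*$ convergent to a Young measure $\nu=(\nu_x)_{x\in\mt}$, and for every Carath\'eodory function $F$ such that $(F(x,z_n))$ is relatively weakly compact in $L^1(\mt)$ one has $F(\cdot,z_n)\rightharpoonup \bar F$ in $L^1(\mt)$ with $\bar F(x)=\langle\nu_x,F(x,\cdot)\rangle$. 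Applying this with $F(x,\xi)=H(\xi)$ already yields the identification of the limit; what remains is to verify the relative weak $L^1$-compactness and then to improve the integrability exponent to $r$.

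Next I would establish the a priori bound on $H(z_n)$. From the growth hypothesis $|H(\xi)|\le C(1+|\xi|^q)$ and the uniform bound $\|z_n\|_{L^p(\mt)}\le C_0$ we get
\begin{equation*}
\int_{\mt}|H(z_n)|^{r}\dx\le C\int_{\mt}\big(1+|z_n|^{qr}\big)\dx\le C\big(1+|\mt|+\|z_n\|_{L^{qr}(\mt)}^{qr}\big)\le C\big(1+C_0^{qr}\big),
\end{equation*}
where the last step uses $qr\le p$, so that $L^p(\mt)\hookrightarrow L^{qr}(\mt)$ on the bounded domain $\mt$. Hence $(H(z_n))$ is bounded in $L^r(\mt)$ with $r>1$, which by reflexivity of $L^r(\mt)$ (together with de la Vall\'ee Poussin, since $t\mapsto t^r$ is superlinear) gives relative weak compactness both in $L^r(\mt)$ and, a fortiori, in $L^1(\mt)$. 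Thus \cite[Theorem 4.2.1]{malek} applies and delivers a subsequence along which $H(z_n)\rightharpoonup\bar H$ in $L^1(\mt)$ with $\bar H(x)=\langle\nu_x,H\rangle$; combining this with the $L^r$-boundedness and uniqueness of weak limits upgrades the convergence to $H(z_n)\rightharpoonup\bar H$ in $L^r(\mt)$. The corresponding statement for the full sequence follows from \cite[Corollary 4.2.10]{malek} once one notes that the Young measure $\nu$ is in fact independent of the chosen subsequence whenever the original sequence $z_n\rightharpoonup z$ already converges.

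The only genuine point requiring care — and the place I would expect to spend the most attention — is the interplay between the exponent bookkeeping and the choice of truncations needed to make the Young-measure machinery rigorous for merely continuous (not bounded) $H$: one approximates $H$ by the truncations $H_k=H\circ T_k$ with $T_k$ a smooth cut-off at level $k$, applies the bounded case to each $H_k$, and then passes $k\to\infty$ using the uniform bound $\sup_n\int_{\mt}|H(z_n)-H_k(z_n)|\dx\to 0$, which again follows from the growth condition and $qr\le p$ (indeed from $q<p$, giving equi-integrability of $|z_n|^q$). Since all of this is exactly the content of \cite[Theorem 4.2.1, Corollary 4.2.10]{malek} specialised to $x$-independent integrands, the proof is essentially a citation plus the elementary H\"older/embedding estimate displayed above, and no new idea beyond the exponent constraint $1<r\le p/q$ is needed.
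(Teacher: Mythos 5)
The paper gives no proof of Corollary~\ref{cor:young} at all; it is explicitly presented as a citation of \cite[Theorem 4.2.1, Corollary 4.2.10]{malek} ``modified to our setting''. Your reconstruction of the missing argument is, in its main lines, the natural one and is correct up to the last paragraph: the growth hypothesis $|H(\xi)|\le C(1+|\xi|^q)$ together with $qr\le p$ and boundedness of $(z_n)$ in $L^p(\mt)$ gives a uniform bound on $H(z_n)$ in $L^r(\mt)$ with $r>1$, hence uniform integrability (Dunford--Pettis), so the fundamental theorem on Young measures applies along a (not relabelled) subsequence, and the $L^r$-bound together with uniqueness of weak limits upgrades the $L^1$-weak convergence of $H(z_n)$ to weak convergence in $L^r(\mt)$. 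That part is fine and is exactly what the paper has in mind.

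Where your argument goes wrong is the closing claim that ``the Young measure $\nu$ is in fact independent of the chosen subsequence whenever the original sequence $z_n\rightharpoonup z$ already converges''. Weak convergence of $z_n$ in $L^p$ does \emph{not} determine the generated Young measure; it only determines its barycentres $\langle\nu_x,\mathrm{id}\rangle$. Concretely, take $z_{2m}=r_m$ (the $m$-th Rademacher function) and $z_{2m+1}=0$: the whole sequence $z_n\rightharpoonup0$ in every $L^p(\mt)$, yet $H(z_n)=z_n^2$ oscillates between $1$ and $0$ and has no weak limit, so no single Young measure can serve the full sequence. Hence the statement of Corollary~\ref{cor:young} is to be read --- as the paper itself tacitly does in the subsequent text, where it inserts the parenthetical ``(here it might be necessary to pass to a subsequence)'' --- as asserting existence of $\nu$ \emph{after} extraction of a not relabelled subsequence of $(z_n)$. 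Your proof is then complete once you delete the final claim and instead retain the subsequence throughout; invoking \cite[Corollary 4.2.10]{malek} to remove it is not available here.
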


\subsection{The effective viscous flux}

It remains to show that $\tilde{p}=a\tilde{\varrho}^\gamma$. Here it is not possible to test by $\Delta^{-1}\nabla\rho$ as in Subsection \ref{subsec:strongconvdensity} so we test by $\Delta^{-1}\nabla T_k(\rho)$ instead, where we employ the cut-off functions
$$T_k(z)=k\,T\Big(\frac{z}{k}\Big)\qquad z\in\mr\quad k\in\N,$$
with $T$ being a smooth concave function on $\mr$ such that $T(z)=z$ for $z\leq 1$ and $T(z)=2$ for $z\geq3$. To this end, we can choose $b=T_k$ in the renormalized continuity equation for $\tilde\varrho_\delta$ (cf. proof of Proposition \ref{prop:martsol}) which leads to
\begin{align*}
\partial_t T_k(\tilde{\varrho}_\delta)+\Div\big(T_k(\tilde{\varrho}_\delta)\bfu_\delta\big)+\big(T_k'(\tilde{\varrho}_\delta)\tilde{\varrho}_\delta-T_k(\tilde{\varrho}_\delta)\big)\Div\tilde{\bfu}_\delta=0
\end{align*}
in the sense of distributions. In order to pass to the limit in this equation, let $\tilde{T}^{1,k}$ denote the weak limit of $T_k(\tilde{\varrho}_\delta)$ given by Corollary \ref{cor:young} and let $\tilde{T}^{2,k}$ denote the weak limit of $\big(T_k'(\tilde{\varrho}_\delta)\tilde{\varrho}_\delta-T_k(\tilde{\varrho}_\delta)\big)\Div\tilde{\bfu}_\delta$  in $L^2(\tilde{\Omega}\times Q)$ (here it might be necessary to pass to a subsequence).
To be more precise, there holds
\begin{align}
 T_k(\tilde{\varrho}_\delta)&\rightarrow \tilde{T}^{1,k}\quad\text{in}\quad C_w([0,T];L^p(\mathbb T^3))\quad \tilde{\p}\text{-a.s.}\quad\forall p\in[1,\infty),\label{eq:Tk1}\\
\big(T_k'(\tilde{\varrho}_\delta)\tilde{\varrho}_\delta-T_k(\tilde{\varrho}_\delta)\big)\Div\tilde{\bfu}_\delta&\rightharpoonup\tilde{T}^{2,k}
\quad\text{in}\quad L^2(\tilde{\Omega}\times Q).\label{eq:Tk2}
\end{align}
So letting $\delta\rightarrow0$ yields
\begin{align}\label{eq:Tk}
\partial_t \tilde{T}^{1,k}+\Div\big(\tilde T^{1,k}\tilde{\bfu}\big)+\tilde{T}^{2,k}=0.
\end{align}
Here we used that $\tilde\p$-a.s.
\begin{align*}
 T_k(\tilde{\varrho}_\delta)&\rightarrow \tilde{T}^{1,k}\quad\text{in}\quad L^2([0,T];W^{-1,2}(\mathbb T^3)),\\
\tilde\bfu_\delta&\rightharpoonup\tilde\bfu\quad\text{in}\quad L^2([0,T];W^{-1,2}(\mathbb T^3)),
\end{align*}
which is a consequence of \eqref{eq:Tk1} (with $p>\frac{6}{5}$) and Proposition \ref{prop:neu}.
Next, for the approximate system \eqref{eq:approximation2} we apply It\^{o}'s formula to the function $f(\rho,\bfq)=\int_{\mt}\bfq\cdot\Delta^{-1}\nabla T_k(\rho)\dx$ and gain similarly to Subsection \ref{subsec:strongconvdensity}
\begin{align*}
\begin{aligned}
&\tilde\stred\int_ {\mathbb T^3}  \tilde\varrho_\delta\tilde \bfu_\delta\cdot \Delta^{-1}\nabla T_k(\tilde\varrho_\delta)\dx\\
&=\tilde\stred\int_ {\mathbb T^3}  \tilde\varrho_\delta\tilde \bfu_\delta(0)\cdot \Delta^{-1}\nabla T_k(\tilde\varrho_\delta(0))\dx-\nu\,\tilde\stred\int_0^t\int_ {\mathbb T^3} \nabla \tilde\bfu_\delta:\nabla\Delta^{-1} \nabla T_k(\tilde\varrho_\delta)\dxs\\
&\;-(\lambda+\nu)\,\tilde\stred\int_0^t\int_ {\mathbb T^3} \diver\tilde\bfu_\delta\,T_k(\tilde\varrho_\delta)\dxs+\tilde\stred\int_0^t\int_ {\mathbb T^3}  \tilde\varrho_\delta\tilde \bfu_\delta\otimes \tilde\bfu_\delta:\nabla\Delta^{-1} \nabla T_k(\tilde\varrho_\delta)\dxs\\
&\,+\tilde\stred\int_0^t\int_ {\mathbb T^3}a\tilde \varrho_\delta^\gamma \,T_k(\tilde\varrho_\delta)\dxs-\tilde\stred\int_0^t(T_k(\tilde\varrho_\delta))_{\mt}\int_ {\mathbb T^3}a\tilde \varrho_\delta^\gamma \,\dxs\\
&\;+\tilde\stred\int_0^t\int_ {\mathbb T^3}\delta\tilde \varrho_\delta^\beta \,(T_k(\tilde\varrho_\delta)-(T_k(\tilde\varrho_\delta))_{\mt})\dxs
-\tilde\stred\int_0^t\int_{\mt}\tilde\varrho_\delta\tilde\bfu_\delta\Delta^{-1}\nabla\diver\big(T_k(\tilde\varrho_\delta)\tilde\bfu_\delta)\dxs\\
&\;-\tilde\stred\int_0^t\int_{\mt}\tilde\varrho_\delta\tilde\bfu_\delta\Delta^{-1}\nabla(T'_k(\tilde\varrho_\delta)\tilde\varrho_\delta-T_k(\tilde\varrho_\delta))\diver\tilde\bfu_\delta\dxs\\
&=\tilde\stred J_1+\cdots +\tilde\stred J_{9}.
\end{aligned}
\end{align*}
This can finally be written as
\begin{align*}
\begin{aligned}
\tilde\E\int_Q (  \tilde{\varrho}_\delta^\gamma&-(\lambda+2\nu)\Div  \tilde{\bfu}_\delta)\,  T_k(\tilde{\varrho}_\delta)\dxt=\tilde\E\big[J_0-J_1-J_6-J_7-J_9\big]\\
&+\tilde\E\int_Q\Big(  T_k(\tilde{\varrho}_\delta)\mathcal R_{ij}[  \tilde{\varrho}_\delta  \tilde{\bfu}^j_\delta]-  \tilde{\varrho}_\delta  \tilde{\bfu}_\delta^j\mathcal R_{ij}[  T_k(\tilde{\varrho}_\delta)]\Big)  \tilde{\bfu}^i_\delta\dxt.
\end{aligned}
\end{align*}
Whereas for the limit system \eqref{eq:limiteq}, It\^{o}'s formula leads to
\begin{align*}
\begin{aligned}
&\tilde\stred\int_ {\mathbb T^3}  \tilde\varrho\tilde \bfu\cdot \Delta^{-1}\nabla \tilde T^{1,k}\dx=\tilde\stred\int_ {\mathbb T^3}  \tilde\varrho\tilde \bfu(0)\cdot \Delta^{-1}\nabla \tilde T^{1,k}(0)\dx\\
&\quad-\nu\,\tilde\stred\int_0^t\int_ {\mathbb T^3} \nabla \tilde\bfu:\nabla\Delta^{-1}\nabla \tilde T^{1,k}\dxs-(\lambda+\nu)\,\tilde\stred\int_0^t\int_ {\mathbb T^3} \diver\tilde\bfu\,\tilde T^{1,k}\dxs\\
&\quad+\tilde\stred\int_0^t\int_ {\mathbb T^3}  \tilde\varrho\tilde \bfu\otimes \tilde\bfu:\nabla\Delta^{-1}\nabla \tilde T^{1,k}\dxs+\tilde\stred\int_0^t\int_ {\mathbb T^3}\tilde p \,\tilde T^{1,k}\dxs\\
&\quad-\tilde\stred\int_0^t(\,\tilde T^{1,k})_{\mt}\int_ {\mathbb T^3}\tilde p \dxs-\tilde\stred\int_0^t\int_{\mt}\tilde\varrho\tilde\bfu\cdot\nabla\Delta^{-1}\diver\big(\tilde T^{1,k}\tilde\bfu\big)\dxs\\
&-\tilde\stred\int_0^t\int_{\mt}\tilde\varrho\tilde\bfu\cdot\Delta^{-1}\nabla\tilde T^{2,k}\dxs
=\tilde\stred K_1+\cdots +\tilde\stred K_{8}.
\end{aligned}
\end{align*}
From this we infer
\begin{align*}
\begin{aligned}
\tilde\E\int_Q (  \tilde{\varrho}^\gamma&-(\lambda+2\nu)\Div  \tilde{\bfu})\,  T_k(\tilde{\varrho})\dxt=\tilde\E\big[K_0-K_1-K_6-K_8\big]\\
&+\tilde\E\int_Q\Big(  \tilde T^{1,k}\mathcal R_{ij}[  \tilde{\varrho}  \tilde{\bfu}^j]-  \tilde{\varrho}  \tilde{\bfu}^j\mathcal R_{ij}[  \tilde T^{1,k}]\Big)  \tilde{\bfu}^i\dxt.
\end{aligned}
\end{align*}
The limit procedure is now very similar to the vanishing viscosity limit.
Finally this implies
\begin{align*}
 T_k(\tilde{\varrho}_\delta)\mathcal R[ \tilde{\varrho}_\delta \tilde{\bfu}_\delta]- \tilde{\varrho}_\delta \tilde{\bfu}_\delta\mathcal R[ T_k(\tilde{\varrho}_\delta)]\rightarrow  \tilde{T}^{1,k}\mathcal R[ \tilde{\varrho} \tilde{\bfu}]- \tilde{\varrho} \tilde{\bfu}\mathcal R[ \tilde{T}^ {1,k}]
\end{align*}
in $L^2(\Omega;L^2(0,T;W^{-1,2}( \mathbb T^3)))$ as $\frac{2\gamma}{\gamma+1}>\frac{6}{5}$ (using Proposition \ref{prop:neu} and \eqref{eq:Tk1}). Hence
\begin{align}\label{eq:eq11}
\begin{aligned}
\lim_{\delta\rightarrow0}&\tilde\E\int_Q(  T_k(\tilde{\varrho}_\delta)\mathcal R_{ij}[  \tilde{\varrho}_\delta  \tilde{\bfu}^j_\delta]-  \tilde{\varrho}  \tilde{\bfu}^j\mathcal R_{ij}[  T_k(\tilde{\varrho}_\delta)])  \tilde{\bfu}^i_\delta\dxt\\
&=\tilde\E\int_Q(  \tilde{T}^{1,k}\mathcal R_{ij}[  \tilde{\varrho}  \tilde{\bfu}^j]-  \tilde{\varrho}  \tilde{\bfu}^j\mathcal R_{ij}[  \tilde{T}^{1,k}])  \tilde{\bfu}^i\dxt.
\end{aligned}
\end{align}
In order to pass to the limit in the effective viscous flux we have to study in addition the term $J_8$. As a consequence of (\ref{eq:Tk2}) it suffices to show
\begin{align}\label{eq:DeltaDiv}
\Delta^{-1}\Div\big( \tilde{\varrho}_\delta\tilde{\bfu}_{\delta}\big)\rightarrow\Delta^{-1}\Div\big( \tilde{\varrho}\tilde{\bfu}\big)\quad\text{in}\quad L^2(\tilde{\Omega}\times Q).
\end{align}
Due to the weak convergence of  $\tilde\varrho_\delta\tilde\bu_\delta$ in $L^\frac{2\gamma}{\gamma+1}$ for a.e. $(\omega,t)$ we gain \eqref{eq:DeltaDiv} as a consequence of the compactness of the operator $\nabla^{-1}\Div:L^\frac{2\gamma}{\gamma+1}\rightarrow L^2$ (recall that $\gamma>\frac{3}{2}$) and the uniform integrability from \eqref{estrhou2delta}. So we have $\tilde\E J_8\rightarrow \tilde\E K_7$ for $\delta\rightarrow0$.
Due to (\ref{eq:eq11}) we obtain
\begin{align}\label{eq:flux1}
\lim_{\delta\rightarrow 0}\tilde\E\bigg[\int_Q ( \tilde{\varrho}_\delta^\gamma-\Div \tilde{\bfu}_\delta)\, T_k(\tilde{\varrho}_\delta)\dxt\bigg]&=\tilde\E\bigg[\int_Q ( \tilde{p}-\Div \tilde{\bfu})\, \tilde{T}^{1,k}\dxt\bigg].
\end{align}

\subsection{Renormalized solutions}

In order to proceed we have to show
\begin{align}\label{eq.amplosc}
\limsup_{\delta\rightarrow0}\tilde\E \int_{Q}|T_k(\tilde{\varrho}_\delta)-T_k(\tilde{\varrho})|^{\gamma+1}\dxt\leq C,
\end{align}
where $C$ does not depend on $k$. The proof of \eqref{eq.amplosc} follows exactly the arguments from the deterministic problem
 in \cite[Lemma 4.3]{feireisl1} using (\ref{eq:Tk1}) and (\ref{eq:flux1}). We omit the details.

By a standard smoothing procedure we can follow from (\ref{eq:Tk}) that
\begin{align}\label{eq:Tkren}
\partial_t b(\tilde{T}^{1,k})+\Div\big(b(\tilde{T}^{1,k})\tilde{\bfu}\big)
+\big(b'(\tilde{T}^{1,k})\tilde{T}^{1,k}-b(\tilde{T}^{1,k})\big)\Div\tilde{\bfu}=b'(\tilde{T}^{1,k})\tilde{T}^{2,k}
\end{align}
in the sense of distributions.
We want to pass to the limit $k\rightarrow\infty$. On account of (\ref{eq.amplosc}) we have
for all $p\in(1,\gamma)$
\begin{align*}
\|\tilde{T}^{1,k}-\tilde{\varrho}\|_{L^p(\tilde{\Omega}\times Q)}^p&\leq \liminf_{\delta\rightarrow0}\|T_k(\tilde{\varrho}_\delta)-\tilde{\varrho}_\delta\|_{L^p(\tilde{\Omega}\times Q)}^p\\
&\leq 2^p\liminf_{\delta\rightarrow0}\tilde\E\int_{[|\tilde{\varrho}_\delta|\geq k]}|\tilde{\varrho}_\delta|^p\dxt\\\
&\leq 2^pk^{p-\gamma}\liminf_{\delta\rightarrow0}\tilde\E\int_{Q}|\tilde{\varrho}_\delta|^\gamma\dxt\longrightarrow 0,\quad k\rightarrow\infty.
\end{align*}
So we have
\begin{align}
\tilde{T}^{1,k}\rightarrow\tilde{\varrho}\quad\text{in}\quad L^p(\tilde{\Omega}\times Q).\label{eq:T1klim}
\end{align}
In order to pass to the limit in (\ref{eq:Tkren}) we have to show
\begin{align}
b'(\tilde{T}^{1,k})\tilde{T}^{2,k}\rightarrow0\quad\text{in}\quad L^1(\tilde{\Omega}\times Q).\label{eq:T2klim}
\end{align}
Recall that $b$ has to satisfy $b'(z)=0$ for all $z\geq M$ for some $M=M(b)$.
We define
\begin{align*}
Q_{k,M}:=\big\{(\omega,t,x)\in \tilde\Omega\times[0,T]\times\mt;\;\tilde{T}^{1,k}\leq M\big\}
\end{align*}
and gain
\begin{align*}
\begin{aligned}
\tilde\E\int_{Q}&|b'(\tilde{T}^{1,k})\tilde{T}^{2,k}|\dxt\leq \sup_{z\leq M}|b'(z)|\tilde\E\int_Q\chi_{Q_{k,M}}|\tilde{T}^{2,k}|\dxt\\
&\leq \,C\,\liminf_{\delta\rightarrow0}\tilde\E\int_Q\chi_{Q_{k,M}}\big|(T_k'(\tilde{\varrho}_\delta)\tilde{\varrho}_\delta-T_k(\tilde{\varrho}_\delta))\Div\tilde{\bfu}_\delta\big|\dxt\\
&\leq\,C\,\sup_\delta\|\Div\tilde{\bfu}_\delta\|_{L^2(\tilde{\Omega}\times Q)}\liminf_{\delta\rightarrow0}\|T_k'(\tilde{\varrho}_\delta)\tilde{\varrho}_\delta-T_k(\tilde{\varrho}_\delta)\|_{L^2(Q_{k,M})}.
\end{aligned}
\end{align*}
It follows from interpolation that
\begin{align}\label{eq:425}
\begin{aligned}
&\|T_k'(\tilde{\varrho}_\delta)\tilde{\varrho}_\delta-T_k(\tilde{\varrho}_\delta)\|^2_{L^2(Q_{k,M})}\\&\qquad\qquad\leq \|T_k'(\tilde{\varrho}_\delta)\tilde{\varrho}_\delta-T_k(\tilde{\varrho}_\delta)\|_{L^1(\tilde{\Omega}\times Q)}^\alpha\|T_k'(\tilde{\varrho}_\delta)\tilde{\varrho}_\delta-T_k(\tilde{\varrho}_\delta)\|_{L^{\gamma+1}(Q_{k,M})}^{(1-\alpha)(\gamma+1)},
\end{aligned}
\end{align}
where $\alpha=\frac{\gamma-1}{\gamma}$. Moreover, we can show similarly to the proof of
(\ref{eq:T1klim})
\begin{align}\label{eq:426}
\begin{aligned}
\|T_k'(\tilde{\varrho}_\delta)\tilde{\varrho}_\delta-T_k(\tilde{\varrho}_\delta)\|_{L^1(\tilde{\Omega}\times Q)}
&\leq \,C\,k^{1-\gamma}\sup_\delta\tilde\E\int_{Q}|\tilde{\varrho}_\delta|^\gamma\dxt\\
&\longrightarrow 0,\quad k\rightarrow\infty.
\end{aligned}
\end{align}
So it is enough to prove
\begin{align}\label{eq:427}
\sup_\delta\|T_k'(\tilde{\varrho}_\delta)\tilde{\varrho}_\delta-T_k(\tilde{\varrho}_\delta)\|_{L^{\gamma+1}(Q_{k,M})}\leq C,
\end{align}
independently of $k$. As $T_k'(z)z\leq T_k(z)$ there holds by the definition of $Q_{k,M}$
\begin{align*}
&\|T_k'(\tilde{\varrho}_\delta)\tilde{\varrho}_\delta-T_k(\tilde{\varrho}_\delta)\|_{L^{\gamma+1}(Q_{k,M})}\\&\leq \,2\Big(\|T_k(\tilde{\varrho}_\delta)-T_k(\tilde{\varrho})\|_{L^{\gamma+1}(\tilde{\Omega}\times Q)}+\|T_k(\tilde{\varrho}_\delta)\|_{L^{\gamma+1}(Q_{k,M})}\Big)\\
&\leq \,2\Big(\|T_k(\tilde{\varrho}_\delta)-T_k(\tilde{\varrho})\|_{L^{\gamma+1}(\tilde{\Omega}\times Q)}+\|T_k(\tilde{\varrho}_\delta)-\tilde{T}^{1,k}\|_{L^{\gamma+1}(\tilde{\Omega}\times Q)}+\|\tilde{T}^{1,k}\|_{L^{\gamma+1}(Q_{k,M})}\Big).\\
&\leq \,2\Big(\|T_k(\tilde{\varrho}_\delta)-T_k(\tilde{\varrho})\|_{L^{\gamma+1}(\tilde{\Omega}\times Q)}+\|T_k(\tilde{\varrho}_\delta)-\tilde{T}^{1,k}\|_{L^{\gamma+1}(\tilde{\Omega}\times Q)}\Big)+CM.
\end{align*}
Now (\ref{eq.amplosc}) and \eqref{eq:Tk1} imply (\ref{eq:427}). On the other hand (\ref{eq:425})-(\ref{eq:427}) imply (\ref{eq:T2klim}). So we can pass to the limit in (\ref{eq:Tkren}) and gain
\begin{align}\label{eq:ren}
\partial_t b(\tilde{\varrho})+\Div\big(b(\varrho)\tilde{\bfu}\big)
+\big(b'(\tilde{\varrho})\tilde{\varrho}-b(\tilde{\varrho})\big)\Div\tilde{\bfu}=0
\end{align}
in the sense of distributions. 

\subsection{Strong convergence of the density}

We introduce the functions $L_k$ by
\begin{align*}
L_k(z)=\begin{cases}z\ln z,&0\leq z< k\\
z\ln k+z\,\int_k^z T_k(s)/s^2\,ds,&z\geq k
\end{cases}
\end{align*}
We can choose $b=L_k$ in (\ref{eq:ren}) such that
\begin{align*}
\partial_t L_k(\tilde{\varrho})+\Div\big(L_k(\tilde{\varrho})\tilde{\bfu}\big)
+T_k(\tilde{\varrho})\Div\tilde{\bfu}=0.
\end{align*}
We also have that
\begin{align*}
\partial_t L_k(\tilde{\varrho}_\delta)+\Div\big(L_k(\tilde{\varrho}_\delta)\tilde{\bfu}_\delta\big)
+T_k(\tilde{\varrho}_\delta)\Div\tilde{\bfu}_\delta=0.
\end{align*}
The difference of both equations reads as 
\begin{align*}
\int_{\mathbb T^3}\big(L_k(\tilde{\varrho}_\delta)(t)-L_k(\tilde{\varrho})(t)\big)\,\varphi\dx&=\int_{\mathbb T^3}\big(L_k(\tilde{\varrho}_\delta)(0)-L_k(\tilde{\varrho})(0)\big)\,\varphi\dx\\
&=\int_0^t\int_{\mathbb T^3}\big(L_k(\tilde{\varrho}_\delta)\tilde{\bfu}_\delta-L_k(\tilde{\varrho})\tilde{\bfu}\big)\cdot\nabla\varphi\dxs\\
&+\int_0^t\int_{\mathbb T^3}\big(T_k(\tilde{\varrho})\Div\tilde{\bfu}-T_k(\tilde{\varrho}_\delta)\Div\tilde{\bfu}_\delta\big)\,\varphi\dxs
\end{align*}
for all $\varphi\in C^\infty(\mathbb T^3)$.
We have the following convergences $\tilde{\p}$-a.s. for all $p\in(1,\gamma)$ 
\begin{align*}
L_k(\tilde{\varrho}_\delta)\rightarrow\tilde{L}^{1,k}\quad\text{in}\quad C_w([0,T];L^p(\mathbb T^3)),\quad\delta\rightarrow0,\\
\tilde{\varrho}_\delta\ln(\tilde{\varrho}_\delta)\rightarrow\tilde{L}^{2,k}\quad\text{in}\quad C_w([0,T];L^p(\mathbb T^3)),\quad\delta\rightarrow0.
\end{align*}
which is a consequence of Corollary \ref{cor:young} and the $\tilde{\p}$-a.s. convergence of $\tilde{\varrho}_\delta$ in $C_w([0,T];L^\gamma(\mt))$. We also have as $\gamma>\frac{6}{5}$
\begin{align*}
L_k(\tilde{\varrho}_\delta)\rightarrow\tilde{L}^{1,k}\quad\text{in}\quad C([0,T];W^{-1,2}(\mathbb T^3)),\quad\delta\rightarrow0,
\end{align*}
$\tilde{\p}$-a.s.
So we gain using $\Lambda_\delta\rightarrow\Lambda$ (weakly in the sense of measures) for the initial condition
\begin{align*}
\tilde\stred\int_{\mathbb T^3}\big(\tilde{L}^{1,k}(t)-L_k(\tilde{\varrho})(t)\big)\,\varphi\dx&\leq \tilde\stred\int_0^t\int_{\mathbb T^3}\big(\tilde{L}^{1,k}\tilde{\bfu}-L_k(\tilde{\varrho})\tilde{\bfu}\big)\cdot\nabla\varphi\dxs\\
&+\limsup_\delta\tilde\stred\int_0^t\int_{\mathbb T^3}\big(T_k(\tilde{\varrho})\Div\tilde{\bfu}-T_k(\tilde{\varrho}_\delta)\Div\tilde{\bfu}_\delta\big)\,\varphi\dxs.
\end{align*}
This and the choice $\varphi=1$ imply as a consequence of (\ref{eq:eq11})
\begin{align*}
\tilde\E\int_{\mathbb T^3}\big(\tilde{L}^{1,k}(t)-L_k(\tilde{\varrho})(t)\big)\dx&=
\tilde\E\int_0^t\int_{\mathbb T^3}T_k(\tilde{\varrho})\Div\tilde{\bfu}\,\dxs\\&\qquad-\liminf_\delta\tilde\E\int_0^t\int_{\mathbb T^3}T_k(\tilde{\varrho}_\delta)\Div\tilde{\bfu}_\delta\dxs\\
&\leq\tilde\E\int_0^t\int_{\mathbb T^3}\big(T_k(\tilde{\varrho})-\tilde{T}^{1,k}\big)\Div\tilde{\bfu}\,\dxs.
\end{align*}
Due to (\ref{eq:T1klim}) the right hand side tends to zero if $k\rightarrow\infty$ such that
\begin{align*}
\lim_{k\rightarrow\infty}\tilde\E\int_{\mathbb T^3}\big(\tilde{L}^{1,k}(t)-L_k(\tilde{\varrho})(t)\big)\dx=0.
\end{align*}
This finally means that
\begin{align*}
\tilde\E\int_{Q} \tilde{\varrho}_\delta\ln\tilde{\varrho}_\delta\dxt\longrightarrow\tilde \E\int_{Q} \tilde{\varrho}\ln\tilde{\varrho}\dxt.
\end{align*}
Convexity of $z\mapsto z\ln z$ yields strong convergence of $\tilde{\varrho}_\delta$.

This means we can pass to the limit in all terms of the system \eqref{eq:approximation2} and obtain a solution to \eqref{eq:} in the sense of Definition \ref{def:sol}.

\begin{proposition}\label{prop:martsol2}
$\big((\tilde{\Omega},\tilde{\mf},(\tilde{\mf}_t),\tilde{\prst}),\tilde\varrho,\tilde{\bu},\tilde{W}\big)$
is a finite energy weak martingale solution to \eqref{eq:} with the initial law $\Lambda$.
\end{proposition}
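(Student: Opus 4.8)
The plan is to collect, one last time, the a priori bounds and convergences obtained in the previous subsections and to check every item of Definition \ref{def:sol} for the limit object $\big((\tilde{\Omega},\tilde{\mf},(\tilde{\mf}_t),\tilde{\prst}),\tilde\varrho,\tilde{\bu},\tilde{W}\big)$. Most of the work is already done: Proposition \ref{prop:neu} gives us a weak martingale solution to the system \eqref{eq:limiteq} with the $\tilde{\prst}$-augmented canonical filtration $(\tilde\mf_t)$, so the stochastic basis (item (a)), the fact that $\tilde W$ is an $(\tilde\mf_t)$-cylindrical Wiener process (item (b)), the continuity equation in the renormalized sense (items (j) and part of (h); note \eqref{eq:ren}), and the initial law being $\Lambda$ (item (f)) are all in hand. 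The regularity/integrability items (c), (d), (e) follow from the uniform bounds \eqref{apvdelta}--\eqref{est:rhobfu22delta} carried over to the new probability space via equality of laws and lower semicontinuity under the a.s. convergences of Proposition \ref{prop:skorokhod1}/\ref{prop:neu}, exactly as in \eqref{eq:aprioriNtilde}; the $p$-th moments are available because the initial moment estimate \eqref{initial} holds for all $p\in[1,\infty)$ and is inherited by the $\Lambda^\delta$. Item (g) — that $\varPhi(\tilde\varrho,\tilde\varrho\tilde\bu)$ lies in $L^2(\tilde\Omega\times[0,T];L_2(\mathfrak U;W^{-l,2}))$ for some $l>3/2$ — is the computation \eqref{stochest} combined with the bounds on $\tilde\varrho$ and $\sqrt{\tilde\varrho}\tilde\bu$ and the essential boundedness of $(\tilde\varrho)_{\mt}$ (which is constant in time and bounded by $2M$ by construction of $\Lambda^\delta$).

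The only genuinely new point to settle is the identification of the martingale $\tilde M$ from Proposition \ref{prop:neu} with the stochastic integral $\int_0^{\cdot}\varPhi(\tilde\varrho,\tilde\varrho\tilde\bu)\,\dif\tilde W$; this is the item (h) (momentum equation) we still owe, and it also yields the pressure identification $\tilde p = a\tilde\varrho^\gamma$ already obtained via the strong convergence \eqref{strongconv}-type argument of the last subsection (strong convergence of $\tilde\varrho_\delta$). First I would, exactly as in the proof of Proposition \ref{prop:martsol1}, introduce the functionals $M_\delta(\rho,\bfv,\bfq)_t$, $N(\rho,\bfq)_t$, $N_k(\rho,\bfq)_t$ (now with pressure $a\rho^\gamma$ only, plus the vanishing $\delta\rho^\beta$ term), note that the martingale identities \eqref{exp111}--\eqref{exp311} hold on the new space for each $\delta$ by equality of laws, and pass to the limit $\delta\rightarrow0$. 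The deterministic terms converge by \eqref{est:rhobfu22delta} and the convective convergence in Proposition \ref{prop:neu}, the pressure term by the strong convergence of $\tilde\varrho_\delta$ (so $a\tilde\varrho_\delta^\gamma\to a\tilde\varrho^\gamma=\tilde p$ in $L^{(\gamma+\Theta)/\gamma}$) together with $\delta\tilde\varrho_\delta^\beta\to0$, and the stochastic terms $N(\tilde\varrho_\delta,\tilde\varrho_\delta\tilde\bu_\delta)_{s,t}$, $N_k(\tilde\varrho_\delta,\tilde\varrho_\delta\tilde\bu_\delta)_{s,t}$ by the mean-value/growth estimate of Proposition \ref{prop:limit1} (the convergence $g_k(\tilde\varrho_\delta,\tilde\varrho_\delta\tilde\bu_\delta)\to g_k(\tilde\varrho,\tilde\varrho\tilde\bu)$ $\tilde\prst\otimes\mathcal L$-a.e. using \eqref{growth1}, \eqref{growth2}, \eqref{strongconv} and the strong convergence of the momentum, plus uniform integrability from \eqref{aprhodelta}, \eqref{estrhou2delta} to upgrade to $L^1$). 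Then, by the bilinearity-of-cross-variation argument reproduced after \eqref{mart} in Proposition \ref{prop:limit1}, $\tilde M - \int_0^{\cdot}\langle\varPhi(\tilde\varrho,\tilde\varrho\tilde\bu)\,\dif\tilde W,\bfvarphi\rangle$ has vanishing quadratic variation and zero initial value, hence is identically zero, which is item (h).

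The remaining item (i), the energy inequality \eqref{energy}, follows from the $\delta$-level energy inequality \eqref{eq:apriorivarepsilon} (valid on the new probability space) by taking $\delta\rightarrow0$: the left-hand side is lower semicontinuous under the a.s. weak convergences of Proposition \ref{prop:neu} (for the $\sup_t\int\frac12\tilde\varrho_\delta|\tilde\bu_\delta|^2$ term one uses weak lower semicontinuity of $(\rho,\bfq)\mapsto\int|\bfq|^2/\rho$ together with the strong convergence of $\tilde\varrho_\delta$ and weak convergence of $\tilde\varrho_\delta\tilde\bu_\delta$; the dissipation term is convex in $\nabla\tilde\bu$), and the right-hand side converges because the approximate initial energies satisfy $\int\|\tfrac12|\bfq|^2/\rho+\tfrac{a}{\gamma-1}\rho^\gamma+\tfrac{\delta}{\beta-1}\rho^\beta\|_{L^1_x}^p\,\dif\Lambda^\delta \to \int\|\tfrac12|\bfq|^2/\rho+\tfrac{a}{\gamma-1}\rho^\gamma\|_{L^1_x}^p\,\dif\Lambda$ by the construction of $\Lambda^\delta$ (with the extra $\delta\rho^\beta$ term dropped since $\delta\rho_\delta^\beta\le\delta^{1/2}$ a.s.). I expect the main obstacle to be bookkeeping rather than a new idea: one has to make sure that the various a.e. convergences needed in the stochastic-integral limit are along one common (not relabelled) subsequence and that all the uniform-integrability exponents (e.g. $(\gamma+\Theta)/\gamma$ with $\Theta\le\frac23\gamma-1$, and $\frac{2\gamma}{\gamma+1}$, $\frac{6\gamma}{4\gamma+3}$) line up so that the Vitali convergence theorem applies when passing from $\tilde\prst\otimes\mathcal L$-a.e. convergence to convergence in $L^1(\tilde\Omega\times Q)$ — this is precisely where the standing hypothesis $\gamma>3/2$ is used. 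Since all of this has already been carried out in detail at the previous approximation levels, the proof amounts to: $\big((\tilde{\Omega},\tilde{\mf},(\tilde{\mf}_t),\tilde{\prst}),\tilde\varrho,\tilde{\bu},\tilde{W}\big)$ satisfies (a)--(j) of Definition \ref{def:sol}, which is the assertion of Theorem \ref{thm:main}. $\qed$
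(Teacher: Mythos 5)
Your proposal is correct and follows the same route the paper takes: the paper's own proof of this proposition is a one-line reference to "having the strong convergence of the density, follow the ideas of Proposition \ref{prop:martsol1}," and what you spell out — inheriting items (a)--(g), (j) from Proposition \ref{prop:neu} and the uniform bounds, identifying $\tilde M$ with the stochastic integral by passing to the limit in the quadratic/cross-variation identities using strong convergence of $\tilde\varrho_\delta$ and $\tilde\varrho_\delta\tilde\bfu_\delta$, and obtaining the energy inequality by lower semicontinuity together with the convergence of the approximate initial energies — is precisely the argument being invoked.
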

\begin{proof}
Having the strong convergence of the density the proof follows the ideas of Proposition \ref{prop:martsol1}.
\end{proof}

\appendix

\section{Auxiliary lemma}
\label{sec:appendix}

In this section, we establish some further properties of the operator $\mathcal{M}$ defined in \eqref{M}. We start with an easy observation:
\begin{equation}\label{23}
\|\mathcal{M}[\rho]\|_{\mathcal{L}(X_N,X_N)}\leq \sup_{x\in\mt}\rho.
\end{equation}
Next, we prove Lipschitz continuity of the operator $\rho\mapsto\mathcal{M}^{\frac{1}{2}}[\rho]$.

\begin{lemma}\label{lemma:aux}
Let $\rho_1,\,\rho_2\in L^2(\mt)$ and assume that there exists constant $\kappa>0$ such that
\begin{equation}\label{1}
\rho_1,\,\rho_2\geq \kappa.
\end{equation}
Then there exists a constant $C=C(\kappa ,N)>0$ such that
\begin{equation}\label{eq:1}
\big\|\mathcal{M}^{\frac{1}{2}}[\rho_1]-\mathcal{M}^{\frac{1}{2}}[\rho_2]\big\|_{\mathcal{L}(X_N,X_N)}\leq C(\kappa ,N)\|\rho_1-\rho_2\|_{L^2}.
\end{equation}

\begin{proof}
As we intend to apply the mean value theorem, let us first calculate the derivative of $\mathcal{M}^{\frac12}[\rho]$ with respect to $\rho$.
There holds
\begin{align*}
\lim_{h\rightarrow0}\frac{\mathcal{M}^{\frac{1}{2}}[\rho+hv]-\mathcal{M}^{\frac{1}{2}}[\rho]}{h}&=\frac{1}{2}\mathcal{M}^{-\frac{1}{2}}[\rho]\mathcal{M}[v].
\end{align*}
The mean value theorem now yields for some $\rho=\alpha\rho_1+(1-\alpha)\rho_2$
\begin{align}\label{2}
\begin{aligned}
\big\|&\mathcal{M}^{\frac{1}{2}}[\rho_1]-\mathcal{M}^{\frac{1}{2}}[\rho_2]\big\|_{\mathcal{L}(X_N,X_N)}\leq\frac{1}{2}\big\|\mathcal{M}^{-\frac{1}{2}}[\rho]\mathcal{M}[\rho_1-\rho_2]\big\|_{\mathcal{L}(X_N,X_N)}\\
&\qquad\leq \frac{1}{2}\big\|\mathcal{M}^{-\frac{1}{2}}[\rho]\big\|_{\mathcal{L}(X_N,X_N)}\big\|\mathcal{M}[\,\cdot\,]\big\|_{\mathcal{L}(L^2,\mathcal{L}(X_N,X_N))}\|\rho_1-\rho_2\|_{L^2}.
\end{aligned}
\end{align}
Since by definition of the operator $\mathcal{M}^{-1}$ (cf. \cite[Section 2.2]{feireisl1})
\begin{align*}
\|\mathcal{M}^{-1}[\rho]\|_{\mathcal{L}(X_N,X_N)}\leq \big(\inf_{x\in\mt}\rho\big)^{-1},
\end{align*}
we deduce that
\begin{align*}
\big\|\mathcal{M}^{-\frac{1}{2}}[\rho]\big\|_{\mathcal{L}(X_N,X_N)}\leq \kappa^{-\frac{1}{2}}
\end{align*}
whenever $\rho$ satisfies the bound \eqref{1}. Besides, we have
\begin{align*}
\|\mathcal{M}[\,\cdot\,]\|_{\mathcal{L}(L^2,\mathcal{L}(X_N,X_N))}&=\sup_{\substack{\rho\in L^2\\
\|\rho\|_{L^2}\leq 1}}\|\mathcal{M}[\rho]\|_{\mathcal{L}(X_N,X_N)}=\sup_{\substack{\rho\in L^2\\
\|\rho\|_{L^2}\leq 1}}\sup_{\substack{\bfpsi\in X_N\\\|\bfpsi\|_{X_N}\leq 1}}\|P_N(\rho\bfpsi)\|_{X_N}\\
&\leq \sup_{\substack{\rho\in L^2\\\|\rho\|_{L^2}\leq 1}}\sup_{\substack{\bfpsi\in X_N\\\|\bfpsi\|_{X_N}\leq 1}}\|\rho\bfpsi\|_{L^2}
\leq C(N)\sup_{\substack{\rho\in L^2\\\|\rho\|_{L^2}\leq 1}}\sup_{\substack{\bfpsi\in X_N\\\|\bfpsi\|_{X_N}\leq 1}}\|\rho\|_{L^2}\|\bfpsi\|_{L^\infty}\\
&\leq C(N).
\end{align*}
Hence \eqref{2} yields the claim.
\end{proof}
\end{lemma}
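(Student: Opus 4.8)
The plan is to reduce everything to elementary spectral theory on the finite-dimensional Hilbert space $X_N$, equipped with the $L^2(\mt)$ inner product with respect to which $(\bfpsi_n)$ is orthonormal. First I would collect the relevant facts about $\mathcal{M}$: the operator $\mathcal{M}[\rho]$ is symmetric, and by \eqref{1} one has $\langle\mathcal{M}[\rho]\bfv,\bfv\rangle=\int_{\mt}\rho|\bfv|^2\dx\ge\kappa\|\bfv\|_{X_N}^2$, so $\mathcal{M}[\rho]\ge\kappa\,\mathrm{Id}$ as a quadratic form. Consequently $\mathcal{M}^{1/2}[\rho]$ is a well-defined symmetric positive definite operator with $\mathcal{M}^{1/2}[\rho]\ge\sqrt{\kappa}\,\mathrm{Id}$, and in particular $\|\mathcal{M}^{-1}[\rho]\|_{\mathcal{L}(X_N,X_N)}\le\kappa^{-1}$ and $\|\mathcal{M}^{-1/2}[\rho]\|_{\mathcal{L}(X_N,X_N)}\le\kappa^{-1/2}$ whenever $\rho$ satisfies \eqref{1}.

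Second, I would bound the dependence of $\mathcal{M}$ on $\rho$ directly, which is where the $L^2$-norm of $\rho_1-\rho_2$ enters. Since $\rho\mapsto\mathcal{M}[\rho]$ is linear, $\mathcal{M}[\rho_1]-\mathcal{M}[\rho_2]=\mathcal{M}[\rho_1-\rho_2]$, and for any $\bfv\in X_N$ and $\sigma\in L^2(\mt)$ one has $\|\mathcal{M}[\sigma]\bfv\|_{X_N}=\|P_N(\sigma\bfv)\|_{X_N}\le\|\sigma\bfv\|_{L^2}\le\|\sigma\|_{L^2}\|\bfv\|_{L^\infty}\le C(N)\|\sigma\|_{L^2}\|\bfv\|_{X_N}$, the last step being the equivalence of the $X_N$- and $L^\infty$-norms on the finite-dimensional space $X_N$. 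Hence $\|\mathcal{M}[\,\cdot\,]\|_{\mathcal{L}(L^2,\mathcal{L}(X_N,X_N))}\le C(N)$, that is, $\|\mathcal{M}[\rho_1]-\mathcal{M}[\rho_2]\|_{\mathcal{L}(X_N,X_N)}\le C(N)\|\rho_1-\rho_2\|_{L^2}$; this is just the refinement of \eqref{23} already recorded above.

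Third — and this is the only nontrivial point — I would transport this Lipschitz bound through the square root. Put $\rho_t:=\rho_2+t(\rho_1-\rho_2)$ for $t\in[0,1]$; by \eqref{1} each $\rho_t\ge\kappa$, so $t\mapsto\mathcal{M}^{1/2}[\rho_t]$ is a $C^1$ curve in the open cone of positive definite operators (the square root is smooth there and $t\mapsto\mathcal{M}[\rho_t]$ is affine). Differentiating $\mathcal{M}^{1/2}[\rho_t]\,\mathcal{M}^{1/2}[\rho_t]=\mathcal{M}[\rho_t]$ shows that $Y_t:=\tfrac{\dd}{\dd t}\mathcal{M}^{1/2}[\rho_t]$ solves the Sylvester equation $\mathcal{M}^{1/2}[\rho_t]Y_t+Y_t\mathcal{M}^{1/2}[\rho_t]=\mathcal{M}[\rho_1-\rho_2]$, hence $Y_t=\int_0^\infty e^{-s\mathcal{M}^{1/2}[\rho_t]}\,\mathcal{M}[\rho_1-\rho_2]\,e^{-s\mathcal{M}^{1/2}[\rho_t]}\dd s$, and using $\mathcal{M}^{1/2}[\rho_t]\ge\sqrt{\kappa}\,\mathrm{Id}$ one gets $\|Y_t\|_{\mathcal{L}(X_N,X_N)}\le\tfrac{1}{2\sqrt{\kappa}}\|\mathcal{M}[\rho_1-\rho_2]\|_{\mathcal{L}(X_N,X_N)}$. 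Integrating in $t$ (equivalently, applying the mean value theorem as in the displayed estimate \eqref{2}) and combining with the second step yields $\|\mathcal{M}^{1/2}[\rho_1]-\mathcal{M}^{1/2}[\rho_2]\|_{\mathcal{L}(X_N,X_N)}\le\tfrac{C(N)}{2\sqrt{\kappa}}\|\rho_1-\rho_2\|_{L^2}$, which is the claim with $C(\kappa,N)=C(N)/(2\sqrt{\kappa})$. The main obstacle lies exactly at this step: the Fréchet derivative of the operator square root is \emph{not} simply $\tfrac12\mathcal{M}^{-1/2}[\rho]\mathcal{M}[\,\cdot\,]$ unless the operators involved commute, so one must either argue through the Sylvester equation as above or invoke the standard fact that $A\mapsto A^{1/2}$ is Lipschitz with constant $(2\sqrt{\kappa})^{-1}$ on $\{A\ge\kappa\,\mathrm{Id}\}$ — for instance via $A^{1/2}=\tfrac1\pi\int_0^\infty\lambda^{-1/2}A(A+\lambda\,\mathrm{Id})^{-1}\dd\lambda$. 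Either route is routine once the lower bound $\kappa$ on the spectrum is in hand.
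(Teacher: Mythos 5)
Your overall strategy — differentiate $\rho\mapsto\mathcal{M}^{1/2}[\rho]$, bound the derivative uniformly on the segment, then integrate — is exactly the paper's strategy, and your intermediate bounds $\|\mathcal{M}^{-1/2}[\rho]\|\le\kappa^{-1/2}$ and $\|\mathcal{M}[\,\cdot\,]\|_{\mathcal{L}(L^2,\mathcal{L}(X_N,X_N))}\le C(N)$ coincide with the paper's. The genuine difference, and it is to your credit, is in how the derivative of the square root is computed. The paper asserts the pointwise formula
\[
\lim_{h\rightarrow0}\frac{\mathcal{M}^{1/2}[\rho+hv]-\mathcal{M}^{1/2}[\rho]}{h}=\tfrac{1}{2}\mathcal{M}^{-1/2}[\rho]\mathcal{M}[v],
\]
which is only valid when $\mathcal{M}[\rho]$ and $\mathcal{M}[v]$ commute; in general the derivative $Y$ is the solution of the Sylvester equation $\mathcal{M}^{1/2}[\rho]\,Y+Y\,\mathcal{M}^{1/2}[\rho]=\mathcal{M}[v]$, which need not equal $\tfrac12\mathcal{M}^{-1/2}[\rho]\mathcal{M}[v]$. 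You identify this explicitly and replace the faulty formula by the integral representation $Y=\int_0^\infty e^{-s\mathcal{M}^{1/2}[\rho]}\mathcal{M}[v]\,e^{-s\mathcal{M}^{1/2}[\rho]}\,\dd s$, from which the bound $\|Y\|\le\tfrac1{2\sqrt{\kappa}}\|\mathcal{M}[v]\|$ follows cleanly using $\mathcal{M}^{1/2}[\rho]\ge\sqrt{\kappa}\,\mathrm{Id}$. This is precisely the bound the paper also lands on, so the final estimate is unchanged, but your route through the Sylvester equation (or the resolvent formula you mention as an alternative) is the one that actually proves it. You are also right to replace the paper's equality-form mean value theorem (which fails for operator-valued maps) by the integral $\int_0^1 Y_t\,\dd t$, which is the correct inequality-form statement. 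In short: same structure, same constants, but your version closes two small technical gaps in the paper's argument.
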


\par\noindent
{\bf Acknowledgements}. The authors wish to thank E. Feireisl for
useful discussions and suggestions. They are
also grateful to the referee for the careful reading of the paper,
and for valuable suggestions.

\end{document}